\documentclass[10pt]{article}

\usepackage{amsmath,amssymb,amsfonts,mathrsfs}
\usepackage[mathscr]{euscript}

\usepackage{amsthm}
\usepackage{geometry}
\usepackage[backend=biber,style=alphabetic]{biblatex}
\DeclareFieldFormat*{title}{\textit{#1}}
\DeclareFieldFormat*{booktitle}{\textit{#1}}

\DeclareFieldFormat*{journaltitle}{\textup{#1}}
\usepackage{tikz-cd}
\usepackage{microtype}
\usepackage{xcolor}

\definecolor{internalblue}{HTML}{0000FF}
\definecolor{citationgreen}{HTML}{00A000}

\usepackage{hyperref}

\hypersetup{
	colorlinks=true,
	linkcolor=internalblue,
	citecolor=citationgreen,
	urlcolor=internalblue
}

\newtheorem{theorem}{Theorem}[section]
\newtheorem{lemma}[theorem]{Lemma}
\newtheorem{proposition}[theorem]{Proposition}
\newtheorem{corollary}[theorem]{Corollary}

\theoremstyle{definition}
\newtheorem{definition}[theorem]{Definition}
\newtheorem{construction}[theorem]{Construction}
\newtheorem{remark}[theorem]{Remark}
\newtheorem{example}[theorem]{Example}
\newtheorem{notation}[theorem]{Notation}
\newtheorem{observation}[theorem]{Observation}
\newtheorem{question}[theorem]{Question}
\newtheorem{convention}[theorem]{Convention}
\newtheorem{recall}[theorem]{Recall}
\newtheorem{simplification}[theorem]{Simplification}
\newtheorem{restrict}[theorem]{Restriction}

\newcommand{\dR}{\mathrm{dR}}
\newcommand{\CAlg}{\mathsf{CAlg}}
\newcommand{\Alg}{\mathsf{Alg}}
\newcommand{\Ar}{\mathsf{Ar}}
\newcommand{\Aff}{\mathsf{Aff}}

\newcommand{\PStk}{\mathsf{PStk}}
\newcommand{\Stk}{\mathsf{Stk}}
\newcommand{\ArtStk}{\mathsf{ArtStk}}
\newcommand{\dArtStk}{\mathsf{dArtStk}}
\newcommand{\Cat}{\mathsf{Cat_{\infty}}}
\newcommand{\op}{\mathsf{op}}
\newcommand{\an}{\mathsf{an}}
\newcommand{\Fun}{\mathsf{Fun}}
\newcommand{\Mod}{\mathsf{Mod}}
\newcommand{\gr}{\operatorname{gr}}
\newcommand{\Gr}{\mathsf{Gr}}
\newcommand{\Fil}{\mathsf{Fil}}
\newcommand{\CRing}{\mathsf{CRing}}
\newcommand{\Grp}{\mathsf{Grp}}
\newcommand{\Set}{\mathsf{Set}}
\newcommand{\cofib}{\operatorname{cofib}}

\newcommand{\QCoh}{\mathsf{QCoh}}
\newcommand{\syn}{\mathsf{syn}}

\newcommand{\Ani}{\mathsf{Ani}}

\newcommand{\pr}{\operatorname{pr}}
\newcommand{\DGA}{\mathsf{DGA}}

\DeclareMathOperator{\wt}{wt}
\DeclareMathOperator*{\colim}{colim}
\DeclareMathOperator{\Hom}{Hom}

\renewbibmacro*{volume+number+eid}{%
  \printfield{volume}%
  \iffieldundef{number}
    {}
    {\printtext{\mkbibparens{\printfield{number}}}}%
  \setunit{\addcomma\space}%
  \printfield{eid}}

\title{The Hodge-to-de Rham spectral sequence can be arbitrarily non-degenerate
	in positive characteristic}
\author{Kehao Cheng}
\date{\today}

\begin{document}

\maketitle
\begin{abstract}
   There is a long tradition of constructing smooth projective varieties in positive
   characteristic with non-degenerate Hodge-to-de Rham spectral sequences.
   In this paper, we show that for any \(n\geq1\) and any spot $(i, j)$ on the $E_n$-page,
   except for the cases where the differential is forced to be zero for trivial reasons,
   there exists a smooth projective variety whose Hodge-to-de Rham spectral sequence
   has a non-zero differential with source $E_n^{i, j}$. Our construction is inspired by constructions of nilmanifolds in complex geometry and can be regarded as a positive characteristic analogue.
\end{abstract}
\section{Introduction}

\subsection{Complex geometry background}
In complex geometry, the starting point of Hodge theory is the following celebrated result:

\begin{theorem}[Hodge~\cite{Hod51}; Fr\"olicher~\cite{Fro55}]
	Let X be a compact K\"{a}hler manifold. Then the Fr\"{o}licher spectral sequence
\[ E_1^{i,j} = H^j(X,\Omega_X^i) \Longrightarrow H_{\mathrm{dR}}^{i+j}(X,\mathbb C) \]
degenerates on first page.
\end{theorem}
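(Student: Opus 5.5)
The plan is the classical Hodge-theoretic argument via harmonic forms and the Kähler identities, which reduces degeneration to a dimension count. Fix a Kähler metric \(\omega\) on \(X\). Use the Hodge decomposition of \(\mathbb C\)-valued forms into \((p,q)\)-types, \(d=\partial+\bar\partial\), and the formal adjoints \(\partial^*,\bar\partial^*, d^*\). The Fr\"olicher spectral sequence is the spectral sequence of the Hodge filtration \(F^iA^\bullet=\bigoplus_{p\ge i}A^{p,\bullet}\) on the de Rham complex of smooth forms. Its \(E_1\)-page is Dolbeault cohomology \(H^{i,j}_{\bar\partial}(X)\), which is \(H^j(X,\Omega^i_X)\) by the Dolbeault lemma. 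It converges to \(H^{i+j}_{\dR}(X,\mathbb C)\).

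First, I would prove the Kähler identities \([\Lambda,\bar\partial]=-i\partial^*\) and \([\Lambda,\partial]=i\bar\partial^*\). The standard route is that they are first-order identities, so by Kähler normal coordinates (the metric osculates to order two to the flat metric) it suffices to check them on \(\mathbb C^n\) with the standard metric. That check is a direct computation. From these identities one deduces \(\Delta_d=2\Delta_{\bar\partial}=2\Delta_\partial\). In particular \(\Delta_d\) preserves bidegree.

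Second, apply elliptic theory. Since \(X\) is compact, \(\Delta_d\) and \(\Delta_{\bar\partial}\) are elliptic, so
\[H^k_{\dR}(X,\mathbb C)\cong\mathcal H^k_{\Delta_d}\quad\text{and}\quad H^{p,q}_{\bar\partial}\cong\mathcal H^{p,q}_{\Delta_{\bar\partial}},\]
with finite-dimensional spaces of harmonic forms. Because \(\Delta_d\) preserves bidegree, \(\mathcal H^k_{\Delta_d}=\bigoplus_{p+q=k}\mathcal H^{p,q}_{\Delta_d}\). Combined with \(\Delta_d=2\Delta_{\bar\partial}\), this gives the dimension identity
\[\dim H^k_{\dR}(X,\mathbb C)=\sum_{i+j=k}\dim H^j(X,\Omega^i_X).\]

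Third, conclude degeneration by counting. For any spectral sequence of a finite filtration on a complex with finite-dimensional terms, each page \(E_{r+1}\) is a subquotient of \(E_r\). Hence \(\sum_{i+j=k}\dim E_r^{i,j}\) is non-increasing in \(r\), and it is strictly decreasing at the first \(r\) where some differential leaving or entering total degree \(k\) is nonzero. At \(E_\infty\) this sum equals \(\dim H^k_{\dR}\), since \(E_\infty\) is the associated graded of a finite filtration. The dimension identity says the sum at \(E_1\) already equals the sum at \(E_\infty\), for every \(k\). Therefore no differential \(d_r\) with \(r\ge1\) can be nonzero, and the spectral sequence degenerates at \(E_1\).

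The main obstacle is the first step, the Kähler identities and the resulting comparison of Laplacians. This is where the Kähler hypothesis enters: \(d\omega=0\) is exactly what allows reduction to the flat model to first order. The elliptic theory and the spectral-sequence counting are standard.
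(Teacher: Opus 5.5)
Your argument is correct, and it is the classical proof behind the paper's citation of Hodge and Fr\"olicher; the paper itself gives no proof of this background theorem. Harmonic theory together with the K\"ahler identities yields $\dim H^k_{\mathrm{dR}}(X,\mathbb C)=\sum_{i+j=k}\dim H^j(X,\Omega^i_X)$, and Fr\"olicher's dimension count converts this equality into $E_1$-degeneration.

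One small correction: the terms $A^k$ of the smooth de Rham complex are not finite-dimensional. The counting step should instead be stated as needing only a finite filtration and a finite-dimensional $E_1$-page, and your setup and elliptic step already provide both.
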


One natural question is whether, given \(n\geq1\) and \(i,j\geq0\),  there exists a compact complex manifold $X$ (necessarily non-K\"{a}hler) whose Fr\"{o}licher spectral sequence has a non-zero differential $d_n: E_n^{i,j}\to E_n^{i+n,j-n+1}$.
It is clear that the answer is negative if $i = j = 0$ or $j<n-1$ for trivial reasons.
On the positive side, for every \(n\geq 1\) and \(i,j\geq 0\) satisfying \(j\geq n-1\) and \((i,j)\neq(0,0)\), Bigalke--Rollenske \cite{Rol08, BR14} constructed a compact complex manifold whose Fr\"{o}licher spectral sequence has a non-zero differential $d_n: E_n^{i,j}\to E_n^{i+n,j-n+1}$.

\subsection{Background and results in positive characteristic algebraic geometry}

Let $k$ be a perfect field of characteristic \(p>0\). It has long been recognized that smooth projective varieties in positive characteristic can have non-degenerate Hodge-to-de Rham spectral sequences:
\begin{example} 
	\begin{enumerate} 
	\item[\textup{(1)}] Mumford constructed a smooth projective surface \(X\) over \(k\) and a global regular \(1\)-form $ \omega\in H^0(X,\Omega^1_X) $ such that $ d\omega\neq 0$ in \cite{Mum61}. In particular, the Hodge-to-de Rham spectral sequence of \(X\) does not degenerate at the \(E_1\)-page.
	\item[\textup{(2)}] Suppose that \(k\) has characteristic \(2\), and let
	\(X\) be a supersingular Enriques surface over \(k\). In \cite[Proposition~II.7.3.8]{Ill79}, Illusie  showed the differential
	\[
	d_1\colon
	H^1(X,\mathcal O)
	\longrightarrow
	H^1(X,\Omega_{X/k}^1)
	\]
	in the Hodge-to-de Rham spectral sequence of \(X\)  is non-zero.
	\item[\textup{(3)}] William Lang studied the Hodge and de Rham cohomology of various classes of smooth projective surfaces in positive characteristic, including quasi-elliptic surfaces, quasi-hyperelliptic surfaces, hyperelliptic surfaces, Enriques surfaces, and Raynaud surfaces; see \cite{Lan79,Lan80,Lan95}. Certain examples among these surfaces provide further instances whose Hodge-to-de Rham spectral sequences do not degenerate at the \(E_1\)-page.
	\end{enumerate} 
\end{example}

On the other hand, Deligne--Illusie proved the following degeneration theorem.

\begin{theorem}[{\cite[Corollaries~2.3 and~2.4]{DI87}}]
	Let \(X\) be a smooth proper scheme over \(k\). If \(X\) admits a \(W_2(k)\)-lifting
	and \(\dim X\leq p\), then the Hodge-to-de Rham spectral sequence of \(X\)
	degenerates at the \(E_1\)-page and the conjugate spectral sequence of \(X\)
	degenerates at the $E_2$-page.
\end{theorem}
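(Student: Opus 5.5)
The plan is to follow Deligne--Illusie: produce a splitting of the de Rham complex in the derived category from the \(W_2(k)\)-lifting, and then turn that splitting into a dimension count. Let \(F\colon X\to X'\) be the relative Frobenius, where \(X'=X\times_{k,\sigma}k\). The inverse Cartier isomorphism gives
\[
C^{-1}\colon \Omega^i_{X'}\xrightarrow{\ \sim\ }\mathcal H^i(F_*\Omega^\bullet_{X/k}),
\]
so \(F_*\Omega^\bullet_{X/k}\) is a complex of coherent \(\mathcal O_{X'}\)-modules whose cohomology sheaves are the Hodge sheaves of \(X'\). The first and main step is to show that a lifting \(\tilde X\) over \(W_2(k)\), which induces a lifting \(\tilde X'\) of \(X'\), yields a map \(\varphi_1\colon\Omega^1_{X'}[-1]\to F_*\Omega^\bullet_{X/k}\) in \(D(X')\) inducing \(C^{-1}\) on \(\mathcal H^1\).

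To build \(\varphi_1\), I would work Zariski-locally, where Frobenius lifts \(\tilde F\colon \tilde U\to\tilde U'\) exist, and set \(\varphi_{\tilde F}=p^{-1}\tilde F^*\) on \(1\)-forms. Two local lifts differ by a homotopy \(h_{12}\colon \Omega^1_{X'}\to F_*\mathcal O_X\), obtained from the difference \(\tilde F_1^*-\tilde F_2^*\) divided by \(p\). The data \((\varphi_{\tilde F},h_{12})\) satisfy the cocycle identities needed to glue into a morphism of Čech complexes. This is the step I expect to be the real work: the cocycle bookkeeping, and the check that the glued map induces \(C^{-1}\) and is independent of choices up to homotopy.

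Next I would extend \(\varphi_1\) multiplicatively. Define \(\varphi_i\) as the composite
\[
\Omega^i_{X'}[-i]\xrightarrow{\ a\ }(\Omega^1_{X'}[-1])^{\otimes i}\xrightarrow{\ \varphi_1^{\otimes i}\ }(F_*\Omega^\bullet)^{\otimes i}\xrightarrow{\ \wedge\ }F_*\Omega^\bullet,
\]
where \(a(\omega_1\wedge\cdots\wedge\omega_i)=\tfrac1{i!}\sum_s\operatorname{sgn}(s)\,\omega_{s(1)}\otimes\cdots\otimes\omega_{s(i)}\). The antisymmetrization \(a\) exists only for \(i<p\). Summing the \(\varphi_i\) gives a quasi-isomorphism
\[
\bigoplus_{i<p}\Omega^i_{X'}[-i]\xrightarrow{\ \sim\ }\tau_{<p}F_*\Omega^\bullet_{X/k}.
\]
If \(\dim X<p\), the truncation is harmless, and we get \(F_*\Omega^\bullet\simeq\bigoplus_i\Omega^i_{X'}[-i]\). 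If \(\dim X=p\), I would use Grothendieck duality for the finite morphism \(F\): it identifies \(F_*\Omega^\bullet\) with its own shifted dual, so the splitting of \(\tau_{<p}\) dualizes to a splitting of \(\tau_{>0}\). Gluing the two splittings gives the full decomposition.

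Finally, the degeneration statements follow by counting. Taking hypercohomology of the decomposition gives
\[
\dim H^n_{\dR}(X/k)=\sum_{i+j=n}\dim H^j(X',\Omega^i_{X'})=\sum_{i+j=n}\dim H^j(X,\Omega^i_X).
\]
Here the second equality uses flat base change along the automorphism \(\sigma\) of the perfect field \(k\). Since \(E_\infty\) is a subquotient of \(E_1\), with equality of total dimensions if and only if all differentials vanish, the Hodge-to-de Rham spectral sequence degenerates at \(E_1\). For the conjugate spectral sequence, \(E_2^{i,j}=H^i(X',\mathcal H^j)\cong H^i(X',\Omega^j_{X'})\), and the same count shows it degenerates at \(E_2\). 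Alternatively, the splitting directly kills all higher differentials of the conjugate spectral sequence.
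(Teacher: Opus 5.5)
The paper gives no proof of its own here---it quotes Deligne--Illusie---and your outline is the Deligne--Illusie argument, which is correct. The one step stated too quickly is the gluing when \(\dim X=p\): the splitting of \(\tau_{>0}F_*\Omega^\bullet_{X/k}\) kills the extension classes of \(\Omega^p_{X'}[-p]\) against \(\Omega^i_{X'}[-i]\) for \(0<i<p\), but the remaining class lies in \(\operatorname{Ext}^{p+1}_{X'}(\Omega^p_{X'},\mathcal O_{X'})=H^{p+1}(X',(\Omega^p_{X'})^{\vee})\), and you need to say that it vanishes because \(\dim X'=p\) (alternatively, bypass the gluing: the \(\tau_{<p}\)-splitting gives the dimension count for \(n<p\), Poincar\'e and Serre duality give it for \(n>p\), and equality of Euler characteristics gives \(n=p\)).
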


Given all the advances in our understanding of the Hodge-to-de Rham spectral sequence
in positive characteristic, one natural question remains open.
\begin{question}[Petrov, \cite{MO259618}]
 For every \(n\geq 1\), does there exist a smooth proper variety \(X\) over \(k\) whose
Hodge-to-de Rham spectral sequence has a non-zero differential on the
\(E_n\)-page?
\end{question}

Inspired by this question, we consider the following refinement.

\begin{question}
	 Given \(n\geq 1\) and
	\(i,j\geq 0\), does there exist a smooth proper variety \(X\) over \(k\)
	whose Hodge-to-de Rham spectral sequence has a non-zero differential $d_n\colon E_n^{i,j}\longrightarrow E_n^{i+n,j-n+1}$?
\end{question}

Let us observe again that the answer is of course negative if \(i=j=0\) or \(j<n-1\). Our main theorem in this paper confirms that, outside these trivial cases, the answer to the above question is yes.

\begin{theorem}[Main result, Corollary~\ref{cor}]\label{Main}
	Given \(n\geq 1\) and \(i,j\geq 0\) satisfying   $ j\geq n-1$ and
	\((i,j)\neq(0,0)\), there exists a smooth projective variety $X$ over $k$ and a class $\beta  \in H^{j}(X,\Omega^i_{X/k}) =E_1^{i,j}$ in the Hodge-to-de Rham spectral sequence of \(X\)
	which satisfies $d_r(\beta)=0 $ for $1\leq r<n$ and $d_{n}(\beta)\neq 0$ in $E^{n+i,j-n+1}_n$.
\end{theorem}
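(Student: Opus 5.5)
The plan is to imitate the Bigalke--Rollenske nilmanifold construction in characteristic $p$. Nilmanifolds are replaced by smooth projective approximations of classifying stacks of unipotent group schemes, built from finite (e.g.\ Frobenius-kernel or $\alpha_p$/$\mu_p$-type) group schemes acting freely on smooth complete intersections, in the spirit of Godeaux--Serre and Antieau--Bhatt--Mathew. The argument has three stages:
\begin{enumerate}
\item[(a)] find a classifying stack $BG$ whose Hodge-to-de Rham spectral sequence has the required non-zero $d_n$;
\item[(b)] pass to a smooth projective variety $X$ mapping to $BG$ so that the relevant part of the spectral sequence injects;
\item[(c)] move the source bidegree around using products and the multiplicative structure.
\end{enumerate}

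For stage (a), first produce a finite flat group scheme $G_n$ over $k$, built as an iterated extension of copies of $\alpha_p$ (a ``Heisenberg-type'' unipotent group, the analogue of a nilpotent Lie algebra). We want Hodge cohomology $H^*(BG_n,\Omega^*)$ computable via a Chevalley--Eilenberg/Koszul type complex, carrying a class $\beta_0\in E_1^{0,n-1}$ or $E_1^{1,n-1}$ with $d_r\beta_0=0$ for $r<n$ and $d_n\beta_0\neq0$. The mechanism is as in Rollenske's examples: a chain of ``Massey-product'' relations $d\omega_1=0$, $\bar\partial\omega_1=-\partial\omega_2$, and so on, which is encoded by the extension structure of $G_n$. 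The Hodge-filtered de Rham cohomology is computed via the filtered derived de Rham complex of $BG_n$, namely the cosimplicial bar construction $G_n^{\bullet}$ with its Hodge filtration. One then runs a zig-zag computation to check that $\beta_0$ survives to $E_n$ and that its image under $d_n$ is non-zero. This needs an explicit description of the classes of $E_n^{n+i,j-n+1}$ that are boundaries of lower pages.

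For stage (b), approximate $BG_n$ by $Y=V/G_n$. Here $V$ is a smooth complete intersection in a projective space carrying a free linear $G_n$-action, and the quotient $Y$ is smooth projective. The map $Y\to BG_n$ induces a morphism of Hodge-to-de Rham spectral sequences. By a Lefschetz-type argument (using high codimension and the vanishing of the cohomology of complete intersections in a range) it is an isomorphism on $E_1^{a,b}$ for $a+b$ below $\dim V$. Choosing $\dim V\gg i+j+1$, the entire relevant range of all pages $E_r$ with $r\le n$ is matched, and both $\beta$ and the non-vanishing of $d_n\beta$ transfer.

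For stage (c), shift $(i,j)$ freely subject to $j\ge n-1$ and $(i,j)\ne(0,0)$. Take $X=Y\times Z$ and $\beta=\beta_0\boxtimes\gamma$, where $Z$ is a smooth projective variety, for instance a product of $\mathbb P^1$'s or of an ordinary abelian variety, whose spectral sequence degenerates and which has a class $\gamma\in H^{b}(Z,\Omega^a)$. Then the Künneth formula for the filtered de Rham complex gives $d_r(\beta_0\boxtimes\gamma)=d_r\beta_0\boxtimes\gamma$, and non-vanishing persists because tensoring over a field is faithful. The possible base cases are $(0,n-1)$ for $n\ge2$ and $(1,0)$ for $n=1$, e.g.\ Mumford-type $d\omega\neq0$. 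If the base case only provides $(1,n-1)$, handle $(0,j)$ separately by a second family of groups $G$ whose first non-trivial differential starts from $H^j(\mathcal O)$. The step I expect to be hardest is stage (a): constructing $G_n$ and proving both $d_r\beta_0=0$ for $r<n$ and $d_n\beta_0\ne0$. I would first try $G_n$ as a central extension tower of Frobenius kernels of a unipotent Heisenberg-type group $U_{n+1}$, and compute with the explicit Hochschild/Koszul description of $H^*(BG,\Omega^*)$ given by $\mathrm{Lie}(G)^\vee$ and the coordinate ring of $G$.
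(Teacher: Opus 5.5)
Your three-stage architecture (classifying stack, Antieau--Bhatt--Mathew approximation, products) is the same as the paper's. The gap is stage (a), which is the whole mathematical content of the theorem and which you leave as a hope. You do not construct $G_n$, you exhibit no class surviving to $E_n$, and you have no method for showing that $d_n\beta_0$ is not hit by an earlier differential. Three concrete ingredients are missing.

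\emph{The model.} A Koszul/Chevalley--Eilenberg complex built from $\mathrm{Lie}(G)^\vee$ and $\mathcal O(G)$, as you describe it, misses the relevant part of the cotangent complex. For non-smooth $G$ one has $L_{BG/k}\simeq\ell_G[-1]$ with $\mathcal H^1=\omega_G=\mathrm{Lie}(G)^\vee$, but also $\mathcal H^0=n_G\neq0$. The target of $d_n$ in the paper is $E_1^{n,0}=(\Lambda^n n_G)^G$, which is invisible in $\mathrm{Lie}(G)^\vee$. The paper instead embeds $G$ in a smooth $\widetilde G$ with regular ideal $I$. It computes $\mathrm{Fil}^*_{\mathrm{Hod}}\dR_{BG/k}$ as the totalization of the filtered divided power de Rham complexes of $D_{A^{\otimes\bullet}}(I_\bullet)$. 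Generators of $I$ give the classes $\eta_y\in n_G$, and tensor products of such generators gain PD-filtration.

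\emph{The group.} A height-one Frobenius kernel such as $U_{n+1}[F]$ is the wrong shape for this mechanism. On it, every $p$-th power of a function vanishing at the identity is zero. The paper's surviving classes, by contrast, are represented by $p$-th powers that are non-zero primitive functions on $G$; being $p$-th powers kills the de Rham differential. The paper uses $H[F]$ only for a $d_1$. Already for $d_2$ it needs mixed truncation heights: in $H[F^{4/3}]=V(x^p,y^{p^2},z^p)$ one has $\delta(y^p)=-x^p\otimes z^p\in\mathrm{Fil}^2_{\mathrm{PD}}$. For general $n$ it needs Frobenius-twisted cocycles $w_k''=w_k+w_k'+z_{k-1}^py_k'+x_{k-1}(z_k')^p$, the subgroup $G_n=V(x_i^p,y_j^{p^2},z_j^{p^2},w_j^{p^2})$, and an explicit zig-zag $s_1+\cdots+s_{n-1}$ of tensors of $p$-th powers whose coboundary lies in $\mathrm{Fil}^n$ and represents a wedge of $n$ classes $\eta$.

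\emph{Non-vanishing on $E_n$.} The paper uses a split torus acting on $G_n$ by group automorphisms, which makes the spectral sequence equivariant. In the weight of the target, $E_1^{n-1,0}$ vanishes, and the other possible sources of $d_2,\dots,d_{n-1}$ are one-dimensional and not $d_1$-cycles. So nothing can hit $d_n(\alpha_n)$ on an earlier page. Your ``explicit description of the boundaries'' is exactly this problem, left unsolved.

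Stages (b) and (c) also need repair. In (b), $V/G$ approximates $[\mathbb P(W)/G]$, which is a projective bundle over $BG$, not $BG$ itself. So $E_1(BG)\to E_1(Y)$ is only injective in a range, not an isomorphism: powers of $c_1(\mathcal O(1))$ contribute extra classes, for example in $E_1^{1,1}$. Injectivity on $E_1$ does not by itself propagate to $E_n$. The paper combines two facts:
\begin{itemize}
\item the Hodge $d$-equivalence $Y\to[\mathbb P(W)/G]$ is an isomorphism below total degree $d$ and injective in degree $d$, and this does propagate page by page;
\item $c_1$ is a permanent cycle, which gives by induction on $r$ that $E_r([\mathbb P(W)/G])$ is free over $E_r(BG)$ on $1,c_1,\dots,c_1^{N-1}$.
\end{itemize}
In (c), products of $\mathbb P^1$'s only move $(i,j)$ along the diagonal. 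You need a factor with independent classes in $E_1^{1,0}$ and $E_1^{0,1}$, such as an elliptic curve, which is what the paper uses. Finally, for $n=1$ a seed in $E_1^{1,0}$ (Mumford's surface, or $B(H[F])$) only reaches bidegrees with $i\geq1$. The cases $(0,j)$ with $j\geq1$ need a second seed in $E_1^{0,1}$. The paper takes it from $B\alpha_p$, where $d_1(\alpha_t)=\omega_t\neq0$, approximated by a surface.
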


\begin{remark}
	To the best of our knowledge, prior to the present work, the only example among smooth projective varieties in positive characteristic we have found in the literature for which the Hodge-to-de Rham spectral sequence is known to have a non-zero differential beyond the first page is Hirokado's Calabi-Yau threefold $\mathcal H$ in characteristic $3$; see \cite{Hir99}. Indeed, the Hodge-to-de Rham spectral sequence of $\mathcal H$ has the following non-zero differentials
	\[
	d_2^{0,3}\colon H^3(\mathcal H,\mathcal O)\to H^2(\mathcal H,\Omega_{\mathcal H/k}^2),
	\qquad
	d_2^{1,1}\colon H^1(\mathcal H,\Omega_{\mathcal H/k}^1)\to H^0(\mathcal H,\Omega_{\mathcal H/k}^3).
	\]
	See Remark~\ref{CY} for details.
\end{remark}
\subsection{Overview of the strategy}
Let us discuss our proof strategy. A useful approach to such questions is to study the Hodge-to-de Rham spectral sequences of classifying stacks. In \cite{Tot18}, Totaro  systematically studied the Hodge cohomology and de Rham cohomology of classifying stacks, providing a rich source of examples of Hodge-to-de Rham spectral sequences. Subsequently, Antieau--Bhatt--Mathew computed  the Hodge-to-de Rham spectral sequence of \(B\alpha_p\). In particular, they proved the following result.
\begin{theorem}[{\cite[Proposition~4.12]{ABM21}}]
	The Hodge-to-de Rham spectral sequence of 
	\(B\alpha_p\)
	\[
	E_1^{i,j}
	=
	H^j(B\alpha_p, \Lambda^iL_{B\alpha_p/k})
	\Longrightarrow
	H_{\mathrm{dR}}^{i+j}(B\alpha_p/k)
	\]
	does not degenerate at the \(E_1\)-page. More precisely, the 
	differential
	\[
	d_1:
	H^1(B\alpha_p,\mathcal O_{B\alpha_p})
	\longrightarrow
	H^1(B\alpha_p,L_{B\alpha_p/k})
	\]
	is non-zero.
\end{theorem}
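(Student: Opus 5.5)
The statement to prove is the result of \cite[Proposition~4.12]{ABM21}: the differential $d_1\colon H^1(B\alpha_p,\mathcal O)\to H^1(B\alpha_p,L_{B\alpha_p/k})$ is non-zero. The plan is to compute both sides through the Čech nerve of $\operatorname{Spec}k\to B\alpha_p$, identify $d_1$ with the map induced by the de Rham differential on the simplicial scheme $\alpha_p^{\bullet}$, and evaluate it on an explicit class.

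\textbf{Step 1: the source.} Write $\alpha_p=\operatorname{Spec}k[t]/(t^p)$ with $t$ primitive. Hodge cohomology of $B\alpha_p$ is computed by descent along the simplicial scheme $\alpha_p^{\bullet}$. This gives
\[
H^j(B\alpha_p,\mathcal O)=H^j(\alpha_p,k),
\]
the Hochschild (rational) cohomology of the Hopf algebra $k[t]/(t^p)$. In degree $1$ it is spanned by the primitive element $t$, that is, by the class of the cocycle $t\in\mathcal O(\alpha_p)$.

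\textbf{Step 2: the target.} The cotangent complex $L_{B\alpha_p/k}$ pulls back along $\operatorname{Spec}k\to B\alpha_p$ to the co-Lie complex of $\alpha_p$, shifted by $-1$. Since $\alpha_p\subset\mathbb G_a$ is cut out by $t^p$, this complex is $[\,k\cdot t^p\xrightarrow{\;d=pt^{p-1}dt=0\;}k\,dt\,]$. Its cohomology is $k$ in degree $0$ and $k$ in degree $-1$, both with trivial $\alpha_p$-action. Hence $L_{B\alpha_p}$ is an extension of $\mathcal O[-1]$ by $\mathcal O$, the $\mathcal O$ being spanned by $dt$ and the $\mathcal O[-1]$ by $t^p$, and $H^1(B\alpha_p,L)$ receives a contribution from $H^0$ of the $\mathcal O[-1]$-piece tensored with $dt$. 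On the Čech nerve the relevant part is concretely $\Omega^1_{\alpha_p^{m}}$ restricted appropriately, and $dt\in\Omega^1_{\alpha_p}$ defines a class in $H^1(B\alpha_p,L)$ via the one-simplices.

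\textbf{Step 3: identify $d_1$ and evaluate.} The differential $d_1$ is induced by the de Rham differential $\mathcal O\to\Omega^1$ levelwise on $\alpha_p^{\bullet}$, using the derived (cotangent-complex) de Rham complex. It therefore sends the cocycle $t$ to the cocycle $dt$, and what remains is to show that $dt$ is not a coboundary in the Čech complex computing $H^1(B\alpha_p,L)$.

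A coboundary would be the image of a degree-$0$ cochain, namely a section of $L$ on $\operatorname{Spec}k$, pulled back along the face maps. Since $\operatorname{Spec}k$ has $L\simeq0$ relative to $k$, while the pullback of $L$ to the point is the shifted co-Lie complex above, the degree-$0$ cochains contribute only through the $k\cdot t^p$ term. Its image under the coboundary is $d(t^p)=0$, not $dt$. So $dt\neq0$ in $H^1$, and $d_1\neq0$.

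\textbf{Main obstacle.} The delicate step is Step 2 together with the non-coboundary check. One must model $L_{B\alpha_p}$ correctly as a two-term complex, with $\alpha_p$ non-smooth so that the cotangent complex is not just $\Omega^1$, and track its descent data honestly. I would do this by embedding $\alpha_p\subset\mathbb G_a$, using the fibre sequence $B\alpha_p\to B\mathbb G_a\to B\mathbb G_a$ induced by Frobenius, and computing $L$ from the transitivity triangle. With that triangle, the class of $dt$ is detected by comparison with $B\mathbb G_a$, where $dt$ is visibly non-zero in $H^1(B\mathbb G_a,L)=H^1(\mathbb G_a,\mathfrak g^\vee)$.
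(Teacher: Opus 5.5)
Your outline follows the paper's route: descent along $\operatorname{Spec}k\to B\alpha_p$, the two-term model of the cotangent complex from $\alpha_p\subset\mathbb G_a$, the computation $t\mapsto\pm dt$, and survival of $dt$ because $d(t^p)=0$. However, the step that carries all the weight, Step~3, is only asserted, and you have placed the difficulty in the wrong step. The schemes $\alpha_p^m$ are not smooth, so $\gr^1_{\mathrm{Hod}}\dR_{\alpha_p^m}\simeq L_{\alpha_p^m/k}[-1]$ is a genuinely derived object. The connecting map $\gr^0\to\gr^1[1]$ of the Hodge filtration is therefore a natural transformation of $\mathsf D(k)$-valued functors, known only up to coherent homotopy. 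Knowing that it is ``the derivation $\mathcal O\to L$'' in each cosimplicial degree does not let you evaluate the induced map on $\lim_{\Delta}$ on an explicit cochain such as $t$. For that you need a strict cosimplicial filtered cochain model of $\mathrm{Fil}^0/\mathrm{Fil}^2\dR_{\alpha_p^\bullet}$ whose $\gr^0$ and $\gr^1$ are identified with $\mathcal O(\alpha_p^\bullet)$ and with your complexes $[I_\bullet/I_\bullet^2\to\Omega^1\otimes\mathcal O(\alpha_p^\bullet)]$. Note that even the naive formula $\bar f\mapsto df$ is a well-defined chain map into that two-term complex only because $I$ is generated by $p$-th powers, so $d(I)\subseteq I\,\Omega^1$. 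This model is exactly what the paper supplies:
\begin{itemize}
\item Theorem~\ref{PD1} identifies $\mathrm{Fil}^*_{\mathrm{Hod}}\dR$ of $\mathcal O(\alpha_p^m)$ with the filtered divided-power de Rham complex of $(k[t]^{\otimes m},I_m)$.
\item Construction~\ref{PD2} and Proposition~\ref{Tot} turn the cosimplicial limit into an explicit filtered bicomplex.
\item In that bicomplex $d_{\mathrm{Tot}}(t)=-dt$ legitimately gives $d_1(\alpha_t)=-[dt]$, and Proposition~\ref{com2} and Remark~\ref{omega} (with the signs of Lemma~\ref{cp}) identify this with $\omega_t\neq0$.
\end{itemize}
Your ``main obstacle'' (Step~2 and the non-coboundary check) is routine once such a model is in place.

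You can also close the gap without divided powers. Use your $B\mathbb G_a$ comparison for the differential itself, not only to detect the target class.
\begin{itemize}
\item The \v{C}ech nerve $\mathbb G_a^\bullet$ is smooth, so the classical filtered de Rham complexes compute $\mathrm{Fil}^*_{\mathrm{Hod}}\dR_{B\mathbb G_a}$.
\item The same bicomplex computation then gives $d_1(\alpha_t)=-[dt]\neq0$ in $E_1^{1,1}(B\mathbb G_a)=H^1(B\mathbb G_a,L_{B\mathbb G_a})=H^0(\mathbb G_a,\mathfrak g^\vee)=k\,dt$. This is $H^0$, not $H^1(\mathbb G_a,\mathfrak g^\vee)$ as you wrote.
\item Naturality of the spectral sequence along $B\alpha_p\to B\mathbb G_a$ shows that $d_1(\alpha_t)$ for $B\alpha_p$ is the image of this class.
\item The map $E_1^{1,1}(B\mathbb G_a)\to E_1^{1,1}(B\alpha_p)$ is injective, because the pullback of $L_{B\mathbb G_a}$ is the direct summand $\omega_{\alpha_p}[-1]$ of $L_{B\alpha_p}$. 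This is Lemma~\ref{split}, which again uses $d(t^p)=0$.
\end{itemize}
Finally, two details in Steps~2--3 need correcting.
\begin{itemize}
\item The labels are reversed: $\mathcal H^0(L_{B\alpha_p})=n_{\alpha_p}$ is spanned by $t^p$, and $\mathcal H^1(L_{B\alpha_p})=\omega_{\alpha_p}$ is spanned by $dt$.
\item The total-degree-$0$ cochains that could bound $dt$ are not sections over $\operatorname{Spec}k$, where $L=0$. They are elements of $I/I^2=\mathcal O(\alpha_p)\,t^p$ in cosimplicial degree $1$. Their images have zero $\Omega^1$-component because $d(f t^p)=t^p\,df\in I\,\Omega^1$, and this is the correct form of your non-coboundary argument.
\end{itemize}
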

To transfer cohomological phenomena from classifying stacks to smooth projective schemes, a natural approach is to approximate classifying stacks by smooth projective schemes in a suitable sense. This idea  already appeared in Bhatt--Morrow--Scholze \cite[\S2.2]{BMS18}. Subsequently, Antieau--Bhatt--Mathew established an approximation theorem   for Hodge cohomology \cite[Theorem 1.2]{ABM21}, which provides a powerful method for constructing smooth projective schemes with non-degenerate Hodge-to-de Rham spectral sequences. For our purposes, we state the following refinement, which  is essentially contained in the argument of \cite[Theorem 1.2 and Proposition 5.11]{ABM21}.
\begin{theorem}[Antieau--Bhatt--Mathew]\label{abm1}
	Suppose \(G\) is a finite group scheme over $k$. For any integer \(d\geq 0\), there exists a smooth
	projective \(k\)-scheme \(X\) of dimension \(d\), together with a map
	\(X\to BG\) such that, writing \(E_r^{i,j}(Z)\) for the \((i,j)\)-term on the \(E_r\)-page of the Hodge-to-de Rham spectral sequence of \(Z\), the induced map between the Hodge-to-de Rham spectral sequences
	\[
	E_r^{i,j}(BG)
	\longrightarrow
	E_r^{i,j}(X)
	\]
	is injective for \(i+j\leq d\), for every $r\geq 1$.
\end{theorem}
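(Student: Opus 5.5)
The plan follows the two-step approach of \cite[Theorem~1.2 and Proposition~5.11]{ABM21}. First we approximate \(BG\) in Hodge cohomology by a smooth projective scheme. Then we upgrade injectivity on \(E_1\) to injectivity on every page.

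\textbf{Step 1: approximation.} Embed \(G\hookrightarrow GL_N\) for some \(N\). Choose a representation \(V\) of \(G\) and a \(G\)-stable open \(U\subset V\) on which \(G\) acts freely, with complement of codimension \(>d+1\). Then \(U/G\to BG\) is a smooth map whose fibres are \(U\). The Hodge cohomology \(H^j(-,\Lambda^i L)\) of \(U/G\) agrees with that of the vector bundle stack \(V/G\) in the range \(i+j\le d\), by the usual codimension argument applied to the graded pieces \(\Lambda^i\) of the cotangent complex. Moreover, \(V/G\to BG\) induces an injection on Hodge cohomology, since \(BG\) is a retract through the zero section.

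Next, compactify: use a projective quotient \(\overline{U/G}\) (for example a quotient of a complete intersection in \(\mathbb{P}(V)\) by a free \(G\)-action). Cut it down by general hyperplane sections to a smooth projective \(X\) of dimension \(d\) (or of dimension \(\ge d\), and then adjust). A Lefschetz-type argument for Hodge cohomology, as in \cite{ABM21}, shows that the restriction \(H^j(\overline{U/G},\Omega^i)\to H^j(X,\Omega^i)\) is injective for \(i+j\le d\). This relies on Serre vanishing for twists by \(\mathcal O(-m)\) with \(m\gg0\), used through the conormal sequence and the Koszul resolution. Composing these maps gives \(X\to BG\) which is injective on \(E_1^{i,j}\) for \(i+j\le d\).

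\textbf{Step 2: all pages.} The obstacle is that injectivity on \(E_1\) does not formally give injectivity on \(E_r\). The kernel of \(E_r(BG)\to E_r(X)\) could contain classes that are boundaries in \(X\) but not in \(BG\).

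To handle this, we will arrange a stronger statement: a splitting in total degrees \(\le d+1\). Concretely, we aim to show that the map of filtered complexes \(R\Gamma_{\dR}(BG)\to R\Gamma_{\dR}(X)\) with the Hodge filtration is, in this range of degrees, split injective on every graded piece. It suffices to get this compatibly with the filtration, i.e.\ for the map of filtered objects up to the relevant truncation.

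The way to obtain this is to take \(d\) larger than needed in Step~1, so that the map \(\gr^i\) is an isomorphism onto a direct summand in degrees \(\le d+1\). Consider the cofibre \(C\) of \(R\Gamma_{\dR}(BG)\to R\Gamma_{\dR}(X)\) as a filtered complex. We want \(\gr^i C\) to be concentrated in degrees \(\ge d+i+1\) modulo a split piece. By a standard spectral sequence comparison (Zeeman-type), if the map is injective on \(E_1\) in degrees \(i+j\le d+1\) and the cokernel on \(E_1\) starts in a controlled range, then injectivity propagates to \(E_r\) in total degree \(\le d\). The point is that a class in \(E_r^{i,j}(BG)\) killed in \(E_r(X)\) must be hit by \(d_s\) from \(E_s^{i-s,j+s-1}(X)\) in total degree \(i+j-1\le d\). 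Injectivity and surjectivity-onto-summand one degree lower rule this out.

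I expect this bookkeeping to be the main difficulty: identifying exactly what extra control (a retraction at the level of filtered complexes, or isomorphism in degrees \(<d\) plus injectivity in degree \(d\)) Step~1 must supply. I would first try to prove that \(X\to BG\) induces an isomorphism on \(E_1^{i,j}\) for \(i+j<d\) and an injection for \(i+j=d\), and then run induction on \(r\) via the five lemma on the \(E_r\)-pages.
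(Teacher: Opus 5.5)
There is a genuine gap, and it is where you suspected: Step~2 gives no working mechanism for passing from $E_1$ to $E_r$, and your fallback would fail. The composite $X\to BG$ is \emph{not} an isomorphism on $E_1^{i,j}$ for $i+j<d$. In the construction one actually uses, $X$ carries Hodge classes that do not come from $BG$, namely the restrictions of the powers $c_1^l$ of the hyperplane class of the projective bundle it sits in; for instance $E_1^{1,1}(X)\cong E_1^{1,1}(BG)\oplus k\,c_1$ once $d\ge 3$. So the page-by-page five-lemma induction cannot be run on the composite.

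Nor does ``surjective onto a direct summand one degree lower'' rule out new boundaries. A $d_r$-cycle coming from $BG$ can become $d_r(y)$ in $X$ with $y$ in the complementary summand, unless the splitting is compatible with every differential. Nothing in your Step~1 provides that compatibility.

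Step~1 also does not compose as written. A geometrically integral projective $X$ admits only constant maps to the quasi-affine $U/G$, and the compactification $\overline{U/G}$ has no map to $BG$. What is needed is the complete intersection inside the stack $[\mathbb P(V)/G]$ from \cite[Theorem~1.2]{ABM21}.

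The missing idea is to factor the map as $X\hookrightarrow Y:=[\mathbb P(V)/G]\to BG$ and treat the two arrows by different mechanisms.

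\begin{itemize}
\item The first arrow is a Hodge $d$-equivalence. So on $E_1$ it is an isomorphism for $i+j\le d-1$ and injective for $i+j=d$. This is exactly the hypothesis of the page-by-page lemma you had in mind, so it propagates to every $E_r$.
\item The second arrow is not an isomorphism, but it splits compatibly with all differentials. By the projective bundle formula \cite[Proposition~5.11]{ABM21}, $E_1(Y)\cong\bigoplus_{l=0}^{n-1}E_1(BG)\,c_1^l$ with $n=\dim V$. Moreover $c_1=c_1(\mathcal O(1))$ is a permanent cycle, since it lifts to $H^2(\mathrm{Fil}^1_{\mathrm{Hod}}\dR_{Y/k})$. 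The Leibniz rule then gives $d_r(x\,c_1^l)=d_r(x)\,c_1^l$, so the decomposition is one of complexes. Induction on $r$ shows that $E_r(Y)$ is free over $E_r(BG)$ on $1,c_1,\dots,c_1^{n-1}$ for every $r$. In particular $E_r(BG)\to E_r(Y)$ is injective on every page.
\end{itemize}

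Composing the two arrows gives the theorem.
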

It is worth noting that a similar approximation strategy was used by Alexander Petrov to construct a smooth projective variety $X_0$ of dimension $p+1$ over $k$ which admits a  lifting over $W(k)$, while it has a non-zero differential on the $E_{p+1}$-page of the conjugate spectral sequence.

\begin{theorem}[{\cite[Theorem~1.1~and~Corollary~10.7]{Pet23}}]
	There exists a smooth projective scheme \(X\) over \(W(k)\) of relative
	dimension \(p+1\) with special fiber  $	X_0 := X \times_{W(k)} k$, such that 
	the differential
	\[
	d_{p+1}^{0,p}\colon
	H^0(X_0^{(1)},\Omega^p_{X_0^{(1)}/k})
	\longrightarrow
	H^{p+1}(X_0^{(1)},\mathcal O)
	\]
	on the $E_{p+1}$-page of the conjugate spectral sequence of \(X_0\)
	is non-zero.
\end{theorem}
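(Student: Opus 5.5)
The plan is to reduce to a computation on a classifying stack and then transfer it to a smooth projective scheme. I would do this in the same spirit as Theorem~\ref{abm1}, but for the conjugate spectral sequence and over $W(k)$.

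\textbf{Step 1: the stack-level computation.} I would take a finite flat group scheme $G$ over $W(k)$ with special fiber $G_0$. The natural first candidate is $G=\mu_p$, whose special fiber is infinitesimal and which lifts canonically. For such $G$, the conjugate spectral sequence
\[
E_2^{i,j}=H^i\bigl(BG_0^{(1)},\Lambda^j L_{BG_0^{(1)}/k}\bigr)\Longrightarrow H^{i+j}_{\dR}(BG_0/k)
\]
can be computed explicitly. The Hodge cohomology of $B\mu_p$ is a polynomial-times-exterior algebra on classes in low degree, which I would take from \cite{Tot18}. The shape of the argument would be the following.

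\begin{itemize}
\item Because $BG_0$ lifts to $W_2(k)$, Deligne--Illusie gives a decomposition of $\tau^{\leq p-1}F_*\Omega^\bullet$. Hence $d_r=0$ for $r\leq p$ on the relevant classes.
\item The first possible obstruction is the extension class of $\tau^{\leq p}F_*\Omega^\bullet$. This class should be expressible through a Bockstein/Steenrod-type operation; this is how I expect Petrov's "Sen operator" description to enter.
\item On $B\mu_p$ (or a product with a torus, or $B(\mu_p\times\mu_p)$ if one copy is too small), I would show that this class pairs non-trivially. Concretely, I would exhibit $\omega\in H^0(\Lambda^p L)$, for instance $y\cdot x^{p-1}$-type classes, with $d_{p+1}(\omega)\neq 0$ in $H^{p+1}(\mathcal O)$.
\end{itemize}

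The lift over $W(k)$ is what guarantees that all shorter differentials vanish, so the first non-vanishing differential has to be $d_{p+1}$.

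\textbf{Step 2: approximation over $W(k)$.} Next I would embed $G\hookrightarrow GL_{N,W(k)}$ and let $G$ act freely on an open subscheme of a projective space. Cutting by general complete intersections that are $G$-stable, chosen compatibly over $W(k)$ (a Godeaux--Serre construction in families), would give a smooth projective $Y/W(k)$ with free $G$-action and relative dimension $p+1$. I would then set $X:=Y/G$, which is smooth projective over $W(k)$ and comes with a map $X_0\to BG_0$.

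A Lefschetz-type argument, exactly as in \cite[Theorem~1.2]{ABM21} and Theorem~\ref{abm1}, then shows that
\[
H^i\bigl(BG_0,\Lambda^jL\bigr)\to H^i\bigl(X_0,\Omega^j\bigr)
\]
is injective for $i+j\leq p+1$ and bijective for $i+j\leq p$. Since the map $X_0\to BG_0$ induces a map of conjugate spectral sequences, two consequences follow:
\begin{itemize}
\item $\omega$ pulls back to a non-zero class surviving to $E_{p+1}$;
\item its image under $d_{p+1}$ is the pullback of a non-zero class in $H^{p+1}(\mathcal O)$.
\end{itemize}

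\textbf{Step 3: injectivity on the $E_{p+1}$-page.} It remains to check that the pulled-back target class is not killed by earlier differentials on $X_0$. Here the bijectivity in total degree $\leq p$, combined with injectivity in degree $p+1$, should propagate page by page. The argument is a comparison of spectral sequences in the same spirit as the "for every $r\geq 1$" clause of Theorem~\ref{abm1}.

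\textbf{Main obstacle.} I expect the main difficulty to be Step 1: identifying the obstruction to decomposing $\tau^{\leq p}F_*\Omega^\bullet$ for $BG_0$ and proving it non-zero on an explicit class. This requires a precise handle on the de Rham complex of $B\mu_p$ as a cosimplicial object, together with the Frobenius-twisted Cartier isomorphism in degree $p$. My first attempt would be to compute the conjugate filtration on $R\Gamma_{\dR}(B\mu_p)$ through the \v{C}ech nerve of $\mathrm{Spec}\,k\to B\mu_p$. I would then detect the extension via the action of the Sen operator (Hodge--Tate weight) on diffracted Hodge cohomology: non-semisimplicity of that operator forces $d_{p+1}\neq0$.
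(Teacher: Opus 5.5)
Your architecture matches the route of Petrov's paper, which the present paper only cites: compute on $BG$ for a finite flat group scheme $G$ over $W(k)$, approximate $BG$ over $W(k)$, and compare spectral sequences page by page. The genuine gap is in Step~1, because the groups you propose cannot work.

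\textbf{The candidate groups give nothing.} The groups $\mu_p$, $\mu_p\times\mu_p$ and their products with tori are diagonalizable, hence linearly reductive over $k$. So $R\Gamma(BG_0,\mathcal O)\simeq k$, and the target $E_2^{p+1,0}=H^{p+1}(BG_0^{(1)},\mathcal O)$ of $d^{0,p}_{p+1}$ is zero.

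The source vanishes as well. Since $\mu_p\subset\mathbb G_m$ is cut out by $t^p-1$ and $d(t^p-1)=0$, we have $L_{B\mu_p/k}\simeq\mathcal O\oplus\mathcal O[-1]$ with trivial action. By d\'ecalage, $\Lambda^pL_{B\mu_p/k}\simeq\mathcal O[-(p-1)]\oplus\mathcal O[-p]$, so $H^0(B\mu_p,\Lambda^pL_{B\mu_p/k})=0$ for $p\geq 2$. This agrees with Totaro's $k[c_1]\otimes\Lambda(v)$, where $c_1\in H^1(\Omega^1)$ and $v\in H^0(\Omega^1)$.

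Hence your ``$y\cdot x^{p-1}$-type'' classes do not exist. Since Step~2 only transports non-vanishing injectively from $BG_0$ to $X_0$, nothing comes out of it.

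\textbf{What the group must satisfy.} You need a $G$ meeting two conditions:
\begin{itemize}
\item its special fiber is not linearly reductive, so that $H^{p+1}(BG_0,\mathcal O)$ can be non-zero;
\item $\Lambda^p n_{G_0}$ has a non-zero $G_0$-invariant, so that $H^0(BG_0,\Lambda^pL)=(\Lambda^pn_{G_0})^{G_0}\neq0$.
\end{itemize}
For instance, take $\mu_p^{\times p}\rtimes\mathbb Z/p$ with $\mathbb Z/p$ permuting the factors cyclically. Here $H^{*}(B\mathbb Z/p,\mathcal O)$ splits off, and $\eta_1\wedge\cdots\wedge\eta_p$ is invariant. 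Petrov's example is built from a group of this type.

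\textbf{The non-vanishing is asserted, not proved.} Even with the right group, Step~1 does not contain the proof. The non-vanishing of $d_{p+1}$ is only asserted (``should be expressible'', ``I would show''), and it is the entire content of the theorem.

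Moreover, Deligne--Illusie for $\mathrm{Fil}^{\mathrm{conj}}_{p-1}$ does not by itself kill $d_r$ for $2\leq r\leq p$ on $E^{0,p}$. It says nothing about how $\mathrm{gr}^{\mathrm{conj}}_p$ is glued to the lower pieces. In Petrov's argument the lift over $W(k)$ enters through the Sen operator, which acts on $\mathrm{gr}^{\mathrm{conj}}_i$ by $-i$. Its generalized eigenspaces modulo $p$ separate $\mathrm{gr}^{\mathrm{conj}}_p$ from every $\mathrm{gr}^{\mathrm{conj}}_i$ with $0<i<p$, leaving only a possible extension with $\mathrm{gr}^{\mathrm{conj}}_0$. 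The theorem then amounts to showing that this extension is non-trivial on an explicit class, which is the non-semisimplicity of the Sen operator.

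\textbf{A smaller correction in Step~2.} The map $E_r(BG_0)\to E_r(X_0)$ is only injective in total degree $\leq p$, not bijective, because the projective bundle contributes powers of $c_1$. Bijectivity holds for $X_0\to[\mathbb P(V)/G_0]$. You then need the projective-bundle induction with $c_1$ a permanent cycle, as in the proof of Theorem~\ref{ABM}. With that fix, Steps~2--3 and the complete-intersection construction over $W(k)$ are fine in outline.
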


Inspired by these results, our strategy is to first construct a sequence of
classifying stacks with arbitrarily nondegenerate Hodge-to-de Rham spectral sequences. The main technical result of the paper is the following construction.

\begin{theorem}[Theorem~\ref{thm:main1}]
	For $n\geq 2$, there exists a non-commutative finite group scheme $G_n$ over $k$ and a class\[\alpha_{n} \in H^{n-1}(BG_{n},\mathcal{O}) =E_1^{0,n-1}\] in the Hodge-to-de Rham spectral sequence of $BG_n$
	which satisfies $d_1(\alpha_{n}),\cdots,d_{n-1}(\alpha_{n})=0$ and $d_{n}(\alpha_{n})\neq 0$ in $E_n^{n,0}$. 
\end{theorem}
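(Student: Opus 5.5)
We construct $G_n$ as a unipotent, nilpotent-of-class-$(n-1)$ finite group scheme built from copies of $\alpha_p$. This mimics the Bigalke--Rollenske nilmanifolds, where a higher Fr\"olicher differential comes from a chain of Massey-type relations among left-invariant forms. Concretely, we take $G_n$ to be a closed subgroup scheme of height one inside the group $U$ of upper unitriangular $n\times n$ matrices, for instance the Frobenius kernel $U[F]$ or a suitable quotient of it. Write $\mathfrak g=\mathrm{Lie}(G_n)$, a restricted Lie algebra. Since $G_n$ is infinitesimal of height one, we can model the Hodge and de Rham cohomology of $BG_n$ algebraically. Hodge cohomology becomes $H^j(BG_n,\Lambda^i L)$, i.e.\ $H^{j-i}(G_n,\mathrm{Sym}^i(\mathfrak g^\vee))$ with the coadjoint action, shifted appropriately as in Totaro's computations. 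De Rham cohomology is computed by a filtered complex: the total complex of the Hochschild (group) cochain complex of $\mathcal O(G_n)$ tensored with the Weil-type algebra $\mathrm{Sym}(\mathfrak g^\vee[-2])$, with its Hodge filtration. For $\alpha_p$ this recovers the computation of Theorem 4.12 in \cite{ABM21}, where $d_1\colon H^1(\mathcal O)\to H^1(L)$ sends the primitive class $x$ to the generator of $\mathfrak g^\vee$ (roughly ``$dx$''). So the first step is to write down this explicit bicomplex for $G_n$. It has a Chevalley--Eilenberg-like description in terms of coordinates $x_{ab}$ ($a<b$) on $U$, with $x_{ab}^p=0$.

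Next we choose $\alpha_n\in H^{n-1}(BG_n,\mathcal O)=H^{n-1}(G_n,k)$ and compute its zigzag. The natural candidate is a Massey-type product $\alpha_n=\langle x_{12},x_{23},\dots\rangle$ of length $n-1$, or more simply the cup product $x_{12}x_{23}\cdots x_{n-1,n}$ corrected so that it becomes a cocycle. We then run the staircase argument. At stage $r$ we show that $d_r(\alpha_n)$ is represented by an element $\omega_r$ of filtration $r$ that is exact on the $E_1$-level, namely $\omega_r=\delta(\eta_r)$ with $\eta_r$ of filtration $r$. The point is that the differential $d$ applied to a product of the linear coordinates along the superdiagonal produces terms involving the higher coordinates $x_{a,b}$ through the group law $x_{ac}\mapsto x_{ac}+x'_{ac}+\sum_b x_{ab}x'_{bc}$. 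The class $dx_{ab}$ then fails to be closed exactly up to products of lower $dx$'s. Each step moves one coordinate further from the diagonal. After $n-1$ corrections we land on the top corner $x_{1n}$, and the obstruction in $E_n^{n,0}=$ a subquotient of $H^0(BG_n,\Lambda^nL)=(\mathrm{Sym}^n\mathfrak g^\vee)^{G_n}$ is a monomial such as $\prod dx$ that is invariant.

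The main obstacle is the last step, proving $d_n(\alpha_n)\neq 0$ in $E_n^{n,0}$. We need the resulting invariant element of $\mathrm{Sym}^n(\mathfrak g^\vee)^{G_n}$ not to be hit by $d_1,\dots,d_{n-1}$ from $E^{n-r,r-1}$. Our first attempt is a weight argument. We use the torus of diagonal matrices acting on $U$ by conjugation, which normalizes $G_n$ and acts compatibly on the entire spectral sequence. This grades everything by characters, and we choose $\alpha_n$ to have a weight in which the relevant sources $E_1^{n-r,r-1}$ have no other classes. If this fails, we would quotient $U[F]$ by a carefully chosen central subgroup so that $\mathfrak g$ is a ``filiform'' restricted Lie algebra with one-dimensional graded pieces. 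There the bookkeeping is tight enough to compute the low-degree $E_1$ terms directly. Once $d_n(\alpha_n)\ne0$ is established, Theorem~\ref{abm1} with $d\ge n$ transfers the statement to a smooth projective $X$, as needed later for Theorem~\ref{Main}.
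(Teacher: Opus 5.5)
\textbf{There is a genuine gap: the height-one approach already fails at $n=2$.} If $G$ has height one, every $f\in\Hom(G,\mathbb G_a)=H^1(BG,\mathcal O)$ factors through $\mathbb G_a[F]=\alpha_p$. So $\alpha_f=f^*\alpha_t$, and by functoriality and Proposition~\ref{alphap} we get $d_1(\alpha_f)=f^*\omega_t$. The image of $f^*\omega_t$ in $H^0(BG,\omega_G)=(\mathfrak g^\vee)^G$ is $\mathrm{Lie}(f)^\vee(dt)=df|_e$. For a height-one group, $\mathrm{Lie}(f)=0$ forces $f=0$. Hence $d_1$ is injective on $E_1^{0,1}$, and no height-one group scheme has a class with $d_1(\alpha)=0\neq d_2(\alpha)$.

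Concretely, your recipe gives $G_2=U_2[F]=\alpha_p$. This group is commutative, has $d_1(\alpha_t)=\omega_t\neq0$, and has $E_1^{2,0}=\Lambda^2(k\eta_{t^p})=0$. The same issue recurs for larger $n$: a linear coordinate $x_{ab}$ has de Rham differential $dx_{ab}$, which represents a nonzero $\omega_{x_{ab}}$. That is exactly what produces $d_1\neq0$ for $B\alpha_p$, and nothing in your sketch removes these terms.

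\textbf{Your candidate classes do not work either.} Since $\delta(x_{13})=-x_{12}\otimes x_{23}$, the cup product $\alpha_{x_{12}}\alpha_{x_{23}}$ is already zero in cohomology, so $x_{12}x_{23}\cdots x_{n-1,n}$ vanishes. A Massey product of degree-one classes lies in $H^2$, not $H^{n-1}$.

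\textbf{The target of $d_n$ is misidentified.} The formula $H^{j-i}(G,\mathrm{Sym}^i\mathfrak g^\vee)$ is Totaro's formula for \emph{smooth} $G$. Taken literally it gives $E_1^{n,0}=0$, which would make the statement impossible. For infinitesimal $G$, $L_{BG}$ has $\mathcal H^0=n_G$ and $\mathcal H^1=\omega_G=\mathfrak g^\vee$. Therefore $E_1^{n,0}=H^0(BG,\Lambda^nL_{BG})=(\Lambda^n n_G)^G$, spanned by wedges of classes $\eta_g$ attached to generators $g$ of the defining ideal. Products of $n$ forms $dx\in\omega_G$ lie in $E_1^{n,n}$, of total degree $2n$ rather than $n$, so ``$\prod dx$'' cannot be $d_n(\alpha_n)$.

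\textbf{The missing idea is the Frobenius twist.} The paper's construction has three linked features.
\begin{enumerate}
\item Representatives are $p$-th powers such as $w_1^p$ and $y_j^p$. The de Rham differential kills them, so the staircase runs entirely in the cosimplicial direction through the divided power filtration.
\item The group law carries $p$-powers, e.g.\ $w_1''=w_1+w_1'+y_1^p(z_1')^p$. Then $\delta(w_1^p)=-y_1^{p^2}\otimes z_1^{p^2}$ lands exactly one PD-step deeper, by Lemma~\ref{PD-tensor}.
\item The defining ideal involves $p^2$-powers, i.e.\ $G_n$ has height $2$. This makes $y_j^p$ and $w_1^p$ nonzero primitive functions on $G_n$, while $y_1^{p^2}$, $x_i^p$ and $z_n^{p^2}$ give nonzero classes in $n_{G_n}$. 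The final obstruction is then the wedge $\eta_{y_1^{p^2}}\wedge\eta_{x_1^p}\wedge\cdots\wedge\eta_{z_n^{p^2}}$.
\end{enumerate}

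Your torus-weight idea for the non-vanishing step is in the right spirit. The paper uses a $\mathbb G_m^{2n}$ adapted to the twisted group law and pins down the weight-$\chi_{n+1}$ part of every $E_1^{l,n-l}$. That argument only becomes available once the construction above is in place.
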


Besides the family \(G_n\), we study  the Frobenius kernel \(H[F]\) of the Heisenberg group scheme \(H\). In the Hodge-to-de Rham spectral sequence of \(B(H[F])\), there  exists a class $u\in E^{1,0}_1$ which satisfies $d_1(u)\neq 0$ in $E^{2,0}_1$; for details, see Proposition~\ref{Heisenberg}.

Applying Theorem~\ref{abm1}  to $BG_{n}$, we obtain the following results.

\begin{theorem}[Theorem~\ref{thm:main 2}]

 For $n\geq 2$, there exists a smooth projective variety $X_n$ of dimension $n$ over $k$ and a class \[\alpha_{n} \in H^{n-1}(X_{n},\mathcal{O}) =E_1^{0,n-1}\] in the Hodge-to-de Rham spectral sequence of $X_n$ which satisfies $d_1(\alpha_{n}),\cdots,d_{n-1}(\alpha_{n})=0$ and $d_{n}(\alpha_{n})\neq 0$ in $E_n^{n,0}$.
\end{theorem}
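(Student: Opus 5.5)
The plan is to deduce the statement from Theorem~\ref{thm:main1} by applying the approximation result Theorem~\ref{abm1} to $G=G_n$ with $d=n$. This produces a smooth projective $k$-scheme $X_n$ of dimension $n$ and a map $f\colon X_n\to BG_n$. The induced map of Hodge-to-de Rham spectral sequences $f^*\colon E_r^{i,j}(BG_n)\to E_r^{i,j}(X_n)$ is injective for $i+j\leq n$ and every $r\geq 1$. The key point is that the only two spots we care about, $(0,n-1)$ and $(n,0)$, both satisfy $i+j\leq n$. We take the class in the statement to be $f^*\alpha_n\in H^{n-1}(X_n,\mathcal O)$, abusively still denoted $\alpha_n$. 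If $X_n$ is not connected, we replace it by a suitable connected component so that it is a variety; this step is discussed at the end.

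First we check that $f^*\alpha_n$ survives to the $E_n$-page. Pullback along $f$ induces a morphism of spectral sequences, because it comes from a map of filtered complexes (Hodge-filtered de Rham cohomology is functorial for maps of stacks). Hence $f^*$ commutes with all differentials and with passage to the next page. We argue inductively on $r<n$. Suppose $\alpha_n$ defines a class on $E_r(BG_n)$ and $f^*\alpha_n$ is its image on $E_r(X_n)$. Then $d_r(f^*\alpha_n)=f^*d_r(\alpha_n)=0$, so $f^*\alpha_n$ is a $d_r$-cycle. Its class on $E_{r+1}(X_n)$ is the image of the class of $\alpha_n$ on $E_{r+1}(BG_n)$. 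Note that this step uses only functoriality, not injectivity.

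Next, on the $E_n$-page we have $d_n(f^*\alpha_n)=f^*(d_n\alpha_n)$, where $d_n\alpha_n\in E_n^{n,0}(BG_n)$ is nonzero by Theorem~\ref{thm:main1}. Since $n+0\leq n$, the injectivity in Theorem~\ref{abm1} at the spot $(n,0)$ on the $E_n$-page gives $f^*(d_n\alpha_n)\neq 0$. It also follows that $f^*\alpha_n$ is nonzero on every page up to $E_n$, since its $d_n$ is nonzero. So nontriviality comes for free.

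The main subtlety is bookkeeping rather than new input. We need Theorem~\ref{abm1} to be the page-by-page injectivity statement as stated, not merely injectivity on $E_1$; the statement explicitly includes every $r\geq1$, so we simply invoke it. We also need to confirm that the target spot has $i+j=n$ exactly equal to $d$, which is why we choose $d=n$ and not smaller.

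The remaining concern is that $X_n$ should be a variety, that is, connected (irreducible, being smooth). If the approximation produces a disconnected scheme, its Hodge-to-de Rham spectral sequence is the direct sum over its connected components. Then $d_n(f^*\alpha_n)\neq0$ forces the restriction of the class to some component $X'$ to have nonzero $d_n$. Each component is smooth projective of dimension $n$, and the vanishing of the earlier $d_r$ passes to components by functoriality. So replacing $X_n$ by $X'$ finishes the argument.
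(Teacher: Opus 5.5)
Your main argument is the same as the paper's. You apply Theorem~\ref{ABM} (equivalently Theorem~\ref{abm1}) to $G_n$ with $d=n$. You transport the vanishing of $d_1,\dots,d_{n-1}$ by functoriality. You then use injectivity on the $E_n$-page at the spot $(n,0)$, which satisfies $i+j=n\le d$, to get $d_n(f^*\alpha_n)=f^*(d_n\alpha_n)\neq 0$. That part is correct.

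The gap is in your last paragraph, on the variety condition. In this paper a variety means a \emph{geometrically} integral scheme, and passing to a connected component does not give that. Over a perfect field $k$ that is not algebraically closed, a connected smooth projective $k$-scheme $X'$ has $H^0(X',\mathcal O)=K$, a finite field extension of $k$. If $K\neq k$, then $X'$ is not geometrically connected; for example, take $\mathbb P^1_K$ viewed over $k$. So ``connected, hence irreducible since smooth'' does not produce a variety over $k$.

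The injectivity statement of Theorem~\ref{abm1} alone cannot rule this out, since it only gives $k=H^0(BG_n,\mathcal O)\hookrightarrow H^0(X_n,\mathcal O)$. The paper instead uses the proof of Theorem~\ref{ABM}. The closed embedding $X_n\hookrightarrow[\mathbb P(V)/G_n]$ is a Hodge $n$-equivalence, so for $n\ge 1$ it induces an \emph{isomorphism} $H^0([\mathbb P(V)/G_n],\mathcal O)\xrightarrow{\sim}H^0(X_n,\mathcal O)$. The projective bundle formula identifies the source with $H^0(BG_n,\mathcal O)=k$, so $H^0(X_n,\mathcal O)=k$.

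Since $X_n$ is smooth (hence geometrically reduced) and proper, this forces $X_n$ to be geometrically connected, hence geometrically integral \cite[Tag~0FD3]{Sta26}. In particular $X_n$ is already connected, so your component-replacement step is unnecessary. With this substitution your proof is complete.
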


Moreover, we can apply Theorem~\ref{abm1} to \(B(H[F])\) and \(B\alpha_p\). We obtain a smooth projective surface $S_1$ over $k$ with a global $1$-form $\omega\in H^0(S_1,\Omega^1_{S_1/k})=E_1^{1,0}$ such that $d_1(\omega)\neq 0$ in $H^0(S_1,\Omega^2_{S_1/k})=E^{2,0}_1$. We also obtain a smooth projective surface $S_2$ over $k$ with a class $\alpha\in H^1(S_2,\mathcal{O})=E_1^{0,1}$ such that $d_1(\alpha)\neq 0$ in $H^1(S_2,\Omega^1_{S_2/k})=E^{1,1}_1$; see Proposition~\ref{surface}.

By taking the product of one of $X_n$ $(n\geq 2)$, $S_1$, or $S_2$ with several copies of an arbitrary elliptic  curve, we obtain Theorem~\ref{Main}.

\begin{remark}
	Note that the nilmanifolds have homotopy types also given by $B\Gamma$
	where $\Gamma$ is a lattice subgroup in a unipotent real Lie group.
	We view our \(BG_n\)'s as the ``nilmanifolds'' in positive characteristic.
	Indeed, our $G_n$ is constructed as a closed subgroup scheme inside unipotent group schemes
	$\widetilde{G_n}$ cut out by $p$-powers of various coordinates,
	hence can also be regarded as ``lattice'' subgroups in a unipotent group scheme.
\end{remark}

\subsection{Construction idea and illustration}
Let us explain where the inspiration for our construction comes from: it rests
in understanding the theory of nilmanifolds in complex geometry.
Recall that a nilmanifold is a compact quotient $X=\Gamma\backslash G$ where \(G\) is a simply connected real nilpotent Lie group and \(\Gamma\subset G \) is a lattice subgroup. A left-invariant integrable complex structure on \(G\) descends to a complex structure on \(X\).
The group multiplication is designed so that certain \(\overline{\partial}\)-closed \((0,1)\)-forms exhibit non-trivial \(\partial\)-\(\overline{\partial}\) zig-zags. Let us illustrate what we mean by the following example.
\begin{example}[cf.~{\cite[Lemma~3.2]{CFU02}}]
	\label{Complex geometry example of H'}
Let \(H'=\mathbb C^3\). We embed \(H'\) into
\(\operatorname{GL}_3(\mathbb C)\) via
\[
(x,y,z)\longmapsto
\begin{pmatrix}
	1 & \overline{x} & y\\
	0 & 1 & z\\
	0 & 0 & 1
\end{pmatrix}.
\]
 The image is closed under matrix multiplication and inversion. Hence  \(H'\) inherits a group structure from \(\operatorname{GL}_3(\mathbb C)\).
 
Let \(\Gamma=\mathbb Z[i]^3\subset H'\) be a lattice subgroup, and set
\(X=\Gamma\backslash H'\). Consider the \((0,1)\)-form
\(\eta=d\overline{y}-x\,d\overline{z}\). One checks that \(\eta\) is
\(\Gamma\)-invariant and satisfies \(\overline{\partial}\eta=0\).
Therefore, the \((0,1)\)-form \(\eta\) descends to a \(\overline{\partial}\)-closed
\((0,1)\)-form on \(X\), and hence determines a class
\(
[\eta]\in H_{\overline{\partial}}^{0,1}(X)
\simeq H^1(X,\mathcal O)
\). A direct computation gives
\(\partial\eta=-dx\wedge d\overline{z}\). Thus
\(
d_1[\eta]=[-dx\wedge d\overline{z}]
\in H_{\overline{\partial}}^{1,1}(X)\simeq H^1(X,\Omega_X^1)
\).
We claim that this class is non-zero.  Indeed, one can choose a \(\overline{\partial}\)-closed $(2,2)$-form $\beta$ such that $
\int_X dx\wedge d\overline{z}\wedge\beta\neq 0
$. Therefore, Stokes' theorem implies that there is no 
\((1,0)\)-form \(\omega\) on \(X\) satisfying
\(\overline{\partial}\omega=-dx\wedge d\overline{z}\). In particular, the differential $d_1\colon H^1(X,\mathcal O)\to H^1(X,\Omega_X^1) $ on the \(E_1\)-page of the Fr\"{o}licher spectral sequence of \(X\) is non-zero.
\end{example}

Below, we shall explain a construction which we regard as a characteristic \(p\) analogue of the above example. First, let us explain the following heuristic: on a compact Kähler manifold \(X\), a class in \(H^1(X,\mathcal O)\) can be represented by a harmonic \( (0,1) \)-form. 
Complex conjugation turns it into a harmonic \((1,0)\)-form, 
which therefore represents a class in \(H^0(X,\Omega_X^1)\).

A very similar phenomenon happens in the Hodge theory of classifying stacks in characteristic $p$. To explain this,
suppose that $G$ is a closed subgroup scheme of a smooth group scheme $H$.
Then a regular function $f \in \mathcal{O}(H)$ gives rise to a class in $ H^1(BG,\mathcal O) \simeq \operatorname{Hom}_{\Grp}(G,\mathbb G_a)$
if $f|_G \in  \mathcal{O}(G)$ defines a group scheme homomorphism $G \to \mathbb{G}_a$. We denote this class by $\alpha_f$. On the other hand, if a regular function $g \in \mathcal{O}(H)$
vanishes on $G$, then it gives rise to an element in $\mathcal H^{-1}(\ell_G)$ under suitable conditions, where
$\ell_G$ is the co-Lie complex of $G$; see \cite[\S3.1]{Ill72a}, \cite{Ill72b}, \cite[\S4]{ABM21} and \cite[Example~A.3.8]{KP24} for the co-Lie complex formalism. Via the identification $L_{BG/k} \simeq \ell_G[-1]$
, the element $g$ gives rise to a global $1$-form on $BG$ under suitable conditions. We denote this global $1$-form by $\eta_g$. Suppose now that \(G\) is a connected finite group scheme and let
\(f\in\mathcal O(H)\) be a regular function whose restriction
\(f|_G\in\mathcal O(G)\) defines a group scheme homomorphism
\(G\to\mathbb G_a\). Raising $f|_G \in \mathcal{O}(G)$ to a suitable $p$-power 
 gives zero in $\mathcal{O}(G)$. Hence, the corresponding $p$-power of $f$ satisfies the condition above and gives rise to a global $1$-form on $BG$ under suitable conditions. Since raising elements to a \(p\)-power plays a role analogous to complex conjugation in characteristic $p$ geometry, the above should be viewed as our analogue of the complex geometry picture.

This vague heuristic is best illustrated through the following pair of examples:

\begin{example}[Proposition~\ref{conjugate}]
	\label{char p geometry example of H'}
	Note that $H'$ in the complex geometry example is obtained by introducing complex conjugation into the group law of the complex Heisenberg group. Let us try to
	mimic this construction in positive characteristic. It is natural to replace complex conjugation by a power of Frobenius, the question is which power? It turns out that the answer is \emph{any} positive power:
	
For a positive integer $r$, let $H^r=\mathbb{A}^3$  as a scheme. We equip $H^r$ with an affine group scheme structure over $k$ as follows. For a \(k\)-algebra \(R\) and  $P=(x,y,z), P'=(x',y',z')\in H^r(R)$, we define $P\cdot P'=\bigl(x+x',\,y+y'+x^{p^r}z',\,z+z'\bigr) $. For positive integers $a,b,c$ satisfying either \(b\geq c\) or \(b\geq a-r\), let $G^r_{a,b,c} $ be the closed subscheme of $ H^r$ cut out by the ideal $(x^{p^a},y^{p^b},z^{p^c})$. One can verify that it is a closed subgroup scheme of $H^r$. 

Suppose now that $b-1\geq a-r\geq c$. The regular function $f=y^{p^c}\in \mathcal{O}(H^r)$ satisfies $\pr_1^*(f) + \pr_2^*(f) - m^*(f) =- x^{p^{c+r}} \otimes z^{p^c}\in \mathcal{O}(H^r)\otimes \mathcal{O}(H^r)$. Since the regular function $y^{p^c}$ does not vanish identically on \(G^r_{a,b,c}\) and the regular function \(z^{p^c}\) vanishes on \(G^r_{a,b,c}\), the regular function $y^{p^c}$ gives rise to a non-zero class $\alpha_{y^{p^c}}\in H^1(BG^r_{a,b,c},\mathcal O)$ and the regular function $z^{p^c}$ gives rise to a class $\eta_{z^{p^c}}\in H^0(BG^r_{a,b,c},L_{BG^r_{a,b,c}/k})$. Indeed, one can show that the class \(\eta_{z^{p^c}}\) is non-zero in $H^0(BG^r_{a,b,c},L_{BG^r_{a,b,c}/k})$. When $a-r>c$, the regular function $x^{p^{c+r}}$ does not vanish identically on $ G^r_{a,b,c}$ and hence it gives rise to a non-zero class $\alpha_{x^{p^{c+r}}}\in H^1(BG^r_{a,b,c},\mathcal O)$. Then the above equality gives that $d_1(\alpha_{y^{p^c}})=-\alpha_{x^{p^{c+r}}}\otimes \eta_{z^{p^c}}\neq 0$ in $E_{1}^{1,1}=H^1(BG^r_{a,b,c},L_{BG^r_{a,b,c}/k})$. When $a-r=c$, the regular function $x^{p^{c+r}}$ vanishes on $ G^r_{a,b,c}$ and hence it gives rise to a class $\eta_{x^{p^{c+r}}}\in H^0(BG^r_{a,b,c},L_{BG^r_{a,b,c}/k})$, which is in fact non-zero. Then the above equality gives that  $ d_1(\alpha_{y^{p^c}})= 0$ and $d_2(\alpha_{y^{p^c}})=-\eta_{x^{p^{c+r}}}\wedge \eta_{z^{p^c}}\neq 0$ in $E^{2,0}_2$. 

\end{example}

\begin{example}[Proposition~\ref{Heisenberg2}]
Let $H$  be the Heisenberg group scheme. Let $H[F^{4/3}]$ be the closed subscheme of $H$ cut out by the ideal $(x^p,y^{p^2},z^p)$. One can verify that it is a closed subgroup scheme of $H$. The regular function $y^p\in \mathcal{O}(H)$ satisfies $\pr_1^*(y^p) + \pr_2^*(y^p) - m^*(y^p) =-x^p\otimes z^p\in \mathcal{O}(H)\otimes \mathcal{O}(H)$. Since the regular functions  $x^p$, $z^p\in \mathcal{O}(H)$ vanish on $H[F^{4/3}]$ and the regular function $y^p$ does not vanish identically on $H[F^{4/3}]$, the regular function $y^p$ gives rise to a non-zero class $\alpha_{y^p}\in H^1(BH[F^{4/3}],\mathcal O)$ and $x^p$, $z^p$ give rise to two classes $\eta_{x^p}$, $\eta_{z^p}\in H^0(BH[F^{4/3}],L_{BH[F^{4/3}]/k})$. One can show that both \(\eta_{x^p}\) and \(\eta_{z^p}\) are non-zero in $ H^0(BH[F^{4/3}],L_{BH[F^{4/3}]/k})$. The above equality gives that $d_1(\alpha_{y^p})=0$, and $d_2(\alpha_{y^p})=-\eta_{x^p}\wedge\eta_{z^p}\neq 0$ in $E_2^{2,0}$.

\end{example}

\section*{Notation and conventions}\label{sec:notation}

\begin{enumerate}
    \item We freely use the language of $\infty$-categories as developed in \cite{Lur09}.
    \item For a commutative ring \(R\), we denote by
    \(\mathsf D(R)\) the derived \(\infty\)-category of
    \(R\)-modules. All tensor products are implicitly derived.
    \item A prestack is an accessible functor $\CRing \rightarrow \Ani$  from the category of commutative rings to the \(\infty\)-category
    of anima (i.e. spaces or $\infty$-groupoids). A stack  is a prestack satisfying fppf descent. We write \(\PStk\) for the \(\infty\)-category of prestacks and \(\Stk\) for the \(\infty\)-category of stacks. We follow the definition of Artin stacks in \cite[Appendix~A.1]{KP24} and write
    $\ArtStk$ for the full subcategory of Artin stacks. For a finitely presented affine flat group scheme $G$ over a commutative ring $R$, we denote by $BG$ the classifying stack of $G$-torsors for the fppf topology, which is in fact an Artin stack by \cite[Tag~06PL]{Sta26}.
    \item For a prestack $X$, we denote by $\mathsf{D}(X)$ the derived \(\infty\)-category of quasi-coherent sheaves  on $X$, which is defined as the limit $ \displaystyle \lim_{\mathrm{Spec}(R)\to X}\mathsf{D}(R)$. The category \(\mathsf D(X)\) carries a natural \(t\)-structure, and we
    denote its heart by \(\QCoh(X)\), the abelian category of quasi-coherent sheaves
    on \(X\). For an object $M\in \mathsf{D}(X)$, we denote by $\mathcal{H}^i(M)\in \QCoh(X)$  the cohomology sheaf of $M$ in degree $i$. When $X=\operatorname{Spec}(A)$ is affine, for $M\in \mathsf{D}\bigl(\operatorname{Spec}(A)\bigr)=\mathsf{D}(A)$, we  write $H^i(M)\in \Mod(A)$  instead of $\mathcal{H}^i(M)$. For a morphism of prestacks $f\colon X\to Y$, we write $f^*\colon \mathsf{D}(Y)\to \mathsf{D}(X)$ for the derived pullback functor and $f_* \colon \mathsf{D}(X)\to \mathsf{D}(Y)$  for the derived pushforward functor. When $Y=\operatorname{Spec}(R)$ is affine, we always write  $R\Gamma(X,-)$ for the functor \(f_*\colon\mathsf D(X)\to\mathsf D(R)\), and, for \(M\in\mathsf D(X)\), we  write $H^i(X,M):=H^i\bigl(R\Gamma(X,M)\bigr)$ for  the $i$-th cohomology of the derived global sections of $M$.
    \item For $M\in \mathsf{D}(X)$, we write $\mathrm{Sym}^n(M), \Lambda^n(M), \Gamma^n(M) \in \mathsf{D}(X)$ for the derived symmetric power, derived exterior power, and derived divided power as in \cite[Section~A.2.2]{KP24}; see also
    \cite[Definition~2.3]{Pet23} and, in the affine case,
    \cite[Section~25.2.2]{Lur18}.
    \item For  a finitely presented affine flat group scheme \(G\) over a commutative ring \(R\), we will freely identify
    \(\QCoh(BG)\) with the category \(\mathsf{Rep}(G)\) of \(G\)-representations over \(R\). Let $\varphi\colon G\to H$ be a homomorphism of  finitely presented affine flat group schemes over $R$. It induces a flat morphism of classifying stacks $B\varphi\colon BG\to BH$. Hence, the pullback functor $(B\varphi)^*\colon \mathsf{D}(BH) \to \mathsf{D}(BG)$ is \(t\)-exact. Under the identifications
    $
    \QCoh(BH)\simeq \mathsf{Rep}(H)$,
    $
    \QCoh(BG)\simeq \mathsf{Rep}(G),
    $
    the induced functor on the hearts
    $
    (B\varphi)^*\colon \QCoh(BH)\longrightarrow \QCoh(BG)
    $
    identifies with the usual restriction functor
  $
    \varphi^*\colon
    \mathsf{Rep}(H)\longrightarrow \mathsf{Rep}(G)
   $.
   \item For a morphism of Artin stacks $X\to Y$, we denote by \(L_{X/Y}\in\mathsf D(X)\) its cotangent complex. 
   \item For a ring-ideal pair $(A,I)$, we denote by $(D_A(I),J,\gamma)$ the divided power envelope of $(A,I)$ relative to $(\mathbb Z, 0, \gamma_{\mathrm{tri}})$ as in \cite[Tag~07H9]{Sta26}. We write $\mathrm{Fil}^n_{\mathrm{PD}}D_A(I):=J^{[n]}$, with $J^{[n]}$ defined as in \cite[Tag~07HQ]{Sta26}.
   \item A variety over a field $k$ is a geometrically integral separated scheme of finite type over $k$.
    \item Throughout the main text, we assume that $k$ is a perfect field of characteristic $p> 0$. Unless otherwise specified, all constructions are taken over the base field $k$, which will  always be omitted from the notation. 
\end{enumerate}

\section*{Preliminaries on totalizations}

We first recall the totalization of a cosimplicial cochain complex. Let $A^{\bullet}:\Delta\to \mathsf{Ch}(R) $ be a cosimplicial cochain complex. We consider unnormalized cochain complex in $\mathsf{Ch}(R)$ in the cosimplicial direction  as in \cite[Tag~019H]{Sta26}.
\[
0 \longrightarrow A^0
\xrightarrow{\delta}
A^1
\xrightarrow{\delta}
A^2
\xrightarrow{\delta}
A^3
\longrightarrow \cdots.
\]
This is an object in $\mathsf{Ch}\bigl(\mathsf{Ch}(R)\bigr)$, or equivalently a bicomplex. Write $ A^{i,j}:=(A^i)^j$,
where \(i\) denotes the cosimplicial degree and \(j\) the cochain degree. The resulting bicomplex has the form

\[
\begin{tikzcd}[column sep=large, row sep=large]
	\vdots &
	\vdots &
	\vdots &
	\\
	A^{0,2} \arrow[u,"d"] \arrow[r,"\delta"] &
	A^{1,2} \arrow[u,"d"] \arrow[r,"\delta"] &
	A^{2,2} \arrow[u,"d"] \arrow[r,"\delta"] &
	\cdots \\
	A^{0,1} \arrow[u,"d"] \arrow[r,"\delta"] &
	A^{1,1} \arrow[u,"d"] \arrow[r,"\delta"] &
	A^{2,1} \arrow[u,"d"] \arrow[r,"\delta"] &
	\cdots \\
	A^{0,0} \arrow[u,"d"] \arrow[r,"\delta"] &
	A^{1,0} \arrow[u,"d"] \arrow[r,"\delta"] &
	A^{2,0} \arrow[u,"d"] \arrow[r,"\delta"] &
	\cdots \\
	\vdots \arrow[u,"d"] &
	\vdots \arrow[u,"d"] &
	\vdots \arrow[u,"d"] &
\end{tikzcd}
\]
where
$
\delta\colon A^{i,j}\longrightarrow A^{i+1,j}
$
is the differential in the cosimplicial direction, and
$
d\colon A^{i,j}\longrightarrow A^{i,j+1}
$
is the original cochain differential of \(A^i\). We define  \(\operatorname{Tot}(A^\bullet)\) to be the product totalization of the bicomplex \((A^{\bullet,\bullet},\delta,d)\) above. More precisely, the degree-\(n\) term of the totalization is $\operatorname{Tot}(A^\bullet)^n
=
\prod_{i+j=n} A^{i,j}$, the total differential is given on \(A^{i,j}\) by  $d_{\operatorname{Tot}}=\delta+(-1)^i d$.

Suppose now that \(A^\bullet\) is a cosimplicial differential graded \(R\)-algebra. We explain the multiplicative structure on \(\operatorname{Tot}(A^\bullet)\).  The unit of \(A^0\) induces a map \(R\to \operatorname{Tot}(A^\bullet)^0\). To define the multiplication, it is enough to specify maps
\[
A^{k,r}\otimes_R A^{l,s}\longrightarrow A^{k+l,r+s}
\]
for all \(k,l,r,s\geq 0\).
These maps are given by the Alexander--Whitney product. Let
$
\sigma_f\colon [k]\hookrightarrow [k+l],
\sigma_b\colon [l]\hookrightarrow [k+l]
$
be the front face and back face maps, defined by
$
\sigma_f(i)=i,
\sigma_b(j)=k+j
$.
Thus \(\sigma_f\) identifies \([k]\) with the first \(k+1\) vertices of \([k+l]\), while \(\sigma_b\) identifies \([l]\) with the last \(l+1\) vertices. For \(x\in A^{k,r}\) and \(y\in A^{l,s}\), we define
\[
x\cup y
:=
 (-1)^{rl}(\sigma_f)_*(x)\cdot (\sigma_b)_*(y)
\in A^{k+l,r+s}.
\]
Here \((\sigma_f)_*\) and \((\sigma_b)_*\) are the maps induced by the cosimplicial structure, and \(\cdot\) denotes the multiplication in the differential graded algebra \(A^{k+l}\).
The Alexander--Whitney product equips \(\operatorname{Tot}(A^\bullet)\) with the structure of a differential graded \(R\)-algebra.

Suppose moreover that \(A^\bullet\) is equipped with a multiplicative decreasing $\mathbb N$-indexed filtration, so that \(\operatorname{Fil}^*A^\bullet\) a cosimplicial diagram of differential graded $R$-algebras
equipped with decreasing $\mathbb N$-indexed filtrations. Then \(\operatorname{Tot}(A^\bullet)\) inherits a filtration \(\operatorname{Fil}^*\operatorname{Tot}(A^\bullet)\) in the evident sense. The Alexander--Whitney product is compatible with this filtration, and hence \(\operatorname{Fil}^*\operatorname{Tot}(A^\bullet)\) is a filtered differential graded \(R\)-algebra.

  We record the following standard facts, which will be used later.
\begin{proposition} \label{Tot} Let $A^{\bullet}:\Delta\to \mathsf{Ch}(R) $ be a cosimplicial cochain complex. We have an equivalence in $\mathsf{D}(R)$
\[\lim_{[i]\in\Delta }A^{i}\simeq \mathrm{Tot}(A^{\bullet}).\]

Moreover if $A^{\bullet}:\Delta\to \DGA_R $ be a cosimplicial decreasing differential graded \(R\)-algebra, then we have an equivalence in $\mathsf{D}(R)$ \[\lim_{[i]\in\Delta }A^{i}\simeq \mathrm{Tot}(A^{\bullet}).\]  is an equivalence in $\Alg(\mathsf{D}(R))$. 

Finally, if $\mathrm{Fil}^*A^{\bullet}:\mathsf{\Delta}\to \DGA_R^{\mathsf{fil}} $ is a cosimplicial diagram of differential graded $R$-algebras
equipped with decreasing $\mathbb N$-indexed filtrations, then the above equivalence is compatible with the filtrations. More precisely, we have an  equivalence \[\lim_{[i]\in\Delta }\mathrm{Fil}^*A^{i}\simeq \mathrm{Fil}^* \mathrm{Tot}(A^{\bullet}).\]
in $\Alg\bigl(\Fil(\mathsf{D}(R))\bigr)$.

\end{proposition}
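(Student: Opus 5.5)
The plan is to reduce all three statements to the classical comparison between the homotopy limit of a cosimplicial object in a stable $\infty$-category and the product totalization of the associated bicomplex. I will proceed in order: first the bare module statement, then the multiplicative refinement, then the filtered refinement. The last two will be deduced from the first by checking that the relevant forgetful functors are conservative and commute with limits.

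\emph{Module statement.} View $A^\bullet$ as a cosimplicial object of $\mathsf{Ch}(R)$, and equip $\Fun(\Delta,\mathsf{Ch}(R))$ with the Reedy (equivalently, injective) model structure, which presents $\Fun(\Delta,\mathsf D(R))$. The limit over $\Delta$ is then computed by the right derived functor of $\lim$. This is the derived functor of the end against the standard cosimplicial simplicial set $\Delta^\bullet$, replaced here by normalized cochains on $\Delta^\bullet$.

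Concretely, I will use the Dold--Kan correspondence for cosimplicial abelian groups in each cochain degree $j$. The cosimplicial direction of $A^{\bullet,j}$ becomes a coconnective complex, and $\lim_\Delta$ of a levelwise discrete cosimplicial module is computed by that complex. Unnormalized and normalized versions are quasi-isomorphic by the standard splitting, cf.\ \cite[Tag~019H]{Sta26}.

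To pass from one cochain degree to the full bicomplex, I will write $A^\bullet$ as the limit of its stupid truncations $\sigma^{\ge -m}A^\bullet$ in the cochain direction. Each truncation is a finite iterated extension of shifted discrete cosimplicial modules, and $\lim_\Delta$ is exact on these extensions. The product totalization of each truncation is therefore correct, and product totalization commutes with this inverse limit. Finally, this inverse limit is itself a homotopy limit, because the transition maps are degreewise surjective, so the tower is Mittag-Leffler.

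\emph{Multiplicative statement.} The forgetful functor $\Alg(\mathsf D(R))\to\mathsf D(R)$ preserves and detects limits, so it suffices to build a map of $E_1$-algebras inducing the equivalence above. I will realize $\DGA_R$ as a model for $\Alg(\mathsf D(R))$ and compute the limit via the functor $A^\bullet\mapsto \operatorname{Tot}(A^\bullet)$ with the Alexander--Whitney product. The needed inputs are:
\begin{itemize}
\item the Alexander--Whitney map is a lax monoidal structure on the unnormalized cochain functor from cosimplicial complexes to complexes, which makes $\operatorname{Tot}$ lax monoidal;
\item the resulting comparison map from $\lim_\Delta$, computed in algebras, to $\operatorname{Tot}$ is compatible with the unit and multiplication up to coherent homotopy.
\end{itemize}
The cleanest route is to note that $\operatorname{Tot}$ is a right Quillen functor out of Reedy-fibrant cosimplicial DGAs that represents $\lim_\Delta$ after forgetting. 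Since the forgetful functor is conservative, the underlying equivalence then upgrades to an equivalence of algebras.

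\emph{Filtered statement.} Limits in $\Fil(\mathsf D(R))$ are computed pointwise in the filtration index. For each $n$, the module statement applied to $\mathrm{Fil}^nA^\bullet$ gives $\lim_\Delta \mathrm{Fil}^nA^\bullet\simeq\operatorname{Tot}(\mathrm{Fil}^nA^\bullet)=\mathrm{Fil}^n\operatorname{Tot}(A^\bullet)$, naturally in $n$. The Alexander--Whitney product preserves filtrations, so the argument of the previous step runs verbatim in filtered DGAs.

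The main obstacle is the multiplicative step: making rigorous that the strict Alexander--Whitney DGA structure on $\operatorname{Tot}$ models the $\infty$-categorical limit in $\Alg(\mathsf D(R))$, and not merely a homotopy-associative structure. I would handle this by appealing to the rectification $\DGA_R[W^{-1}]\simeq\Alg(\mathsf D(R))$ together with the monoidality of cosimplicial Dold--Kan via Alexander--Whitney, as in Lurie's treatment, rather than checking coherences by hand.
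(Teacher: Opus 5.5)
The paper gives no proof of Proposition~\ref{Tot}; it records it as a standard fact. So I am judging your argument on its own terms. The module step has a real gap where you pass from single rows to the whole bicomplex.

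\textbf{The truncation step fails.} With the convention $d\colon A^{i,j}\to A^{i,j+1}$, the stupid truncations $\sigma^{\geq -m}A^\bullet$ are \emph{subcomplexes}, so $A^\bullet=\colim_m\sigma^{\geq -m}A^\bullet$. The degreewise projections $\sigma^{\geq -m-1}A^\bullet\to\sigma^{\geq -m}A^\bullet$ are not chain maps, so there is no Mittag-Leffler tower here. Moreover, a one-sided truncation is a finite extension of shifted rows only if $A^\bullet$ is bounded on the other side.
\begin{itemize}
\item The genuine tower of surjections is $A^\bullet=\lim_M\sigma^{\leq M}A^\bullet$, but its terms are unbounded below. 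That is exactly the case a row-by-row reduction cannot reach, because $\mathrm{Tot}(A^\bullet)^n=\prod_{i\geq0}A^{i,n-i}$ involves cochain degrees tending to $-\infty$.
\item Exhausting a bounded-above $B$ by bounded pieces gives $\colim_m\mathrm{Tot}(\sigma^{\geq -m}B)=\mathrm{Tot}^{\oplus}(B)$, not the product totalization. Also, $\lim_\Delta$ does not commute with that colimit.
\end{itemize}
Row-wise information does not control $\mathrm{Tot}$ in this half-plane. Take the bicomplex with $A^{i,-i}=Ra_i$, $A^{i+1,-i}=Rb_i$ for $i\geq0$, $\delta a_i=b_i$, $da_{i+1}=b_i$, and all other terms zero. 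Its rows are exact. Yet $\sum_i c_ia_i$ with $c_0=1$ and $c_{i+1}=(-1)^ic_i$ is a cycle in $\mathrm{Tot}^0$, and $\mathrm{Tot}^{-1}=0$, so $H^0(\mathrm{Tot})\neq0$.

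As written, your argument proves the statement only for $A^\bullet$ bounded below in the cochain direction. In that case use $\sigma^{\leq M}$: each piece is bounded and the tower of totalizations is degreewise eventually constant. This covers the uses in the paper (de Rham complexes with their filtrations and graded pieces, the cobar complexes, and the two-term complexes of Lemma~\ref{cp}), but not the stated generality.

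\textbf{How to fix the module step in general.} Drop the row-by-row reduction and finish the end computation you started.
\begin{itemize}
\item Every cosimplicial object of $\mathsf{Ch}(R)$ is Reedy fibrant for the projective model structure, since its matching maps are degreewise split surjective by cosimplicial Dold--Kan.
\item The unnormalized chains $C_*(\Delta^\bullet;R)$ are Reedy cofibrant, with latching maps $C_*(\partial\Delta^n;R)\hookrightarrow C_*(\Delta^n;R)$, and are levelwise quasi-isomorphic to $R$.
\item Hence $\lim_\Delta A^\bullet$ is the end $\int_{[n]}\underline{\Hom}(C_*(\Delta^n;R),A^n)$. By Yoneda this is $\prod_{k\geq0}A^{k,m-k}$ in degree $m$, with differential built from $\delta=\sum_i(-1)^id^i$ and $d$. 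That is the unnormalized product totalization itself.
\end{itemize}
No normalized/unnormalized comparison is needed. By the example above, a mere row-wise quasi-isomorphism would not have sufficed; you would need the natural chain homotopy.

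\textbf{The multiplicative step is also incomplete.} Conservativity of $\Alg(\mathsf{D}(R))\to\mathsf{D}(R)$ only upgrades an algebra map you already have, and right Quillen-ness of $\mathrm{Tot}$ does not supply one. You must still identify the derived left adjoint of $\mathrm{Tot}$ (with the Alexander--Whitney product) with the constant-diagram functor. One way is to check it on free algebras, where the left adjoint is $T(V)\mapsto T\bigl(V\otimes C_*(\Delta^\bullet;R)\bigr)$ levelwise, and then induct over cell attachments.

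Once that comparison is natural, the filtered statement follows as you say, since limits in $\Fil(\mathsf{D}(R))$ are computed pointwise.
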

\section*{Acknowledgements}
I would like to thank my advisors Ruochuan Liu and Shizhang Li for their guidance and support. I am particularly grateful to Shizhang Li  for introducing me to this question and for many helpful discussions.
	
\section*{AI Disclosure}
The author used large language models during the preparation of this manuscript for finding references, proofreading, improving the exposition, and checking mathematical arguments for possible errors and internal inconsistencies. All mathematical ideas and proofs in this paper were developed independently of large language models. The author takes full responsibility for all mathematical content.

\section{Hodge-to-de Rham spectral sequences of classifying stacks}

In this section, we  explain our method for computing  Hodge-to-de Rham spectral sequence of a classifying stack based on divided power de Rham complexes.
\subsection{Derived de Rham cohomology of stacks}

In this subsection, we fix a commutative ring \(R\). We recall the definition of derived de Rham cohomology for syntomic stacks over \(R\), following the construction of Antieau--Bhatt--Mathew in \cite[Construction~2.7]{ABM21}.

Recall that an \(R\)-algebra \(A\) is \emph{syntomic} if it is flat and finitely presented over \(R\), and the cotangent complex \(L_{A/R}\) has \(\operatorname{Tor}\)-amplitude contained in \([-1,0]\). We denote by \(\mathsf{Syn}_R\) the category of syntomic \(R\)-algebras.  Furthermore, the notion of a syntomic morphism can be extended to Artin stacks; see, for example, \cite[Construction~A.1.12]{KP24}.

For \(A\in\mathsf{Syn}_R\), let
\(\mathrm{Fil}_{\mathrm{Hod}}^*\dR_{A/R}\in
\CAlg(\Fil(\mathsf D(R)))\)
denote the Hodge-filtered derived de Rham cohomology introduced in  \cite[Definition~2.1]{Bha12}; see also Construction~\ref{de Rham}. 

\begin{construction}[Derived de Rham cohomology of syntomic stacks]
	\label{dR-syntomic-stack}
	Let \(X\) be a syntomic Artin stack over \(R\). We define its Hodge-filtered derived de Rham cohomology by right Kan extension
	\[
	\mathrm{Fil}_{\mathrm{Hod}}^*\dR_{X/R}
	:=
	\lim_{\substack{\operatorname{Spec}(A)\to X\\ A/R\ \mathrm{syntomic}}}
	\mathrm{Fil}_{\mathrm{Hod}}^*\dR_{A/R}
	\in
	\CAlg\bigl(\Fil(\mathsf D(R))\bigr).
	\]	We denote the underlying object by \(\dR_{X/R}\), and write $H^i_{\dR}(X/R):=H^i\left(\dR_{X/R}\right)\in \Mod_R$.
\end{construction}

\begin{remark}

	When \(R\) has characteristic \(p>0\), the assignment $A\longmapsto \mathrm{Fil}_{\mathrm{Hod}}^*\dR_{A/R}$ satisfies syntomic descent; see \cite[Theorem~2.5]{ABM21} and \cite[Example~5.12]{BMS19}.
	Hence, derived de Rham cohomology of syntomic stacks can be computed by descent along any syntomic atlas. Let \(U\to X\) be a syntomic cover , and let \(U_\bullet\to X\) be its \v{C}ech nerve, so that \(U_n=U\times_X\cdots\times_XU\), with \(n+1\) factors. Syntomic descent gives a natural equivalence
	
	\[
	\mathrm{Fil}_{\mathrm{Hod}}^*\dR_{X/R}
	\simeq
	\lim_{[n]\in\Delta}
	\mathrm{Fil}_{\mathrm{Hod}}^*\dR_{U_n/R}.
	\]
	
\end{remark}

\begin{remark} Suppose that \(R\) has characteristic \(p>0\). It follows from \cite[Corollary~3.10]{Bha12} that there is a natural equivalence \(\mathrm{Fil}_{\mathrm{Hod}}^*\dR_{A/R}\simeq \mathrm{Fil}_{\mathrm{Hod}}^*\Omega^{\bullet}_{A/R}\) for every smooth $R$-algebra $A$, where $\mathrm{Fil}_{\mathrm{Hod}}^*\Omega^{\bullet}_{A/R}$ denotes the classical Hodge-filtered de Rham cohomology. Hence, for every smooth $R$-scheme $X$, Zariski descent shows that \(\mathrm{Fil}_{\mathrm{Hod}}^*\dR_{X/R}\) recovers the classical Hodge-filtered de Rham cohomology of \(X/R\).

\end{remark}
\begin{example}\label{syn}
	Let $G$ be a flat finitely presented affine group scheme over $R$. Note that $G$ is automatically syntomic over $R$. Hence, the canonical base point $*=\operatorname{Spec}(R)\to BG$ is a syntomic cover. Applying syntomic descent, we  obtain a natural equivalence
	
	\[\mathrm{Fil}_{\mathrm{Hod}}^*\dR_{BG/R}
	\simeq
	\lim_{[n]\in\Delta}
	\mathrm{Fil}_{\mathrm{Hod}}^*\dR_{G^n/R}.\]
\end{example}

We next record the description of the graded pieces of derived de Rham cohomology.
\begin{lemma}[{cf.~\cite[Remark~2.8]{ABM21}; Proposition~\ref{cotangent}}]
	The graded pieces of the Hodge filtration are given by
	
	$$
	\operatorname{gr}_{\mathrm{Hod}}^i\dR_{X/R}
	\simeq
	R\Gamma\left(X,\Lambda^iL_{X/R}\right)[-i].
	$$
\end{lemma}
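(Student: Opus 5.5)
The plan is to reduce the statement to the affine syntomic case and then pass to the limit. By Construction~\ref{dR-syntomic-stack}, $\mathrm{Fil}^*_{\mathrm{Hod}}\dR_{X/R}$ is the limit of $\mathrm{Fil}^*_{\mathrm{Hod}}\dR_{A/R}$ over syntomic points $\operatorname{Spec}(A)\to X$. Since $\operatorname{gr}^i$ is a finite limit (a cofiber of $\mathrm{Fil}^{i+1}\to\mathrm{Fil}^i$), it commutes with limits in $\Fil(\mathsf D(R))$. Hence
\[
\operatorname{gr}^i_{\mathrm{Hod}}\dR_{X/R}\simeq\lim_{\operatorname{Spec}(A)\to X}\operatorname{gr}^i_{\mathrm{Hod}}\dR_{A/R}.
\]
So it suffices to identify the affine graded pieces functorially and then identify the resulting limit with $R\Gamma(X,\Lambda^iL_{X/R})[-i]$.

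\textbf{Affine step.} For a syntomic (indeed any) $R$-algebra $A$, Bhatt's construction \cite[Definition~2.1]{Bha12} defines $\mathrm{Fil}^*_{\mathrm{Hod}}\dR_{A/R}$ by left Kan extension from polynomial algebras, using the completed filtration. For polynomial $P$, the graded pieces are $\Omega^i_{P/R}[-i]=\Lambda^iL_{P/R}[-i]$. Left Kan extension commutes with taking graded pieces, and the Hodge completion does not change graded pieces. We therefore get a natural equivalence $\operatorname{gr}^i\dR_{A/R}\simeq\Lambda^iL_{A/R}[-i]$, the derived exterior power, functorial in $A$.

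\textbf{Globalization.} We must show
\[
\lim_{\operatorname{Spec}(A)\to X}\Lambda^iL_{A/R}\simeq R\Gamma(X,\Lambda^iL_{X/R}).
\]
The idea is to compare $L_{A/R}$ with the pullback of $L_{X/R}$. For $u\colon\operatorname{Spec}A\to X$ there is a cofiber sequence
\[
u^*L_{X/R}\to L_{A/R}\to L_{A/X},
\]
so the two do not agree on the nose, and it is cleaner to restrict to a cofinal subcategory. Choose a smooth atlas $U\to X$ with $U$ a disjoint union of affines. For each term of the Čech nerve $U_\bullet$ (an algebraic space smooth over $X$, covered by affines smooth over $X$), syntomic descent, as in the first Remark after Construction~\ref{dR-syntomic-stack}, lets us compute the limit over $U_\bullet$. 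For affines smooth over $X$, $L_{A/X}$ is a vector bundle $\Omega_{A/X}$ in degree $0$, so $\Lambda^iL_{A/R}$ carries a finite filtration with graded pieces $\Lambda^{a}u^*L_{X/R}\otimes\Omega^{i-a}_{A/X}$.

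The main obstacle is exactly this discrepancy between $L_{A/R}$ and $u^*L_{X/R}$. I would handle it by the standard trick: show that the terms with $i-a>0$ assemble, along the Čech nerve, to a relative de Rham-type complex that is acyclic. Equivalently, replace the smooth atlas by the syntomic-local formalism of \cite[Remark~2.8]{ABM21}, where one checks that $A\mapsto\Lambda^iL_{A/R}$ is a syntomic sheaf. This uses flat descent for cotangent complexes and the fact that derived exterior powers of syntomic cotangent complexes satisfy fpqc descent \cite[Theorem~3.1]{BMS19}. With that sheaf property, the limit over all syntomic $A\to X$ equals the limit of the quasi-coherent sheaf on the big syntomic site. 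That sheaf restricts on the small site to $\Lambda^iL_{X/R}$ (cf.\ Proposition~\ref{cotangent}), whose limit is $R\Gamma(X,\Lambda^iL_{X/R})$ by the definition of $\mathsf D(X)$ as a limit. Shifting by $[-i]$ gives the lemma.
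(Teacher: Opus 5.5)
Your first two steps (taking $\operatorname{gr}^i$ commutes with the limit, and $\operatorname{gr}^i\dR_{A/R}\simeq\Lambda^iL_{A/R}[-i]$ for syntomic $A$) are exactly how the paper begins in Proposition~\ref{cotangent}. You also correctly isolate the real issue. Your last paragraph, however, does not resolve it.

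The syntomic-sheaf property of $A\mapsto\Lambda^iL_{A/R}$ from \cite[Theorem~3.1]{BMS19} only lets you compute $\lim_{\operatorname{Spec}(A)\to X}\Lambda^iL_{A/R}$ along the \v{C}ech nerve of an atlas. That yields $\lim_{[n]\in\Delta}R\Gamma(U_n,\Lambda^iL_{U_n/R})$, and it says nothing about comparing this with $R\Gamma(X,\Lambda^iL_{X/R})$. The assignment $(u\colon\operatorname{Spec}(A)\to X)\mapsto\Lambda^iL_{A/R}$ is a sheaf, but it is not quasi-coherent, since the maps $B\otimes_A\Lambda^iL_{A/R}\to\Lambda^iL_{B/R}$ are not equivalences. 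Consequently ``the definition of $\mathsf D(X)$ as a limit'' computes $\lim_u u^*\Lambda^iL_{X/R}$, which is precisely the other side of the discrepancy you flagged. For the same reason, $\Lambda^iL_{X/R}$ does not ``restrict'' to $\Lambda^iL_{U/R}$ on a smooth chart. So your two routes are not equivalent: the sheaf property is needed, but it is not the missing step.

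What is missing is descent for the cotangent complex of the stack itself along a smooth cover, $\Lambda^iL_{X/R}\simeq\lim_{[n]\in\Delta}p_{n*}\Lambda^iL_{U_n/R}$. The paper imports this from \cite[Proposition~1.1.5]{KP22} in the proof of Lemma~\ref{Kan}. Your ``standard trick'' is the right way to prove it, once made precise. Give $\Lambda^iL_{U_n/R}$ the finite filtration with graded pieces $p_n^*\Lambda^aL_{X/R}\otimes\Lambda^{i-a}L_{U_n/X}$. The piece $a=i$ recovers $\Lambda^iL_{X/R}$ by descent for quasi-coherent complexes. For $i-a>0$ the totalization vanishes: after base change along $U\to X$ the \v{C}ech nerve acquires an extra degeneracy, so the cosimplicial object is split with augmentation $\Lambda^{i-a}L_{U/U}=0$. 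This is the same mechanism as in the proof of Lemma~\ref{fun}; no de Rham-type differential is involved.

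Finally, the $U_n$ are algebraic spaces only when $X$ has representable diagonal. That covers $BG$, where $U_n=G^n$ is affine. For a general Artin stack you need the paper's induction on the Artin level, applying the inductive hypothesis to the $U_n$.
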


Thus the Hodge filtration gives the Hodge-to-de Rham spectral sequence

\[
E_1^{i,j}
=
H^j\left(X,\Lambda^iL_{X/R}\right)
\Longrightarrow
H_{\mathrm{dR}}^{i+j}(X/R).
\]

\subsection{Derived de Rham cohomology of classifying stacks via the divided power de Rham complex}
We consider the category $\mathsf{Pair}_k^{\mathsf{sm,reg}}$
whose objects are ring-ideal pairs $(A, I)$, where $A$ is a smooth $k$-algebra and $I \subset A$ is an ideal generated by a regular sequence; see Remark~\ref{Pair1} and Definition~\ref{Pair2} for details. And for such a pair $(A, I)$, we write $(D_A(I),J,\gamma)$ for the divided power envelope of $(A, I)$ respect to $(\mathbb{Z},0,\gamma_{tri})$ as in \cite[Tag~07H9]{Sta26}. We then consider the filtered divided power de Rham complex functor,
\begin{equation}
	\begin{aligned}
		\mathrm{Fil}^*_{\mathrm{PD}}\Omega^{\mathrm{pd},\bullet}_{D_-(-)}:\mathsf{Pair}_k^{\mathsf{sm,reg}} &\longrightarrow \CAlg\bigl(\Fil(\mathsf{D}(k))\bigr), \\
		(A, I) &\longmapsto \mathrm{Fil}^*_{\mathrm{PD}}\Omega^{\mathrm{pd},\bullet}_{D_A(I)/k}
	\end{aligned}
\end{equation}
where the divided power de Rham complex $\Omega^{\mathrm{pd},\bullet}_{D_A(I)/k}$ is defined as:
\begin{equation}
	\Omega^{\mathrm{pd},\bullet}_{D_A(I)/k}: = \left( D_A(I) \xrightarrow{d} D_A(I) \otimes_A \Omega^1_{A/k} \xrightarrow{d} D_A(I) \otimes_A \Omega^2_{A/k} \xrightarrow{d} \dots \right)
\end{equation}
And this complex is  equipped with a decreasing multiplicative filtration defined by:
\begin{equation}
	\mathrm{Fil}_{\mathrm{PD}}^{ r} \Omega^{\mathrm{pd},\bullet}_{D_A(I)/k}: = \left( \mathrm{Fil}_{\mathrm{PD}}^{ r} D_A(I) \rightarrow \mathrm{Fil}_{\mathrm{PD}}^{ r-1} D_A(I) \otimes_A \Omega^1_{A/k} \rightarrow \mathrm{Fil}_{\mathrm{PD}}^{ r-2} D_A(I) \otimes_A \Omega^2_{A/k} \rightarrow \dots \right).
\end{equation}
where the decreasing multiplicative filtration $\mathrm{Fil}_{\mathrm{PD}}^{*} D_A(I)$ on $ D_A(I)$  is defined by
$\mathrm{Fil}_{\mathrm{PD}}^{ n} D_A(I)= J^{[n]}$.

By  Theorem~\ref{P1} and Theorem~\ref{P2}, we obtain the following result; compare \cite[Proposition~3.27]{Bha12}, \cite[Theorem~7.16]{Mat22} and \cite[Lemma~4.32 and Proposition~4.35]{Mao24}.
\begin{theorem}\label{PD1}
	There is a natural identification of functors $\mathsf{Pair}_k^{\mathsf{sm,reg}} \to \CAlg \bigl( \Fil(\mathsf{D}(k)) \bigr)$
	\begin{equation}
		\mathrm{Fil}_{\mathrm{PD}}^{*} \Omega^{\mathrm{pd},\bullet}_{D_A(I)/k} \simeq \mathrm{Fil}_{\mathrm{Hod}}^{*} \dR_{(A/I)/k},
	\end{equation}
	for $(A,I) \in \mathsf{Pair}_k^{\mathsf{sm,reg}}$.
\end{theorem}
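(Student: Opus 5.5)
The identification is proved in two stages, matching the references to Theorem~\ref{P1} and Theorem~\ref{P2}. The first stage compares the Hodge-filtered derived de Rham cohomology of $A/I$ with a filtered object built from $A$ and $I$. The second stage identifies that object with the filtered divided power de Rham complex.

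\textbf{Stage 1: a Hodge-completed comparison.} Since $A$ is smooth over $k$ and $I$ is generated by a regular sequence, $A\to A/I$ is a quasi-regular (lci) surjection. The transitivity triangle $L_{A/k}\otimes_A A/I\to L_{(A/I)/k}\to L_{(A/I)/A}\simeq I/I^2[1]$ then has outer terms that are finite projective modules in degrees $0$ and $-1$. I would first produce a natural filtered map
\[
\mathrm{Fil}^*_{\mathrm{Hod}}\dR_{(A/I)/k}\longrightarrow \mathrm{Fil}^*_{\mathrm{PD}}\Omega^{\mathrm{pd},\bullet}_{D_A(I)/k}.
\]
It should be functorial on $\mathsf{Pair}_k^{\mathsf{sm,reg}}$ and multiplicative. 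To build it, left Kan extend from polynomial pairs: resolve $A/I$ simplicially by polynomial $k$-algebras $P_\bullet\to A/I$ that factor through $A$, so that one can use Bhatt's crystalline-style comparison (\cite[Proposition~3.27]{Bha12}). The intermediate object is the derived de Rham cohomology of $A/I$ relative to $A$, namely $D_A(I)$ with its PD filtration, tensored with the de Rham complex of $A$. This uses the identification $\dR_{(A/I)/A}\simeq D_A(I)$ for regular ideals, which is Bhatt's theorem in characteristic $p$.

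\textbf{Stage 2: checking on graded pieces.} It is enough to check that the map is an equivalence after passing to $\gr^i$ for every $i$. Both filtrations are complete: the Hodge filtration on $A/I$ is complete in characteristic $p$ for lci algebras (conjugate filtration argument), and the PD filtration is complete because $D_A(I)$ is locally a divided power polynomial algebra over $A/I$. On the source side,
\[
\gr^i=\Lambda^iL_{(A/I)/k}[-i].
\]
Using the triangle from Stage 1, this carries a finite filtration with pieces $\Omega^{a}_{A}\otimes\Gamma^{b}(I/I^2)[-a-2b+b]$ for $a+b=i$, since $\Lambda^b(N[1])\simeq\Gamma^b(N)[b]$. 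On the target side, $\gr^i$ of the PD de Rham complex is the complex $J^{[i-a]}/J^{[i-a+1]}\otimes\Omega^a_A$. Locally we have $J^{[m]}/J^{[m+1]}\simeq\Gamma^m_{A/I}(I/I^2)$, and the induced differential gives exactly the Koszul-type complex computing $\Lambda^i$ of the two-term complex $I/I^2\to\Omega^1_A\otimes A/I$. So the graded pieces agree, and one checks that the map of Stage 1 induces this identification. I would do this check locally in coordinates where $A$ is \'etale over a polynomial ring and $I=(x_1,\dots,x_r)$.

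\textbf{Naturality and multiplicativity.} Multiplicativity and naturality in $\CAlg(\Fil(\mathsf D(k)))$ follow if the map is constructed as a map of functors of simplicial commutative rings before Kan extension. I expect the main obstacle to be this construction: making the comparison map natural on all of $\mathsf{Pair}_k^{\mathsf{sm,reg}}$ as a map of $E_\infty$-filtered algebras, rather than only a pointwise quasi-isomorphism. To handle it, I would define both sides as left Kan extensions from the subcategory of pairs $(k[X,Y],(Y))$. There the comparison is classical: the divided power Poincar\'e lemma identifies both sides with $\Omega^\bullet_{k[X]}$ tensored with the PD de Rham complex of $k[Y]$ along $(Y)$. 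I would then argue that both functors commute with the relevant sifted colimits, using completeness on graded pieces.
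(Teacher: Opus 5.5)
\textbf{The gap is in Stage~2.} You reduce the equivalence to graded pieces by asserting that both filtrations are complete. In characteristic $p$ neither is.

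Your own justification shows why. Take a divided power polynomial algebra $B\langle t\rangle=\bigoplus_{n}B\,t^{[n]}$ with $\mathrm{Fil}^m=\bigoplus_{n\geq m}B\,t^{[n]}$. Then $\lim_m\mathrm{Fil}^m=0$, but $\lim^1_m\mathrm{Fil}^m\simeq\prod_nB\,t^{[n]}/\bigoplus_nB\,t^{[n]}\neq0$.

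A concrete instance: for $A=k[x]$ and $I=(x^p)$ one has $D_A(I)=\bigoplus_{0\leq a<p,\,n\geq0}k\,x^a(x^p)^{[n]}$. Then $H^0(\Omega^{\mathrm{pd},\bullet}_{D_A(I)/k})=\bigoplus_nk\,(x^p)^{[n]}$, while the PD-completed complex has $H^0=\prod_nk\,(x^p)^{[n]}$. Correspondingly, $\mathrm{Fil}^*_{\mathrm{Hod}}\dR_{(k[x]/x^p)/k}$ is not Hodge-complete. The conjugate filtration argument gives exhaustiveness of the \emph{conjugate} filtration, not completeness of the Hodge filtration.

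Consider a filtered map $\mathrm{Fil}^*M\to\mathrm{Fil}^*N$ inducing equivalences on all $\gr^i$. All you can conclude is that $\mathrm{cofib}(\mathrm{Fil}^mM\to\mathrm{Fil}^mN)$ is independent of $m$. So your argument proves only the comparison after Hodge/PD completion, as your Stage~1 heading suggests. The theorem states the non-completed identification. The phrase ``using completeness on graded pieces'' in your last paragraph hides the same problem.

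\textbf{The missing idea} is an independent identification of the underlying objects $\dR_{(A/I)/k}\simeq\Omega^{\mathrm{pd},\bullet}_{D_A(I)/k}$.

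The paper follows essentially the route of your last paragraph. By Theorem~\ref{P1}, both sides are left Kan extended from polynomial pairs, where Lemma~\ref{lemma A.7} compares them. The real content is Theorem~\ref{P2}: the functor $(A,I)\mapsto\mathrm{Fil}^*_{\mathrm{PD}}\Omega^{\mathrm{pd},\bullet}_{D_A(I)/k}$ is left Kan extended on $\mathsf{Pair}_k^{\mathsf{sm,reg}}$.

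Your Koszul computation handles each $\gr^i$. Since $\mathrm{Fil}^m=\mathrm{fib}(\mathrm{Fil}^{m-1}\to\gr^{m-1})$ with $\mathrm{Fil}^0$ the underlying object, it then remains only to treat $\mathrm{Fil}^0$. The paper does this with the increasing, exhaustive conjugate filtration on $D_A(I)$:
\begin{itemize}
\item its graded pieces are $\mathrm{Frob}_A^*\Gamma^i_{A/I}(I/I^2)$, which carry Frobenius-descent connections;
\item the Cartier isomorphism identifies the cohomology of the associated de Rham complexes with $\iota_A^*\Gamma^i_{A/I}(I/I^2)\otimes_{A^{(1)}}\Omega^j_{A^{(1)}/k}$;
\item this is visibly left Kan extended from polynomial pairs.
\end{itemize}
Alternatively, you could invoke Bhatt's non-completed comparison between derived de Rham and crystalline cohomology for lci maps in characteristic $p$. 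Its proof also rests on the conjugate filtration.

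\textbf{A minor slip:} the pieces of $\Lambda^iL_{(A/I)/k}[-i]$ are $\Omega^a_A\otimes\Gamma^b_{A/I}(I/I^2)[-a]$, not shifted by $[-a-b]$.
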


\begin{construction}\label{PD2}
	We consider an embedding of affine group schemes $G \hookrightarrow \widetilde{G}$, 
	where $\widetilde{G}$ is a smooth affine group scheme over $k$ and the ideal $I = \operatorname{Ker}(\mathcal{O}(\widetilde{G}) \to \mathcal{O}(G))$ is generated by a regular sequence. Then $I_i = \operatorname{Ker}(\mathcal{O}(\widetilde{G}^i) \to \mathcal{O}(G^i))$
	is also generated by a regular sequence for any $i\geq 0$.  We  write $A= \mathcal{O}(\widetilde{G})$ for notational simplicity. In this case, we apply Theorem~\ref{PD1} to the diagram $\{(A^{\otimes i}, I_i)\}_{i\in \Delta}$ and obtain
	\begin{equation}
		\mathrm{Fil}_{\mathrm{Hod}}^{*}\dR_{BG/k}\simeq\lim_{[i]\in \Delta}\mathrm{Fil}_{\mathrm{Hod}}^{*}\dR_{G^i/k}=\lim_{[i]\in \Delta} \mathrm{Fil}_{\mathrm{Hod}}^{*}\dR_{\mathcal{O}(G^i)/k}\simeq\lim_{[i]\in \Delta}\mathrm{Fil}_{\mathrm{PD}}^{*}\Omega^{\mathrm{pd},\bullet}_{D_{I_i}(A^{\otimes i})/k} ,
	\end{equation}
	where the first equivalence comes from syntomic descent, as in Example~\ref{syn}. Using Proposition~\ref{Tot} we can identify $\displaystyle \lim_{[i]\in \Delta}\mathrm{Fil}_{\mathrm{PD}}^{*}\Omega^{\mathrm{pd},\bullet}_{D_{I_i}(A^{\otimes i})/k}$ with the totalization of the filtered bicomplex $\mathrm{Fil}^*B^{\bullet,\bullet}$ described as follows.

The underlying bicomplex \(B^{\bullet,\bullet}\) is given by
	\[
	\begin{tikzcd}[
		row sep=3.1em,
		column sep=2.2em,
		cells={nodes={inner sep=1.5pt}}
		]
		\vdots
		&
		\vdots
		&
		\vdots
		&
		\vdots
		&
		\\
		0
		\ar[u]
		\ar[r]
		&
		D_A(I)\otimes_A\Omega^2_{A/k}
		\ar[u,"d"]
		\ar[r,"\delta"]
		&
		\begin{gathered}
			D_{A^{\otimes 2}}(I_2)\otimes_{A^{\otimes 2}}
			\Omega^2_{A^{\otimes 2}/k}
		\end{gathered}
		\ar[u,"d"]
		\ar[r,"\delta"]
		&
		\begin{gathered}
			D_{A^{\otimes 3}}(I_3)\otimes_{A^{\otimes 3}}
			\Omega^2_{A^{\otimes 3}/k}
		\end{gathered}
		\ar[u,"d"]
		\ar[r,"\delta"]
		&
		\cdots
		\\
		0
		\ar[u]
		\ar[r]
		&
		D_A(I)\otimes_A\Omega^1_{A/k}
		\ar[u,"d"]
		\ar[r,"\delta"]
		&
		\begin{gathered}
			D_{A^{\otimes 2}}(I_2)\otimes_{A^{\otimes 2}}
			\Omega^1_{A^{\otimes 2}/k}
		\end{gathered}
		\ar[u,"d"]
		\ar[r,"\delta"]
		&
		\begin{gathered}
			D_{A^{\otimes 3}}(I_3)\otimes_{A^{\otimes 3}}
			\Omega^1_{A^{\otimes 3}/k}
		\end{gathered}
		\ar[u,"d"]
		\ar[r,"\delta"]
		&
		\cdots
		\\
		k
		\ar[u,"d"]
		\ar[r,"\delta"]
		&
		D_A(I)
		\ar[u,"d"]
		\ar[r,"\delta"]
		&
		D_{A^{\otimes 2}}(I_2)
		\ar[u,"d"]
		\ar[r,"\delta"]
		&
		D_{A^{\otimes 3}}(I_3)
		\ar[u,"d"]
		\ar[r,"\delta"]
		&
		\cdots .
	\end{tikzcd}
	\tag{\(\ast\)}
	\label{bicomplex}
	\]
	Here the horizontal differential \(\delta\) is induced by the
	cosimplicial structure, while the vertical differential \(d\) is the
	de Rham differential. The decreasing filtration on \(B^{\bullet,\bullet}\) is defined bidegreewise by
$\mathrm{Fil}^m B^{i,j}
	:=
	\mathrm{Fil}_{\mathrm{PD}}^{m-j}
	D_{A^{\otimes i}}(I_i)
	\otimes_{A^{\otimes i}}
	\Omega^j_{A^{\otimes i}/k}
$, where we use the conventions \(A^{\otimes 0}=k\) and \(I_0=0\).
	We denote the totalization of $\mathrm{Fil}^*B^{\bullet,\bullet}$ by \(\mathrm{Fil}^*C^\bullet\), which is a filtered differential graded $k$-algebra. Consequently, we can compute the differentials in the Hodge-to-de Rham spectral sequence of $BG$ explicitly from the filtered cochain complex \(\mathrm{Fil}^*C^\bullet\); see \cite[Tag~012K]{Sta26}.
\end{construction}	
	
	\begin{remark}\label{product}
		We would like to describe the product $a\cdot b$ in the differential graded $k$-algebra $C^{\bullet}$ for elements $a\in D_{A^{\otimes n}}(I_n)\subset C^n$ and $b\in D_{A^{\otimes m}}(I_m)\subset C^m$. Indeed, there is a canonical equivalences $D_{A^{\otimes r}}(I_r)\simeq D_{A}(I)^{\otimes r}$ for every $r\geq 0$. Suppose that
		\(a=a_1\otimes\cdots\otimes a_n\) and
		\(b=b_1\otimes\cdots\otimes b_m\),
		where \(a_i,b_j\in D_A(I)\). Then the product is given by \[
		a\cdot b
		=
		a_1\otimes\cdots\otimes a_n
		\otimes
		b_1\otimes\cdots\otimes b_m \in D_{A}(I)^{\otimes (n+m)}\simeq D_{A^{\otimes(n+m)}}(I_{n+m}).
		\]
	\end{remark}

	\begin{notation}
	For an element $a\in \mathrm{Fil}^rC^n$ whose image in $\mathrm{Fil}^rC^n/\mathrm{Fil}^sC^n$ is a cycle in the cochain complex $\mathrm{Fil}^rC^{\bullet}/\mathrm{Fil}^sC^{\bullet}$, we denote the class determined by \(a\) in $H^n(\mathrm{Fil}^rC^{\bullet}/\mathrm{Fil}^sC^{\bullet})$ by $[a]$. When there is a risk of ambiguity, we indicate the ambient cohomology group by writing $[a]\in H^n(\mathrm{Fil}^rC^{\bullet}/\mathrm{Fil}^sC^{\bullet})$.
\end{notation}

\subsection{Low degree Hodge cohomology of classifying stacks} 
In this subsection, we study $H^1(BG,\mathcal O)$,
$H^0(BG,L_{BG/k})$, and $H^1(BG,L_{BG/k})$ using the comparisons
in Theorem~\ref{PD1} and Construction~\ref{PD2}. This will help us to analyze the Hodge-to-de Rham spectral sequence of $BG$. From now on, we assume that $G$ is a finite type affine group scheme  over $k$, and write $i:*=\mathrm{Spec}(k)\to G$ for the unit section of $G$ and $e:*=\mathrm{Spec}(k)\to BG$ for the canonical base point of $BG$. On a first reading, the reader may skip the lemmas and propositions, while keeping track of the surrounding discussion, remarks, and notation.

We first treat $H^1(BG,\mathcal{O})$. By descent along the canonical base point map \(\operatorname{Spec}(k)\to BG\), we obtain equivalences $\displaystyle 
R\Gamma(BG,\mathcal O)
\simeq
\lim_{[n]\in\Delta}\mathcal O(G^n)
\simeq
\mathrm{Tot}\!\left(\mathcal O(G)^{\otimes \bullet}\right)
$.
Here $\mathrm{Tot}\!\left(\mathcal O(G)^{\otimes \bullet}\right)$  is the standard cobar complex computing the rational cohomology of $G$ with
coefficients in the trivial $G$-representation $k$: 
\[
k
\xrightarrow{\delta=0}
\mathcal O(G)
\xrightarrow{\delta=\mathrm{pr}_1^*+\mathrm{pr}_2^*-m^*}
\mathcal O(G)^{\otimes 2}
\xrightarrow{\delta}
\mathcal O(G)^{\otimes 3}
\longrightarrow
\cdots .
\]

\begin{notation}
	We know that \(H^1(BG,\mathcal O)\simeq \ker\!\left(\operatorname{pr}_1^*+\operatorname{pr}_2^*-m^*\colon \mathcal O(G)\to \mathcal O(G)\otimes_k\mathcal O(G)\right) \simeq \operatorname{Hom}_{\Grp}(G,\mathbb G_a)\) for an affine group scheme \(G\) over \(k\).Thus, an element $f\in\mathcal O(G)$ satisfying
	$\operatorname{pr}_1^*(f)+\operatorname{pr}_2^*(f)-m^*(f)=0$,
	i.e. a primitive element, determines a class in $H^1(BG,\mathcal O)$. We denote this class by $\alpha_f$.
\end{notation}

\begin{remark}\label{alpha}
	Suppose now that we are given a regular closed embedding $G\hookrightarrow \widetilde{G}$ as in Construction~\ref{PD2} and retain the notation introduced there. Taking the $\gr^0$ of the equivalence  $\mathrm{Fil}_{\mathrm{Hod}}^{*}\dR_{BG/k}\simeq \mathrm{Fil}^*C^{\bullet}$ in Construction~\ref{PD2}, we obtain $
	R\Gamma(BG,\mathcal O)\simeq \gr^0C^\bullet
	$. By construction, \(\gr^0C^\bullet\) is the totalization of the cosimplicial complex whose degree \(r\) term is
	$
	D_{A^{\otimes r}}(I_r)/\mathrm{Fil}_{\mathrm{PD}}^1D_{A^{\otimes r}}(I_r)
	$.
	For every \(r\ge 0\), there are canonical identifications
	$
	D_{A^{\otimes r}}(I_r)/\mathrm{Fil}_{\mathrm{PD}}^1D_{A^{\otimes r}}(I_r)\simeq A^{\otimes r}/I_r\simeq\mathcal O(G^r)
	$. Hence this cosimplicial complex is canonically identified with the standard cobar complex above.
\end{remark}

We next treat $H^0(BG,L_{BG/k})$. In order to understand Hodge cohomology of $BG$, we first recall some basic facts about the co-Lie complex \(\ell_G\in\mathsf D(BG)\); see \cite[Chapitre~VII, \S 3.1]{Ill72a} and \cite[\S 2.5.1 and \S 3.1]{Ill72b} for the co-Lie complex formalism. It is a $G$-equivariant refinement of $i^*L_{G/k}$,  in the sense that there is a canonical equivalence $e^*\ell_G\simeq i^*L_{G/k}$. Moreover, we have $L_{BG/k}\simeq \ell_G[-1]$, and hence  $e^*L_{BG/k}\simeq i^*L_{G/k}[-1]$; see, for example, \cite[Example~A.3.8]{KP24}.

For completeness, we explain these identifications. We first record the following auxiliary lemma, which packages the transitivity triangle and base change for cotangent complexes. We now denote by \(\dArtStk\) the \(\infty\)-category of derived Artin stacks, where the derived setting will only be used to form derived fiber products.
\begin{lemma}\label{pullback}
	Let \(S\) be a derived Artin stack, and let \(\dArtStk_{S,*}\) denote the
	\(\infty\)-category of pointed derived Artin stacks over \(S\).
	Then the functor
	\[
	F : \dArtStk_{S,*}^{\op}
	\longrightarrow \mathsf{D}(S),
	\qquad
	(X,e: S\to X) \longmapsto e^*L_{X/S},
	\] commutes with finite colimits. 
\end{lemma}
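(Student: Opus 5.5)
The plan is to reduce the claim to two properties of cotangent complexes that hold in any reasonable setting: transitivity triangles for composable maps, and base change along (derived) pullbacks. We then check that these are exactly what finite-colimit preservation means in the opposite category. Colimits in \(\dArtStk_{S,*}^{\op}\) are limits in \(\dArtStk_{S,*}\). Finite limits are generated by the terminal object and pullbacks, so it suffices to check two things: \(F\) sends the terminal pointed stack to the initial object of \(\mathsf D(S)\), and \(F\) sends pullback squares of pointed derived Artin stacks to pushout squares in \(\mathsf D(S)\). Since \(\mathsf D(S)\) is stable, pushout and pullback squares agree, so it is enough to produce the relevant cofiber sequences.

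For the terminal object, the terminal pointed derived stack over \(S\) is \((S,\mathrm{id})\), and \(F(S,\mathrm{id})=\mathrm{id}^*L_{S/S}=0\), which is initial (indeed zero) in \(\mathsf D(S)\).

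For pullbacks, take \(X\to Z\leftarrow Y\) in \(\dArtStk_{S,*}\), with \(W=X\times_Z Y\) pointed by the induced section \(e_W\colon S\to W\). Write \(e_X,e_Y,e_Z\) for the sections and \(g\colon W\to Y\), \(h\colon Y\to Z\), \(f\colon X\to Z\). We must show that
\[
\begin{tikzcd} e_Z^*L_{Z/S}\ar[r]\ar[d] & e_X^*L_{X/S}\ar[d]\\ e_Y^*L_{Y/S}\ar[r] & e_W^*L_{W/S}\end{tikzcd}
\]
is a pushout square. Equivalently, the map on horizontal cofibers \(e_Y^*L_{Y/S}\to e_W^*L_{W/S}\) should induce an equivalence \(e_X^*L_{X/Z}\simeq e_W^*L_{W/Y}\) after passing to cofibers. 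This follows in three steps:
\begin{enumerate}
\item Apply the transitivity triangles for \(X\to Z\to S\) and \(W\to Y\to S\). These identify the cofiber of \(e_X^*f^*L_{Z/S}=e_Z^*L_{Z/S}\to e_X^*L_{X/S}\) with \(e_X^*L_{X/Z}\), and similarly the cofiber of \(e_Y^*L_{Y/S}\to e_W^*L_{W/S}\) with \(e_W^*L_{W/Y}\). Here we use \(f\circ e_X=e_Z\) and \(g\circ e_W=e_Y\).
\item Apply base change for the cotangent complex of derived Artin stacks: \(L_{W/Y}\simeq \pi_X^*L_{X/Z}\), where \(\pi_X\colon W\to X\) is the projection. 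This uses that the fiber product is taken in the derived sense, which is why the lemma is phrased in \(\dArtStk\).
\item Since \(\pi_X\circ e_W=e_X\), pulling back along \(e_W\) gives \(e_W^*L_{W/Y}\simeq e_X^*L_{X/Z}\).
\end{enumerate}

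The remaining point, which I expect to be the only real obstacle, is naturality. The equivalence of cofibers must be the one induced by the square, not merely an abstract isomorphism. For this I would use the functoriality of the transitivity triangle with respect to the morphism of composable pairs \((W\to Y\to S)\to(X\to Z\to S)\): the canonical map \(\pi_X^*L_{X/Z}\to L_{W/Y}\) appearing in the morphism of triangles is exactly the base-change map. Pulling back along \(e_W\) then yields a morphism of cofiber sequences whose outer terms are the square's maps and whose cofiber map is an equivalence, so the square is cocartesian.
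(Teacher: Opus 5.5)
Your proposal is correct and matches the paper's proof essentially step for step. Both reduce to the terminal object \((S,\mathrm{id})\) and to pullback squares, identify the cofibers via the transitivity triangles (using that the sections compose correctly), invoke base change for the cotangent complex along the derived fiber product, and conclude by stability of \(\mathsf D(S)\). Your added remark on naturality, namely that the cofiber map is the pullback along the section of the base-change map obtained from functoriality of the transitivity triangle, spells out a point the paper leaves implicit in the phrase ``the induced cofiber map.''
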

\begin{proof}
	The final object in $\dArtStk_{S,*}$ is $(S,\mathrm{Id})$, and we have $F\bigl((S,\mathrm{Id})\bigr)= L_{S/S}=0$.
	It remains to show that \(F\) carries pullback squares in $\dArtStk_{S,*}$ to pushout squares in $\mathsf D(S)$. For a pullback diagram in $\dArtStk_{S,*}$
	\begin{equation}
		\begin{tikzcd}[column sep=large, row sep=large]
			\bigl(X,e_1\bigr)
			\ar[r,"f"]
			\ar[d,"g"]
			\ar[dr, phantom, very near start, "\lrcorner"]
			&
			\bigl(Y,e_2\bigr)
			\ar[d,"g_1"]
			\\
			\bigl(Z,e_3\bigr)
			\ar[r,"f_1"]
			&
			\bigl(W,e_4\bigr),
		\end{tikzcd}
	\end{equation}
	we need to show 
	
	\begin{equation}
		\begin{tikzcd}[column sep=large, row sep=large]
			e_4^*L_{W/S} \ar[r] \ar[d]
			&
			e_2^*L_{Y/S} \ar[d]
			\\
			e_3^*L_{Z/S} \ar[r]
			&
			e_1^*L_{X/S}
		\end{tikzcd}
	\end{equation}
	is a pushout diagram. By the transitivity triangles for $Y\to W\to S$ and
	$X\to Z\to S$, we have
	\[
	\begin{aligned}
		\cofib(e_4^*L_{W/S}\to e_2^*L_{Y/S})
		&\simeq
		\cofib(e_2^*g_1^*L_{W/S}\to e_2^*L_{Y/S})
		\simeq e_2^*L_{Y/W},\\
		\cofib(e_3^*L_{Z/S}\to e_1^*L_{X/S})
		&\simeq
		\cofib(e_1^*g^*L_{Z/S}\to e_1^*L_{X/S})
		\simeq e_1^*L_{X/Z}.
	\end{aligned}
	\]
	Since the original square is a pullback, base change for the
	cotangent complex shows that the natural map
	$f^*L_{Y/W}\to L_{X/Z}$ is an equivalence.
	Hence the induced cofiber map
	$e_2^*L_{Y/W}\to e_1^*L_{X/Z}$ is an equivalence.
	Since $\mathsf D(S)$ is stable, the desired square is both
	a pullback and a pushout.
	Therefore, $F$ preserves finite colimits, completing the proof.
\end{proof}	

\begin{example}Applying Lemma~\ref{pullback} to the following pullback diagram 
	\[
	\begin{tikzcd}[column sep=large, row sep=large]
		(G,i)
		\ar[r]
		\ar[d]
		\ar[dr, phantom, very near start, "\lrcorner"]
		&
		*
		\ar[d]
		\\
		*
		\ar[r]
		&
		(BG,e),	
	\end{tikzcd}
	\]
	we obtain $e^*L_{BG}	\simeq i^*L_G[-1]$. Moreover, the conjugation action defines a \(G\)-action on \(G\) by group automorphisms with unit section $i_{BG}:BG\to [G/^{\mathrm{ad}}G]$. Thus, we obtain a group object $[G/^{\mathrm{ad}}G]$ over $BG$. Consider the following pullback diagram 
	\[
	\begin{tikzcd}[column sep=large, row sep=large]
		([G/^{\mathrm{ad}}G],i_{BG})
		\ar[r]
		\ar[d]
		\ar[dr, phantom, very near start, "\lrcorner"]
		&
		(BG,\mathrm{Id})
		\ar[d,"\Delta"]
		\\
		(BG,\mathrm{Id})
		\ar[r,"\Delta"]
		&
		(BG\times BG,\Delta),
	\end{tikzcd}
	\]
	then we obtain $L_{BG}\simeq \Delta^*L_{BG\times BG/BG}
	\simeq i_{BG}^*L_{[G/^{\mathrm{ad}}G]/BG}[-1]$. One can check this recovers the equivalence in \cite[Example~A.3.8]{KP24}. In fact, one can define the co-Lie complex as
	$\ell_G:=i_{BG}^*L_{[G/^{\mathrm{ad}}G]/BG}\in\mathsf D(BG)$,
	so the equivalence above gives $L_{BG/k}\simeq\ell_G[-1]$.
\end{example}

Since \(G\) is a finite type group scheme over \(k\), it is syntomic over \(k\) by \cite[Lemma~2.1.3]{CZ24}. Hence \(L_{BG/k}\simeq \ell_G[-1]\) is a perfect complex of $\mathrm{Tor}$-amplitude \([0,1]\). In particular, its only possibly non-zero cohomology sheaves are $\mathcal{H}^0(L_{BG/k})$ and $\mathcal{H}^1(L_{BG/k})$.

\begin{notation}
	Following Illusie's notation \cite[\S~4.1]{Ill72b}, we define $n_G:=\mathcal{H}^0(L_{BG/k})=\mathcal{H}^{-1}(\ell_G)\in \QCoh(BG)$, and $\omega_G:=\mathcal{H}^1(L_{BG/k})=\mathcal{H}^{0}(\ell_G)\in \QCoh(BG)$, and use the same notation for their underlying \(k\)-vector spaces when no confusion can arise.
\end{notation}
Accordingly, there is a canonical fiber sequence $ n_G\to L_{BG/k} \to  \omega_G[-1]$ in $\mathsf{D}(BG)$. Taking cohomology, we obtain an exact sequence 
\[0\to H^0(BG,n_G)\to H^0(BG,L_{BG/k}) \to H^0(BG,\omega_G[-1])  =0.\] Hence, we see $ H^0(BG,L_{BG/k})\simeq H^0(BG,n_G)=(n_G)^G $, the $G$-invariant subspace of $n_G$. More generally, one can show that \(\mathcal H^0(\Lambda^rL_{BG/k})\simeq\Lambda^rn_G\), and hence $H^0(BG,\Lambda^r L_{BG/k})\simeq H^0(BG,\Lambda^r n_G)=(\Lambda^r n_G)^G$.

Suppose now that we are given a regular closed embedding $f: G\hookrightarrow \widetilde{G}$ as in Construction~\ref{PD2} and retain the notation introduced there. Write $j:\operatorname{Spec}(k)\to\widetilde G$ for the unit section of $\widetilde G$ and $\widetilde{e}:\operatorname{Spec}(k)\to B\widetilde G$ for the canonical base point of $B\widetilde G$. For each $n\geq 0$, let
$f_n:G^n\hookrightarrow\widetilde G^n$
denote the induced embedding, and let
$f_{-1}:BG\to B\widetilde G$
denote the induced morphism of classifying stacks. The embedding $f: G\hookrightarrow \widetilde{G}$ gives another description of $H^0(BG,L_{BG/k})$ as follows. Taking the first graded piece of the equivalence  $\mathrm{Fil}_{\mathrm{Hod}}^{*}\dR_{BG/k}\simeq \mathrm{Fil}^*C^{\bullet}$ in Construction~\ref{PD2}, we obtain an equivalence $R\Gamma(BG,L_{BG/k})[-1]\simeq \gr^1(C^{\bullet})$. Consequently, we have $H^0(BG,L_{BG/k})\simeq H^1(\gr^1\bigl(C^{\bullet})\bigr)$. Here $\gr^1(C^{\bullet})$ is the totalization of the following bicomplex
\[
\begin{tikzcd}[row sep=2.8em, column sep=3.5em]
	\vdots
	&
	\vdots
	&
	\vdots
	&
	\\
	0 \arrow[u] \arrow[r, "\delta"]
	&
	0 \arrow[u] \arrow[r, "\delta"]
	&
	0 \arrow[u] \arrow[r, "\delta"]
	&
	\cdots
	\\
	0 \arrow[u] \arrow[r, "\delta"]
	&
	\Omega^1_{A/k}\otimes_A A/I
	\arrow[r, "\delta"]
	\arrow[u, "d"]
	&
	\Omega^1_{A^{\otimes 2}/k}
	\otimes_{A^{\otimes 2}} A^{\otimes 2}/I_2
	\arrow[r, "\delta"]
	\arrow[u, "d"]
	&
	\cdots
	\\
	0 \arrow[u] \arrow[r, "\delta"]
	&
	I/I^2
	\arrow[r, "\delta"]
	\arrow[u, "d"]
	&
	I_2/I_2^2
	\arrow[r, "\delta"]
	\arrow[u, "d"]
	&
	\cdots  .
\end{tikzcd}
\]
Hence, we have
\[
H^1(\gr^1)\simeq H^1(0
\rightarrow
I/I^2
\xrightarrow{\,(\delta,-d)\,}
I_2/I_2^2\oplus \Omega^1_{A/k} \otimes_A (A/I)
\longrightarrow \cdots)=\ker(I/I^2
\xrightarrow{\,(\delta,-d)\,}
I_2/I_2^2\oplus \Omega^1_{A/k} \otimes_A (A/I)).
\]

We next show that these two descriptions of \(H^0(BG,L_{BG/k})\) are compatible in a suitable sense.
\begin{convention}
	For a map $d:M\to N$, we write $[M\xrightarrow{d}N]_{[a,a+1]}$
	for the two-term complex with $M$ in cohomological degree $a$
	and $N$ in cohomological degree $a+1$.
\end{convention}

We begin with the following commutative diagram, which involves
some subtle sign issues that we explain in the proof.
\begin{lemma}\label{cp}There is a commutative diagram  in $\mathsf{D}(k)$
\[
\begin{tikzcd}[column sep=large,row sep=large]
	R\Gamma(BG,L_{BG/k})[-1]
	\ar[r,"\simeq"]
	\ar[d,"\simeq"']
	&
	\gr^1(C^{\bullet})
	=
	\operatorname{Tot}\!\left(
	\left[
	I_\bullet/I_\bullet^2
	\xrightarrow{d}
	\Omega^1_{A^{\otimes\bullet}/k}\otimes_{A^{\otimes\bullet}} A^{\otimes\bullet}/I_\bullet
	\right]_{[0,1]}
	\right)
	\ar[d,"\simeq"]
	\\
	\displaystyle
	\left(
	\lim_{[n]\in\Delta} R\Gamma(G^n,L_{G^n/k})
	\right)[-1]
	\ar[r,"\simeq"]
	&
	\operatorname{Tot}\!\left(
	\left[
	I_\bullet/I_\bullet^2
	\xrightarrow{d}
	\Omega^1_{A^{\otimes\bullet}/k}\otimes_{A^{\otimes\bullet}} A^{\otimes\bullet}/I_\bullet
	\right]_{[-1,0]}
	\right)[-1].
\end{tikzcd}
\]
Here the upper horizontal equivalence is induced by taking the first graded piece of the equivalence $\mathrm{Fil}_{\mathrm{Hod}}^{*}\dR_{BG/k}\simeq \mathrm{Fil}^*C^{\bullet}$ in Construction~\ref{PD2}, the left vertical equivalence is induced by syntomic descent, and the lower horizontal equivalence comes from the standard presentation of cotangent complexes in the lci case. The right vertical equivalence acts by \((-1)^{n+1}\) on \(I_n/I_n^2\) and by \((-1)^n\) on \(\Omega^1_{A^{\otimes n}/k}\otimes_{A^{\otimes n}}A^{\otimes n}/I_n\).	
\end{lemma}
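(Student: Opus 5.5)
The plan is to reduce the whole square to a levelwise statement about a single affine scheme $G^n$ and then assemble over $\Delta$. Both routes from the upper left corner factor through the cosimplicial object $[n]\mapsto \gr^1_{\mathrm{Hod}}\dR_{G^n/k}$.

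\emph{Along the top.} The equivalence $\mathrm{Fil}_{\mathrm{Hod}}^*\dR_{BG/k}\simeq \lim_{[n]\in\Delta}\mathrm{Fil}^*_{\mathrm{PD}}\Omega^{\mathrm{pd},\bullet}_{D_{A^{\otimes n}}(I_n)/k}$ of Construction~\ref{PD2} is built levelwise from Theorem~\ref{PD1} and is natural in $[n]$. Taking $\gr^1$ commutes with the limit, so the upper arrow is the totalization of levelwise equivalences
\[
\gr^1_{\mathrm{Hod}}\dR_{G^n/k}\simeq \gr^1_{\mathrm{PD}}\Omega^{\mathrm{pd},\bullet}_{D_{A^{\otimes n}}(I_n)/k}=\bigl[I_n/I_n^2\xrightarrow{d}\Omega^1_{A^{\otimes n}/k}\otimes A^{\otimes n}/I_n\bigr]_{[0,1]}.
\]

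\emph{Along the left and bottom.} The left vertical arrow is descent for $\gr^1_{\mathrm{Hod}}\dR_{-}\simeq R\Gamma(-,L_{-})[-1]$. The bottom arrow is the totalization of the standard lci presentations $L_{G^n/k}\simeq[I_n/I_n^2\to\Omega^1_{A^{\otimes n}}\otimes A^{\otimes n}/I_n]_{[-1,0]}$, shifted by $[-1]$.

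So it suffices to show two things. First, for each $n$ the composite
\[
[\,\cdot\,]_{[0,1]}\simeq\gr^1\dR_{G^n/k}\simeq L_{G^n/k}[-1]\simeq [\,\cdot\,]_{[-1,0]}[-1]
\]
is the identity on the underlying complexes up to the standard shift sign. Concretely, $[-1]$ acts on a two-term complex by negating the differential; restoring $d$ means multiplying one term by $-1$, here $I_n/I_n^2$. Second, the resulting levelwise maps are compatible with the cosimplicial structure and with the totalization conventions. For the first point, I would check the comparison of Theorem~\ref{PD1} on $\gr^1$ against the lci presentation. Both are natural in the pair $(A,I)$ and are determined on the universal case of a polynomial algebra modulo a regular sequence, by left Kan extension from polynomial pairs. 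There, $\gr^1$ of the derived de Rham complex is computed from the conormal sequence, and one matches the two maps by hand.

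\emph{Assembly and signs.} The main obstacle is the sign bookkeeping in this second step. In $\operatorname{Tot}$ the vertical differential at cosimplicial level $n$ carries the sign $(-1)^n$. The complex $K_n[-1]$ placed in column $n$ has differential $-d$, while the top-row complex has differential $d$. Furthermore, the shift $[-1]$ outside $\operatorname{Tot}$ changes the total differential $\delta+(-1)^n d$ by a global sign. I would match the two totalizations explicitly degreewise: on $I_n/I_n^2$ the needed sign is $(-1)^{n+1}$, combining the shift sign with the column-parity sign, and on $\Omega^1$ it is $(-1)^n$. I would then verify directly that these signs intertwine both $\delta$ and $d_{\mathrm{Tot}}$. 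This is a finite check: $\delta$ preserves the row, and the sign changes by $-1$ when $n$ increases by one, which is absorbed by the $(-1)^n$ on the vertical differential. This produces the stated right vertical map. Commutativity in $\mathsf D(k)$ then follows because all four arrows are totalizations of compatible cosimplicial levelwise maps, via Proposition~\ref{Tot}.
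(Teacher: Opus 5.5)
Your decomposition is the paper's. You factor the upper arrow through $\lim_{[n]\in\Delta}\gr^1_{\mathrm{Hod}}\dR_{G^n/k}$ and use the levelwise comparison $(-\mathrm{id},\mathrm{id})$ between $\gr^1_{\mathrm{PD}}$ and the shifted lci presentation (Remark~\ref{sign1}, likewise checked on polynomial pairs). You then pick up a column sign $(-1)^n$ from moving $[-1]$ past the limit.

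The gap is in how you justify that last sign. Checking that your signed map intertwines the total differentials only shows it is \emph{a} chain isomorphism $\operatorname{Tot}([\cdots]_{[0,1]})\to\operatorname{Tot}([\cdots]_{[-1,0]})[-1]$. Write $\epsilon_n$ for the sign on $I_n/I_n^2$ and $\epsilon'_n$ for the sign on $\Omega^1_{A^{\otimes n}/k}\otimes_{A^{\otimes n}}A^{\otimes n}/I_n$. The chain-map condition is exactly $\epsilon_{n+1}=-\epsilon_n$ and $\epsilon'_n=-\epsilon_n$. So the map in the statement and its negative both pass your ``finite check''. When $p\neq 2$, they cannot both make the square commute in $\mathsf D(k)$, and the lemma is precisely the assertion of which one does.

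Your closing claim, that all four arrows are totalizations of levelwise maps via Proposition~\ref{Tot}, is also false. The left/bottom route passes through the canonical equivalence $(\lim_\Delta M^\bullet)[-1]\simeq\lim_\Delta(M^\bullet[-1])$. That equivalence is not induced by any levelwise map; it is exactly where $(-1)^n$ comes from, and its chain-level representative has to be identified.

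That identification is the missing ingredient, the paper's Lemma~\ref{sign2}. Totalize the levelwise fiber sequences $M^\bullet\to\operatorname{Cone}(\mathrm{id}_{M^\bullet})\to M^\bullet[1]$. Compare them with $\operatorname{Tot}(M^\bullet)\to\operatorname{Cone}(\mathrm{id}_{\operatorname{Tot}(M^\bullet)})\to\operatorname{Tot}(M^\bullet)[1]$ through the isomorphism $(x,y)\mapsto(x,(-1)^iy)$ on $M^{i,j}\oplus M^{i,j+1}$. This shows the canonical exchange $\operatorname{Tot}(M^\bullet)[-1]\to\operatorname{Tot}(M^\bullet[-1])$ is multiplication by $(-1)^i$ on column $i$, with the global sign fixed rather than merely constrained. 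Composing with $(-\mathrm{id},\mathrm{id})$ then yields $(-1)^{n+1}$ on $I_n/I_n^2$ and $(-1)^n$ on the $\Omega^1$ term.

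The same caution applies to your levelwise step. ``Restoring $d$ by negating one term'' does not by itself decide between $(-\mathrm{id},\mathrm{id})$ and $(\mathrm{id},-\mathrm{id})$. Only an actual computation on polynomial pairs does that, for instance a pair with $I=0$, where the sign on $\Omega^1$ is visibly $+1$.
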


\begin{proof}
	The upper horizontal equivalence in the statement can be expressed
	as the composite from the upper-left corner to the lower-right corner
	in the following commutative diagram:
	
	\[
	\begin{tikzcd}[column sep=large,row sep=large]
		R\Gamma(BG,L_{BG/k})[-1]
		=
		\gr^1_{\mathrm{Hod}}\dR_{BG/k}
		\ar[d,"\simeq"']
		\ar[dr,"\simeq"]
		&
		\\
		\displaystyle
		\lim_{[n]\in\Delta}
		\bigl(R\Gamma(G^n,L_{G^n/k})[-1]\bigr)
		=
		\lim_{[n]\in\Delta}
		\gr^1_{\mathrm{Hod}}\dR_{G^n/k}
		\ar[r,"\simeq"]
		\ar[d,"\simeq"']
		&
		\displaystyle
		\lim_{[n]\in\Delta}
		\gr^1_{\mathrm{PD}}
		\Omega^{\mathrm{pd},\bullet}_{D_{A^{\otimes n}}(I_n)/k}
		\ar[d,"\simeq"]
		\\
		\displaystyle
		\operatorname{Tot}\!\left(
		\left[
		I_\bullet/I_\bullet^2
		\xrightarrow{d}
		\Omega^1_{A^{\otimes\bullet}/k}
		\otimes_{A^{\otimes\bullet}}
		A^{\otimes\bullet}/I_\bullet
		\right]_{[-1,0]}[-1]
		\right)
		\ar[r,"\simeq"]
		&
		\displaystyle
		\operatorname{Tot}\!\left(
		\left[
		I_\bullet/I_\bullet^2
		\xrightarrow{d}
		\Omega^1_{A^{\otimes\bullet}/k}
		\otimes_{A^{\otimes\bullet}}
		A^{\otimes\bullet}/I_\bullet
		\right]_{[0,1]}
		\right)
		=\gr^1C^\bullet.
	\end{tikzcd}
	\]

The upper left vertical equivalence is induced by syntomic descent,
and the middle horizontal equivalence is induced by the comparison in Theorem~\ref{PD1}.
The lower vertical equivalences are given by the standard two-term
presentations of the cotangent complexes and the first graded
pieces of divided power de Rham complexes, respectively.
By Remark~\ref{sign1}, the lower horizontal equivalence acts by
$-\mathrm{id}$ on $I_n/I_n^2$ and by $\mathrm{id}$ on
$\Omega^1_{A^{\otimes n}/k}
\otimes_{A^{\otimes n}}A^{\otimes n}/I_n$.
On the other hand, by Lemma~\ref{sign2}, we obtain a commutative diagram
\[
\begin{tikzcd}[column sep=large,row sep=large]
	\displaystyle
	\left(
	\lim_{[n]\in\Delta}
	R\Gamma(G^n,L_{G^n/k})
	\right)[-1]
	\ar[r,"\simeq"] \ar[d,"\simeq"']
	&
	\displaystyle
	\lim_{[n]\in\Delta}
	\bigl(R\Gamma(G^n,L_{G^n/k})[-1]\bigr)
	\ar[d,"\simeq"]
	\\
	\operatorname{Tot}\!\left(
	\left[
	I_\bullet/I_\bullet^2
	\xrightarrow{d}
	\Omega^1_{A^{\otimes\bullet}/k}
	\otimes_{A^{\otimes\bullet}}
	A^{\otimes\bullet}/I_\bullet
	\right]_{[-1,0]}
	\right)[-1]
	\ar[r,"\simeq"]
	&
	\operatorname{Tot}\!\left(
	\left[
	I_\bullet/I_\bullet^2
	\xrightarrow{d}
	\Omega^1_{A^{\otimes\bullet}/k}
	\otimes_{A^{\otimes\bullet}}
	A^{\otimes\bullet}/I_\bullet
	\right]_{[-1,0]}[-1]
	\right).
\end{tikzcd}
\]
The upper horizontal arrow is the canonical equivalence exchanging
the shift with the limit, and the vertical equivalences come from
the standard two-term presentations of the cotangent complexes.
By Lemma~\ref{sign2}, the lower horizontal equivalence acts by
$(-1)^n$ on both \(I_n/I_n^2\) and \(\Omega^1_{A^{\otimes n}/k}\otimes_{A^{\otimes n}}A^{\otimes n}/I_n\). Composing the lower horizontal equivalences in the second and first
diagrams gives an equivalence acting by $(-1)^{n+1}$ on $I_n/I_n^2$
and by $(-1)^n$ on
$\Omega^1_{A^{\otimes n}/k}
\otimes_{A^{\otimes n}}A^{\otimes n}/I_n$.
Its inverse gives the right vertical equivalence in the statement,
with the same signs. Therefore, combining the two commutative diagrams gives the desired
commutative diagram, completing the proof.
	
\end{proof}

\begin{proposition}\label{com}
	There is a commutative diagram of $k$-vector spaces
	\[
	\begin{tikzcd}[column sep=large,row sep=large]
		(n_G)^G
		\ar[r,"\simeq"]
		\ar[d,hook]
		&
		H^0(BG,L_{BG/k})
		\ar[r,"\simeq"]
		&
		\ker\!\left(
		I/I^2
		\xrightarrow{\,(\delta,-d)\,}
		I_2/I_2^2\oplus
		\Omega^1_{A/k}\otimes_A(A/I)
		\right)
		\ar[d,"i^*"]
		\\
		n_G
		\ar[rr,"\simeq"]
		&&
		\ker\!\left(
		i^*(I/I^2)\to j^*\Omega^1_{A/k}
		\right).
	\end{tikzcd}
	\]
	The second upper horizontal isomorphism is obtained by taking $H^1$
	of the equivalence
	$R\Gamma(BG,L_{BG/k})[-1]\simeq\gr^1C^\bullet$.
	Viewing $n_G$ as a $k$-vector space, the lower horizontal isomorphism
	is induced by the identifications
	$n_G\simeq H^0(e^*L_{BG/k})
	\simeq H^{-1}(i^*L_{G/k})
	\simeq\ker(i^*(I/I^2)\to j^*\Omega^1_{A/k})$,
	where the last isomorphism comes from  the standard
	two-term presentation
	$L_{G/k}\simeq
	[I/I^2\xrightarrow{d}\Omega^1_{A/k}\otimes_A A/I]_{[-1,0]}$.
\end{proposition}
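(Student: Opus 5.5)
The idea is to realize every map in the diagram as $H^0$, or $H^1$ after a shift, of a map of complexes, and then use naturality of the equivalences in Lemma~\ref{cp} with respect to the base point $e\colon *\to BG$. The left vertical map $(n_G)^G\hookrightarrow n_G$ is $H^0$ of the restriction $R\Gamma(BG,L_{BG/k})\to R\Gamma(*,e^*L_{BG/k})=e^*L_{BG/k}$. In terms of descent along $*\to BG$, this restriction is the projection from the limit $\lim_{[n]\in\Delta}R\Gamma(G^n,L_{G^n/k})$ onto a single term. Concretely, I would use the augmentation coming from the pullback square $G=*\times_{BG}*$ in Lemma~\ref{pullback}, which identifies $e^*L_{BG/k}$ with $i^*L_{G/k}[-1]$.

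Under the two-term presentation, the relevant composite from $\operatorname{Tot}$ to $i^*L_{G/k}[-1]$ is the following. Project $\operatorname{Tot}\bigl([I_\bullet/I_\bullet^2\to\Omega^1\otimes A^{\otimes\bullet}/I_\bullet]_{[-1,0]}\bigr)[-1]$ onto its cosimplicial degree-$1$ column $[I/I^2\to\Omega^1_{A/k}\otimes_A A/I]_{[-1,0]}[-1]$, and then pull back along the unit $i$. The unit $i$ factors through $j$, so $i^*$ is compatible with $j^*\Omega^1_{A/k}$. The key steps, in order, are as follows.

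\begin{enumerate}
\item Write down the restriction-to-basepoint map on the descent side. Check that on $H^0$ it sends a cocycle, whose component lies in the degree-$1$ column $I/I^2$, to its image under $i^*$ in $\ker(i^*(I/I^2)\to j^*\Omega^1_{A/k})=H^{-1}(i^*L_{G/k})$. This requires that the identification $e^*L_{BG}\simeq i^*L_G[-1]$ of Lemma~\ref{pullback} agree with the one coming from the Čech nerve, since both come from the same square $G\rightrightarrows *\to BG$.
\item Compose with the right vertical equivalence of Lemma~\ref{cp} to pass to $\gr^1C^\bullet$. In degree $n=1$ it acts by $(-1)^{n+1}=+1$ on $I/I^2$, so it does not change the element.
\item Use the kernel description of $H^1(\gr^1C^\bullet)$ established just before the statement.
\end{enumerate}

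Combining these, the square commutes on the nose with the map $i^*$, and the upper composite is an isomorphism by the preceding discussion.

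I expect the main obstacle to be the sign and shift bookkeeping in step 1. One has to see that the canonical identification $n_G=H^0(e^*L_{BG})\simeq H^{-1}(i^*L_G)$ coincides, including sign, with extracting the degree-$1$ cosimplicial component of the totalization. My plan is to compute the projection $\lim_{\Delta}\to$ (degree-$1$ term) via the fiber sequence $\operatorname{Tot}\to(\text{degree }0)\to(\text{degree }1)[-1]$; note that the degree-$0$ column is zero, since $L_{*/k}=0$. I would then compare this with the cofiber computation in Lemma~\ref{pullback}, and absorb any residual global sign $\pm1$, which does not affect commutativity up to the automorphism $-1$ of $n_G$. If necessary I would adjust by normalizing the lower isomorphism accordingly. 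Since $n=1$ makes every sign in Lemma~\ref{cp} trivial on $I/I^2$, I expect no sign to actually appear.
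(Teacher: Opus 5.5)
Your proposal is correct and is essentially the paper's argument. The paper pulls the descent presentation back along the unit sections $i_n$ and identifies $\operatorname{Tot}$ of the resulting cosimplicial complex $(C^{\oplus n})_{[n]\in\Delta}$, where $C=[i^*(I/I^2)\to j^*\Omega^1_{A/k}]_{[-1,0]}$, with $C[-1]$ via the normalization inclusion $C[-1]\to\operatorname{Tot}(C^{\oplus\bullet})$; your projection onto the cosimplicial degree-$1$ column is a chain map because the degree-$0$ column vanishes, and it is exactly the quasi-inverse of that inclusion. Like you, the paper then uses that Lemma~\ref{cp} acts by $(-1)^{1+1}=1$ on $I/I^2$. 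Your step~1 is settled in the paper by Lemma~\ref{pullback}, which shows that the augmented diagram $e^*L_{BG/k}\to(i_n^*L_{G^n/k})_{[n]\in\Delta}$ is the co-\v{C}ech nerve of $e^*L_{BG/k}\to 0$ and hence a limit diagram. That is what pins down the lower isomorphism, so no residual sign arises, and your fallback of absorbing one would not be legitimate anyway, since the maps in the diagram are prescribed.
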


\begin{proof}
	Let $i_n:\operatorname{Spec}(k)\to G^n$ denote the unit section.
	Pulling back along the base point $e$ of $BG$ and the unit sections
	$i_n$ of $G^n$ gives the following commutative diagram in $\mathsf D(k)$:
	\[
	\begin{tikzcd}[row sep=large,column sep=large]
		R\Gamma(BG,L_{BG/k})
		\ar[r,"\simeq"]
		\ar[d,"e^*"']
		&
		\displaystyle
		\lim_{[n]\in\Delta}
		R\Gamma(G^n,L_{G^n/k})
		\ar[r,"\simeq"]
		\ar[d,"\displaystyle\lim_{[n]\in\Delta}i_n^*"]
		&
		\displaystyle
		\operatorname{Tot}\!\left(
		\left[
		I_\bullet/I_\bullet^2
		\xrightarrow{d}
		\Omega^1_{A^{\otimes\bullet}/k}
		\otimes_{A^{\otimes\bullet}}
		A^{\otimes\bullet}/I_\bullet
		\right]_{[-1,0]}
		\right)
		\ar[d,"\operatorname{Tot}(i_\bullet^*)"]
		\\
		e^*L_{BG/k}
		\ar[r,"\simeq"]
		&
		\displaystyle
		\lim_{[n]\in\Delta}
		i_n^*L_{G^n/k}
		\ar[r,"\simeq"]
		&
		\displaystyle
		\operatorname{Tot}\!\left(
		\left[
		i_\bullet^*(I_\bullet/I_\bullet^2)
		\xrightarrow{d}
		i_\bullet^*\!\left(
		\Omega^1_{A^{\otimes\bullet}/k}
		\otimes_{A^{\otimes\bullet}}
		A^{\otimes\bullet}/I_\bullet
		\right)
		\right]_{[-1,0]}
		\right).
	\end{tikzcd}
	\]
	The first upper horizontal arrow is an equivalence by flat descent
	for cotangent complexes.
	The first lower horizontal arrow is induced by the augmented
	cosimplicial diagram
	$e^*L_{BG/k}\to(i_n^*L_{G^n/k})_{[n]\in\Delta}$.
	By Lemma~\ref{pullback}, this is the co\v{C}ech nerve of
	$e^*L_{BG/k}\to0$, and hence is a limit diagram by the dual of
	\cite[Lemma~1.2.4.19]{Lur17}.
	The remaining horizontal equivalences are induced by the standard
	two-term presentations for the cotangent complexes.
	
Let
$C:=[i^*(I/I^2)\xrightarrow{d}j^*\Omega^1_{A/k}]_{[-1,0]}$.
The cosimplicial system of cochain complexes
$\bigl(
[i_n^*(I_n/I_n^2)\xrightarrow{d}
i_n^*(\Omega^1_{A^{\otimes n}/k}
\otimes_{A^{\otimes n}}A^{\otimes n}/I_n)]_{[-1,0]}
\bigr)_{[n]\in\Delta}$
is naturally isomorphic to $(C^{\oplus n})_{[n]\in\Delta}$
with its standard cobar structure maps.
Normalizing in the cosimplicial direction gives a bicomplex
with $C$ in cosimplicial degree $1$ and zero in all other
cosimplicial degrees.
Its total complex is $C[-1]$, and the inclusion into the
unnormalized bicomplex induces a natural quasi-isomorphism
$C[-1]\to\operatorname{Tot}(C^{\oplus\bullet})$.
Thus we obtain equivalences
$e^*L_{BG/k}\simeq\operatorname{Tot}(C^{\oplus\bullet})\simeq C[-1]$
in $\mathsf D(k)$, recovering the canonical equivalence
$e^*L_{BG/k}\simeq i^*L_{G/k}[-1]$.
Consequently, taking $H^0$ of the lower row of the diagram above
gives the lower horizontal isomorphism in the statement.
	
Taking $H^0$ and applying Lemma~\ref{cp}, we obtain the following
commutative diagram:
\[
\begin{tikzcd}[row sep=large,column sep=large]
	H^0(BG,L_{BG/k})
	\ar[r,"\simeq"]
	\ar[d,"e^*"']
	&
	H^0\!\left(
	\operatorname{Tot}\!\left(
	\left[
	I_\bullet/I_\bullet^2
	\xrightarrow{d}
	\Omega^1_{A^{\otimes\bullet}/k}
	\otimes_{A^{\otimes\bullet}}
	A^{\otimes\bullet}/I_\bullet
	\right]_{[-1,0]}
	\right)
	\right)
	\ar[r,"\simeq"]
	&
	H^1(\gr^1C^\bullet)
	\ar[d,"i^*"]
	\\
	n_G
	\ar[rr,"\simeq"]
	&&
	\ker\!\left(
	i^*(I/I^2)\to j^*\Omega^1_{A/k}
	\right).
\end{tikzcd}
\]
The second upper horizontal isomorphism is induced by
Lemma~\ref{cp} and acts by $(-1)^{1+1}=1$ on $I/I^2$
in cosimplicial degree $1$.
Under the identification
$H^1(\gr^1C^\bullet)
=\ker(I/I^2\xrightarrow{(\delta,-d)}
I_2/I_2^2\oplus\Omega^1_{A/k}\otimes_A A/I)$,
the right vertical map is therefore restriction along $i$.
The composite of the upper horizontal isomorphisms is obtained
by taking $H^1$ of
$R\Gamma(BG,L_{BG/k})[-1]\simeq\gr^1C^\bullet$.
Identifying $H^0(BG,L_{BG/k})$ with $(n_G)^G$ gives
the diagram in the statement.
\end{proof}
\begin{notation}
	Let \(y\in I\), and write \(\bar y\in I/I^2\) for its image. Suppose that $i^*(\bar y) \in \ker\bigl(i^*(I/I^2)\to j^*\Omega^1_{A/k} \bigr)$.
	Under the canonical identification \(n_G=\ker(i^*(I/I^2)\to j^*\Omega^1_{A/k})\), we denote by \(\eta_y\in n_G\) the element corresponding to \(i^*\bar y\).
\end{notation}
\begin{remark}\label{eta}
	Suppose that $\eta_y $ is $G$-invariant, i.e. that it lies in $(n_G)^G$.  By Proposition~\ref{com}, there is a unique element \([\widetilde y]\in \ker\left(I/I^2\xrightarrow{(\delta,-d)}I_2/I_2^2\oplus
	\bigl(\Omega^1_{A/k}\otimes_A A/I\bigr)\right)=H^1(\gr^1C^{\bullet})\) whose restriction along the  map $i^*:\ker\!\left(
	I/I^2
	\xrightarrow{\,(\delta,-d)\,}
	I_2/I_2^2\oplus
	\bigl(\Omega^1_{A/k}\otimes_A A/I\bigr)
	\right)\to \ker\bigl(i^*(I/I^2)\to j^*\Omega^1_{A/k} \bigr)$ is $\eta_y$. Under the two descriptions of $H^0(BG,L_{BG/k})$ above,
	the elements $\eta_y$ and its lift represent the same cohomology class.
\end{remark}

We conclude our discussion of $H^0(BG,L_{BG/k})$ with a lemma giving a sufficient condition for the \(G\)-action on \(n_G\) to be trivial.

\begin{lemma}\label{normal}
	Let $G$ be a finite type group scheme over $k$ admitting a closed embedding into a smooth  affine group scheme $\widetilde{G}$, such that  $G$ is a normal subgroup scheme of $\widetilde{G}$. Then $n_G$ is a $G$-representation with trivial $G$-action.
\end{lemma}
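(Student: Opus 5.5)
The plan is to describe the $G$-action on $n_G$ concretely through the embedding $G\hookrightarrow\widetilde G$, and then to see that this action extends to an action of $\widetilde G$ which can only differ from the trivial one on a part that vanishes. By the preceding discussion, $n_G=\mathcal H^{-1}(\ell_G)$, with $\ell_G=i_{BG}^*L_{[G/^{\mathrm{ad}}G]/BG}$. So, as a $G$-representation, $n_G\simeq H^{-1}(i^*L_{G/k})$, where $G$ acts through the conjugation action of $G$ on itself. The two-term presentation gives
\[
n_G\simeq\ker\bigl(i^*(I/I^2)\to j^*\Omega^1_{A/k}\bigr),
\]
where $I/I^2$ is the conormal module of $G\subset\widetilde G$. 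I will use this description throughout.

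The first step is to use normality. Since $G$ is normal in $\widetilde G$, conjugation by $\widetilde G$ preserves $G$. This gives an action of $\widetilde G$ on the pair $(\widetilde G,G)$ by group automorphisms fixing the unit section. It therefore induces a $\widetilde G$-equivariant structure on the exact sequence
\[
0\to n_G\to i^*(I/I^2)\to j^*\Omega^1_{A/k}\to\omega_G\to 0,
\]
and restricting this structure to $G\subset\widetilde G$ recovers the co-Lie action. Functoriality of the presentation under the conjugation automorphisms makes this precise. Put differently, the action on $n_G$ factors through $G\to\widetilde G\to\mathrm{Aut}$, and the intermediate representation $i^*(I/I^2)$ is the conormal space of $G$ at the identity inside $\widetilde G$.

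The key step is to show that $G$ acts trivially on the subspace $n_G$. I would first try the following. An element $\bar y\in i^*(I/I^2)$ with $y\in I$ has image $dy|_e=0$ in $j^*\Omega^1_{A/k}$ exactly when $y\in I\cap\mathfrak m_e^2$. For $g\in G(R)$, conjugation $c_g$ sends $y$ to $c_g^*y$. Because $G$ is normal, the commutator map $\widetilde G\times G\to G$, $(h,g)\mapsto hgh^{-1}g^{-1}$, lands in $G$. For $g\in G$ one has $c_g^*y-y\in I$, and the goal is to show that this difference lies in $I\cdot\mathfrak m_e+I^2$, i.e. that it vanishes in $i^*(I/I^2)$. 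Expanding the conjugation action infinitesimally at $e$, the first-order change is governed by the differential of $c_g$ on the linear part of $y$, and this vanishes because $dy|_e=0$. The higher-order change involves products of the coordinates of $g$, which are zero on $G$, with terms in $\mathfrak m_e$, so it lands in $I\mathfrak m_e$.

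I expect the main obstacle to be making this last argument rigorous with group-scheme points rather than coordinates. Concretely, one has to verify that $(c_g^*-1)$ sends $I\cap\mathfrak m_e^2$ into $I\mathfrak m_e+I^2$ after base change to $R$. If the direct Taylor-expansion argument becomes unwieldy, I would fall back on a more conceptual route. The action of $G$ on $H^{-1}$ of the conormal sequence can be computed through the adjoint map $\widetilde G\to\mathrm{Aut}(T_e\widetilde G)$. On the part of the conormal module that is not seen by tangent vectors, namely the kernel $n_G$, the action is induced from the deformation $c_g$ of the identity of $\widetilde G$. Since $g$ lies in $G$, that deformation is trivial modulo $I$, so it acts as the identity on $n_G$.
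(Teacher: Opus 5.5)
Your reduction is fine: with $G$ acting by conjugation, $n_G=(I\cap\mathfrak m_e^2)/I\mathfrak m_e$. So the lemma is equivalent to the inclusion $(c_g^*-1)(I\cap\mathfrak m_e^2)\subset (I\mathfrak m_e)\otimes R$ for $g\in G(R)$. That inclusion is the whole content of the lemma, and your argument for it does not work.

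\begin{itemize}
\item The first-order step only shows $c_g^*y-y\in I\cap\mathfrak m_e^2$, which is no progress.
\item The higher-order step never uses normality. Conjugation by $g\in G$ preserves $G$ whether or not $G$ is normal in $\widetilde G$, so nothing in your expansion distinguishes the normal case. Yet the conclusion fails without normality.
\end{itemize}

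For instance, $G=V(x^p,y^p,z^{p^2})$ is a closed subgroup scheme of the Heisenberg group $H$, and it is not normal. Conjugation by $g=(a,b,c)\in G(R)$ sends $y\mapsto y+az-cx$. Since $a^p=0$, this gives
\[
c_g^*(y^p)-y^p=-c^p\,x^p,
\]
and $c^p\neq 0$ in $\mathcal O(G)$. Because $d$ kills $p$-th powers, $n_G$ is spanned by the classes of $x^p,y^p,z^{p^2}$, and $G$ acts non-trivially on it. This is exactly the kind of term your heuristic discards: a nonzero function of $g$ times an element of $I$ that is not in $I\mathfrak m_e$. The coordinates of $g$ are not zero; they only satisfy the equations of $G$. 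The paper's remark with $\alpha_p^2\rtimes C_2\subset\mathbb G_a^2\rtimes C_2$ is another counterexample, and there $c_g$ is not even an infinitesimal deformation of the identity.

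The missing idea is to use normality to form the quotient $K=\widetilde G/G$. This is a smooth affine group scheme, $q\colon\widetilde G\to K$ is faithfully flat, and $G=q^{-1}(e_K)$. The argument then runs as follows.
\begin{itemize}
\item The ideal of $G$ is $I=\mathfrak m_K A$.
\item Flatness gives $I/I^2\simeq\omega_K\otimes_k A/I$, hence $i^*(I/I^2)\simeq\omega_K$.
\item For $g\in G$ one has $q\circ c_g=q$, so $c_g^*$ fixes $q^*\mathcal O(K)$.
\item Therefore $c_g^*$ acts trivially on all of $i^*(I/I^2)$, in particular on $n_G$.
\end{itemize}
Your desired estimate then becomes honest: writing $y=\sum b_ia_i$ with $b_i\in\mathfrak m_K$, you get $c_g^*y-y=\sum b_i(c_g^*a_i-a_i)\in\mathfrak m_K\mathfrak m_e\subset I\mathfrak m_e$.

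This is essentially the paper's proof. The paper phrases it via the cartesian square $BG\simeq *\times_{BK}B\widetilde G$, which yields an exact sequence $0\to n_G\to\pi^*e_1^*\omega_K\to h^*\omega_{\widetilde G}$. That sequence exhibits $n_G$ inside a representation pulled back from a point.

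Your fallback's ``trivial modulo $I$'' gestures at this. Taken literally, though ($c_g\equiv\mathrm{id}$ on $G$), it would say that $g$ centralizes $G$, which is false for non-commutative $G$. What is true is that $c_g$ is the identity on $\widetilde G/G$. To exploit it, you need to identify $n_G$ with a subspace of $\omega_K$.
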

\begin{proof}
Let $K$ be the quotient group scheme $\widetilde{G}/G$, which is a smooth affine  group scheme over $k$ by \cite[Exp.~VI$_B$, Thm.~11.17]{SGA3}.  Consider a fiber sequence in $\Stk_{k}$
	\[
	\begin{tikzcd}[column sep=large, row sep=large]
		BG \arrow[r,"h"] \arrow[d,"\pi"] \arrow[dr, phantom, "\lrcorner", very near start]
		&
		B\widetilde{G} \arrow[d,"g"]
		\\
		* \arrow[r,"e_1"]
		&
		BK .
	\end{tikzcd}
	\]
	Thus, we obtain\[L_{BG}\simeq h^*L_{B\widetilde{G}/BK}\simeq h^*\cofib(g^*L_{BK}\to L_{B\widetilde{G}})\simeq \cofib(h^*g^*L_{BK}\to h^*L_{B\widetilde{G}})\simeq \cofib(\pi^*e_1^*L_{BK}\to h^*L_{B\widetilde{G}} ).\] 
	Taking cohomology, we obtain an exact sequence in $\QCoh(BG)=\mathsf{Rep}(G)$
	
	\[0\to n_G\to \pi^*e_1^*\omega_K\to h^*\omega_{\widetilde{G}}.\]
	Since $\pi^*e_1^*\omega_K$  is a trivial $G$-representation, it follows that $n_G$ is a trivial $G$-representation.

\end{proof}

\begin{remark}
	In general, the \(G\)-representation \(n_G\) need not be trivial, even for finite group schemes. For example, consider the semidirect product $G=\alpha_p^2\rtimes C_2$, where $C_2$ acts on $\alpha_p^2$ by permuting the two factors.
\end{remark}

We next consider $H^1(BG,L_{BG/k})$.
We continue to work with the fixed embedding
$G\hookrightarrow\widetilde G$ and retain the notation introduced above. We now impose the additional condition that the canonical map $d\colon I/I^2\to\Omega^1_{A/k}\otimes_A A/I$ is zero.
This condition will be assumed throughout the remainder of this subsection. In this case, we obtain a canonical splitting of $L_{BG/k}$ as follows.
\begin{lemma}\label{split} Under the above assumption, we have canonical equivalences $L_{BG/k}\simeq n_G\oplus \omega_G[-1]$ and $f_{-1}^*L_{B\widetilde{G}/k}\simeq \omega_G[-1]$.
	
\end{lemma}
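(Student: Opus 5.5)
Under the standing assumption that $d\colon I/I^2\to\Omega^1_{A/k}\otimes_A A/I$ vanishes, I will first split the two-term presentation, then show that the splitting descends along the cosimplicial structure, and then identify the second piece with $f_{-1}^*L_{B\widetilde G/k}$.

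\textbf{Step 1: split at each level.} For every $n$ the map $d\colon I_n/I_n^2\to\Omega^1_{A^{\otimes n}/k}\otimes A^{\otimes n}/I_n$ is zero. This holds because $I_n$ is generated by the pullbacks of generators of $I$ along the projections, and $d$ is compatible with these projections. Consequently the standard presentation
\[L_{G^n/k}\simeq\bigl[I_n/I_n^2\xrightarrow{0}\Omega^1_{A^{\otimes n}/k}\otimes A^{\otimes n}/I_n\bigr]_{[-1,0]}\]
splits canonically as $I_n/I_n^2[1]\oplus\Omega^1_{A^{\otimes n}/k}\otimes A^{\otimes n}/I_n$. The key point is that this splitting is natural in maps of pairs $(A^{\otimes m},I_m)\to(A^{\otimes n},I_n)$, since a two-term complex with zero differential is canonically the sum of its cohomologies. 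Hence the splitting is compatible with the cosimplicial structure maps.

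\textbf{Step 2: descend the splitting to $BG$.} The cotangent complex is a quasi-coherent sheaf, so I will work with the cosimplicial quasi-coherent objects $L_{G^n/k}$ attached to the \v{C}ech nerve of $*\to BG$. Equivalently, I will use the description $L_{BG/k}\simeq\ell_G[-1]$ with $\ell_G$ computed via the embedding. More concretely, $\ell_G$ is pulled back from the $G$-equivariant (adjoint) structure on $j^*$ of the presentation: $\ell_G$ is represented by the $G$-equivariant two-term complex $[i^*(I/I^2)\to j^*\Omega^1_{A/k}]$ with $G$ acting by conjugation. Its differential is zero, so it splits $G$-equivariantly as $\mathcal H^{-1}[1]\oplus\mathcal H^0$. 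After shifting by $[-1]$ this gives $L_{BG/k}\simeq n_G\oplus\omega_G[-1]$.

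\textbf{Step 3: identify the pullback from $B\widetilde G$.} Consider the map $f_{-1}^*L_{B\widetilde G/k}\to L_{BG/k}$. Since $\widetilde G$ is smooth, $L_{B\widetilde G/k}\simeq\omega_{\widetilde G}[-1]$ with $\omega_{\widetilde G}=j^*\Omega^1_{A/k}$. The map $i^*L_{\widetilde G}\to i^*L_G$ is the inclusion of $j^*\Omega^1_{A/k}$ as the degree-$0$ term of the presentation. It is therefore an isomorphism onto $\mathcal H^0=\omega_G$, because the differential vanishes and hence $j^*\Omega^1_{A/k}\to\omega_G$ is an isomorphism. This identifies $f_{-1}^*L_{B\widetilde G/k}$ with the summand $\omega_G[-1]$.

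\textbf{Main obstacle.} The hardest point is Step 2: making sure the splitting is genuinely $G$-equivariant (canonical in $\mathsf D(BG)$) rather than merely a splitting of underlying vector spaces. I would handle this by realizing $\ell_G$ as $i_{BG}^*L_{[G/^{\mathrm{ad}}G]/BG}$. The embedding $[G/^{\mathrm{ad}}G]\hookrightarrow[\widetilde G/^{\mathrm{ad}}G]$ over $BG$ is a regular immersion into a smooth object, so it gives a two-term presentation in $\QCoh(BG)$ whose differential is the descent of the zero map $d$. A two-term complex in an abelian category with zero differential is canonically split, and this yields the claimed splitting.
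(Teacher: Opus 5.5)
Your argument can be made to work, but it is more roundabout than the paper's. In fact your Step~3 by itself is the paper's proof.

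\paragraph{The paper's route.} The paper takes the transitivity sequence $f_{-1}^*L_{B\widetilde G/k}\to L_{BG/k}\to L_{BG/B\widetilde G}$. Since $\widetilde G$ is smooth, $f_{-1}^*L_{B\widetilde G/k}$ is concentrated in degree $1$. The paper then checks that the first map is an isomorphism on $\mathcal H^1$ after applying $e^*$, which is $t$-exact and conservative because $e$ is an fppf cover. After $e^*$ the map becomes $j^*\Omega^1_{A/k}\to H^0\bigl([i^*(I/I^2)\xrightarrow{0}j^*\Omega^1_{A/k}]\bigr)$, which is visibly an isomorphism. A map in $\mathsf D(BG)$ from an object concentrated in degree $1$ that is an isomorphism on $\mathcal H^1$ is a section of $L_{BG/k}\to\tau^{\geq 1}L_{BG/k}=\omega_G[-1]$. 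It therefore splits $L_{BG/k}$ and gives both equivalences at once, and equivariance is automatic because the section is a morphism in $\mathsf D(BG)$.

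So the ``main obstacle'' you identify does not arise, and Steps~1--2 are unnecessary. Step~1 is never actually used: the $L_{G^n/k}$ are not pullbacks of $L_{BG/k}$ along $G^n\to BG$, so its levelwise splittings would not descend directly.

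\paragraph{Your adjoint-quotient route.} This route is valid. Pull back the transitivity triangle of $[G/^{\mathrm{ad}}G]\hookrightarrow[\widetilde G/^{\mathrm{ad}}G]$ over $BG$ along $i_{BG}$. This gives a fiber sequence $j^*\Omega^1_{A/k}\to\ell_G\to i^*(I/I^2)[1]$. Its connecting map is a morphism in $\mathsf{Rep}(G)$ whose underlying linear map is zero, so it is zero.

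Be careful, though, with ``zero differential, hence canonically split.'' Splittings of an object of $\mathsf D(BG)$ with cohomology $n_G,\omega_G$ form a torsor under $\mathrm{Ext}^1_G(\omega_G,n_G)$. This group is nonzero already for $G=\alpha_p$. So canonicity comes only from the specific section $j^*\Omega^1_{A/k}\to\ell_G$.

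In Step~3 you call $f_{-1}^*L_{B\widetilde G/k}$ ``the summand'' of your Step~2 splitting. That requires checking that this section corresponds to $f_{-1}^*L_{B\widetilde G/k}\to L_{BG/k}$ under $L_{BG/k}\simeq\ell_G[-1]$. The literal statement does not need this, since the composite $f_{-1}^*L_{B\widetilde G/k}\to L_{BG/k}\to\omega_G[-1]$ is an equivalence regardless. However, Proposition~\ref{com2} uses precisely the splitting defined by $f_{-1}^*L_{B\widetilde G/k}$, and the paper's route produces that splitting by construction.

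What your route buys in exchange is an explicit $G$-equivariant two-term model of $\ell_G$.
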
	
\begin{proof}
	Note that there is a fiber sequence $ f_{-1}^*L_{B\widetilde{G}/k}\to L_{BG/k} \to L_{BG/B\widetilde{G}}$. Since $\widetilde{G}$ is smooth over $k$, the complex $f_{-1}^*L_{B\widetilde{G}/k}$ is concentrated in cohomological degree \(1\). Thus, it suffices to show $f_{-1}^*L_{B\widetilde{G}/k}\to L_{BG/k}$ induces an equivalence on $\mathcal{H}^1$. Indeed, this identifies \(f_{-1}^*L_{B\widetilde{G}/k}\) with \(\omega_G[-1]\) via the composite \(f_{-1}^*L_{B\widetilde{G}/k}\to L_{BG/k}\to \omega_G[-1]\), which gives a canonical splitting \(L_{BG/k}\simeq n_G\oplus\omega_G[-1]\). Recall that the base point
	$ e: \operatorname{Spec}(k)\longrightarrow BG
	$
	is a fppf cover. Hence \(e^*\) is \(t\)-exact and conservative. We are thus reduced to proving that $H^1(\widetilde{e}^*L_{B\widetilde{G}/k})\to H^1(e^*L_{BG/k})$ is an isomorphism. Equivalently, we need to show that $H^0(j^*L_{\widetilde{G}/k}) \to H^0(i^*L_{G/k}) $ is an isomorphism. Concretely, this is the map on \(H^0\) induced by
	$
	j^*\Omega^1_{\widetilde G/k}
	\longrightarrow
	\bigl[
	i^*(I/I^2)
	\xrightarrow{0}
	j^*\Omega^1_{\widetilde G/k}
	\bigr]_{[-1,0]}
	$, which is an isomorphism by the assumption.
\end{proof}	

It follows that $H^1(BG,L_{BG/k})\simeq H^1(BG,n_G\oplus \omega_G[-1])\simeq H^1(BG,n_G)\oplus H^0(BG,\omega_G)=H^1(BG,n_G)\oplus (\omega_G)^G$. We next establish a compatibility statement for $H^1(BG,L_{BG/k})$ analogous to that in Proposition~\ref{com}. We begin with the following technical lemma.

\begin{lemma}\label{fun}The natural map
		\[ R\Gamma\!\left(BG,f_{-1}^*L_{B\widetilde{G}/k}\right)\longrightarrow \lim_{[n]\in \Delta}R\Gamma\!\left(G^n,f_n^*L_{\widetilde G^n/k}\right)\]
		induced by functoriality of the cotangent complex is an equivalence.
\end{lemma}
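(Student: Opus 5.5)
The plan is to reduce the statement to flat descent for quasi-coherent sheaves along the base point $e\colon \operatorname{Spec}(k)\to BG$. Let $U_\bullet\to BG$ be the \v{C}ech nerve of $e$, so that $U_n\simeq G^n$. The face and degeneracy maps are the usual bar-construction maps. The key observation is that the composite $G^n\simeq U_n\to BG\xrightarrow{f_{-1}} B\widetilde G$ is canonically identified with $G^n\xrightarrow{f_n}\widetilde G^n\to B\widetilde G$, where $\widetilde G^n\to B\widetilde G$ is the $n$-th term of the \v{C}ech nerve of $\widetilde e$. The identification holds because $f$ is a group homomorphism, so $B f$ carries the \v{C}ech nerve of $e$ to that of $\widetilde e$ termwise via $f_n$. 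Write $q_n\colon G^n\to BG$ for the augmentation.

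First I will show that, for each $n$, the natural map $q_n^*f_{-1}^*L_{B\widetilde G/k}\to f_n^*L_{\widetilde G^n/k}$ induced by functoriality of the cotangent complex is an equivalence. The argument uses the transitivity triangle for $\widetilde G^n\to B\widetilde G\to \operatorname{Spec}(k)$. Since $\widetilde G^n\to B\widetilde G$ is a base change of $\widetilde e$, its relative cotangent complex is the pullback of $L_{*/B\widetilde G}\simeq \widetilde e^*L_{B\widetilde G}[1]$, which lies in cohomological degree $0$ because $\widetilde G$ is smooth. So the map $p_n^*L_{B\widetilde G/k}\to L_{\widetilde G^n/k}$ is \emph{not} an equivalence; its cofiber is $\Omega^1$ of the fibres, in degree $0$. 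This shows the naive termwise claim is false. I will therefore reinterpret the statement: the right-hand limit should be identified with the limit of the cosimplicial object $R\Gamma(G^n, q_n^*f_{-1}^*L_{B\widetilde G/k})$, and I will show directly that the extra pieces contribute nothing in the limit.

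Concretely, I will work with the fibre sequences $q_n^*f_{-1}^*L_{B\widetilde G/k}\to f_n^*L_{\widetilde G^n/k}\to f_n^*L_{\widetilde G^n/B\widetilde G}$. These are natural in $[n]$, and the total limits form a fibre sequence. Flat descent along $e$ identifies $\lim_n R\Gamma(G^n,q_n^*M)$ with $R\Gamma(BG,M)$ for any $M\in\mathsf D(BG)$, so the first term gives $R\Gamma(BG,f_{-1}^*L_{B\widetilde G/k})$. It remains to show that $\lim_n R\Gamma(G^n, f_n^*L_{\widetilde G^n/B\widetilde G})=0$. The relative cotangent complexes $L_{\widetilde G^n/B\widetilde G}$ are the cotangent complexes of the \v{C}ech nerve of $\widetilde e$ over $B\widetilde G$. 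Restricted along the unit sections, they form the cosimplicial object $n\mapsto (j^*\Omega^1_{\widetilde G})^{\oplus (n+1)}$ with the coface maps of the \v{C}ech nerve of a point. This is the cosimplicial object $\operatorname{Hom}(\Delta^n_{\text{vertices}},V)$, which has a contraction (an extra codegeneracy), so its totalization vanishes.

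The main obstacle is making this vanishing argument precise globally on $G^n$ rather than at the unit section. I will try to present $f_n^*L_{\widetilde G^n/B\widetilde G}$ as $q_n^*$ of an augmented cosimplicial object in $\mathsf D(BG)$ that is split. Concretely, this should be pulled back from $\widetilde e$-\v{C}ech data over $B\widetilde G$, where the extra degeneracy coming from the section $\widetilde e$ gives a contraction. Commuting $\lim_\Delta$ with $R\Gamma$ then gives the vanishing.
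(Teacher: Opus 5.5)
Your reduction is correct, and it is more economical than the paper's route. Since $f_{-1}\circ q_n\simeq p_n\circ f_n$ compatibly in $[n]$, quasi-coherent descent along $e$ together with the transitivity triangles for $\widetilde G^n\to B\widetilde G\to\operatorname{Spec}(k)$, pulled back along $f_n$, shows the lemma is equivalent to $\lim_{[n]\in\Delta}R\Gamma(G^n,f_n^*L_{\widetilde G^n/B\widetilde G})\simeq 0$. The paper instead compares $L_{BG/B\widetilde G}$ with $L_{G^n/\widetilde G^n}$, pushes forward to $B\widetilde G$, and pulls back along $\widetilde e$ using a Tor-amplitude argument. It then ends by showing that a cosimplicial limit of $R\Gamma(G^n\times\widetilde G,-)$ applied to pullbacks of $L_{\widetilde G^{n+1}/k}$ vanishes.

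Two small inaccuracies along the way:
\begin{itemize}
\item For $n\geq 1$, $p_n$ is not a base change of $\widetilde e$. It is the $(n+1)$-fold fibre product of $\widetilde e$ with itself over $B\widetilde G$, so $L_{\widetilde G^n/B\widetilde G}$ is a sum of $n+1$ pullbacks of $L_{*/B\widetilde G}$, not one. You use the correct rank later.
\item Nothing needs to be ``reinterpreted'': the lemma says exactly that these termwise non-equivalences become an equivalence after totalization.
\end{itemize}

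The genuine gap is the vanishing step, which you name as the main obstacle but do not prove. The cosimplicial vector space obtained by restricting to unit sections does not determine $\lim_n R\Gamma(G^n,-)$. You need a description of $f_n^*L_{\widetilde G^n/B\widetilde G}$ on all of $G^n$, natural in $[n]$. The mechanism you suggest is also off: the \v{C}ech nerve of $\widetilde e$ is not split over $B\widetilde G$, since that would require a section of $\widetilde e$. The contraction lives in the coefficients.

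The missing input is the fibre-product formula $L_{\widetilde G^n/B\widetilde G}\simeq\bigoplus_{i=0}^{n}\mathrm{pr}_i^*L_{*/B\widetilde G}$. Here the $\mathrm{pr}_i$ are the projections of $*\times_{B\widetilde G}\cdots\times_{B\widetilde G}*$, and a simplicial operator $\theta\colon[m]\to[n]$ sends the $j$-th summand identically to the $\theta(j)$-th. Since $L_{*/B\widetilde G}\simeq\widetilde e^*L_{B\widetilde G/k}[1]\simeq\omega_{\widetilde G}$ lives on the point, each $\mathrm{pr}_i^*$ is just $\mathcal O\otimes_k(-)$. Hence $R\Gamma(G^n,f_n^*L_{\widetilde G^n/B\widetilde G})\simeq\mathcal O(G^n)\otimes_k\omega_{\widetilde G}^{\oplus(n+1)}$, which is the diagonal of the bicosimplicial object $([n],[m])\mapsto\mathcal O(G^n)\otimes_k\omega_{\widetilde G}^{\oplus(m+1)}$.

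You still need the cofinality of the diagonal $\Delta\to\Delta\times\Delta$ to compute this limit iteratively; your phrase ``commuting $\lim_\Delta$ with $R\Gamma$'' skips this step. For fixed $n$, the $m$-direction is $\mathcal O(G^n)\otimes_k$ applied to the \v{C}ech conerve of $0\to\omega_{\widetilde G}$. That object is split augmented with augmentation $0$, so its limit vanishes.

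This has the same shape as the paper's last step, where the natural trivialization comes instead from Illusie's $L_{\widetilde G^{n+1}/k}\simeq\widetilde\pi_{n+1}^*\omega_{\widetilde G^{n+1}}$ for a presentation by group homomorphisms. With this input supplied, your route gives a complete and shorter proof.
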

\begin{proof} By the transitivity triangle for cotangent complexes, we obtain a map of fiber sequences
	\[
	\begin{tikzcd}[column sep=large, row sep=large]
		R\Gamma\!\left(BG,f_{-1}^*L_{B\widetilde G/k}\right)
		\ar[r] \ar[d]
		&
		R\Gamma(BG,L_{BG/k})
		\ar[r] \ar[d]
		&
		R\Gamma(BG,L_{BG/B\widetilde G})
		\ar[d]
		\\
		\displaystyle
		\lim_{[n]\in\Delta}
		R\Gamma\!\left(G^n,f_n^*L_{\widetilde G^n/k}\right)
		\ar[r]
		&
		\displaystyle
		\lim_{[n]\in\Delta}
		R\Gamma\!\left(G^n,L_{G^n/k}\right)
		\ar[r]
		&
		\displaystyle
		\lim_{[n]\in\Delta}
		R\Gamma\!\left(G^n,L_{G^n/\widetilde G^n}\right).
	\end{tikzcd}
	\]
	The middle vertical map is an equivalence by flat descent for the cotangent complex in \cite[Proposition 1.1.5]{KP22}. It therefore remains to show that $\displaystyle
	R\Gamma(BG,L_{BG/B\widetilde G})
	\to
	\lim_{[n]\in\Delta}
	R\Gamma\!\left(G^n,L_{G^n/\widetilde G^n}\right)$ is an equivalence.
	
	Let $a_n:G^n\to BG$ denote the augmentation, and set
	$b_n:=f_{-1}\circ a_n$.
	It is enough to show that the natural map
	\[
	f_{-1*}L_{BG/B\widetilde G}
	\longrightarrow
	\lim_{[n]\in\Delta}
	b_{n*}L_{G^n/\widetilde G^n}
	\]
	is an equivalence in $\mathsf D(B\widetilde G)$. Since the base point $\widetilde e\colon \mathrm{Spec}(k)\to B\widetilde G$ is a smooth cover, we see that $\widetilde e^*$ is conservative. Hence, we just need to show that $\displaystyle\widetilde e^*
	f_{-1*}L_{BG/B\widetilde G}
	\to
	\widetilde e^*\bigl(\lim_{[n]\in\Delta}
	(b_n)_*L_{G^n/\widetilde G^n}\bigr)$ is an equivalence. Since $f_n\colon G^n\to \widetilde G^n$ is a regular closed embedding, the cotangent complexes $L_{G^n/\widetilde G^n}$ have
	$\mathrm{Tor}$-amplitude contained in $[-1,0]$. Thus, by \cite[Corollary~3.1.13]{KP24} we have $\displaystyle \widetilde e^*\bigl(\lim_{[n]\in\Delta}
	(b_n)_*L_{G^n/\widetilde G^n}\bigr)\simeq  \lim_{[n]\in\Delta}
	\widetilde e^*(b_n)_*L_{G^n/\widetilde G^n}$. Hence, we need to show that $\displaystyle \widetilde e^*
	f_{-1*}L_{BG/B\widetilde G}
	\to
	\lim_{[n]\in\Delta}\widetilde e^*
	(b_n)_*L_{G^n/\widetilde G^n}$ is an equivalence. 
	
	By flat base change and base change for relative cotangent complexes,
	we have
	$\widetilde e^*f_{-1*}L_{BG/B\widetilde G}
	\simeq R\Gamma(\widetilde G/G,L_{(\widetilde G/G)/k})$
	and
	$\widetilde e^*(b_n)_*L_{G^n/\widetilde G^n}
	\simeq R\Gamma(G^n\times\widetilde G,
	L_{(G^n\times\widetilde G)/\widetilde G^{n+1}})$
	for each $n\geq 0$.
	Thus it remains to show that the natural map
	$R\Gamma(\widetilde G/G,L_{(\widetilde G/G)/k})
	\to \lim_{[n]\in\Delta}
	R\Gamma(G^n\times\widetilde G,
	L_{(G^n\times\widetilde G)/\widetilde G^{n+1}})$
	is an equivalence. Here the natural map is induced by the augmented Čech nerve of
	the morphism
	$(\widetilde G\xrightarrow{\mathrm{id}}\widetilde G)
	\to(\widetilde G/G\to\operatorname{Spec}(k))$
	in the arrow category of $k$-stacks
	\[
	\begin{tikzcd}[column sep=large, row sep=large]
		\cdots \ar[r]
		&
		G^2\times\widetilde G
		\ar[r,shift left=1.2ex]
		\ar[r]
		\ar[r,shift right=1.2ex]
		\ar[d,"q_2"]
		&
		G\times\widetilde G
		\ar[r,shift left=.6ex]
		\ar[r,shift right=.6ex]
		\ar[d,"q_1"]
		&
		\widetilde G
		\ar[r]
		\ar[d,"\mathrm{id}"]
		&
		\widetilde G/G
		\ar[d]
		\\
		\cdots \ar[r]
		&
		\widetilde G^3
		\ar[r,shift left=1.2ex]
		\ar[r]
		\ar[r,shift right=1.2ex]
		&
		\widetilde G^2
		\ar[r,shift left=.6ex]
		\ar[r,shift right=.6ex]
		&
		\widetilde G
		\ar[r]
		&
		\operatorname{Spec}(k).
	\end{tikzcd}
	\]
	
	Here $q_n:G^n\times\widetilde G\to\widetilde G^{n+1}$
	denotes the vertical map in simplicial degree $n$. There are two standard presentations of the augmented Čech nerve
	of $\widetilde G\to\operatorname{Spec}(k)$, both with
	$\widetilde G^{n+1}$ in simplicial degree $n$.
	One is associated with the left multiplication action of
	$\widetilde G$ on itself.
	The other has face and degeneracy maps given by
	\[
	d_m(g_0,\ldots,g_n)=(g_0,\ldots,\widehat{g_m},\ldots,g_n),
	\qquad
	s_m(g_0,\ldots,g_n)=(g_0,\ldots,g_m,g_m,\ldots,g_n),
	\]
	where $0\leq m\leq n$, with $n\geq 1$ for the face maps. We use the second presentation, in which all maps in the
	augmented simplicial object are group homomorphisms. For each \(n\geq 0\), the transitivity triangle gives a fiber sequence in $\mathsf{D}(G^n\times\widetilde G)$
\[
q_n^*L_{\widetilde G^{n+1}/k}
\longrightarrow
L_{(G^n\times\widetilde G)/k}
\longrightarrow
L_{(G^n\times\widetilde G)/\widetilde G^{n+1}}.
\]
	Applying derived global sections and taking cosimplicial limits, we obtain the following fiber sequence in $\mathsf{D}(k)$
\[
\lim_{[n]\in\Delta}
R\Gamma\!\left(
G^n\times\widetilde G,q_n^*L_{\widetilde G^{n+1}/k}
\right)
\longrightarrow
\lim_{[n]\in\Delta}
R\Gamma\!\left(
G^n\times\widetilde G,L_{(G^n\times\widetilde G)/k}
\right)
\longrightarrow
\lim_{[n]\in\Delta}
R\Gamma\!\left(
G^n\times\widetilde G,
L_{(G^n\times\widetilde G)/\widetilde G^{n+1}}
\right).
\]
	Flat descent for cotangent complexes \cite[Proposition 1.1.5]{KP22} gives a natural equivalence
	$\displaystyle R\Gamma(\widetilde G/G,L_{(\widetilde G/G)/k})\simeq\lim_{[n]\in \Delta}
	R\Gamma\!\left(
	G^n\times\widetilde G,L_{(G^n\times\widetilde G)/k}
	\right)$.
	Thus it suffices to show that
	$\displaystyle \lim_{[n]\in\Delta}
	R\Gamma(G^n\times\widetilde G,q_n^*L_{\widetilde G^{n+1}/k})
	\simeq 0$. Write $\pi_{n+1}:G^n\times\widetilde G\to\operatorname{Spec}(k)$
	and $\widetilde\pi_{n+1}:\widetilde G^{n+1}\to\operatorname{Spec}(k)$
	for the structure morphisms, and
	$j_{n+1}:\operatorname{Spec}(k)\to\widetilde G^{n+1}$
	for the unit section.
	By \cite[Chapitre~VII, (4.1.2.1)]{Ill72a} (see also \cite[\S~3.1]{Ill72b}), we have natural equivalences $L_{\widetilde G^{n+1}/k}\simeq \widetilde{\pi}_{n+1}^*j_{n+1}^*L_{\widetilde G^{n+1}/k}\simeq \widetilde{\pi}_{n+1}^*\omega_{\widetilde G^{n+1}}$ for every $n\geq -1$, where $\omega_{\widetilde G^{n+1}}$ is viewed as a $k$-vector space. The naturality follows from the fact that all morphisms appearing in the augmented simplicial diagram $\{\widetilde G^{n+1}\}_{[n]\in(\Delta^+)^{\mathsf{op}}}$ are group homomorphisms. Hence, we obtain natural equivalences 
\[
\begin{aligned}
	\lim_{[n]\in\Delta}
	R\Gamma(G^n\times\widetilde G,q_n^*L_{\widetilde G^{n+1}/k})
	&\simeq \lim_{[n]\in\Delta}
	R\Gamma(G^n\times\widetilde G,
	q_n^*\widetilde{\pi}_{n+1}^*\omega_{\widetilde G^{n+1}})\\
	&\simeq \lim_{[n]\in\Delta}
	R\Gamma(G^n\times\widetilde G,
	\pi_{n+1}^*\omega_{\widetilde G^{n+1}})\\
	&\simeq \lim_{[n]\in\Delta}
	\mathcal O(G^n\times\widetilde G)\otimes
	\omega_{\widetilde G^{n+1}}\\
	&\simeq \lim_{([n],[m])\in\Delta\times\Delta}
	\mathcal O(G^n\times\widetilde G)\otimes
	\omega_{\widetilde G^{m+1}}\\
	&\simeq \lim_{[n]\in\Delta}
	\left(\lim_{[m]\in\Delta}
	\mathcal O(G^n\times\widetilde G)\otimes
	\omega_{\widetilde G^{m+1}}\right).
\end{aligned}
\]
Here the third equivalence follows from the projection formula, and the fourth equivalence follows from 
the fact that the diagonal map $\Delta\to \Delta\times \Delta$ is left cofinal by \cite[Tag~02QP]{Lur26}.
Note that \(\{\omega_{\widetilde G^{n+1}}\}_{[n]\in\Delta^{+}}\simeq \{\omega_{\widetilde G}^{\oplus (n+1)}\}_{[n]\in\Delta^{+}}\) is a split augmented cosimplicial $k$-vector space; see \cite[Tag~04S9]{Lur26} for the notion of split augmented cosimplicial objects. By functoriality, for any $M\in\mathsf{D}(k)$, we have that \(\{\omega_{\widetilde G^{n+1}}\otimes M\}_{[n]\in\Delta^{+}}\) is also a split augmented cosimplicial object in $\mathsf{D}(k)$. Consequently, by \cite[Tag~04SH]{Lur26}, we conclude that
	$\displaystyle
	\lim_{[m]\in\Delta}
	\omega_{\widetilde G^{m+1}}
	\otimes\mathcal O(G^n\times\widetilde G)
	\simeq \omega_{\widetilde G^0}\otimes \mathcal O(G^n\times\widetilde G)
	\simeq 0$.
	Therefore, the limit
	$\displaystyle \lim_{[n]\in\Delta}
	R\Gamma(G^n\times\widetilde G,q_n^*L_{\widetilde G^{n+1}/k})$ vanishes, as desired. This completes the proof of the lemma.

\end{proof}

\begin{proposition}\label{com2}
	There is a commutative diagram of $k$-vector spaces
	\[
	\begin{tikzcd}[row sep=large,column sep=large]
		H^1(BG,L_{BG/k})
		\ar[r,"\simeq"]
		&
		H^2(\gr^1C^\bullet)
		\\
		(\omega_G)^G
		\simeq H^1(BG,f_{-1}^*L_{B\widetilde G/k})
		\ar[r,"\simeq"]
		\ar[u,hook]
		\ar[d,hook]
		&
		\ker\!\left(
		\delta:
		\Omega^1_{A/k}\otimes_A A/I
		\to
		\Omega^1_{A^{\otimes 2}/k}
		\otimes_{A^{\otimes 2}}
		A^{\otimes 2}/I_2
		\right)
		\ar[u,hook]
		\ar[d,"i^*"]
		\\
		\omega_G
		\ar[r,"\simeq"]
		&
		j^*\Omega^1_{A/k}.
	\end{tikzcd}
	\]
	The upper horizontal isomorphism is induced by the equivalence
	$R\Gamma(BG,L_{BG/k})[-1]\simeq\gr^1C^\bullet$.
	The middle horizontal isomorphism is induced by the equivalence in the Lemma~\ref{fun}.
	Viewing $\omega_G$ as a $k$-vector space, the lower horizontal
	isomorphism is induced by the identifications
	$\omega_G
	\simeq H^1(e^*f_{-1}^*L_{B\widetilde G/k})
	\simeq H^1(\widetilde e^*L_{B\widetilde G/k})
	\simeq H^0(j^*L_{\widetilde G/k})
	\simeq j^*\Omega^1_{A/k}$,
	where the first isomorphism comes from Lemma~\ref{split}.
	The left vertical maps are the natural inclusions.
	The upper right vertical map is the negative of the natural inclusion,
	and the lower right vertical map is restriction along $i$.
\end{proposition}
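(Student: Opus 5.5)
The plan is to copy the proof of Proposition~\ref{com}, with the fiber sequence $f_{-1}^*L_{B\widetilde G/k}\to L_{BG/k}\to L_{BG/B\widetilde G}$ and Lemma~\ref{fun} used in the roles of flat descent and Lemma~\ref{cp}. The argument has three steps: build the top square, build the bottom square, then check the two signs.

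\textbf{Step 1: the top square.} By Lemma~\ref{split} we have $f_{-1}^*L_{B\widetilde G/k}\simeq\omega_G[-1]$, hence $H^1(BG,f_{-1}^*L_{B\widetilde G/k})\simeq H^0(BG,\omega_G)=(\omega_G)^G$. Under the splitting $L_{BG/k}\simeq n_G\oplus\omega_G[-1]$, the map induced on $H^1$ by $f_{-1}^*L_{B\widetilde G/k}\to L_{BG/k}$ is the inclusion of the summand $(\omega_G)^G$. This explains the injective left vertical maps.

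Lemma~\ref{fun} identifies $R\Gamma(BG,f_{-1}^*L_{B\widetilde G/k})$ with $\lim_{[n]\in\Delta}R\Gamma(G^n,f_n^*L_{\widetilde G^n/k})$. The latter is $\operatorname{Tot}$ of the cosimplicial complex $\Omega^1_{A^{\otimes\bullet}/k}\otimes_{A^{\otimes\bullet}}A^{\otimes\bullet}/I_\bullet$, placed in degree $0$, since $\widetilde G^n$ is smooth. This complex is the degree-$0$ subterm of the two-term presentation $[I_\bullet/I_\bullet^2\xrightarrow{d}\Omega^1\otimes A^{\otimes\bullet}/I_\bullet]_{[-1,0]}$. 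Because $d=0$ on $I/I^2$ and the subcomplex vanishes in cosimplicial degree $0$, $H^1$ of the totalization is $\ker(\delta)$ on $\Omega^1_{A/k}\otimes_A A/I$. This is the middle horizontal isomorphism.

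Naturality of the transitivity triangle, combined with the commutative square of Lemma~\ref{cp}, gives a commutative square in $\mathsf D(k)$. Its top row is $R\Gamma(BG,L_{BG/k})[-1]\simeq\gr^1C^\bullet$ and its bottom row is the Tot of the subcomplex, mapping into $\gr^1C^\bullet$ through the right vertical equivalence of Lemma~\ref{cp}. That equivalence acts by $(-1)^n$ on the $\Omega^1$-term in cosimplicial degree $n$. On $H^2$ of $\gr^1C^\bullet$, a class in cosimplicial degree $1$ with de Rham degree $1$ is therefore multiplied by $(-1)^1=-1$. This is the source of the \emph{negative} of the natural inclusion in the upper right vertical map.

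\textbf{Step 2: the bottom square.} Pull back along $e$ and the unit sections $i_n$, as in the proof of Proposition~\ref{com}. The augmented cosimplicial object $e^*f_{-1}^*L_{B\widetilde G/k}\to(i_n^*f_n^*L_{\widetilde G^n/k})_n$ identifies with $\widetilde e^*L_{B\widetilde G/k}\to(j_n^*L_{\widetilde G^n/k})_n$. By Lemma~\ref{pullback}, applied to $\widetilde G$, the latter is the co\v{C}ech nerve of $\widetilde e^*L_{B\widetilde G/k}\to0$, so it is a limit diagram. The cosimplicial vector space $(j^*\Omega^1_{A/k})^{\oplus\bullet}$ carries the cobar structure, so normalizing leaves only cosimplicial degree $1$. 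Its $H^1$ is $j^*\Omega^1_{A/k}$, and this recovers $\omega_G\simeq H^0(j^*L_{\widetilde G/k})$. The map $\operatorname{Tot}(i_\bullet^*)$ induces restriction along $i$ on $\ker\delta$. Taking $H^1$ of the resulting commutative diagram of $e^*$ and $\lim i_n^*$ gives the bottom square. The left map is the inclusion $(\omega_G)^G\hookrightarrow\omega_G$, because $e^*$ on $H^0(BG,\omega_G)$ is the inclusion of invariants.

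\textbf{Step 3: signs, the main obstacle.} The delicate point is consistency of signs. In the bottom square no shift is exchanged with a limit, so the sign is $+1$ and the lower right map is plain restriction. In the top square I must make sure the only sign is the $(-1)^n$ from Lemma~\ref{cp}. I would verify this by rerunning Remark~\ref{sign1} and Lemma~\ref{sign2} restricted to the $\Omega^1$-subcomplex. Remark~\ref{sign1} contributes $\mathrm{id}$ on $\Omega^1$, and Lemma~\ref{sign2} contributes $(-1)^n$. With $n=1$ the net sign is $-1$, which is what the statement asserts.
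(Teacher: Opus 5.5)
Your proposal is correct and follows essentially the same route as the paper. The top square comes from naturality of $f_{-1}^*L_{B\widetilde G/k}\to L_{BG/k}$ together with Lemma~\ref{fun} and Lemma~\ref{cp}, where the factor $(-1)^n$ on the $\Omega^1$-term in cosimplicial degree $1$ produces the negative inclusion. The bottom square comes from pulling back along $e$ and the unit sections (using $f_n\circ i_n=j_n$), exactly as in Proposition~\ref{com}.
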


\begin{proof}
	The natural map
	$f_{-1}^*L_{B\widetilde G/k}\to L_{BG/k}$
	and its analogues in each cosimplicial degree induce the following
	commutative diagram in $\mathsf D(k)$:
	\[
	\begin{tikzcd}[column sep=large,row sep=large]
		R\Gamma(BG,f_{-1}^*L_{B\widetilde G/k})
		\ar[r,"\simeq"]
		\ar[d]
		&
		\displaystyle
		\lim_{[n]\in\Delta}
		R\Gamma(G^n,f_n^*L_{\widetilde G^n/k})
		\ar[r,"\simeq"]
		\ar[d]
		&
		\displaystyle
		\operatorname{Tot}\!\left(
		\Omega^1_{A^{\otimes\bullet}/k}
		\otimes_{A^{\otimes\bullet}}
		A^{\otimes\bullet}/I_\bullet
		\right)
		\ar[d]
		\\
		R\Gamma(BG,L_{BG/k})
		\ar[r,"\simeq"]
		&
		\displaystyle
		\lim_{[n]\in\Delta}
		R\Gamma(G^n,L_{G^n/k})
		\ar[r,"\simeq"]
		&
		\displaystyle
		\operatorname{Tot}\!\left(
		\left[
		I_\bullet/I_\bullet^2
		\xrightarrow{d}
		\Omega^1_{A^{\otimes\bullet}/k}
		\otimes_{A^{\otimes\bullet}}
		A^{\otimes\bullet}/I_\bullet
		\right]_{[-1,0]}
		\right).
	\end{tikzcd}
	\]
	The first upper horizontal equivalence is given by Lemma~\ref{fun},
	and the first lower horizontal equivalence is induced by syntomic
	descent.
	The remaining horizontal equivalences come from the smoothness of
	$\widetilde G$ and the standard two-term presentations for the cotangent
	complexes of $G^n$.
	
	Taking $H^1$ and applying Lemma~\ref{cp}, we obtain the following
	commutative diagram:
	\[
	\begin{tikzcd}[row sep=large,column sep=large]
		(\omega_G)^G
		\simeq H^1(BG,f_{-1}^*L_{B\widetilde G/k})
		\ar[r,"\simeq"]
		\ar[d,hook]
		&
		\ker\!\left(
		\delta:
		\Omega^1_{A/k}\otimes_A A/I
		\to
		\Omega^1_{A^{\otimes 2}/k}
		\otimes_{A^{\otimes 2}}
		A^{\otimes 2}/I_2
		\right)
		\ar[d,hook,"{\xi\mapsto[\xi]}"]
		\\
		H^1(BG,L_{BG/k})
		\ar[r,"\simeq"]
		\ar[dr,"\simeq"']
		&
		H^1\!\left(
		\operatorname{Tot}\!\left(
		\left[
		I_\bullet/I_\bullet^2
		\xrightarrow{d}
		\Omega^1_{A^{\otimes\bullet}/k}
		\otimes_{A^{\otimes\bullet}}
		A^{\otimes\bullet}/I_\bullet
		\right]_{[-1,0]}
		\right)
		\right)
		\ar[d,"\simeq"]
		\\
		&
		H^2(\gr^1C^\bullet).
	\end{tikzcd}
	\]
	The lower right vertical isomorphism is induced by Lemma~\ref{cp}
	and sends the class of a form cocycle $\xi$ in cosimplicial degree $1$
	to $-[\xi]$.
	The diagonal isomorphism is the one in the statement.
	Thus the composite of the right vertical maps is the negative
	of the natural inclusion. This therefore gives the upper square in the statement.
	
	To obtain the lower square, let
	$j_n:\operatorname{Spec}(k)\to\widetilde G^n$ denote the unit section.
	Pulling back along the base point $e$ of $BG$ and the unit sections
	$i_n$ of $G^n$ gives the following commutative diagram in $\mathsf D(k)$:
	\[
	\begin{tikzcd}[row sep=large,column sep=large]
		R\Gamma(BG,f_{-1}^*L_{B\widetilde G/k})
		\ar[r,"\simeq"]
		\ar[d,"e^*"']
		&
		\displaystyle
		\lim_{[n]\in\Delta}
		R\Gamma(G^n,f_n^*L_{\widetilde G^n/k})
		\ar[r,"\simeq"]
		\ar[d,"\displaystyle\lim_{[n]\in\Delta}i_n^*"]
		&
		\displaystyle
		\operatorname{Tot}\!\left(
		\Omega^1_{A^{\otimes\bullet}/k}
		\otimes_{A^{\otimes\bullet}}
		A^{\otimes\bullet}/I_\bullet
		\right)
		\ar[d,"\operatorname{Tot}(i_\bullet^*)"]
		\\
		e^*f_{-1}^*L_{B\widetilde G/k}
		\ar[r,"\simeq"]
		&
		\displaystyle
		\lim_{[n]\in\Delta}
		j_n^*L_{\widetilde G^n/k}
		\ar[r,"\simeq"]
		&
		\displaystyle
		\operatorname{Tot}\!\left(
		j_\bullet^*\Omega^1_{A^{\otimes\bullet}/k}
		\right).
	\end{tikzcd}
	\]
	Here we use $f_n\circ i_n=j_n$ to identify the pullbacks in the
	second row.
	Taking $H^1$ gives the lower square by the same argument as in
	Proposition~\ref{com}.
	Combining the two squares proves the proposition.
\end{proof}
\begin{notation}
	For an element $x\in A$, the differential $dx\in\Omega^1_{A/k}$ determines an element $j^*(dx)\in j^*\Omega^1_{A/k}\simeq \omega_G$. We     denote this element by $\omega_x $ . 
\end{notation}
\begin{remark}\label{omega}
	Suppose $\omega_x $ is $G$-invariant, i.e. that it lies in $(\omega_G)^G$. Then by Proposition~\ref{com2}, $\omega_x$ admits a unique lift  to \[
	\ker\!\left(
	\delta:
	\Omega^1_{A/k}\otimes_A A/I
	\to
	\Omega^1_{A^{\otimes 2}/k}
	\otimes_{A^{\otimes 2}}
	A^{\otimes 2}/I_2
	\right)\subset H^2(\gr^1C^{\bullet})\] along the restriction map $-i^*:\Omega^1_{A/k}\otimes_A A/I\to j^*\Omega^1_{A/k}$. Under the two descriptions of $H^1(BG,L_{BG/k})$ above,
	the elements $\omega_x$ and its lift represent the same cohomology class.
\end{remark}

\subsection{$B\alpha_{p^n}$}
In \cite[Propositions~4.10 and~4.12]{ABM21}, Antieau--Bhatt--Mathew computed the Hodge-to-de Rham spectral sequence of \(B\alpha_{p}\). We now apply our tools to compute the Hodge-to-de Rham spectral sequence of $B\alpha_p$. In fact, we will treat the case of $B\alpha_{p^n}$ by considering the closed embedding $\alpha_{p^n}\hookrightarrow \mathbb{G}_a$. Let $t$ be the standard coordinate on
$\mathbb G_a=\mathrm{Spec}(k[t])$, so that
$\alpha_{p^n}=\mathrm{Spec}(k[t]/(t^{p^n}))$. We first describe the structure sheaf cohomology of  $B\alpha_{p^n}$.

\begin{proposition}[{\cite[Proposition~I.4.27]{Jan03}}]\label{compute}
	We have $H^1(B\alpha_{p^n},\mathcal{O})\simeq \Hom_{\Grp}(\alpha_{p^n},\mathbb{G}_a)\simeq \displaystyle \bigoplus_{0\leq i\leq n-1}k\alpha_{t^{p^i}}$.
	Moreover, if $\operatorname{char}(k)=2$, there is a canonical isomorphism
	\[H^*(B\alpha_{p^n},\mathcal{O})\simeq\operatorname{Sym}^*(H^1(B\alpha_{p^n},\mathcal{O}))=\operatorname{Sym}^*(k\alpha_{t} \oplus k\alpha_{t^p}\oplus \cdots\oplus k\alpha_{t^{p^{n-1}}}).\]
	If \(\operatorname{char}(k)\neq2\), there is a canonical isomorphism
	\[
	\begin{aligned}
		H^*(B\alpha_{p^n},\mathcal{O}) 
		&\simeq \Lambda^*(H^1(B\alpha_{p^n},\mathcal{O})) \otimes \operatorname{Sym}^* \left( \operatorname{Span}_k \left\{ \bar{\beta} H^1(B\alpha_{p^n},\mathcal{O}) \right\} \right) \\
		&= \Lambda^* \left(  k\alpha_{t} \oplus k\alpha_{t^p}\oplus \cdots\oplus k\alpha_{t^{p^{n-1}}}  \right) \otimes \operatorname{Sym}^*(k\bar{\beta}(\alpha_{t}) \oplus k\bar{\beta}(\alpha_{t^p})\oplus \cdots\oplus k\bar{\beta}(\alpha_{t^{p^{n-1}}})).
	\end{aligned}
	\]
\end{proposition}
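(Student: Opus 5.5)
The plan is to compute $H^*(B\alpha_{p^n},\mathcal O)$ as the cohomology of the cobar complex $\mathrm{Tot}(\mathcal O(\alpha_{p^n})^{\otimes\bullet})$ of Remark~\ref{alpha}, i.e.\ as the Hochschild (rational) cohomology $\mathrm{Ext}^*_{\mathcal O(\alpha_{p^n})^\vee}(k,k)$. The key structural input is that the coordinate Hopf algebra $\mathcal O(\alpha_{p^n})=k[t]/(t^{p^n})$ with $t$ primitive is isomorphic as a Hopf algebra to the tensor product
\[
k[t]/(t^{p})\otimes k[t^p]/(t^{p^2})\otimes\cdots
\]
only up to twisting. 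So I would instead pass to the dual Hopf algebra (the distribution algebra). Its underlying algebra is the divided power algebra on $t$ truncated at degree $p^n$, namely $k[u_0,\dots,u_{n-1}]/(u_0^p,\dots,u_{n-1}^p)$ with $u_i$ dual to $t^{p^i}$. This is a commutative, cocommutative truncated polynomial algebra, and it is the group algebra of the dual finite group scheme.

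First, I identify $H^1$. A primitive element of $k[t]/(t^{p^n})$ must be a $p$-polynomial $\sum_i c_it^{p^i}$: expand $\Delta(t^m)=\sum\binom{m}{j}t^j\otimes t^{m-j}$ and use Lucas' theorem, which shows $t^m$ is primitive exactly when $m$ is a $p$-power less than $p^n$. This gives the stated basis $\alpha_{t^{p^i}}$.

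Second, since $H^*(BG,\mathcal O)=\mathrm{Ext}^*_{D}(k,k)$ for $D=\mathcal O(G)^\vee$, and $D\simeq\bigotimes_{i=0}^{n-1}k[u_i]/(u_i^p)$ as algebras, I apply the Künneth formula for Ext over a tensor product of augmented algebras. This reduces the computation to $\mathrm{Ext}^*_{k[u]/(u^p)}(k,k)$, which is computed by the standard $2$-periodic resolution
\[
\cdots\xrightarrow{u^{p-1}}k[u]/u^p\xrightarrow{u}k[u]/u^p\to k.
\]
The result is $\Lambda(x)\otimes k[y]$ with $|x|=1$ and $|y|=2$ for $p$ odd, and $k[x]$ with $y=x^2$ for $p=2$. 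The class $y$ is the Bockstein $\bar\beta(x)$, represented in the cobar complex by $\sum_{0<j<p}\frac{1}{p}\binom{p}{j}t^{j}\otimes t^{p-j}$ applied to the relevant coordinate. The cup product on the cobar complex agrees with the Yoneda product, and this product is graded commutative because $D$ is a cocommutative Hopf algebra. Together these assemble the canonical isomorphism in the statement.

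The main obstacle is making the identifications canonical and multiplicative. One has to check that the Künneth decomposition respects cup products and matches $x_i$ with $\alpha_{t^{p^i}}$. For $p=2$ one must also verify $x_i^2=\bar\beta(x_i)\neq0$, i.e.\ that the exterior relation fails; this follows since $x^2$ generates $\mathrm{Ext}^2$ over $k[u]/u^2$. Since this is a cited result (Jantzen, Proposition~I.4.27), I would ultimately defer these routine compatibilities to that reference.
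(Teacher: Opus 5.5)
Your outline is correct and is the standard proof of the cited result. The paper gives no proof of its own and simply cites Jantzen; your route is to identify $\mathrm{Dist}(\alpha_{p^n})$, as an augmented algebra, with $\bigotimes_{i<n}k[u_i]/(u_i^p)$, apply K\"unneth, and use the periodic resolution over $k[u]/(u^p)$.

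Two small corrections, neither a real gap. First, your opening remark that $k[t]/(t^{p^n})$ is a tensor product ``up to twisting'' is false. It is harmless, since you drop it at once.

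Second, $k[u_i]/(u_i^p)$ is not a Hopf subalgebra for $i\geq 1$, so you cannot identify $\bar\beta(\alpha_{t^{p^i}})$ with the K\"unneth generator $y_i$ just by ``applying the formula to the relevant coordinate.'' Instead, combine a $\mathbb{G}_m$-weight count with a pairing computation. The weight-$p^{i+1}$ part of $H^2$ is the line spanned by $y_i$. Pairing $\beta(t^{p^i})$ against the bar cycle $[u_i\,|\,u_i^{p-1}]$ gives $-1\neq 0$, so $\bar\beta(\alpha_{t^{p^i}})$ is a nonzero multiple of $y_i$.
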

We briefly recall the map \(\bar\beta:H^1(B\alpha_{p^n},\mathcal O)\to H^2(B\alpha_{p^n},\mathcal O)\) appearing above. Consider the map $\beta:\mathcal{O}(\alpha_{p^n})\to \mathcal{O}(\alpha_{p^n})\otimes \mathcal{O}(\alpha_{p^n})$, $\displaystyle \beta(f)=\sum_{1 \leq i\leq p-1}\frac{1}{p}\binom{p}{i}f^i\otimes f^{p-i}$, where $\displaystyle \frac{1}{p}\binom{p}{i}\in \mathbb Z$. A direct calculation shows that $\beta$  induces a map $\bar{\beta}:H^1(B\alpha_{p^n},\mathcal{O})\to H^2(B\alpha_{p^n},\mathcal{O})$.

\begin{lemma}[{cf.~\cite[Proposition 4.10]{ABM21}}]We have an isomorphism of bigraded commutative algebras
	\[H^*(B\alpha_{p^n},\Lambda^*L_{B\alpha_{p^n}})\simeq H^*(B\alpha_{p^n},\mathcal{O})\otimes \Lambda^*(k\eta_{t^{p^n}})\otimes \mathrm{Sym}^*(k\omega_t),\]
	where $\eta_{t^{p^n}}\in H^0(B\alpha_{p^n},L_{B\alpha_{p^n}})$ and $\omega_t \in H^1(B\alpha_{p^n},L_{B\alpha_{p^n}})$.
\end{lemma}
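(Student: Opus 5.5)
The plan is to compute the Hodge cohomology of \(B\alpha_{p^n}\) from the regular embedding \(\alpha_{p^n}\hookrightarrow\mathbb G_a\), with \(A=k[t]\) and \(I=(t^{p^n})\). First I check that the assumption of the previous subsection holds: \(d(t^{p^n})=p^nt^{p^n-1}dt=0\), so \(d\colon I/I^2\to\Omega^1_{A/k}\otimes_A A/I\) vanishes. Lemma~\ref{split} then gives \(L_{B\alpha_{p^n}}\simeq n_G\oplus\omega_G[-1]\). Since \(\mathbb G_a\) is commutative, \(\alpha_{p^n}\) is normal in it, so by Lemma~\ref{normal} the representation \(n_G\) is trivial. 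The representation \(\omega_G\simeq f_{-1}^*L_{B\mathbb G_a}\) is the pullback of the coadjoint representation of the commutative group \(\mathbb G_a\), hence also trivial. Both are one-dimensional: \(n_G\) is spanned by \(\eta_{t^{p^n}}\), because \(i^*(I/I^2)=k\cdot\overline{t^{p^n}}\) and the map to \(j^*\Omega^1\) is zero. Likewise \(\omega_G\) is spanned by \(\omega_t=j^*(dt)\). Hence \(L_{B\alpha_{p^n}}\simeq \mathcal O\cdot\eta_{t^{p^n}}\oplus \mathcal O\cdot\omega_t[-1]\) as trivial representations.

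Next I take derived exterior powers. For a direct sum of a flat line \(N\) in degree \(0\) and a line \(W\) shifted to degree \(1\), the décalage identity \(\Lambda^j(W[-1])\simeq\Gamma^j(W)[-j]\) gives
\[\Lambda^iL\simeq \bigoplus_{a+b=i}\Lambda^aN\otimes\Gamma^b(W)[-b].\]
Since \(N\) is a line, only \(a\in\{0,1\}\) contributes. Taking \(R\Gamma(B\alpha_{p^n},-)\) of trivial representations tensors with \(R\Gamma(B\alpha_{p^n},\mathcal O)\). This yields the bigraded vector space \(H^*(B\alpha_{p^n},\mathcal O)\otimes\Lambda^*(k\eta_{t^{p^n}})\otimes\Gamma^*(k\omega_t)\) additively, with \(\omega_t\) in bidegree \((1,1)\).

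The main obstacle is the multiplicative structure, namely identifying \(\Gamma^*\) with \(\mathrm{Sym}^*\) as an algebra. Note that the multiplication on \(\Gamma^*(W)\) is not polynomial in characteristic \(p\), and here the bigraded commutative algebra structure includes signs. I would handle this by computing directly in the bicomplex \(\gr^*C^\bullet\) of Construction~\ref{PD2}, using the Alexander--Whitney product from Remark~\ref{product}. By Remark~\ref{omega}, \(\omega_t\) is represented by \(-dt\) placed in cosimplicial degree \(1\). Its \(m\)-fold cup product is \(\pm\, dt\otimes\cdots\otimes dt\) in cosimplicial degree \(m\), form degree \(m\), and Hodge weight \(m\). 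The graded piece \(\gr^m\) is built from \(\Lambda^m\) of the two-term complex \([I_\bullet/I_\bullet^2\to\Omega^1\otimes A/I]\), and its Koszul-type presentation contains \(\Lambda^m(\Omega^1)\) of the tensor power. So I would check that \(dt_1\wedge\cdots\wedge dt_m\) restricts to a generator of the one-dimensional top piece at the unit section. This shows the cup power \(\omega_t^m\) generates the relevant summand, so no divided-power denominators appear and \(\mathrm{Sym}^*(k\omega_t)\to H^*\) is an isomorphism onto that factor.

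Finally, \(\eta_{t^{p^n}}\) lies in odd total degree with \(\eta^2\) landing in \(\Lambda^2n_G=0\). Together with the fact that the unit \(\mathcal O\to\Lambda^0\) makes everything an \(H^*(B\alpha_{p^n},\mathcal O)\)-algebra, this assembles the claimed isomorphism of bigraded commutative algebras. This parallels \cite[Proposition 4.10]{ABM21} in the case \(n=1\).
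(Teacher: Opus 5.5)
Your skeleton matches the paper's. You split $L_{B\alpha_{p^n}/k}\simeq k\eta_{t^{p^n}}\oplus k\omega_t[-1]$ with both summands trivial representations; you get triviality from Lemma~\ref{normal} and the coadjoint action of the commutative group $\mathbb G_a$, the paper from Illusie's result for commutative groups, and either works. Then you apply the projection formula and d\'ecalage.

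The genuine error is the d\'ecalage you invoke. Illusie's identities are $\Lambda^j(M[1])\simeq\Gamma^j(M)[j]$ and $\mathrm{Sym}^j(M[1])\simeq\Lambda^j(M)[j]$. Setting $M=W[-1]$ in the second gives $\Lambda^j(W[-1])\simeq\mathrm{Sym}^j(W)[-j]$, not $\Gamma^j(W)[-j]$. Divided powers arise for exterior powers of a suspension, as in $L_{(A/I)/A}\simeq I/I^2[1]$, whereas $L_{B\alpha_{p^n}/k}$ contains a desuspension. These equivalences respect the graded algebra structures, so the correct form gives $\Lambda^*(k\eta_{t^{p^n}}\oplus k\omega_t[-1])\simeq\Lambda^*(k\eta_{t^{p^n}})\otimes\bigoplus_b\mathrm{Sym}^b(k\omega_t)[-b]$ multiplicatively. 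The multiplicative projection formula then finishes the proof exactly as in the paper, and the ``main obstacle'' you identify does not exist.

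Your version is not a harmless relabeling, even though $\Gamma^b$ and $\mathrm{Sym}^b$ of a line agree additively. If $\Lambda^*(W[-1])$ were $\Gamma^*(W)$ as an algebra, then $\omega_t^p=p!\,\omega_t^{[p]}=0$, contradicting the very statement you are proving. So your proposed bicomplex computation could only succeed because your premise is false.

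As sketched, that computation also does not yet prove anything. Knowing that the cochain $dt_1\wedge\cdots\wedge dt_m$ restricts to a generator of $\Lambda^m(k^m)$ at the unit section says nothing about whether its class in $H^{2m}(\gr^mC^\bullet)$ is non-zero. To complete that route you would need two further steps:
\begin{itemize}
\item show that the normalized cochains of the cosimplicial vector space $[r]\mapsto\Lambda^m(k^r)$ are one-dimensional, concentrated in degree $m$, and spanned by $e_1\wedge\cdots\wedge e_m$;
\item split off the factor $\mathcal O(\alpha_{p^n}^r)$ by an Eilenberg--Zilber argument.
\end{itemize}
Together these amount to proving the $\mathrm{Sym}$ form of d\'ecalage by hand. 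Replace $\Gamma^j$ by $\mathrm{Sym}^j$ and the detour disappears.
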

\begin{proof}
	To compute Hodge cohomology, we first calculate the cotangent complex $L_{B\alpha_{p^n}/k}$. Since \(\alpha_{p^n}\) is commutative, the conjugation action of $\alpha_{p^n}$ on itself is trivial. It follows from \cite[Chapitre~VII, Proposition~4.1.1]{Ill72a} or \cite[Proposition~4.4]{Ill72b}
	that $L_{B\alpha_{p^n}/k}
	\simeq \pi^*e^*L_{B\alpha_{p^n}/k}
	\simeq \pi^*i^*L_{\alpha_{p^n}/k}[-1]$,
	where $\pi:B\alpha_{p^n}\to\mathrm{Spec}(k)$
	is the structure morphism. Using the closed embedding $\alpha_{p^n}\hookrightarrow \mathbb{G}_a$, we  see that $L_{\alpha_{p^n}/k}$ can be computed by 2-term complex $[(t^{p^n})/(t^{2p^{n}})\xrightarrow{0} k[t]/(t^{p^n})dt]$. Hence, we obtain $L_{B\alpha_{p^n}/k}=n_{\alpha_{p^n}}\oplus \omega_{\alpha_{p^n}}[-1]=k\eta_{t^{p^n}}\oplus k\omega_t[-1]$. Thus, we have 
	\[
	\begin{aligned}
		R\Gamma(B\alpha_{p^n},\Lambda^*L_{B\alpha_{p^n}})[-*]\simeq  R\Gamma(B\alpha_{p^n},\Lambda^*(k\eta_{t^{p^n}}\oplus k\omega_t[-1]))[-*]&\simeq R\Gamma(B\alpha_{p^n},\mathcal{O})\otimes\Lambda^*(k\eta_{t^{p^n}}\oplus k\omega_t[-1])[-*]\\&\simeq R\Gamma(B\alpha_{p^n},\mathcal{O})\otimes \Lambda^*k\eta_{t^{p^n}}[-*]\otimes \mathrm{Sym}^*(k\omega_t)[-2*].
	\end{aligned}
	\]
	The second equivalence follows from projection formula, while the last equivalence follows from Illusie's décalage isomorphism; see \cite[Chapitre~I, \S4.3.2]{Ill71} or \cite[Proposition~A.2.4]{KP24}. Taking cohomology, we obtain the desired isomorphism.
\end{proof}
\begin{proposition}[{cf.~\cite[Proposition 4.12]{ABM21}}]\label{alphap} On the $E_1$-page of the Hodge-to-de Rham spectral sequence of $B\alpha_{p^n}$, we have $d_1(\eta_{t^{p^n}})=0$,$d_1(\omega_t)=0$, $d_1(\alpha_{t^{p^i}})= 0$ for $1\leq i\leq n-1$, and $d_1(\alpha_t)=\omega_t\neq 0$. If \(\operatorname{char}(k)=p\neq 2\), we also have $d_1(\bar{\beta}(\alpha_{t^{p^i}}))=0$ for $0\leq i\leq n-1$.
\end{proposition}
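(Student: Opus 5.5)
We compute everything in the filtered complex $\mathrm{Fil}^*C^\bullet$ of Construction~\ref{PD2}, using the embedding $\alpha_{p^n}\hookrightarrow\mathbb G_a$. Here $A=k[t]$ and $I=(t^{p^n})$, and $I$ is generated by a regular element. The map $d\colon I/I^2\to\Omega^1_{A}\otimes A/I$ sends $t^{p^n}$ to $p^n t^{p^n-1}dt=0$, so the hypothesis of Lemma~\ref{split} holds. By Remarks~\ref{alpha}, \ref{eta} and~\ref{omega}, the four kinds of classes have explicit representatives:
\begin{itemize}
\item[] $\alpha_{t^{p^i}}$ is represented by $t^{p^i}\in D_A(I)=C^{1,0}$ in cosimplicial degree $1$;
\item[] $\eta_{t^{p^n}}$ is represented by $t^{p^n}\in J\subset D_A(I)$, lying in $\mathrm{Fil}^1$;
\item[] $\omega_t$ is represented by $-dt\in D_A(I)\otimes\Omega^1_A$ in cosimplicial degree $1$.
\end{itemize}
One checks that these lifts are cocycles in the relevant graded pieces, using that $t^{p^n}$ and $t^{p^i}$ are primitive in $k[t]$.

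The differential $d_1$ of a class in $\mathrm{gr}^r$ is computed by lifting a representative to $\mathrm{Fil}^r$, applying $d_{\mathrm{Tot}}=\delta\pm d$, and reading off the result in $\mathrm{gr}^{r+1}$.

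For $\alpha_{t^{p^i}}$, the element $t^{p^i}\in C^{1,0}$ already lies in $\mathrm{Fil}^0$. Its horizontal differential is
\[
\delta(t^{p^i})=t^{p^i}\otimes1+1\otimes t^{p^i}-(t\otimes1+1\otimes t)^{p^i}=0.
\]
Its vertical differential is $d(t^{p^i})=p^it^{p^i-1}dt$. This is zero for $i\ge1$, so $t^{p^i}$ is an honest cocycle of $C^\bullet$ and all $d_r(\alpha_{t^{p^i}})$ vanish; in particular $d_1=0$. For $i=0$ we get $\pm dt$, whose class in $H^2(\mathrm{gr}^1C^\bullet)$ is $\pm\omega_t$ by Remark~\ref{omega}. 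With the sign conventions of Lemma~\ref{cp} and Proposition~\ref{com2}, in particular the minus sign in the upper right vertical map, this should come out exactly to $\omega_t$.

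For $\eta_{t^{p^n}}$, the representative $t^{p^n}$ satisfies $\delta(t^{p^n})=0$, since $t^{p^n}$ is primitive, and $d(t^{p^n})=0$. So it is an honest cocycle of $\mathrm{Fil}^1C^\bullet$, and hence $d_1(\eta_{t^{p^n}})=0$.

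For $\omega_t$, the representative $dt$ satisfies $d(dt)=0$, and $\delta(dt)=dt_1+dt_2-d(t_1+t_2)=0$. So it is again an honest cocycle, and $d_1(\omega_t)=0$.

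Nonvanishing of $\omega_t$ follows from the preceding lemma on the Hodge cohomology of $B\alpha_{p^n}$, where $\omega_t$ is a free generator. Equivalently, it follows from Proposition~\ref{com2}, since $j^*dt\neq0$ in $\omega_{\alpha_{p^n}}$.

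For $\bar\beta(\alpha_{t^{p^i}})$ with $p\neq2$, the natural representative is
\[
\beta(t^{p^i})=\sum_{m=1}^{p-1}\tfrac1p\tbinom pm\, t^{mp^i}\otimes t^{(p-m)p^i}\in C^{2,0}.
\]
Its horizontal differential $\delta\beta(t^{p^i})$ vanishes in $A^{\otimes3}$. Indeed, $\beta(t)$ is the Witt-vector carry cocycle, which is a cocycle integrally in $\mathbb Z[t]^{\otimes 3}$. Its vertical differential is computed termwise via $d(t^{mp^i})=mp^it^{mp^i-1}dt$.
\begin{itemize}
\item[] For $i\ge1$ every term vanishes, so $\beta(t^{p^i})$ is an honest cocycle and $d_1=0$.
\item[] For $i=0$ we get $d\beta(t)=\sum_m \tfrac1p\binom pm\bigl(mt^{m-1}dt\otimes t^{p-m}+(p-m)t^m\otimes t^{p-m-1}dt\bigr)$. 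This need not vanish, so here we exhibit a coboundary instead. Using the divided power $\gamma_p(t)=t^p/p!$ is illegitimate because $t\notin I$. Instead, the relation $\delta(t^p)=0$ together with the integral identity $(t_1+t_2)^p-t_1^p-t_2^p=-p\beta(t)$ suggests taking $\frac{1}{p}d$ of this identity. Concretely, we show that $d\beta(t)=\delta(\theta)$ for the $1$-form $\theta=t^{p-1}dt\in C^{1,1}$, up to sign. This is checked by differentiating the identity over $\mathbb Z$ and dividing by $p$. Hence $\beta(t)\mp\theta$ is a cocycle in $\mathrm{Fil}^0$ up to $\mathrm{Fil}^2$, and $d_1(\bar\beta(\alpha_t))=0$.
\end{itemize}

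The main obstacle I expect is this last step, together with keeping the signs straight in the identification $d_1(\alpha_t)=\omega_t$ rather than $-\omega_t$. Both require careful use of the sign conventions of Lemma~\ref{cp}, Proposition~\ref{com2} and the total differential $\delta+(-1)^id$.
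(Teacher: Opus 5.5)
Your proposal is correct and follows the paper's proof essentially verbatim: it uses the same representatives in $\mathrm{Fil}^*C^\bullet$ for $\alpha_{p^n}\hookrightarrow\mathbb{G}_a$, the same vanishing of $\delta$ and $d$ on $t^{p^i}$ and $\beta(t^{p^i})$ for $i\geq 1$ and on $t^{p^n}$, and the same corrected element $(\beta(t),t^{p-1}dt)$, which is in fact an honest cocycle of $C^\bullet$ since $\Omega^2_{k[t]/k}=0$ (so $\bar\beta(\alpha_t)$ is a permanent cycle). The sign you left open is immediate: $d_{\mathrm{Tot}}(t)=\delta(t)-d(t)=-dt$ in cosimplicial degree $1$, and $-[dt]$ is exactly the lift of $\omega_t$ given in Remark~\ref{omega}, so $d_1(\alpha_t)=\omega_t$.
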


\begin{proof}
	We prove this proposition only in the case $\operatorname{char}(k)\neq 2$; the case $\operatorname{char}(k)= 2$ is similar. Consider the closed embedding $\alpha_{p^n}\hookrightarrow \mathbb{G}_a$. We apply Theorem~\ref{PD1} and Construction~\ref{PD2} to this closed embedding and retain the notation introduced there. As in Proposition~\ref{compute}, we also define  $ \beta:\mathcal{O}(\mathbb G_a)\to \mathcal{O}(\mathbb G_a)\otimes \mathcal{O}(\mathbb G_a)$ by $\displaystyle \beta(f)=\sum_{1 \leq i\leq p-1}\frac{1}{p}\binom{p}{i}f^i\otimes f^{p-i}$.
	
	By Remark~\ref{alpha}, we see that $\alpha_{t^{p^i}}\in H^1(B\alpha_{p^n},\mathcal{O})$ corresponds to $[t^{p^i}]\in H^1(\gr^0C^{\bullet})$, and $\bar{\beta}(\alpha_{t^{p^i}})\in H^2(B\alpha_{p^n},\mathcal{O})$ corresponds to $[\beta(t^{p^i})]\in H^2(\gr^0C^{\bullet})$. We first compute $d_1(\alpha_{t})$. Since  \(\delta(t)=0\), we have $d_{\mathrm{Tot}}(t)=-dt$ in the complex $C^{\bullet}$. The class $d_1(\alpha_t)$ can be computed by
	the boundary operator $\partial:H^1(\gr^0C^{\bullet})\to H^2(\gr^1C^{\bullet})$
	induced by the short exact sequence of cochain complexes
	\[0\to \gr^1C^{\bullet}\to \mathrm{Fil}^0C^{\bullet}/\mathrm{Fil}^2C^{\bullet}\to \gr^0C^{\bullet}\to 0.\] Hence, we compute $d_1(\alpha_t)=[d_{\mathrm{Tot}}(t)]=-[dt]$ in $H^2(\gr^1C^{\bullet})=E^{1,1}_1$. By Remark~\ref{omega}, the class \(-[dt]\in H^2(\gr^1)\) corresponds to \(\omega_t\), and hence \(d_1(\alpha_t)=\omega_t\). In particular, we have $d_1(\omega_t)=0$.
	
	We next compute \(d_1(\alpha_{t^{p^i}})\) and  \(d_1(\bar{\beta}(\alpha_{t^{p^i}}))\) for \(1\leq i\leq n-1\). In the bicomplex $B^{\bullet,\bullet}$, 
	we have $\delta(t^{p^i})=0$ and $\delta(\beta(t^{p^i}))=0$  for $1\leq i\leq n-1$, $d(t^{p^i})=0$ and $d(\beta(t^{p^i}))=0$ for $1\leq i\leq n-1$.  Hence $t^{p^i}\in D_A(I)=C^1$ satisfies $d_{\mathrm{Tot}}(t^{p^i})=0$ for $1 \leq i\leq n-1$, and $\beta(t^{p^i})\in D_{A^{\otimes 2}}(I_2)\subset C^2$ satisfies $d_{\mathrm{Tot}}(\beta(t^{p^i}))=0$ for $1 \leq i\leq n-1$. Thus, the element $t^{p^i}$ determines a class in $H^1(\mathrm{Fil}^0C^{\bullet})$ with image $\alpha_{t^{p^i}}$ in $H^1(\gr^0C^{\bullet})=E^{0,1}_1$ for $1\leq i\leq n-1$; and  $\beta(t^{p^i})$ defines a class in $H^2(\mathrm{Fil}^0C^{\bullet})$ with image $\bar{\beta}(\alpha_{t^{p^i}})$ in $H^2(\gr^0C^{\bullet})=E^{0,2}_1$. It follows that $\alpha_{t^{p^i}}$ and $\bar{\beta}(\alpha_{t^{p^i}})$ are permanent cycles for $1 \leq i\leq n-1$. 
	
	We next consider $\bar{\beta}(\alpha_{t})$.
	Note that $\delta(\beta(t))=0$ and  $d(\beta(t))=(t_1+t_2)^{p-1}d(t_1+ t_2)-t_1^{p-1}dt_1-t_2^{p-1}dt_2=-\delta(t^{p-1}dt) $ in the bicomplex $B^{\bullet,\bullet}$, where we write $t\otimes1$ and $1\otimes t$ as $t_1$ and $t_2$, respectively.
	Since \(\Omega^2_{\mathbb G_a/k}=0\), we  have $d(t^{p-1}dt)=0$. Thus, the element $a=(\beta(t), t^{p-1}dt)\in D_{A^{\otimes 2}}(I_2)\oplus D_{A}(I)\otimes_A \Omega^1_{A/k}=C^2 $ satisfies $d_{\mathrm{Tot}}(a)=0$. It follows that $a$ determines a class in $H^2(C^{\bullet})=H^2(\mathrm{Fil}^0C^{\bullet})$ with image $ \bar{\beta}(\alpha_{t})$ in $H^2(\gr^0)=E^{0,2}_1$. Hence, we see that $ \bar{\beta}(\alpha_{t})$ is a permanent cycle.
	
	Finally, we consider $\eta_{t^{p^n}}$. By Remark~\ref{eta}, the class $\eta_{t^{p^n}}$ corresponds to $[t^{p^n}]\in H^1(\gr^1 C^{\bullet})$. Since $\delta(t^{p^n})=0$ and  $d(t^{p^n})=0$ in the bicomplex $B^{\bullet,\bullet}$,  the element $t^{p^n}$ determines a class in $H^1(\mathrm{Fil}^1C^{\bullet} )$ with image $\eta_{t^{p^n}}$ in $H^1(\gr^1)=E^{1,0}_1$. Hence, we see that $\eta_{t^{p^n}}$ is a permanent cycle.
	
\end{proof}

\section{Examples from Heisenberg}
Let us review an example coming from complex geometry. Consider the complex Heisenberg group
\[
H(\mathbb{C}):=
\left\{
\begin{pmatrix}
	1 & x & y\\
	0 & 1 & z\\
	0 & 0 & 1
\end{pmatrix}
\;\middle|\;
x,y,z\in\mathbb C
\right\}
\]
and the discrete cocompact subgroup
\[
\Gamma:=H(\mathbb{C})\cap \operatorname{GL}(3,\mathbb Z[i])\subset H(\mathbb{C}).
\]
Then $M:=\Gamma\backslash H(\mathbb C)$
is a complex  manifold known as the Iwasawa manifold. It admits a non-closed holomorphic $1$-form. In particular, the Fr\"olicher spectral sequence of Iwasawa manifold is non-degenerate at $E_1$-page; see, for example, \cite[p.~444]{GH78}. This example from complex geometry suggests us to consider variants of Heisenberg group scheme for our purposes.

We first  recall the definition of Heisenberg group scheme. 
\begin{definition}
Let \(H:\CRing_k\to\Grp\) be the functor defined by
\[
H(R)=
\left\{
\begin{pmatrix}
	1&x&y\\
	0&1&z\\
	0&0&1
\end{pmatrix}
\;\middle|\;
x,y,z\in R\right\}.
\]
This defines a finite type affine group scheme over $k$.	
\end{definition}

 We next consider the following variants of $H$. These examples and computations illustrate the initial idea behind our general construction.

\subsection[{$H[F]$}]{$H[F]$}

Let \(H[F]\) be the Frobenius kernel of $H$.  More explicitly, let \(H[F]:\CRing_k\to\Grp\) be the subgroup functor of \( H \) defined by
\[
H[F](R)=
\left\{
\begin{pmatrix}
1&x&y\\
0&1&z\\
0&0&1
\end{pmatrix}
\;\middle|\;
x,y,z\in R,\quad
x^p=y^p=z^p=0
\right\}.
\]
Hence, it defines a closed subgroup scheme $H[F]$ of $H$ with definition ideal $I=(x^p,y^p,z^p)$.
    Informally,  \[H[F]=
\begin{pmatrix}
1 & \alpha_p & \alpha_p \\
0 & 1 & \alpha_p \\
0 & 0 & 1 
\end{pmatrix} .
\]

We apply Theorem~\ref{PD1} and Construction~\ref{PD2} to the closed embedding $H[F]\hookrightarrow H$, and retain the notation
introduced in Construction~\ref{PD2}. In this case, we have
$I/I^2=(A/I)x^p\oplus(A/I)y^p\oplus(A/I)z^p$
and
$n_{H[F]}\simeq
k\eta_{x^p}\oplus k\eta_{y^p}\oplus k\eta_{z^p}$.
Since $H[F]$ is normal in $H$, the $H[F]$-action on $n_{H[F]}$
is trivial by Lemma~\ref{normal}. Hence, we have $H^0(B(H[F]),\Lambda^rL_{B(H[F])})\simeq\Lambda^r (n_{H[F]})$ for every $r\geq 0$.

\begin{proposition}\label{Heisenberg}
    In the Hodge-to-de Rham spectral sequence of $B(H[F])$, the class $\eta_{y^p}\in H^0(B(H[F]),L_{B(H[F])})=E^{1,0}_1$ satisfies $d_1(\eta_{y^p})=-\eta_{x^p}\wedge \eta_{z^p}\neq 0$ in $H^0(B(H[F]),\Lambda^2L_{B(H[F])/k})=E^{2,0}_1$.
\end{proposition}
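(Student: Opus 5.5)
We work with the closed embedding $H[F]\hookrightarrow H$ and the filtered complex $\mathrm{Fil}^*C^\bullet$ of Construction~\ref{PD2}. The map $d\colon I/I^2\to\Omega^1_{A/k}\otimes_A A/I$ vanishes, since $d(x^p)=d(y^p)=d(z^p)=0$. The first step is to represent $\eta_{y^p}$ explicitly. The element $y^p\in I\subset D_A(I)=C^1$ lies in $\mathrm{Fil}^1$, and $d(y^p)=0$. Its horizontal differential is computed from the group law $(x,y,z)\cdot(x',y',z')=(x+x',\,y+y'+xz',\,z+z')$. Taking $p$-th powers gives
\[
\delta(y^p)=\mathrm{pr}_1^*(y^p)+\mathrm{pr}_2^*(y^p)-m^*(y^p)=-x^p\otimes z^p\in D_{A^{\otimes 2}}(I_2).
\]
This lies in $J_2^{[2]}$, i.e.\ in $\mathrm{Fil}^2$, because it is a product of two elements of $I$. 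Hence the image of $y^p$ in $\gr^1C^\bullet$ is a cocycle. By Remark~\ref{eta}, together with the normality of $H[F]$ in $H$ (Lemma~\ref{normal}), this class is the unique lift of $\eta_{y^p}$, so $[y^p]\in H^1(\gr^1C^\bullet)$ corresponds to $\eta_{y^p}$.

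The next step is to compute $d_1$ via the connecting map $H^1(\gr^1C^\bullet)\to H^2(\gr^2C^\bullet)$ of $0\to\gr^2\to\mathrm{Fil}^1/\mathrm{Fil}^3\to\gr^1\to0$. We have $d_{\mathrm{Tot}}(y^p)=\delta(y^p)-d(y^p)=-x^p\otimes z^p$, so $d_1(\eta_{y^p})=[-x^p\otimes z^p]\in H^2(\gr^2C^\bullet)$. We then use the multiplicative structure. By Remark~\ref{product}, the Alexander--Whitney product of $x^p,z^p\in C^1$ is $x^p\otimes z^p\in C^2$; the sign $(-1)^{rl}$ is trivial because $r=0$. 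The classes $[x^p]$ and $[z^p]$ are $\gr^1$-cocycles, since $x$ and $z$ are primitive for the group law, and they represent $\eta_{x^p}$ and $\eta_{z^p}$ by Remark~\ref{eta}. Hence $d_1(\eta_{y^p})=-[x^p]\cdot[z^p]$.

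It remains to identify this product with $-\eta_{x^p}\wedge\eta_{z^p}$ under $H^0(B(H[F]),\Lambda^2L)\simeq H^2(\gr^2C^\bullet)$, and to show it is nonzero. Multiplicativity of the equivalence in Theorem~\ref{PD1} and Construction~\ref{PD2} shows that the product on $\gr^*C^\bullet$ corresponds to the cup product on Hodge cohomology. On $H^0(BG,L_{BG})$ this cup product is the wedge product into $H^0(\Lambda^2L)\simeq\Lambda^2 n_G$, up to a universal sign coming from the shift $\gr^1\simeq R\Gamma(L)[-1]$. That sign is harmless for nonvanishing, and can be fixed by a local computation.

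Nonvanishing then holds because $n_{H[F]}=k\eta_{x^p}\oplus k\eta_{y^p}\oplus k\eta_{z^p}$ is $3$-dimensional with trivial action. Thus $\eta_{x^p}\wedge\eta_{z^p}$ is a nonzero element of $\Lambda^2 n_{H[F]}=H^0(\Lambda^2L)=E_1^{2,0}$.

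The main obstacle is this comparison: that the product in $\gr^2C^\bullet$ of the two lifts equals the wedge $\eta_{x^p}\wedge\eta_{z^p}$ under $\mathcal H^0(\Lambda^2L)\simeq\Lambda^2 n_G$, with the correct sign. I would verify it by pulling back along the base point $e$. This reduces the question to the analogous statement for $\gr^2$ of the PD de Rham complex of $(A^{\otimes 2},I_2)$ after restriction to the unit. There $\gr^2 D$ contains $\Gamma^2(I/I^2)$, and the term $x^p\otimes z^p$, placed in the normalized cosimplicial degree $2$ piece, corresponds to $\eta_{x^p}\wedge\eta_{z^p}$ via décalage $\Gamma^2(V[1])\simeq\Lambda^2V[2]$.
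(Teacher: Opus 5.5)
Your proposal is correct and follows the paper's proof essentially step for step. You represent $\eta_{y^p}$ by $[y^p]\in H^1(\gr^1C^\bullet)$ via Remark~\ref{eta}, compute $\delta(y^p)=-x^p\otimes z^p\in\mathrm{Fil}^2_{\mathrm{PD}}$, and apply the connecting map of $0\to\gr^2C^\bullet\to\mathrm{Fil}^1C^\bullet/\mathrm{Fil}^3C^\bullet\to\gr^1C^\bullet\to 0$. You then factor the result as $[x^p]\cdot[z^p]$ by Remark~\ref{product} and get nonvanishing because $\eta_{x^p},\eta_{y^p},\eta_{z^p}$ form a basis of $n_{H[F]}$. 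The paper also simply asserts, from multiplicativity, that $[x^p]\cdot[z^p]$ corresponds to $\eta_{x^p}\wedge\eta_{z^p}$, so the sign caution you raise there is a refinement rather than a different route.
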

\begin{proof}We apply Theorem~\ref{PD1} and Construction~\ref{PD2} to this closed embedding  $H[F]\hookrightarrow H$, and retain the notation introduced in Construction~\ref{PD2}. By Remark~\ref{eta}, the class  $\eta_{y^p} \in n_{H[F]}=H^0(B(H[F]),L_{B(H[F])/k})$ corresponds to $[y^p]\in H^1(\gr^1)$. We now compute the two differentials of $y^p$  in the bicomplex $B^{\bullet,\bullet}$: $d(y^p)=0$, while 
$\delta(y^p)=y^p\otimes 1+1\otimes y^p-(y\otimes 1+1\otimes y+x\otimes z)^p=-x^p\otimes z^p\in \mathrm{Fil}^2_{\mathrm{PD}}D_{A^{\otimes 2}}(I_2)$. Note that $d_1([y^p])$ is given by the image of $[y^p]$ under the boundary operator $\partial :H^1(\gr^1C^{\bullet})=E^{1,0}_1\to H^2(\gr^2C^{\bullet})=E^{2,0}_1$  associated with the short exact sequence of cochain complexes
\[  0\to \gr^2C^{\bullet}\to \mathrm{Fil}^1C^{\bullet}/\mathrm{Fil}^3C^{\bullet}\to \gr^1C^{\bullet}\to 0 .        \] Thus, we have $d_1([y^p] )= [d_{\mathrm{Tot}}(y^p)]= -[x^p\otimes z^p]$ in $H^2(\gr^2C^{\bullet})=E^{2,0}_1$. By Remark~\ref{product} we have $[x^p\otimes z^p]=[x^p]\cdot[z^p]$ in $H^2(\gr^2C^{\bullet})$, which corresponds to $\eta_{x^p}\wedge \eta_{z^p}$. Hence, we have $d_1(\eta_{y^p})=-\eta_{x^p}\wedge \eta_{z^p}$. Finally, since \(\eta_{x^p},\eta_{y^p},\eta_{z^p}\) form a basis of \(n_{H[F]}\), we have \(\eta_{x^p}\wedge\eta_{z^p}\neq0\) in $\Lambda^2(n_{H[F]})=H^0(B(H[F]),\Lambda^2L_{B(H[F])/k})$. Hence $d_1(\eta_{y^p})\neq 0$ in $E_1^{2,0}$.
\end{proof}

\subsection[{$H[F^{4/3}]$}]{$H[F^{4/3}]$}
Let \(H[F^{4/3}]:\CRing_k\to\Grp\) be a subgroup functor of \( H \) defined by
\[
H[F^{4/3}](R)=
\left\{
\begin{pmatrix}
1&x&y\\
0&1&z\\
0&0&1
\end{pmatrix}
\;\middle|\;
x,y,z\in R,\quad
x^p=y^{p^2}=z^p=0
\right\}.
\]
Hence, it defines a closed  subgroup scheme $H[F^{4/3}]$ of $H$ with definition ideal $I=(x^p,y^{p^2},z^p)$.
    Informally, \[H[F^{4/3}]=
\begin{pmatrix}
1 & \alpha_p & \alpha_{p^2} \\
0 & 1 & \alpha_p \\
0 & 0 & 1 
\end{pmatrix}.
\]
We apply Theorem~\ref{PD1} and Construction~\ref{PD2} to the closed embedding $H[F^{4/3}]\hookrightarrow H$, and retain the notation
introduced in Construction~\ref{PD2}. In this case, we have
\(I/I^2=(A/I)x^p\oplus(A/I)y^{p^2}\oplus(A/I)z^p\)
and \(n_G\simeq k\eta_{x^p}\oplus k\eta_{y^{p^2}}
\oplus k\eta_{z^p}\). Since $H[F^{4/3}]$ is normal in $H$, the $H[F^{4/3}]$-action on $n_{H[F^{4/3}]}$
is trivial by Lemma~\ref{normal}. Hence, we have $H^0(B(H[F^{4/3}]),\Lambda^rL_{B(H[F^{4/3}])})\simeq\Lambda^r (n_{H[F^{4/3}]})$ for every $r\geq 0$. Moreover, since $ x,y^p,z\in \operatorname{Ker}(\mathrm{pr}_1^*+\mathrm{pr}_2^*-m^*:\mathcal{O}(H[F^{4/3}])\to \mathcal{O}(H[F^{4/3}])\otimes \mathcal{O}(H[F^{4/3}]))$, they determine classes $\alpha_x,\alpha_{y^p},\alpha_z \in H^1(B(H[F^{4/3}]),\mathcal{O})$.

\begin{proposition}\label{Heisenberg2}
 In the Hodge-to-de Rham spectral sequence of $B(H[F^{4/3}])$, the class $\alpha_{y^p} \in H^1(B(H[F^{4/3}]),\mathcal{O})=E_1^{0,1}$ satisfies $d_1(\alpha_{y^p})=0$, and $d_2(\alpha_{y^p})=- \eta_{x^p} \wedge \eta_{z^p} \neq 0$ in $E_{2}^{2,0}$.
\end{proposition}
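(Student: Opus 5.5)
We work with the closed embedding $H[F^{4/3}]\hookrightarrow H$, applying Theorem~\ref{PD1} and Construction~\ref{PD2} with $A=\mathcal O(H)=k[x,y,z]$ and $I=(x^p,y^{p^2},z^p)$, and we compute directly in the filtered complex $\mathrm{Fil}^*C^\bullet$. By Remark~\ref{alpha}, $\alpha_{y^p}$ is the class $[y^p]\in H^1(\gr^0C^\bullet)$. The plan is to correct $y^p\in D_A(I)=C^1$ by elements of higher filtration until it becomes a cycle modulo $\mathrm{Fil}^3$. The image of the resulting total differential in $\gr^2$ then computes $d_2$.

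First we compute the two differentials of $y^p$. Clearly $d(y^p)=0$. Next,
\[\delta(y^p)=y^p\otimes1+1\otimes y^p-(y\otimes1+1\otimes y+x\otimes z)^p=-x^p\otimes z^p.\]
Since $x^p$ and $z^p$ lie in $I$, this element lies in $\mathrm{Fil}^2_{\mathrm{PD}}D_{A^{\otimes2}}(I_2)$, so $d_{\mathrm{Tot}}(y^p)\in\mathrm{Fil}^2C^2$. Consequently $y^p$ is a cycle in $\mathrm{Fil}^0C^\bullet/\mathrm{Fil}^2C^\bullet$. Its image under the boundary map $H^1(\gr^0)\to H^2(\gr^1)$ is therefore zero, which gives $d_1(\alpha_{y^p})=0$. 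Moreover, $y^p$ itself is a lift of $\alpha_{y^p}$ to $H^1(\mathrm{Fil}^0/\mathrm{Fil}^2)$, so no correction term is needed.

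By the standard description of $d_2$ from a filtered complex \cite[Tag~012K]{Sta26}, $d_2(\alpha_{y^p})$ is the class of $d_{\mathrm{Tot}}(y^p)=-x^p\otimes z^p$ in $H^2(\gr^2C^\bullet)=E_1^{2,0}$, taken modulo the image of $d_1$. By Remark~\ref{product}, $[x^p\otimes z^p]=[x^p]\cdot[z^p]$. By Remark~\ref{eta} and the multiplicativity of the identification, this product corresponds to $\eta_{x^p}\wedge\eta_{z^p}\in\Lambda^2 n_{H[F^{4/3}]}=H^0(B H[F^{4/3}],\Lambda^2L)$. Here we use the identification $H^0(\Lambda^2L)\simeq\Lambda^2 n_G$, which holds because $G$ is normal in $H$ (Lemma~\ref{normal}). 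Since $\eta_{x^p},\eta_{y^{p^2}},\eta_{z^p}$ form a basis of $n_G$, the wedge $\eta_{x^p}\wedge\eta_{z^p}$ is nonzero in $E_1^{2,0}$.

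The main obstacle is showing this class survives to $E_2^{2,0}$, that is, that it is not in the image of $d_1\colon E_1^{1,0}\to E_1^{2,0}$. Here $E_1^{1,0}=n_G$ is spanned by $\eta_{x^p},\eta_{y^{p^2}},\eta_{z^p}$, so we compute $d_1$ on each generator exactly as in Proposition~\ref{Heisenberg}.
\begin{itemize}
\item For $x^p$ and $z^p$: the coordinates $x$ and $z$ are additive, so $\delta(x^p)=\delta(z^p)=0$, and also $d(x^p)=d(z^p)=0$. Hence $d_1(\eta_{x^p})=d_1(\eta_{z^p})=0$.
\item For $y^{p^2}$: we have $\delta(y^{p^2})=-x^{p^2}\otimes z^{p^2}$. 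Since $x^{p^2}=(x^p)^p=p!\,\gamma_p(x^p)$ vanishes in characteristic $p$ in $D_A(I)$, this element is $0$. Hence $d_1(\eta_{y^{p^2}})=0$.
\end{itemize}
So $d_1$ vanishes on $E_1^{1,0}$. Since $E_2^{2,0}$ has no further incoming differential from $E_1^{1,0}$, it equals $E_1^{2,0}$. Hence $d_2(\alpha_{y^p})=-\eta_{x^p}\wedge\eta_{z^p}\neq0$ in $E_2^{2,0}$.
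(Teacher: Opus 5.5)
Your proof is correct and follows the paper's argument. You use the same lift $y^p$ of $\alpha_{y^p}$ to $\mathrm{Fil}^0C^\bullet/\mathrm{Fil}^2C^\bullet$, the same identification $d_2(\alpha_{y^p})=-[x^p]\cdot[z^p]=-\eta_{x^p}\wedge\eta_{z^p}$, and the same reduction to showing that $d_1\colon E_1^{1,0}\to E_1^{2,0}$ vanishes.

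The one difference is how you verify that last vanishing. You use the direct cocycle computation, with $(x^p)^p=p!\,\gamma_p(x^p)=0$ in the divided power envelope killing $\delta(y^{p^2})$; the paper mentions this option but its written argument instead pulls back $d_1(\eta_{t^p})=0$ from $B\alpha_p$ along the homomorphisms $(x,y,z)\mapsto x,\,y^p,\,z$. A small nit: zero incoming $d_1$ only shows that $E_2^{2,0}$ is the subspace of $d_1$-cycles in $E_1^{2,0}$, not all of $E_1^{2,0}$. That is all you need, since $\eta_{x^p}\wedge\eta_{z^p}$ is a $d_1$-cycle.
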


\begin{proof} We apply Theorem~\ref{PD1} and Construction~\ref{PD2} to the closed embedding $H[F^{4/3}]\hookrightarrow H$, and retain the notation introduced in Construction~\ref{PD2}. By Remark~\ref{alpha}, the class $\alpha_{y^p}\in H^1(B(H[F^{4/3}]),\mathcal{O})$ corresponds to $[y^p]\in H^1(\gr^0C^{\bullet})$. In the bicomplex \(B^{\bullet,\bullet}\), we have $d(y^p)=0$, while
$\delta(y^p)=y^p\otimes 1+1\otimes y^p-(y\otimes 1+1\otimes y+x\otimes z)^p=-x^p\otimes z^p\in \mathrm{Fil}^2_{\mathrm{PD}}D_{A^{\otimes 2}}(I_2)$. Thus the image of $y^p$ in $\mathrm{Fil}^0C^{\bullet}/\mathrm{Fil}^2C^{\bullet}$ defines a class  $v\in H^1(\mathrm{Fil}^0C^{\bullet}/\mathrm{Fil}^2C^{\bullet})$, which lifts the class $[y^p]\in H^1(\gr^0C^{\bullet})=E_1^{0,1}$. Therefore, we see that $d_1(\alpha_{y^p})=0$, and
$d_2(\alpha_{y^p})\in E_2^{2,0}$ is represented by
the class $\partial(v)\in H^2(\gr^2C^\bullet)=E_1^{2,0}$,
where $\partial$ is the boundary operator associated with
the short exact sequence of cochain complexes
\[
0\to\gr^2C^\bullet
\to\mathrm{Fil}^0C^\bullet/\mathrm{Fil}^3C^\bullet
\to\mathrm{Fil}^0C^\bullet/\mathrm{Fil}^2C^\bullet
\to0.
\] 
By the above computation, we can see that $\partial(v)=[d_{\mathrm{Tot}}(y^p)]=[-x^p\otimes z^p ]$, which corresponds to $-\eta_{x^p}\wedge \eta_{z^p} $ by a similar argument in  Proposition~\ref{Heisenberg}.

It remains to show that $\eta_{x^p}\wedge \eta_{z^p}\neq 0$ in $E^{2,0}_2$. Equivalently, we need to show $\eta_{x^p}\wedge \eta_{z^p}\in E^{2,0}_1$ is not in the image of the differential $d_1: E^{1,0}_1 \to E^{2,0}_1$. We know that
$E_1^{1,0}=H^0(B(H[F^{4/3}]),L_{B(H[F^{4/3}])/k})
=n_{H[F^{4/3}]}=
k\eta_{x^p}\oplus k\eta_{y^{p^2}}\oplus k\eta_{z^p}$
and
$E_1^{2,0}=H^0(B(H[F^{4/3}]),\Lambda^2L_{B(H[F^{4/3}])/k})
=\Lambda^2(n_{H[F^{4/3}]})$. Hence, we have $\eta_{x^p}\wedge\eta_{z^p}\neq 0$
in $E_1^{2,0}$. We next show that $d_1: E^{1,0}_1 \to E^{2,0}_1$ is a zero map.
  This can be verified by a computation similar to the one above. Alternatively, we can argue by functoriality. Consider three homomorphisms $g_1,g_2, g_3:H[F^{4/3}]\to \alpha_p$ sending $(x,y,z)$ to $x$, $y^p$ and $z$, respectively. Then we have $g_1^*(\eta_{t^p})=\eta_{x^p}$, $g_2^*(\eta_{t^p})=\eta_{y^{p^2}}$, and $g_3^*(\eta_{t^p})=\eta_{z^p}$. Since $d_1(\eta_{t^p})=0$ for $B\alpha_p$ by Proposition~\ref{alphap},
  functoriality gives
  $d_1(\eta_{x^p})=d_1(\eta_{y^{p^2}})
  =d_1(\eta_{z^p})=0$. This completes the proof.
\end{proof}

\subsection{$G^r_{a,b,c}$} We now consider variants of $H$ obtained by introducing \(p\)-powers into group multiplication. For a positive integer $r$, let $H^r=\mathbb{A}^3$  as a scheme. We can equip $H^r$ with an affine group scheme structure over $k$ as follows. For a \(k\)-algebra \(R\) and  $P=(x,y,z),P'=(x',y',z')\in H^r(R)$, we define $P\cdot P'=\bigl(x+x',\,y+y'+x^{p^r}z',\,z+z'\bigr) $. For integers $ a,b,c\geq 1$, let $G^r_{a,b,c} $ be the closed subscheme of $ H^r$ cut out by the ideal $(x^{p^a},y^{p^b},z^{p^c})$. One can verify that this closed subscheme is a closed subgroup scheme of $H^r$ provided that either $b\geq a-r$ or $b\geq c$. We consider the closed embedding  $G^r_{a,b,c}\hookrightarrow H^r$. We apply Theorem~\ref{PD1} and Construction~\ref{PD2} to this closed embedding, and retain the notation
introduced in Construction~\ref{PD2}. In this case, we have
\(I/I^2=(A/I)x^{p^a}\oplus(A/I)y^{p^b}\oplus(A/I)z^{p^c}\)
and \(n_{G^r_{a,b,c}}\simeq k\eta_{x^{p^a}}\oplus k\eta_{y^{p^b}}
\oplus k\eta_{z^{p^c}}\). Moreover, since the $G^r_{a,b,c}$-action on $n_{G^r_{a,b,c}}$ is trivial by Lemma~\ref{normal}.

We first recall a basic fact that will be used below.
\begin{lemma} \label{zero}
	Let $(B,J,\gamma)$ be a divided power ring such that $B$ is an $\mathbb F_p$-algebra. Then the \(p\)-th power of every element of $J$ is zero.

\end{lemma}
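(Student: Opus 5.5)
The plan is to use the divided power axioms directly. Let $x\in J$. Recall that a divided power structure $\gamma$ on $J$ consists of maps $\gamma_n\colon J\to J$ for $n\geq 1$ (with $\gamma_0=1$). These maps satisfy $\gamma_1(x)=x$ and the multiplicativity relation
\[
\gamma_n(x)\gamma_m(x)=\binom{n+m}{n}\gamma_{n+m}(x).
\]
By induction on $n$, this relation gives the standard identity $x^n=n!\,\gamma_n(x)$ for all $n\geq 1$. The base case is $\gamma_1(x)=x$. For the inductive step, write
\[
x^{n+1}=x\cdot x^n=n!\,\gamma_1(x)\gamma_n(x)=n!\binom{n+1}{1}\gamma_{n+1}(x)=(n+1)!\,\gamma_{n+1}(x).
\]

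Specializing to $n=p$ gives $x^p=p!\,\gamma_p(x)$. Since $B$ is an $\mathbb F_p$-algebra, the integer $p!$ is divisible by $p$ and hence acts as $0$ on $B$. Therefore $x^p=0$.

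The argument is entirely formal, and no step presents a real obstacle. The only point to check is that the induction uses only the axioms $\gamma_1(x)=x$ and the product formula, both of which hold for any divided power ring in the sense of \cite[Tag~07GL]{Sta26}. Neither the base ring nor the compatibility with $(\mathbb Z,0,\gamma_{\mathrm{tri}})$ plays any role.
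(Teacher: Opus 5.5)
Your proof is correct and follows the same approach as the paper, which writes $b^p=p!\,\gamma_p(b)=0$ directly. The only difference is that you derive the identity $x^n=n!\,\gamma_n(x)$ by induction from the product formula, where the paper takes it as known.
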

\begin{proof}
	 Let $b$ be an element of $J$. Since $B$ is an $\mathbb F_p$-algebra, we have $b^p=p!\gamma_p(b)=0$.
\end{proof}
 Suppose now $ b-1\geq a-r\geq c$. We apply Theorem~\ref{PD1} and Construction~\ref{PD2} to the closed embedding  $G^r_{a,b,c}\hookrightarrow H^r$, and retain the notation
introduced in Construction~\ref{PD2}. In this case, we have
\(I/I^2=(A/I)x^{p^a}\oplus(A/I)y^{p^b}\oplus(A/I)z^{p^c}\)
and \(n_{G^r_{a,b,c}}\simeq k\eta_{x^{p^a}}\oplus k\eta_{y^{p^b}}
\oplus k\eta_{z^{p^c}}\). Since $G^r_{a,b,c}$ is normal in $H^r$, the $G^r_{a,b,c}$-action on $n_{G^r_{a,b,c}}$ is trivial by Lemma~\ref{normal}. Moreover, since $
y^{p^c},y^{p^{c+1}},\cdots,y^{p^{b-1}}
 \in \operatorname{Ker}(\mathrm{pr}_1^*+\mathrm{pr}_2^*-m^*:\mathcal{O}(G^r_{a,b,c})\to \mathcal{O}(G^r_{a,b,c})\otimes \mathcal{O}(G^r_{a,b,c})$, they determine classes $\alpha_{y^{p^c}},\alpha_{y^{p^{c+1}}},\ldots,
 \alpha_{y^{p^{b-1}}}\in H^1(BG^r_{a,b,c},\mathcal O)$. We next study the differentials of these classes in the Hodge-to-de Rham spectral sequence of $BG^r_{a,b,c}$.

\begin{proposition}\label{conjugate}Consider the Hodge-to-de Rham spectral sequence of $BG^r_{a,b,c}$. For $c\leq s\leq b-1$, the classes
	$\alpha_{y^{p^s}}\in H^1(BG^r_{a,b,c},\mathcal O)$
	satisfy the following properties:
	 \begin{enumerate}
	 	\item[(1)] If $a-r>c$, then
	 	$d_1(\alpha_{y^{p^c}})\neq0$.
	 	
	 	\item[(2)] If $a-r=c$, then
	 	$d_1(\alpha_{y^{p^c}})=0$ and
	 	$d_2(\alpha_{y^{p^c}})\neq0$.
	 	
	 	\item[(3)] If $c+1\leq s\leq b-1$, then
	 	$d_1(\alpha_{y^{p^s}})=0$ and
	 	$d_2(\alpha_{y^{p^s}})=0$.
	 \end{enumerate}
	\end{proposition}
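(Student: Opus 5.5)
We follow the pattern of Propositions~\ref{Heisenberg2} and~\ref{alphap}, working in the filtered complex $\mathrm{Fil}^*C^\bullet$ of Construction~\ref{PD2} attached to the embedding $G^r_{a,b,c}\hookrightarrow H^r$. By Remark~\ref{alpha}, $\alpha_{y^{p^s}}$ corresponds to $[y^{p^s}]\in H^1(\gr^0C^\bullet)$. The basic computation in the bicomplex $B^{\bullet,\bullet}$ is that $d(y^{p^s})=0$, and
\[
\delta(y^{p^s})=y_1^{p^s}+y_2^{p^s}-(y_1+y_2+x_1^{p^r}z_2)^{p^s}=-x_1^{p^{r+s}}\otimes z_2^{p^s},
\]
with $x_1=x\otimes1$ and $z_2=1\otimes z$. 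The whole argument then comes down to locating $x^{p^{r+s}}$ and $z^{p^s}$ in the PD filtration of $D_A(I)$.

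For (3), take $s\geq c+1$. Then $z^{p^s}=(z^{p^c})^{p^{s-c}}$ is a $p$-th power of an element of $J$, so it vanishes in $D_A(I)$ by Lemma~\ref{zero}. Hence $\delta(y^{p^s})=0$ in $D_{A^{\otimes2}}(I_2)=D_A(I)^{\otimes2}$. So $y^{p^s}\in C^1$ is a genuine cocycle of $\mathrm{Fil}^0C^\bullet$, and $\alpha_{y^{p^s}}$ is a permanent cycle; in particular $d_1=d_2=0$.

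For (2), take $s=c$ and $a-r=c$. Then $x^{p^{r+c}}=x^{p^a}\in I$ and $z^{p^c}\in I$, so $\delta(y^{p^c})\in\mathrm{Fil}^2_{\mathrm{PD}}$ and is nonzero. Exactly as in Proposition~\ref{Heisenberg2}, $y^{p^c}$ defines a class $v\in H^1(\mathrm{Fil}^0/\mathrm{Fil}^2)$ lifting $[y^{p^c}]$. This gives $d_1(\alpha_{y^{p^c}})=0$, and $d_2(\alpha_{y^{p^c}})$ is represented by $\partial v=-[x^{p^a}\otimes z^{p^c}]$. By Remark~\ref{product} and Remark~\ref{eta}, this equals $-\eta_{x^{p^a}}\wedge\eta_{z^{p^c}}\in\Lambda^2 n_G=E_1^{2,0}$, which is nonzero; here we use that the action is trivial by Lemma~\ref{normal}, so $E_1^{2,0}=\Lambda^2 n_G$. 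It remains to check that this class is not in the image of $d_1\colon E_1^{1,0}\to E_1^{2,0}$. We argue by functoriality, as in Proposition~\ref{Heisenberg2}. The maps $(x,y,z)\mapsto x$, $z$ are homomorphisms to $\alpha_{p^a}$ and $\alpha_{p^c}$, and they pull back $\eta_{t^{p^a}}$ and $\eta_{t^{p^c}}$ to $\eta_{x^{p^a}}$ and $\eta_{z^{p^c}}$. For $y$, note that $y^{p^{b-1}}$ is primitive, since $(x^{p^r}z')^{p^{b-1}}=0$ because $b-1\geq a-r$ and $b-1\geq c$. So $(x,y,z)\mapsto y^{p^{b-1}}$ is a homomorphism to $\alpha_p$ pulling $\eta_{t^p}$ back to $\eta_{y^{p^b}}$. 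Proposition~\ref{alphap} then gives $d_1=0$ on all of $E_1^{1,0}$.

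For (1), take $a-r>c$. Then $x^{p^{r+c}}$ does not lie in $I$, while $z^{p^c}\in I$. So $\delta(y^{p^c})=-x_1^{p^{r+c}}\otimes z_2^{p^c}$ lies in $\mathrm{Fil}^1$ with nonzero image in $\gr^1$, and $d_1(\alpha_{y^{p^c}})=-[x^{p^{r+c}}\otimes z^{p^c}]\in H^2(\gr^1C^\bullet)=E_1^{1,1}$. By Remark~\ref{product} this is the cup product $\alpha_{x^{p^{r+c}}}\cdot\eta_{z^{p^c}}$. Since $n_G$ is a trivial representation, $H^1(BG,n_G)\simeq H^1(BG,\mathcal O)\otimes n_G$, which sits inside $E_1^{1,1}$ via Lemma~\ref{split}. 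So we reduce to showing that $\alpha_{x^{p^{r+c}}}\neq0$ in $H^1(BG,\mathcal O)$. This holds because $x^{p^{r+c}}$ is a nonzero primitive element of $\mathcal O(G)$, using $r+c<a$. The main obstacle here is carefully identifying the class $[x^{p^{r+c}}\otimes z^{p^c}]$ with the nonzero summand $\alpha\otimes\eta$ inside $H^1(BG,n_G)\subset H^2(\gr^1)$, with no sign or cocycle-correction subtlety. I would handle this by pulling back along the homomorphism $G\to\alpha_{p^a}\times\alpha_{p^c}$, $(x,y,z)\mapsto(x,z)$. There the Künneth decomposition of $H^1(B(\alpha_{p^a}\times\alpha_{p^c}),L)$ makes it clear that the class $\alpha_{t_1^{p^{r+c}}}\otimes\eta_{t_2^{p^c}}$ is nonzero, and the pullback map respects the decomposition $H^1(B-,\mathcal O)\otimes n$.
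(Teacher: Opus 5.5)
Your proposal is correct and follows essentially the same route as the paper. It uses the same bicomplex computation $\delta(y^{p^s})=-x^{p^{r+s}}\otimes z^{p^s}$, Lemma~\ref{zero} to kill $z^{p^s}$ for $s\geq c+1$, the $H[F^{4/3}]$-style lift to $\mathrm{Fil}^0/\mathrm{Fil}^2$ together with functoriality to $B\alpha_{p^m}$ to get $d_1=0$ on $E_1^{1,0}$ in (2), and the identification of $d_1(\alpha_{y^{p^c}})$ with $\mp\alpha_{x^{p^{r+c}}}\otimes\eta_{z^{p^c}}\in H^1(BG,\mathcal O)\otimes n_G\subset E_1^{1,1}$ in (1). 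The extra detour through $G\to\alpha_{p^a}\times\alpha_{p^c}$ is unnecessary, since the identification already follows from multiplicativity of $\mathrm{Fil}^*C^\bullet\simeq\mathrm{Fil}^*_{\mathrm{Hod}}\dR_{BG/k}$ and naturality of $n_G\to L_{BG/k}$; also, nonvanishing upstairs would not descend along a pullback, so your nonvanishing rightly rests on $\alpha_{x^{p^{r+c}}}\neq0$ on $BG$ itself.
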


\begin{proof} Applying Theorem~\ref{PD1} and Construction~\ref{PD2} to the closed embedding  $G^r_{a,b,c}\hookrightarrow H^r$, and retain the notation introduced in Construction~\ref{PD2}. For $c\leq s\leq b-1$, by Remark~\ref{alpha}, the class $\alpha_{y^{p^s}}\in H^1(BG^r_{a,b,c},\mathcal O)$ corresponds to $[y^{p^s}]\in H^1(\gr^1C^{\bullet})$. 
  Since  $z^{p^s}=(z^{p^{s-1}})^p$ and $z^{p^{s-1}}\in \mathrm{Fil}_{\mathrm{PD}}^1D_{A^{\otimes 2}}(I_2)$, Lemma~\ref{zero} gives $z^{p^s}=0$ in $D_{A^{\otimes 2}}(I_2)$. Hence, when $c+1\leq s\leq b-1$, we have $\delta(y^{p^s})=-x^{p^{r+s}} \otimes z^{p^s}=0 \in D_{A^{\otimes 2}}(I_2)$. It follows that $y^{p^s}$ determines a class in $H^1(C^{\bullet})$ which lifts $[y^{p^s}]\in H^1(\gr^1C^{\bullet})$. Thus, the class $\alpha_{y^{p^s}}\in H^1(BG^r_{a,b,c},\mathcal O)$ is a permanent cycle.
	
When $a-r>c$, we have $\delta(y^{p^c})=-x^{p^{r+c}} \otimes z^{p^c} \in \mathrm{Fil}^1_{\mathrm{PD}}D_{A^{\otimes 2}}(I_2)$. In this case, the class $[x^{p^{r+c}}]\in H^1(\gr^0C^{\bullet})$ corresponds to a non-zero class $\alpha_{x^{p^{r+c}}}\in H^1(BG^r_{a,b,c},\mathcal{O})$ and $[z^{p^c}]\in H^1(\gr^1C^{\bullet})$ corresponds to  a non-zero class $\eta_{z^{p^c}}\in H^0(BG^r_{a,b,c} ,L_{BG^r_{a,b,c} /k})$. Hence, by the same argument as in Proposition~\ref{Heisenberg}, we obtain $d_1(\alpha_{y^{p^c}})=-\alpha_{x^{p^{r+c}}}\otimes \eta_{z^{p^{c}}}\neq 0$ in $H^1(BG^r_{a,b,c} ,\mathcal O)\otimes n_G=H^1(BG^r_{a,b,c} ,n_G)\subset H^1(BG^r_{a,b,c} ,L_{BG^r_{a,b,c} /k})$. When $a-r=c$, we have $\delta(y^{p^c})=-x^{p^{a}} \otimes z^{p^c}\in \mathrm{Fil}^2_{\mathrm{PD}} D_{A^{\otimes 2}}(I_2)$. In this case, the class $[x^{p^{a}}]\in H^1(\gr^1C^{\bullet})$ corresponds to a non-zero class $\eta_{x^{p^{a}}}\in H^0(BG^r_{a,b,c},L_{BG^r_{a,b,c} /k})$ and $[z^{p^c}]\in H^1(\gr^1C^{\bullet})$ corresponds to  a non-zero class $\eta_{z^{p^c}}\in H^0(BG^r_{a,b,c} ,L_{BG^r_{a,b,c} /k})$. Hence, by the same argument as in Proposition~\ref{Heisenberg2}, we have $d_1(\alpha_{y^{p^c}})=0$ and $d_2(\alpha_{y^{p^c}})=-\eta_{x^{p^{a}}}\wedge \eta_{z^{p^c}}\neq 0$ in $E^{2,0}_2$.

\end{proof}

\section{Classifying stacks with highly non-degenerate Hodge-to-de Rham spectral sequences}

In this section, we construct a non-commutative finite group scheme $G_n$ such that the Hodge-to-de Rham spectral sequence of $BG_n$ has a non-zero differential on the $E_n$-page.
\subsection{The classifying stacks $BG_n$}

\begin{construction}\label{example}

For \(n\geq 2\), consider the following group scheme \(\widetilde G_n\). As a scheme, we set
\[\widetilde{G}_{n}=\mathbb A^{4n-5}=\operatorname{Spec}k[x_1,\cdots,x_{n-2},y_1,\cdots,y_{n-1},z_1,\cdots,z_{n-1},w_1,\cdots,w_{n-1}].\]
We regard \(\widetilde G_n\) as a set-valued functor via its functor of points
\[
\widetilde G_n\colon \CRing_k\longrightarrow\Set,
\qquad
R\longmapsto\widetilde G_n(R)=R^{4n-5}.
\]
We equip \(\widetilde G_n\) with a group scheme structure by promoting the above functor to a group-valued functor $ \CRing_k \longrightarrow \Grp$.
The group law on \(\widetilde G_n(R)\) is defined as follows. Given \(P=(x_i,y_j,z_i,w_k)\) and \(P'=(x_i',y_j',z_i',w_k')\) in
\(\widetilde G_n(R)\), write
\(P\cdot P'=(x_i'',y_j'',z_i'',w_k'')\).
The coordinates are defined by
\(x_i''=x_i+x_i'\),
\(y_j''=y_j+y_j'\),
\(z_i''=z_i+z_i'\), and
\(w_1''=w_1+w_1'+y_1^p(z_1')^p\).
For \(2\leq k\leq n-1\), we set
\[
w_k''
=
w_k+w_k'
+z_{k-1}^p y_k'
+x_{k-1}(z_k')^p.
\]
Hence, we obtain a smooth affine group scheme $\widetilde G_n$ over $k$. We consider a closed subscheme 
\[
G_n
=V\bigl(
x_1^p,\ldots,x_{n-2}^p,
y_1^{p^2},\ldots,y_{n-1}^{p^2},
z_1^{p^2},\ldots,z_{n-1}^{p^2},
w_1^{p^2},\ldots,w_{n-1}^{p^2}
\bigr).
\]
It is straightforward to check that $G_n$ is a closed normal subgroup scheme of \(\widetilde G_n\).
\end{construction}
\begin{remark}
The group scheme $G_2$ can be viewed as a variant of $H[F^{4/3}]$. In fact, there is a group scheme homomorphism $G_2\to H[F^{4/3}]$ sending $(y_1,z_1,w_1)\in G_2(R)$ to $(x,y,z)=(y^p, w,z^p)\in H[F^{4/3}](R)$. 
\end{remark}

 We can verify $x_i, \; y_j, \; y_j^p, \; z_i, \; z_i^p, \; w_k^p \in \operatorname{Ker}(\mathrm{pr}_1^*+\mathrm{pr}_2^*-m^*:\mathcal{O}(G_{n+1})\to \mathcal{O}(G_{n+1})\otimes \mathcal{O}(G_{n+1}))$. It follows that $\alpha_{x_i},\alpha_{y_j},\alpha_{y_j^p}, \alpha_{z_j},\alpha_{z_j^p},\alpha_{w_j^p} \in H^1(BG_{n+1},\mathcal{O})$. Throughout this section, unless otherwise specified,
 $E_r^{i,j}$ denotes the $(i,j)$-term on the $r$-th page
 of the Hodge--de Rham spectral sequence of $BG_{n+1}$.

\begin{theorem}\label{thm:main1}
    For $n\geq 2$, the class
     \[\alpha_{n} = \alpha_{w_1^p}\alpha_{y_2^p}\alpha_{y_3^p}  \dots  \alpha_{y_{n-1}^p} \in H^{n-1}(BG_{n},\mathcal{O}) =E_1^{0,n-1}\] in the Hodge-to-de Rham spectral sequence of $BG_{n}$
    satisfies 
    \begin{enumerate}
    \item[(1)]$d_1(\alpha_{n})=0,\cdots,d_{n-1}(\alpha_{n})=0$
    \item[(2)]$d_{n}(\alpha_{n})\neq 0$ in $E_{n}^{n,0}$.
    \end{enumerate}
\end{theorem}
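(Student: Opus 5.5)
The plan is to work throughout with the filtered complex $\mathrm{Fil}^*C^\bullet$ of Construction~\ref{PD2} for the regular embedding $G_n\hookrightarrow\widetilde G_n$, and to exhibit an explicit zig-zag. By Remark~\ref{alpha} and Remark~\ref{product}, $\alpha_n$ is represented in $H^{n-1}(\gr^0C^\bullet)$ by the pure tensor
\[
a_0:=w_1^p\otimes y_2^p\otimes\cdots\otimes y_{n-1}^p\in D_A(I)^{\otimes(n-1)}\subset C^{n-1}.
\]
Its de Rham differential vanishes, since all entries are $p$-th powers. The Alexander--Whitney product is multiplicative for $d_{\mathrm{Tot}}$. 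Moreover $\delta(y_k^p)=0$ and, from the group law,
\[
\delta(w_1^p)=-y_1^{p^2}\otimes z_1^{p^2}.
\]
Hence
\[
d_{\mathrm{Tot}}(a_0)=\pm\, y_1^{p^2}\otimes z_1^{p^2}\otimes y_2^p\otimes\cdots\otimes y_{n-1}^p .
\]
Only the first two factors lie in $\mathrm{Fil}^1_{\mathrm{PD}}$, so this lies in $\mathrm{Fil}^2$ only. The idea is to kill the offending factor pair $z_{k-1}^{p^2}\otimes y_k^p$ step by step using $w_k^p$: the group law gives
\[
\delta(w_k^p)=-\bigl(z_{k-1}^{p^2}\otimes y_k^p+x_{k-1}^p\otimes z_k^{p^2}\bigr).
\]
The second term has both factors in $I$, so it is one filtration step higher.

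Concretely, I would define inductively, for $1\le m\le n-2$,
\[
a_m:=\pm\, y_1^{p^2}\otimes x_1^p\otimes\cdots\otimes x_{m-1}^p\otimes w_{m+1}^p\otimes y_{m+2}^p\otimes\cdots\otimes y_{n-1}^p\in\mathrm{Fil}^{m}C^{n-1},
\]
with signs chosen so that $d_{\mathrm{Tot}}(a_0+\cdots+a_m)$ equals $\pm\,y_1^{p^2}\otimes x_1^p\otimes\cdots\otimes x_{m}^p\otimes z_{m+1}^{p^2}\otimes y_{m+2}^p\otimes\cdots$. This element lies in $\mathrm{Fil}^{m+2}$. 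Here one uses $\delta(x_i^p)=\delta(y_1^{p^2})=0$ in $D_{A^{\otimes2}}(I_2)$, and that the remaining cross terms vanish. In particular $z^{p^2}$-type terms that appear with an extra $p$-th power die by Lemma~\ref{zero}, exactly as in Proposition~\ref{conjugate}. The last step $m=n-2$ gives
\[
d_{\mathrm{Tot}}\Bigl(\sum_{m=0}^{n-2}a_m\Bigr)=\pm\, y_1^{p^2}\otimes x_1^p\otimes\cdots\otimes x_{n-2}^p\otimes z_{n-1}^{p^2}\in\mathrm{Fil}^nC^n .
\]
By the standard description of differentials of the spectral sequence of a filtered complex \cite[Tag~012K]{Sta26}, this shows $d_r(\alpha_n)=0$ for $r<n$. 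It also shows that $d_n(\alpha_n)$ is the class of this tensor in $H^n(\gr^nC^\bullet)=E_1^{n,0}$. By Remark~\ref{eta}, Remark~\ref{product} and Lemma~\ref{normal}, that class is
\[
\pm\,\eta_{y_1^{p^2}}\wedge\eta_{x_1^p}\wedge\cdots\wedge\eta_{x_{n-2}^p}\wedge\eta_{z_{n-1}^{p^2}}\in\Lambda^n n_{G_n}=E_1^{n,0}.
\]
This is non-zero because these $\eta$'s are part of the basis of $n_{G_n}$, which is indexed by the generators of $I/I^2$. The bookkeeping of Alexander--Whitney signs and of which tensor slot $\delta$ hits is routine but must be done carefully.

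The main obstacle is to show that this element survives to $E_n^{n,0}$, i.e.\ is not in the image of $d_r\colon E_r^{n-r,r-1}\to E_r^{n,0}$ for $1\le r\le n-1$. I would first try a weight argument. The torus $\mathbb G_m^N$ acts on $\widetilde G_n$ by group automorphisms, rescaling the coordinates compatibly with the group law; this is possible since each cross term in the group law is a monomial. The action preserves $G_n$ and the ideal, so the whole filtered complex and the spectral sequence are multigraded. One then chooses weights, for instance tracking separately the index $k$ and the letters $x,y,z,w$, so that the target class has a weight not attained by $E_1^{n-r,r-1}$ for $r<n$. This would require some control of $E_1^{n-r,r-1}=H^{r-1}(BG_n,\Lambda^{n-r}L)$ in the relevant weight. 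It can be obtained from Lemma~\ref{split} and the description of $H^*(BG_n,\mathcal O)$ via the filtration of $G_n$ by central $\alpha_{p^e}$-pieces. If the weights alone do not separate everything, the fallback is to construct a homomorphism from a suitable commutative group scheme, a product of copies of $\alpha_{p}$ and $\alpha_{p^2}$, into $G_n$ or onto a quotient. Under such a map the target class pulls back to a non-zero permanent class, while the possible sources pull back to zero; this would use functoriality as in Proposition~\ref{Heisenberg2} and the computation of Proposition~\ref{alphap}.
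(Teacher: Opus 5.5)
Your part (1) is the paper's argument. Your $a_m$ are the paper's $s_{m+1}$ (written there for $G_{n+1}$), and the identification of $d_n(\alpha_n)$ with $\pm\,\eta_{y_1^{p^2}}\wedge\eta_{x_1^p}\wedge\cdots\wedge\eta_{x_{n-2}^p}\wedge\eta_{z_{n-1}^{p^2}}$ is Lemma~\ref{form}.

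The gap is in part (2). You plan to choose a coordinate-rescaling torus so that the weight $\chi$ of the target is not attained in $E_1^{n-r,r-1}$ for $r<n$. For $n\geq 3$ this is impossible for every such torus, and your own zig-zag is the reason.
\begin{itemize}
\item Each $a_m$ is a monomial, hence a weight vector, and $d_{\mathrm{Tot}}$ is equivariant.
\item $d_{\mathrm{Tot}}(a_0+\cdots+a_{m-1})$ is a single non-zero pure tensor (Lemma~\ref{PD-tensor}), and it is cancelled by one of the two terms of $d_{\mathrm{Tot}}(a_m)$. Hence every $a_m$ has the same weight as $a_0$ and as the target.
\item Since $d_{\mathrm{Tot}}(a_m)\in\mathrm{Fil}^{m+1}C^n$, each $a_m$ defines a class
$[a_m]=\pm\,\alpha_{w_{m+1}^p}\alpha_{y_{m+2}^p}\cdots\alpha_{y_{n-1}^p}\otimes\eta_{y_1^{p^2}}\wedge\eta_{x_1^p}\wedge\cdots\wedge\eta_{x_{m-1}^p}\in E_1^{m,n-1-m}$.
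This class is non-zero by the argument of Lemma~\ref{bound}: restrict to the commutative subgroup $V(z_1,\ldots,z_{n-1})\simeq\alpha_p^{n-2}\times\alpha_{p^2}^{2n-2}$.
\item For $1\leq m\leq n-2$, these groups are exactly the sources $E_1^{n-r,r-1}$ of $d_r$ with $r=n-m\in[2,n-1]$.
\end{itemize}
So the target weight is attained there. An $E_1$-level weight count can only dispose of $d_1$, where indeed the weight-$\chi$ part of $E_1^{n-1,0}$ vanishes. That suffices only for $n=2$.

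What is missing is the second half of the paper's argument, in two steps.
\begin{enumerate}
\item A sharp upper bound: the weight-$\chi$ part of $E_1^{m,n-1-m}$ is at most one-dimensional, hence spanned by $[a_m]$. This is Proposition~\ref{G} and Lemmas~\ref{sq}, \ref{c1}, \ref{c2}. It needs the specific torus of Construction~\ref{a2}, with enough independent weights, and the filtration $\mathcal W_{n+1,1}\subset\mathcal H^1(\mathcal L)$, because $\omega_G$ is not a trivial $G$-representation.
\item A computation showing that each $[a_m]$ is not a $d_1$-cycle. From $\delta(w_{m+1}^p)$ one gets $d_1(\alpha_{w_{m+1}^p})=\pm\,\alpha_{y_{m+1}^p}\otimes\eta_{z_m^{p^2}}$. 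The Leibniz rule then gives $d_1[a_m]$, whose $\mathcal O$-part $\alpha_{y_{m+1}^p}\cdots\alpha_{y_{n-1}^p}$ is detected on the same commutative subgroup.
\end{enumerate}
Only together do these give vanishing of the weight-$\chi$ part of $E_2^{m,n-1-m}$ for all $1\leq m\leq n-1$, hence survival of the target to $E_n$.

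Your fallback cannot replace this step. For any homomorphism $\phi\colon K\to G_n$ from a product $K$ of $\alpha_p$'s and $\alpha_{p^2}$'s, naturality gives $\phi^*(d_n\alpha_n)=d_n(\phi^*\alpha_n)$. The spectral sequence of $BK$ degenerates at $E_2$ (Proposition~\ref{alphap} and K\"unneth), so the image of the target in $E_n(BK)$ is always zero for $n\geq 2$. Such a pullback can never witness $d_n(\alpha_n)\neq 0$. Pulling back from a quotient of $G_n$ goes in the wrong direction to detect a class on $BG_n$.
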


\begin{proof}[Proof of Theorem~\ref{thm:main1}(1)]

For ease of notation, we work with \(G_{n+1}\) for $n\geq 1$. We apply Theorem~\ref{PD1} and Construction~\ref{PD2} to the closed embedding $G_{n+1}\hookrightarrow \widetilde{G}_{n+1}$ and retain the notation introduced there.

By Remark~\ref{alpha}, the class $\alpha_{n+1}=\alpha_{w_1^p}\alpha_{y_2^p}\alpha_{y_3^p}  \dots  \alpha_{y_{n}^p}$ corresponds to $[w_1^p\otimes y_2^p\otimes\cdots\otimes y_{n}^p]\in H^n(\gr^0 C^{\bullet})$. We set $s_1=w_1^p\otimes y_2^p\otimes\cdots\otimes y_{n}^p$ and, for \(2\leq l\leq n\), $s_l=y_1^{p^2}\otimes x_1^p\otimes x_2^p\otimes\cdots\otimes
x_{l-2}^p\otimes w_l^p\otimes y_{l+1}^p\otimes\cdots\otimes y_n^p$.
Observe $\delta(w_1^p)=w_1^p\otimes 1+1\otimes w_1^p-(w_1\otimes 1+1\otimes w_1+y_1^p\otimes z_1^p)^p=-y_1^{p^2}\otimes z_1^{p^2}$ in the bicomplex $B^{\bullet,\bullet}$, while $\delta(y^p_i)=0$. By Leibniz rule of $\delta$, we have $\delta(s_1)=\delta(w_1^p\otimes y_2^p\otimes\cdots\otimes y_{n}^p)=\delta(w_1^p)\otimes y_2^p\otimes\cdots\otimes y_{n}^p=-y_1^{p^2}\otimes z_1^{p^2}\otimes y_2^p\otimes\cdots\otimes y_{n}^p$ in the bicomplex $B^{\bullet,\bullet}$. Similarly, for $2\leq l\leq n$, we have
\[
\begin{aligned}
	&\delta(s_l)=\delta\bigl(
	y_1^{p^2}\otimes x_1^p\otimes x_2^p\otimes\cdots\otimes
	x_{l-2}^p\otimes w_l^p\otimes y_{l+1}^p\otimes\cdots\otimes y_n^p
	\bigr) \\
	&\qquad =
	(-1)^l\Bigl(
	y_1^{p^2}\otimes x_1^p\otimes x_2^p\otimes\cdots\otimes
	x_{l-2}^p\otimes z_{l-1}^{p^2}\otimes y_l^p\otimes
	y_{l+1}^p\otimes\cdots\otimes y_n^p \\
	&\qquad\qquad\quad
	+
	y_1^{p^2}\otimes x_1^p\otimes x_2^p\otimes\cdots\otimes
	x_{l-2}^p\otimes x_{l-1}^p\otimes z_l^{p^2}\otimes
	y_{l+1}^p\otimes\cdots\otimes y_n^p
	\Bigr).
\end{aligned}
\]
Note that $x_i^p, y_j^{p^2},z_j^{p^2},w_j^{p^2}\in \mathrm{Fil}^1_{\mathrm{PD}}D_A(I)$. Consequently, we have $\delta(s_1+s_2+\cdots+s_n)=(-1)^l(y_1^{p^2}\otimes x_1^p\otimes x_2^p\otimes\cdots\otimes x_{n-1}^p\otimes z_n^{p^2})\in  \mathrm{Fil}_{\mathrm{PD}}^{n+1}D_{A^{\otimes n+1}}(I_{n+1})$. On the other hand, since each \(s_i\) is a \(p\)-th power, we have \(d(s_i)=0\) in the bicomplex $B^{\bullet,\bullet}$.

Therefore, we can define
\[\widetilde{\alpha}_{n+1}=[s_1+s_2+\cdots+s_n] \in H^n(\mathrm{Fil}^0C^{\bullet}/\mathrm{Fil}^{n+1}C^{\bullet}),\] whose image in $H^n(\gr^0)$ is $[s_1]=\alpha_{n+1}$. Hence, the class $\alpha_{n+1}$ satisfies $d_1(\alpha_{n+1})=0,\cdots,d_{n}(\alpha_{n+1})=0$.

\end{proof}
To describe the class $d_{n+1}(\alpha_{n+1})$
in $E_{n+1}^{n+1,0}$, we first identify
$n_{G_{n+1}}$ explicitly.
As an $A/I$-module, we have
\[
I/I^2
=
\bigoplus_{i=1}^{n-1}(A/I)x_i^p
\oplus
\bigoplus_{i=1}^{n}(A/I)y_i^{p^2}
\oplus
\bigoplus_{i=1}^{n}(A/I)z_i^{p^2}
\oplus
\bigoplus_{i=1}^{n}(A/I)w_i^{p^2},
\]
where the generators denote their classes modulo $I^2$. Since $d\colon I/I^2\to \Omega^1_{A/k}\otimes_A A/I$ is zero, the underlying $k$-vector space of $n_{G_{n+1}}$ is
\[
n_{G_{n+1}}
=
\bigoplus_{i=1}^{n-1}k\eta_{x_i^p}
\oplus
\bigoplus_{i=1}^{n}k\eta_{y_i^{p^2}}
\oplus
\bigoplus_{i=1}^{n}k\eta_{z_i^{p^2}}
\oplus
\bigoplus_{i=1}^{n}k\eta_{w_i^{p^2}}.
\]
Since $G_{n+1}$ is a normal subgroup scheme of $\widetilde{G}_{n+1}$, the $G_{n+1}$-action on $n_{G_{n+1}}$ is trivial by Lemma~\ref{normal}. Consequently, we have $E_{1}^{r,0}=H^0(BG_{n+1},\Lambda^{r}L_{BG_{n+1}/k})\simeq \Lambda^r(n_G)$ for every $r\geq 0$. In the following lemma, we give an explicit description of the class $d_{n+1}(\alpha_{n+1})$ in terms of $\Lambda^{n+1}(n_G)$.
\begin{lemma}\label{form}
In the Hodge-to-de Rham spectral sequence of $BG_{n+1}$, the class 
$d_{n+1}(\alpha_{n+1})\in E_{n+1}^{n+1,0}$ is represented by  $(-1)^{n}\eta_{y_1^{p^2}}\wedge\eta_{x_1^{p}}\wedge\cdots\wedge\eta_{x_{n-1}^{p}}\wedge\eta_{z_n^{p^2}}\in \Lambda^{n+1}(n_G)\simeq E_{1}^{n+1,0}$.
\end{lemma}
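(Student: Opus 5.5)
The plan is to read off $d_{n+1}(\alpha_{n+1})$ directly from the lift $\widetilde{\alpha}_{n+1}=[s_1+\cdots+s_n]\in H^n(\mathrm{Fil}^0C^\bullet/\mathrm{Fil}^{n+1}C^\bullet)$ built in the proof of Theorem~\ref{thm:main1}(1), by the standard recipe for spectral sequences of filtered complexes (\cite[Tag~012K]{Sta26}). Since $\widetilde{\alpha}_{n+1}$ lifts $\alpha_{n+1}$ to $\mathrm{Fil}^0/\mathrm{Fil}^{n+1}$, the class $d_{n+1}(\alpha_{n+1})\in E_{n+1}^{n+1,0}$ is represented by $\partial(\widetilde{\alpha}_{n+1})\in H^{n+1}(\gr^{n+1}C^\bullet)=E_1^{n+1,0}$. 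Here $\partial$ is the boundary map of
\[
0\to\gr^{n+1}C^\bullet\to\mathrm{Fil}^0C^\bullet/\mathrm{Fil}^{n+2}C^\bullet\to\mathrm{Fil}^0C^\bullet/\mathrm{Fil}^{n+1}C^\bullet\to0,
\]
exactly as in Proposition~\ref{Heisenberg2}. Concretely, $\partial(\widetilde{\alpha}_{n+1})=[d_{\mathrm{Tot}}(s_1+\cdots+s_n)]$.

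The first step is to compute $d_{\mathrm{Tot}}(s_1+\cdots+s_n)$ exactly. Since each $s_l$ is a tensor of $p$-th powers, $d(s_l)=0$, so only $\delta$ contributes. I would recheck the telescoping from the proof of (1), keeping careful track of signs. One needs the sign conventions of $\delta$ on the cobar complex and of the Leibniz rule $\delta(a\otimes b)=\delta a\otimes b+(-1)^{|a|}a\otimes\delta b$ under Remark~\ref{product}. One also uses $\delta(x_i^p)=\delta(y_j^{p^2})=0$ and
\[
\delta(w_l^p)=-(z_{l-1}^{p^2}\otimes y_l^p+x_{l-1}^p\otimes z_l^{p^2})\quad\text{for } l\ge2,
\]
where the cross term $x_{l-1}\otimes (z_l')^p$ raised to the $p$-th power gives $x_{l-1}^p\otimes z_l^{p^2}$. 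The expected outcome is that the terms $y_1^{p^2}\otimes x_1^p\otimes\cdots\otimes x_{l-2}^p\otimes z_{l-1}^{p^2}\otimes y_l^p\otimes\cdots$ cancel pairwise between $s_{l-1}$ and $s_l$. This should leave the single term
\[
(-1)^n\,y_1^{p^2}\otimes x_1^p\otimes\cdots\otimes x_{n-1}^p\otimes z_n^{p^2}\in\mathrm{Fil}^{n+1}_{\mathrm{PD}}D_{A^{\otimes(n+1)}}(I_{n+1})
\]
in bidegree $(n+1,0)$. The sign $(-1)^n$ should come from the position of the $w_n^p$ factor in $s_n$. This sign bookkeeping is the main obstacle, and I would test it against $n=1$, where $s_1=w_1^p$ and $\delta(w_1^p)=-y_1^{p^2}\otimes z_1^{p^2}$, giving sign $-1=(-1)^1$.

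The second step is to identify this cocycle in $\gr^{n+1}C^\bullet$ with a wedge product in $\Lambda^{n+1}(n_G)\simeq E_1^{n+1,0}$. By Remark~\ref{eta}, each of $[x_i^p]$, $[y_1^{p^2}]$ and $[z_n^{p^2}]$ in $H^1(\gr^1C^\bullet)$ corresponds to $\eta_{x_i^p}$, $\eta_{y_1^{p^2}}$ and $\eta_{z_n^{p^2}}$ respectively. These are genuine classes: each element lies in $I$, has $d=0$, and is primitive modulo $I_2^2$, so it is a cycle in $\gr^1$. By Remark~\ref{product}, the tensor product equals the cup product $[y_1^{p^2}]\cdot[x_1^p]\cdots[x_{n-1}^p]\cdot[z_n^{p^2}]$ in $H^{n+1}(\gr^{n+1}C^\bullet)$. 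The Koszul sign $(-1)^{rl}$ in the Alexander--Whitney product vanishes because all factors sit in cochain degree $r=0$. Finally, the multiplicative identification $\mathrm{Fil}^*_{\mathrm{Hod}}\dR_{BG/k}\simeq\mathrm{Fil}^*C^\bullet$ (Theorem~\ref{PD1}, Proposition~\ref{Tot}) is compatible with products on graded pieces. Under $H^0(BG,\Lambda^\bullet L_{BG/k})\simeq\Lambda^\bullet n_G$, the product of degree-one classes is the wedge product. Hence the cup product corresponds to $\eta_{y_1^{p^2}}\wedge\eta_{x_1^p}\wedge\cdots\wedge\eta_{x_{n-1}^p}\wedge\eta_{z_n^{p^2}}$, as in the $n=1$ case of Proposition~\ref{Heisenberg}. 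Combining the two steps gives the stated representative.
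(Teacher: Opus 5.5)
Your proposal is correct and follows essentially the same route as the paper. You represent $d_{n+1}(\alpha_{n+1})$ by applying the connecting map of $0\to\gr^{n+1}C^\bullet\to\mathrm{Fil}^0C^\bullet/\mathrm{Fil}^{n+2}C^\bullet\to\mathrm{Fil}^0C^\bullet/\mathrm{Fil}^{n+1}C^\bullet\to0$ to the lift $[s_1+\cdots+s_n]$, compute $\delta(s_1+\cdots+s_n)=(-1)^n\,y_1^{p^2}\otimes x_1^p\otimes\cdots\otimes x_{n-1}^p\otimes z_n^{p^2}$ by telescoping, and identify this class with the stated wedge product via Remark~\ref{product} and Remark~\ref{eta}. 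Your sign bookkeeping also checks out: $\delta(s_l)=(-1)^{l-1}\cdot(\text{prefix})\otimes\delta(w_l^p)\otimes(\text{suffix})$, so only the final term, with sign $(-1)^n$, survives the telescoping.
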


\begin{proof}
The class $d_{n+1}(\alpha_{n+1}) $ is represented by the image of $\widetilde{\alpha}_{n+1}$ under the boundary operator
$\partial:H^n(\mathrm{Fil}^0/\mathrm{Fil}^{n+1})\to H^{n+1}(\mathrm{Fil}^{0}/\mathrm{Fil}^{n+2})=H^{n+1}(\gr^{n+1}) $ associated with the short exact sequence of cochain complexes
\[ 0\to \gr^{n+1}C^{\bullet}\to \mathrm{Fil}^0C^{\bullet}/\mathrm{Fil}^{n+2}C^{\bullet} \to \mathrm{Fil}^{0}C^{\bullet}/\mathrm{Fil}^{n+1}C^{\bullet}\to 0.\]
By the computation in Theorem~\ref{thm:main1}, we have
$\partial(\widetilde{\alpha}_{n+1}^{(n+1)})=[d_{\mathrm{Tot}}(s_1+\cdots+s_n)]=[\delta(s_1+\cdots+s_n)]=(-1)^n [y_1^{p^2}\otimes x_1^p\otimes \cdots \otimes x_{n-1}^p\otimes z_{n}^{p^2}]\in H^{n+1}(\gr^{n+1})$. By Remark~\ref{product}, the element $y_1^{p^2}\otimes x_1^p\otimes \cdots \otimes x_{n-1}^p\otimes z_{n}^{p^2}$ is the product of $y_1^{p^2}, x_1^p, \cdots, x_{n-1}^p, z_{n}^{p^2}$ in the differential graded $k$-algebra $C^{\bullet}$. Hence, we have $[y_1^{p^2}\otimes x_1^p\otimes \cdots \otimes x_{n-1}^p\otimes z_{n}^{p^2}]=[y_1^{p^2}]\cdot [x_1^p] \cdots [x_{n-1}^p]\cdot [z_{n}^{p^2}]\in H^{n+1}(\gr^{n+1})$. By Remark~\ref{eta}, these classes  $[y_1^{p^2}], [x_1^p], \cdots , [x_{n-1}^p], [z_{n}^{p^2}]$ corresponds to  $\eta_{y_1^{p^2}},\eta_{x_1^{p}},\cdots,\eta_{x_{n-1}^{p}},\eta_{z_n^{p^2}}$, respectively. It follows that the class $(-1)^n [y_1^{p^2}]\cdot [x_1^p]\cdot \cdots \cdot [x_{n-1}^p]\cdot [z_{n}^{p^2}]$ corresponds to $(-1)^{n}\eta_{y_1^{p^2}}\wedge\eta_{x_1^{p}}\wedge\cdots\wedge\eta_{x_{n-1}^{p}}\wedge\eta_{z_n^{p^2}}\in\Lambda^{n+1}(n_G)$.
\end{proof}

\subsection{Torus action}

To establish $ \eta_{y_1^{p^2}}\wedge\eta_{x_1^{p}}\wedge\cdots\wedge\eta_{x_{n-1}^{p}}\wedge\eta_{z_n^{p^2}}\neq 0  \ \text{in} \   E_{n+1}^{n+1,0}$, we wish to use an approach based on torus weight as in \cite[Section~4.2]{ABM21}.
We first recall some facts about $B\mathbb{G}_m^l$ for $l\geq 0$.  We have a canonical isomorphism  
\[
\begin{aligned}
	\Hom_{\Grp}(\mathbb{G}_m^l,\mathbb{G}_m)
	&\simeq \mathbb{Z}^l,\\
	\left(
	\chi_\mu\colon
	(\lambda_1,\ldots,\lambda_l)
	\longmapsto
	\lambda_1^{a_1}\cdots\lambda_l^{a_l}
	\right)
	&\longmapsto
	\mu=(a_1,\ldots,a_l).
\end{aligned}
\]
For a $\mathbb{G}_m^l$-representation $V$, we  define $
V_\mu =V_{\wt=\mu}:
=
\left\{
v\in V \,\middle|\, t\cdot v=\chi_{-\mu}(t)v
\text{ for all } t\in \mathbb{G}_m^l
\right\},
$ for $\mu \in \mathbb{Z}^l$. Then $V$ has a decomposition $\displaystyle V=\bigoplus_{\mu\in \mathbb{Z}^l} V_{\mu}$. And for $\mathbb{G}_m^l$-representation $V,W$, we have $\displaystyle \Hom_{\mathsf{Rep}(\mathbb{G}_m^l)}(V, W)=\prod_{\mu\in \mathbb{Z}^l}\Hom(V_{\mu},W_{\mu})$; see, for example, \cite[\S I.2.11]{Jan03}.
Hence, we have an  equivalence of categories
\[
\begin{aligned}
 \QCoh(B\mathbb{G}_m^l)= \mathsf{Rep}(\mathbb{G}_m^l)\simeq \Fun(\mathbb{Z}^l, \Mod_k),\
	V
	&\longmapsto
(V_\mu)_{\mu\in\mathbb{Z}^l},
\end{aligned}
\]
Moreover, this equivalence can be enhanced to a symmetric monoidal equivalence. Note our convention  follows that in the convention in \cite[Construction 2.2.1]{Bha22}. Moreover, there is a derived version of the above equivalence;  see, for example, \cite[Construction~2.2.1]{Bha22}. More precisely, we have a symmetric monoidal equivalence. 
\[
\begin{aligned}
	\Fun\bigl(\mathbb Z^l,\mathsf D(k)\bigr)
	&\simeq
	\mathsf D(B\mathbb G_m^l),\
	F
	&\longmapsto
	\bigoplus_{\mu\in\mathbb Z^l}
	F(\mu)\otimes \mathcal O(-\mu),
\end{aligned}
\]
where $\mathcal O(\mu)$ is the line bundle on $B\mathbb G_m^l$ corresponding to one-dimensional $\mathbb G_m^l$-representation $\chi_{\mu}$. In what follows, we will call an element in $\mathbb{Z}^l$  a weight. For a weight $\mu\in \mathbb{Z}^l$ and a $\mathbb G_m^l$-representation $V$,  call $V_\mu =V_{\wt=\mu}$ weight-$\mu$ part of $V$. If an element $a\in V_\mu =V_{\wt=\mu}$, we will say $\wt(a)=\mu$. 

\begin{construction}\label{a1}
Let \(G\) be a finite type affine group scheme over \(k\), equipped with an action of \(\mathbb G_m^l\) by group scheme automorphisms. The action of \(\mathbb G_m^l\) on \(G\) induces an action of \(\mathbb G_m^l\) on \(BG\). The quotient $[BG/\mathbb G_m^l]$ is equivalent to $B(G\rtimes\mathbb G_m^l ) $ in $\Stk_{/B\mathbb{G}_m^l}$, where the base point \(B\mathbb G_m^l\to B(G\rtimes\mathbb G_m^l)\) is induced by the section \(\mathbb G_m^l\to G\rtimes\mathbb G_m^l\). By Example~\ref{action}, the filtered object \(\mathrm{Fil}_{\mathrm{Hod}}^{*}\dR_{BG/k}\) carries a natural \(\mathbb G_m^l\)-action. That is, the filtered object \(\mathrm{Fil}_{\mathrm{Hod}}^{*}\dR_{B(G\rtimes \mathbb G_m^l)/B\mathbb G_m^l}\) gives a \(\mathbb G_m^l\)-equivariant refinement of \(\mathrm{Fil}^*_{\mathrm{Hod}}\dR_{BG/k}\) in the sense that, denoting the base point \(*=\operatorname{Spec}(k)\to B\mathbb G_m^l\) by \(e_T\), we have \(e_T^*\mathrm{Fil}_{\mathrm{Hod}}^{*}\dR_{B(G\rtimes \mathbb G_m^l)/B\mathbb G_m^l}\simeq \mathrm{Fil}_{\mathrm{Hod}}^{*}\dR_{BG/k}\). Similarly, we have a $\mathbb G_m^l$-action on Hodge cohomology  of $BG$.

The filtered object
\(\mathrm{Fil}_{\mathrm{Hod}}^{*}
\dR_{B(G\rtimes \mathbb{G}_m^{l})/B\mathbb{G}_m^{l}}\)
in \(\mathsf{D}(B\mathbb{G}_m^{l})\) induces a spectral sequence in
\(\QCoh(B\mathbb G_m^{l})=\mathsf{Rep}(\mathbb G_m^{l})\)
\[
E_1^{i,j}
=
\mathcal{H}^j\!\left(
\pi_*\Lambda^i
L_{B(G\rtimes \mathbb{G}_m^{l})/B\mathbb{G}_m^{l}}
\right)
\Longrightarrow
\mathcal{H}^{i+j}\!\left(
\dR_{B(G\rtimes \mathbb{G}_m^{l})/B\mathbb{G}_m^{l}}
\right).
\]
By base change and the \(t\)-exactness of the pullback functor
\(e_T^*:\mathsf{D}(B\mathbb{G}_m^{l})\to\mathsf{D}(k)\),
after pulling back along \(e_T\), the above spectral sequence recovers
the Hodge-to-de Rham spectral sequence of \(BG\). In particular, we obtain a $\mathbb G_m^l$-equivariant refinement of the Hodge-to-de Rham spectral sequence of \(BG\).
\end{construction}

\begin{example}[{cf.\cite[Proposition 4.10]{ABM21}}]
The group scheme \(\alpha_{p^n}\) carries a natural
\(\mathbb G_m\)-action by group scheme automorphisms, 
\[
\mathbb{G}_m\curvearrowright \alpha_{p^n} \qquad \lambda\cdot x=\lambda x 
.\]
Hence, the Hodge cohomology of $B\alpha_{p^n}$ carries a \(\mathbb G_m\)-action.
For $\operatorname{char}(k) =2$, we have
	$\wt(\alpha_{t^{p^i}})=p^i$, $\wt(\eta_{t^{p^i}})=p^i$, and $\wt(\omega_t)=1$. For $\operatorname{char}(k) \neq 2$, we have
	$\wt(\alpha_{t^{p^i}})=p^i, \wt(\bar{\beta}(\alpha_{t^{p^i}}))=p^{i+1}$, $\wt(\eta_{t^{p^i}})=p^i$, and $\wt(\omega_t)=1$.
\end{example}

\begin{construction}\label{a2}

 Let us construct a $\mathbb{G}_m^{2n}$-action on $G_{n+1}$ by group scheme automorphisms . Note that $\widetilde{G}_{n+1}=\mathbb{A}^{4n-1}$ carries a natural \(\mathbb G_m^{4n-1}\)-action by coordinatewise scaling.  However, this action does not necessarily give rise to group automorphisms. Hence, we want to restrict this action to a closed subgroup scheme of $\mathbb{G}_m^{4n-1}$ such that this action gives rise to group automorphisms of  $\widetilde{G}_{n+1}$. Suppose $L\subseteq \mathbb{G}_m^{4n-1}$ is such a closed subgroup scheme. Then, for any $R\in \CRing_k$, an element $(\lambda_1,\cdots , \lambda_{4n-1})\in L(R)\subseteq \mathbb{G}_m^{4n-1}(R)$  should satisfy
 \[
 \lambda_{3n}w_1+\lambda_{3n}w_1'+\lambda_{n}^p\lambda_{2n}^py_1^p(z_1')^p
 =
 \lambda_{3n}(w_1+w_1'+y_1^p(z_1')^p)
 \]
 and, for \(2\leq k\leq n\),
 \[
 \lambda_{3n+k-1}w_k+\lambda_{3n+k-1}w_k'
 +\lambda_{2n+k-2}^p\lambda_{n+k-1}z_{k-1}^py_k'
 +\lambda_{k-1}\lambda_{2n+k-1}^px_{k-1}(z_k')^p 
 =
 \lambda_{3n+k-1}(w_k+w_k'
 +z_{k-1}^py_k'
 +x_{k-1}(z_k')^p )
 .\]
 Thus, we have $\lambda_{3n}=\lambda_{n}^p\lambda_{2n}^p$.
 And, for \(2\leq k\leq n\), we have $\lambda_{3n+k-1} =\lambda_{2n+k-2}^p\lambda_{n+k-1}=\lambda_{k-1}\lambda_{2n+k-1}^p$. Equivalently, we have $\lambda_{3n+k} =\lambda_{n+k}\lambda_{2n+k-1}^p$ and $\lambda_{k}=\lambda_{n+k}\lambda_{2n+k-1}^p\lambda_{2n+k}^{-p}$ for \(1\leq k\leq n-1\).

Hence, we define a subgroup functor
\(L_{n+1}\subseteq \mathbb G_m^{4n-1}\)
by setting, for every \(R\in\CRing_k\),
\[ L_{n+1}(R)=
 \left\{
 (\lambda_1,\ldots,\lambda_{4n-1})
 \in \mathbb G_m^{4n-1}(R)
 \;\middle|\;
 \begin{aligned}
 	\lambda_{3n}
 	&=\lambda_n^p\lambda_{2n}^p,\\
 	\lambda_{3n+k}&=\lambda_{n+k}\lambda_{2n+k-1}^p,\qquad &1\leq k\leq n-1,\\
 \lambda_{k}&=\lambda_{n+k}\lambda_{2n+k-1}^p\lambda_{2n+k}^{-p},
 	\qquad &1\leq k\leq n-1
 \end{aligned}
 \right\}.
 \]
 which defines a closed subgroup scheme of $\mathbb G_m^{4n-1}$, and
 \[
 \begin{aligned}
 	\mathbb G_m^{2n} \xrightarrow{\ \sim\ } L_{n+1},\
 	(\mu_1,\ldots,\mu_{2n})
 	\longmapsto
 	(\lambda_1,\ldots,\lambda_{4n-1}).
 \end{aligned}
 \]
 where the coordinates \(\lambda_k\) are given by
 \[
 \lambda_k=
 \begin{cases}
 	\mu_{k+1}\mu_{n+k}^{\,p}\mu_{n+k+1}^{-p},
 	& 1\leq k\leq n-1,\\[0.4em]
 	\mu_{k-n+1},
 	& n\leq k\leq 3n-1,\\[0.4em]
 	\mu_1^p\mu_{n+1}^p,
 	& k=3n,\\[0.4em]
 	\mu_{k-3n+1}\mu_{k-2n}^{\,p},
 	& 3n+1\leq k\leq 4n-1.
 \end{cases}
 \]
 Thus,  $\widetilde{G}_{n+1}$ carries a $\mathbb{G}_m^{2n}$-action by group automorphisms. One can check the action preserves closed subgroup scheme $G_{n+1}\subset \widetilde{G}_{n+1}$. Hence, the closed subgroup scheme $G_{n+1}$ inherits this $\mathbb{G}_m^{2n}$-action by group scheme automorphisms.
 
 \end{construction}
 \begin{lemma}\label{weight}With respect to the \(\mathbb G_m^{2n}\)-actions on
 	\(\mathcal O(G_{n+1})\) and \(\mathcal O(\widetilde G_{n+1})\), we have
 \[
 \begin{aligned}
 	&\wt(x_i)=e_{i+1} +pe_{n+i}-pe_{n+i+1}\\
 	&\wt(y_i)=e_{i}\\
 	&\wt(z_i)=e_{n+i}\\                            
 	&\wt(w_1)=pe_1+pe_{n+1}\\
 	&\wt(w_i)=e_i+pe_{n+i-1} \qquad ( 2\leq i\leq n )
 \end{aligned}
 \]
 \end{lemma}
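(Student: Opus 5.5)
The plan is to read the weights directly off the explicit parametrization $\mathbb G_m^{2n}\xrightarrow{\sim}L_{n+1}$ in Construction~\ref{a2}. The first step is to fix how the $\mathbb G_m^{2n}$-action on points turns into an action on $\mathcal O(\widetilde G_{n+1})$. For a point $g\in\widetilde G_{n+1}(R)$ and $t\in\mathbb G_m^{2n}(R)$, the induced action on functions is $(t\cdot f)(g)=f(t^{-1}\cdot g)$. With the convention $V_\mu=\{v\mid t\cdot v=\chi_{-\mu}(t)v\}$, a coordinate function $u$ on which $t$ acts on points by multiplication by the character $\chi_\nu(t)$ satisfies
\[
(t\cdot u)(g)=\chi_\nu(t)^{-1}u(g)=\chi_{-\nu}(t)\,u(g).
\]
Hence $\wt(u)=\nu$: the weight of a coordinate function is exactly the exponent vector of the scaling character of that coordinate. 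This sign bookkeeping is the only point needing care, and I would check it against the $\alpha_{p^n}$ example, where $\lambda\cdot x=\lambda x$ gives $\wt(t)=1$.

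The second step is to match coordinates with the indices of $\lambda_1,\ldots,\lambda_{4n-1}$ in the ordering $x_1,\ldots,x_{n-1},y_1,\ldots,y_n,z_1,\ldots,z_n,w_1,\ldots,w_n$. Under this ordering, $x_i\leftrightarrow\lambda_i$, $y_i\leftrightarrow\lambda_{n+i-1}$, $z_i\leftrightarrow\lambda_{2n+i-1}$ and $w_i\leftrightarrow\lambda_{3n+i-1}$. Substituting the formulas for $\lambda_k$ in terms of $\mu$ then gives:
\begin{itemize}
\item $\lambda_i=\mu_{i+1}\mu_{n+i}^p\mu_{n+i+1}^{-p}$ for $x_i$;
\item $\lambda_{n+i-1}=\mu_i$ for $y_i$;
\item $\lambda_{2n+i-1}=\mu_{n+i}$ for $z_i$;
\item $\lambda_{3n}=\mu_1^p\mu_{n+1}^p$ for $w_1$;
\item $\lambda_{3n+i-1}=\mu_i\mu_{n+i-1}^p$ for $w_i$ with $2\le i\le n$.
\end{itemize}
Reading off exponent vectors yields exactly the stated weights.

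The last step concerns $\mathcal O(G_{n+1})$. Since the action preserves $G_{n+1}$ (Construction~\ref{a2}), the surjection $\mathcal O(\widetilde G_{n+1})\to\mathcal O(G_{n+1})$ is $\mathbb G_m^{2n}$-equivariant. It therefore sends weight vectors to weight vectors of the same weight, so the images of $x_i,y_i,z_i,w_i$ have the same weights. No step is a real obstacle; the only place an error could enter is the sign convention relating the action on points to the weight decomposition of functions.
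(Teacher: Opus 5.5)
Your proposal is correct and matches the paper's intended argument. The paper gives no separate proof: the lemma is read off from the parametrization $\mathbb G_m^{2n}\xrightarrow{\sim}L_{n+1}$ in Construction~\ref{a2}, using the coordinate-to-$\lambda_k$ matching you wrote down and the sign convention that makes a coordinate scaled by $\chi_\nu$ have weight $\nu$, as in the $\alpha_{p^n}$ example.
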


\subsection{Analysis of $E_1^{l,n-l}$ and completion of the proof}
By Construction~\ref{a1} and Construction~\ref{a2}, the Hodge-to-de Rham spectral sequence of $BG_{n+1}$ admits a $\mathbb{G}_m^{2n}$-equivariant refinement, namely a spectral sequence in $\QCoh(B\mathbb{G}_m^{2n})=\mathsf{Rep}(\mathbb{G}_m^{2n})$ of the form 
\[
E_1^{i,j}
=
\mathcal{H}^j\!\left(
\pi_*\Lambda^i
L_{B(G_{n+1}\rtimes \mathbb{G}_m^{2n})/B\mathbb{G}_m^{2n}}
\right)
\Longrightarrow
\mathcal{H}^{i+j}\!\left(
\dR_{B(G_{n+1}\rtimes \mathbb{G}_m^{2n})/B\mathbb{G}_m^{2n}}
\right).
\]
Here $\pi:B(G_{n+1}\rtimes \mathbb G_m^{2n})\longrightarrow B\mathbb G_m^{2n}$ denotes the structure morphism.

\begin{notation}\label{notation-T}
	We denote by
	$s:B\mathbb G_m^{2n}
	\to B(G_{n+1}\rtimes\mathbb G_m^{2n})$
	the section of $\pi$ induced by the inclusion
	$\mathbb G_m^{2n}\hookrightarrow
	G_{n+1}\rtimes\mathbb G_m^{2n}$,
	and set
	$\mathcal L:=
	L_{B(G_{n+1}\rtimes\mathbb G_m^{2n})/B\mathbb G_m^{2n}}$.
\end{notation}
\begin{convention}\label{GM}
In what follows, we will not distinguish a \(\mathbb G_m^{2n}\)-representation from its underlying \(k\)-vector space when no confusion can arise, with the \(\mathbb G_m^{2n}\)-action understood. For example, when stating results, we will often write \(H^j(BG_{n+1},\Lambda^iL_{BG_{n+1}/k})\). When it is necessary to keep track of the equivariant structure explicitly, especially in the proofs, we will use the more precise notation \(\mathcal H^j\!\left(\pi_*\Lambda^i\mathcal L\right)\). We will follow the same convention for the other cohomology groups appearing below.
\end{convention}

By Lemma~\ref{weight}, we have $\wt(\eta_{y_1^{p^2}}\wedge\eta_{x_1^{p}}\wedge\cdots\wedge\eta_{x_{n-1}^{p}}\wedge\eta_{z_n^{p^2}})=\wt(\eta_{y_1^{p^2}})+\wt(\eta_{x_1^{p}})+\cdots+\wt(\eta_{x_{n-1}^{p}})+\wt (\eta_{z_n^{p^2}})=p^2e_1+p(e_2+\cdots+e_n)+p^2e_{n+1}$. 
\begin{notation}
	We write $\chi_{n+1}=p^2e_1+p(e_2+\cdots+e_n)+p^2e_{n+1}\in \mathbb Z^{2n}$. 
\end{notation}	
 We next need to understand weight-$\chi_{n+1}$ part of $E_1^{l,n-l}=H^{n-l}(BG_{n+1},\Lambda^l L_{BG_{n+1}/k})$. We first  record a lemma that will be used later.

\begin{lemma}\label{r}
	Let \(H\) and \(K\) be affine group schemes over \(k\), with \(K\) acting on \(H\) by group automorphisms, and let \(G=H\rtimes K\) be the corresponding semidirect product. Write
	$q:BG\longrightarrow BK$ for the morphism induced by the projection $G\to K$, and $s:BK\longrightarrow BG$	for the morphism induced by the section $K\hookrightarrow G$. Let \(V\in \QCoh(BG)\simeq \mathsf{Rep}(G)\). Suppose that the restriction of \(V\) to \(BH\) is a trivial $H$-representation. Then we have a canonical isomorphism $V\simeq q^*s^*(V)$.
\end{lemma}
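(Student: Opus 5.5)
The plan is to exhibit $V$ and $q^*s^*V$ as the same object of $\mathsf{Rep}(G)$, using the equivalence $\QCoh(BG)\simeq\mathsf{Rep}(G)$ and that pullbacks along $Bq$ and $Bs$ are restriction functors (Notation and conventions, item 6). Write $\rho\colon G\to\mathrm{GL}(V)$ for the coaction. Then $s^*V$ is $V$ restricted to $K$, and $q^*s^*V$ is the same vector space with $G$ acting through $G\to K$: $(h,\kappa)$ acts by $\rho(1,\kappa)$. On points, $V$ itself has $(h,\kappa)$ acting by $\rho(h,1)\rho(1,\kappa)$, since $(h,\kappa)=(h,1)(1,\kappa)$ in $H\rtimes K$.

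The hypothesis that $V|_{BH}$ is trivial means $\rho(h,1)=\mathrm{id}$ for all points $h\in H(R)$ and all $R$. Hence $\rho(h,\kappa)=\rho(1,\kappa)$ functorially in $R$, so the identity of the underlying vector space is $G$-equivariant from $V$ to $q^*s^*V$. To make this precise, I would argue at the level of comodules rather than points. The coaction $V\to V\otimes\mathcal O(G)=V\otimes\mathcal O(H)\otimes\mathcal O(K)$ is determined by its compositions with the two restrictions. Triviality on $H$ says the $\mathcal O(H)$-component is $v\mapsto v\otimes1$. Multiplicativity then forces the full coaction to equal the composite of the $K$-coaction with the inclusion $\mathcal O(K)\to\mathcal O(G)$ dual to the projection. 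This is precisely the comodule $q^*s^*V$.

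For canonicity, I would note that the isomorphism is the identity on underlying spaces, and that it can be described intrinsically as the counit-type map. There is a natural transformation, namely the identification $q\circ s\simeq\mathrm{id}_{BK}$. Combined with the adjunction this gives a map $q^*s^*V\to V$, which is checked to be an isomorphism after pulling back along $\mathrm{Spec}(k)\to BG$. Alternatively, one can simply define the map as the identity, since it is compatible with morphisms of such $V$.

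The only mild subtlety is the decomposition $(h,\kappa)=(h,1)(1,\kappa)$ in the chosen semidirect product convention, together with verifying that the coaction factorization is compatible with the twisted multiplication on $\mathcal O(H)\otimes\mathcal O(K)$. I expect this bookkeeping to be the main, though small, obstacle.
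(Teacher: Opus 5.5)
Your argument is the paper's: write $(h,\kappa)=(h,1)(1,\kappa)$, use triviality on $H$ to get $\rho(h,\kappa)=\rho(1,\kappa)$, and conclude that the identity of $V$ is a $G$-equivariant isomorphism $V\to q^*s^*V$; the comodule reformulation is a harmless way of making the functor-of-points step rigorous for infinite-dimensional $V$. The adjunction remark in your canonicity paragraph is imprecise and not needed, since the identity map already works: with no hypothesis there is no natural map $q^*s^*V\to V$, and the natural maps in fact go $q^*q_*V\to V$ and $q_*V\to s^*V$, both isomorphisms when $V|_{BH}$ is trivial.
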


\begin{proof}
	Let $\rho:G\to \operatorname{GL}(V)$ be the action. It suffices to show that the composite \[G=H\rtimes K\xrightarrow{\mathrm{pr}_2}K\xrightarrow{\rho|_K}\operatorname{GL}(V)\] agrees with $\rho$. For any $k$-algebra $R$ and $(h,k)\in G(R)$, we have $(h,k)=(h,1)(1,k)$. Hence, $\rho((h,k))=\rho((h,1))\rho((1,k))=\rho((1,k))$ by assumption. This completes the proof.
\end{proof}

By an argument similar to the proof of Lemma~\ref{split}, we obtain the following splitting.
\begin{lemma}\label{splitting}
	We have a canonical equivalence $\mathcal L\simeq \mathcal H^0(\mathcal L)\oplus \mathcal H^1(\mathcal L)[-1]$ in $\mathsf{D}(B(G_{n+1}\rtimes \mathbb G_m^{2n})).$ 
	
\end{lemma}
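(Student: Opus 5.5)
The strategy is to imitate the proof of Lemma~\ref{split}, now working relative to $B\mathbb G_m^{2n}$. Write $\widetilde{\mathcal G}:=\widetilde G_{n+1}\rtimes\mathbb G_m^{2n}$; this is a smooth affine group scheme containing $G_{n+1}\rtimes\mathbb G_m^{2n}$ as a closed subgroup. The $\mathbb G_m^{2n}$-action preserves $G_{n+1}$, so the embedding is compatible with the semidirect product structures. Let $F\colon B(G_{n+1}\rtimes\mathbb G_m^{2n})\to B\widetilde{\mathcal G}$ be the induced map, viewed over $B\mathbb G_m^{2n}$. Transitivity gives a fiber sequence
\[
F^*L_{B\widetilde{\mathcal G}/B\mathbb G_m^{2n}}\longrightarrow \mathcal L\longrightarrow L_{B(G_{n+1}\rtimes\mathbb G_m^{2n})/B\widetilde{\mathcal G}}.
\]
Since $B\widetilde{\mathcal G}\to B\mathbb G_m^{2n}$ has fibre $B\widetilde G_{n+1}$ with $\widetilde G_{n+1}$ smooth, the first term is concentrated in cohomological degree $1$. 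This uses the base-change identification $e_T^*L_{B\widetilde{\mathcal G}/B\mathbb G_m^{2n}}\simeq L_{B\widetilde G_{n+1}/k}$ after pulling back along a base point, together with $t$-exactness and conservativity of that pullback.

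The third term is $\mathcal L$ modulo the image of a degree-$1$ object. Its $\mathcal H^0$ equals $\mathcal H^0(\mathcal L)$, and we will show it has no $\mathcal H^1$ once the map on $\mathcal H^1$ is known to be an isomorphism. The main step is to show that
\[
\mathcal H^1\bigl(F^*L_{B\widetilde{\mathcal G}/B\mathbb G_m^{2n}}\bigr)\to\mathcal H^1(\mathcal L)
\]
is an isomorphism. Because $e\colon\mathrm{Spec}(k)\to B(G_{n+1}\rtimes\mathbb G_m^{2n})$ is fppf, $e^*$ is $t$-exact and conservative, so it suffices to check this after pulling back to the point. By base change along $\mathrm{Spec}(k)\to B\mathbb G_m^{2n}$, that pullback is exactly the map $H^1(\widetilde e^*L_{B\widetilde G_{n+1}/k})\to H^1(e^*L_{BG_{n+1}/k})$ treated in Lemma~\ref{split}. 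That map is $j^*\Omega^1\to H^0$ of $[i^*(I/I^2)\xrightarrow{0}j^*\Omega^1]$, and it is an isomorphism because $d\colon I/I^2\to\Omega^1\otimes A/I$ vanishes. This vanishing holds since all generators $x_i^p,y_i^{p^2},z_i^{p^2},w_i^{p^2}$ are $p$-th powers.

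Once the $\mathcal H^1$ map is an isomorphism, the composite $F^*L_{B\widetilde{\mathcal G}/B\mathbb G_m^{2n}}\to\mathcal L\to\mathcal H^1(\mathcal L)[-1]$ is an equivalence, using the canonical truncation map for an object of amplitude $[0,1]$. Its inverse, composed with the first map, gives a section of $\mathcal L\to\mathcal H^1(\mathcal L)[-1]$. The fibre of that truncation map is $\mathcal H^0(\mathcal L)$, so the section splits $\mathcal L\simeq\mathcal H^0(\mathcal L)\oplus\mathcal H^1(\mathcal L)[-1]$.

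The only point needing care is the compatibility of the relative cotangent complexes with base change to the fibre over $B\mathbb G_m^{2n}$. This follows from the pullback squares $BG_{n+1}\to B(G_{n+1}\rtimes\mathbb G_m^{2n})$ and $B\widetilde G_{n+1}\to B\widetilde{\mathcal G}$ over $e_T$, together with base change for cotangent complexes.
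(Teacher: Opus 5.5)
Your proposal is correct and follows the paper's approach. The paper proves this lemma only by saying it is ``similar to the proof of Lemma~\ref{split}'', and you carry that argument out for $G_{n+1}\rtimes\mathbb G_m^{2n}\hookrightarrow\widetilde G_{n+1}\rtimes\mathbb G_m^{2n}$ over $B\mathbb G_m^{2n}$, reducing via the fppf base point and base change along $e_T$ to the non-equivariant $\mathcal H^1$-isomorphism coming from $d\colon I/I^2\to\Omega^1_{A/k}\otimes_A A/I$ being zero. One notational nit: what you write as $e_T^*L_{B\widetilde{\mathcal G}/B\mathbb G_m^{2n}}$ is really the pullback along the base change $B\widetilde G_{n+1}\to B\widetilde{\mathcal G}$ of $e_T$, as your last paragraph already makes clear.
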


This splitting gives the following description of $\mathcal H^{n-l}\left(\pi_*\Lambda^l\mathcal L\right)$.

\begin{corollary}\label{subq}
	For $0\leq l\leq n$, there is a canonical isomorphism in $\QCoh(B\mathbb{G}_m^{2n})=\mathsf{Rep}(\mathbb{G}_m^{2n})$
\[
\mathcal H^{n-l}\left(\pi_*\Lambda^l\mathcal L\right)
\simeq
\bigoplus_{i=0}^l
\Lambda^i(s^*\mathcal H^0(\mathcal L))\otimes
\mathcal H^{n+i-2l}\!\left(
\pi_*\mathrm{Sym}^{l-i}\bigl(\mathcal H^1(\mathcal L)\bigr)
\right).
\]
\end{corollary}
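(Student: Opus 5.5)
Write $\mathcal N:=\mathcal H^0(\mathcal L)$ and $\mathcal W:=\mathcal H^1(\mathcal L)$. The plan is to start from the splitting $\mathcal L\simeq \mathcal N\oplus\mathcal W[-1]$ of Lemma~\ref{splitting} and expand the derived exterior power:
\[
\Lambda^l\mathcal L\simeq\bigoplus_{i+m=l}\Lambda^i\mathcal N\otimes\Lambda^m(\mathcal W[-1])\simeq\bigoplus_{i=0}^l\Lambda^i\mathcal N\otimes\mathrm{Sym}^{l-i}(\mathcal W)[-(l-i)],
\]
using the décalage isomorphism $\Lambda^m(M[-1])\simeq\mathrm{Sym}^m(M)[-m]$ (\cite[Proposition~A.2.4]{KP24}). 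The binomial decomposition of $\Lambda^l$ of a direct sum holds for derived exterior powers by their construction as animated functors. Note that $\mathcal N$ and $\mathcal W$ are vector bundles, since $G_{n+1}$ is syntomic; this is what makes the classical identifications legitimate.

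Next I would show that $\mathcal N$ is pulled back from $B\mathbb G_m^{2n}$, that is, $\mathcal N\simeq\pi^*s^*\mathcal N$. Restricted along $BG_{n+1}\to B(G_{n+1}\rtimes\mathbb G_m^{2n})$, the sheaf $\mathcal N$ becomes $n_{G_{n+1}}$: by base change of relative cotangent complexes, $e_T^*$ of $\mathcal L$ is $L_{BG_{n+1}/k}$. By Lemma~\ref{normal}, $n_{G_{n+1}}$ is a trivial $G_{n+1}$-representation. Lemma~\ref{r}, applied to $H=G_{n+1}$ and $K=\mathbb G_m^{2n}$, then gives $\mathcal N\simeq\pi^*s^*\mathcal N$, hence $\Lambda^i\mathcal N\simeq\pi^*\Lambda^i(s^*\mathcal N)$. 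The projection formula yields
\[
\pi_*\Lambda^l\mathcal L\simeq\bigoplus_{i=0}^l\Lambda^i(s^*\mathcal N)\otimes\pi_*\mathrm{Sym}^{l-i}(\mathcal W)[-(l-i)].
\]

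Finally I take $\mathcal H^{n-l}$. Since $\Lambda^i(s^*\mathcal N)$ is a vector bundle on $B\mathbb G_m^{2n}$ and sits in degree $0$, tensoring with it is $t$-exact. The summand indexed by $i$ therefore contributes $\Lambda^i(s^*\mathcal N)\otimes\mathcal H^{n-l-(l-i)}(\pi_*\mathrm{Sym}^{l-i}\mathcal W)$, and $n-l-(l-i)=n+i-2l$, which is exactly the claimed degree.

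The main subtlety is canonicity. The splitting in Lemma~\ref{splitting} must be the canonical one, so that the decomposition is natural. One also has to check that the identification of $e_T^*\mathcal N$ with $n_{G_{n+1}}$ is compatible with the $G_{n+1}$-action, so that Lemma~\ref{r} genuinely applies. I expect this base-change bookkeeping to be the only delicate step; the rest is formal.
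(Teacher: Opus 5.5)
Your proposal is correct and follows the paper's proof step for step. You expand $\Lambda^l\mathcal L$ via the splitting of Lemma~\ref{splitting} and d\'ecalage, use Lemma~\ref{normal} with Lemma~\ref{r} to get $\mathcal H^0(\mathcal L)\simeq\pi^*s^*\mathcal H^0(\mathcal L)$, apply the projection formula, and read off $\mathcal H^{n-l}$ of the summand shifted by $[i-l]$; the extra points you make (local freeness, $t$-exactness of tensoring with $\Lambda^i(s^*\mathcal H^0(\mathcal L))$ over a field) are details the paper leaves implicit.
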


\begin{proof}
By Lemma~\ref{splitting}, we have canonical equivalences	
\[
	\Lambda^l\mathcal L\simeq \bigoplus_{i=0}^l\Lambda^i\bigr(\mathcal H^0(\mathcal L)\bigr)\otimes\Lambda^{l-i} \bigl( \mathcal H^1(\mathcal L) [-1]\bigr)
	\simeq 
	\bigoplus_{i=0}^l
	\Lambda^i\mathcal H^0(\mathcal L)
	\otimes
	\mathrm{Sym}^{l-i}\bigl(\mathcal H^1(\mathcal L)\bigr)[i-l].
	\]
	Since the restriction of $\mathcal H^0(\mathcal L)$ to $BG_{n+1}$ is $\mathcal H^0(L_{BG_{n+1}/k})=n_{G_{n+1}}$, which is a trivial $G_{n+1}$-representation by Lemma~\ref{normal}, Lemma~\ref{r} gives a canonical isomorphism $\mathcal H^0(\mathcal L)\simeq \pi^* s^* \mathcal H^0(\mathcal L)$. Then we have
	\[
	\pi_*(\Lambda^l\mathcal L)
	\simeq
	\pi_*\!\left(
	\bigoplus_{i=0}^l
	\Lambda^i\bigl(\pi^* s^*\mathcal H^0(\mathcal L)\bigr)\otimes
	\mathrm{Sym}^{l-i}\bigl(\mathcal H^1(\mathcal L)\bigr)[i-l]
	\right)
	\simeq
	\bigoplus_{i=0}^l
	\Lambda^i\bigl(s^*\mathcal H^0(\mathcal L)\bigr)\otimes
	\pi_*\mathrm{Sym}^{l-i}\bigl(\mathcal H^1(\mathcal L)\bigr)[i-l].
	\]
	The last equivalence above  follows from the projection formula. Applying $\mathcal H^{n-l}$, we obtain the desired isomorphism.

\end{proof}

We next analyze the \(\mathbb G_m^{2n}\)-representation
$\mathcal H^{a}\!\left(
\pi_*\mathrm{Sym}^{b}\bigl(\mathcal H^1(\mathcal L)\bigr)
\right)$ for integers $a,b\geq 0$.
 
\begin{construction}
By the base change property for the cotangent complex, we see that $\mathcal{L}|_{BG_{n+1}}\simeq L_{BG_{n+1}/k}$. Hence, we obtain an isomorphism $\mathcal{H}^1(\mathcal{L})|_{BG_{n+1}}\simeq \mathcal{H}^1(L_{BG_{n+1}/k})= \omega_{G_{n+1}}$. In other words,  the \(G_{n+1}\rtimes\mathbb G_m^{2n}\)-representation
 \(\mathcal H^1(\mathcal L)\) restricts to the \(G_{n+1}\)-representation
 \(\omega_{G_{n+1}}\). Moreover, we have an isomorphism
 $
 \omega_{G_{n+1}}
 \simeq
 \omega_{\widetilde G_{n+1}}|_{BG_{n+1}},
 $
 where the \(\widetilde G_{n+1}\)-action on
 \(\omega_{\widetilde G_{n+1}}\) is induced by the conjugation action. Hence, the underlying $k$-vector space of $\omega_{G_{n+1}}$ is
 \[
 \omega_{G_{n+1}}
 =
 \bigoplus_{i=1}^{n-1}k\omega_{x_i}
 \oplus
 \bigoplus_{i=1}^{n}k\omega_{y_i}
 \oplus
 \bigoplus_{i=1}^{n}k\omega_{z_i}
 \oplus
 \bigoplus_{i=1}^{n}k\omega_{w_i},
 \]
 and a direct computation shows that the $\mathcal{O}(G_{n+1})$-comodule structure on $\omega_{G_{n+1}}$ is given by
 
\begin{equation}\label{eq:coaction}
\begin{aligned}
	\omega_{x_i}
	&\longmapsto \omega_{x_i}\otimes 1,
	&&1\leq i\leq n-1,\\
	\omega_{y_i}
	&\longmapsto \omega_{y_i}\otimes 1,
	&&1\leq i\leq n,\\
	\omega_{z_i}
	&\longmapsto \omega_{z_i}\otimes 1,
	&&1\leq i\leq n,\\
	\omega_{w_1}
	&\longmapsto \omega_{w_1}\otimes 1,\\
	\omega_{w_i}
	&\longmapsto
	\omega_{w_i}\otimes 1
	+\omega_{x_{i-1}}\otimes z_i^p
	-\omega_{y_i}\otimes z_{i-1}^p,
	&&2\leq i\leq n.
\end{aligned}
\end{equation}
 The subspace of $\omega_{G_{n+1}}$ spanned by
 $\omega_{x_i},\omega_{y_i},\omega_{z_i}$ is stable under both the $G_{n+1}$-action
 and the $\mathbb G_m^{2n}$-action, since these generators
 are $G_{n+1}$-invariant and each has a pure weight
 under the $\mathbb G_m^{2n}$-action. It is therefore stable under the
 $G_{n+1}\rtimes\mathbb G_m^{2n}$-action and defines a subrepresentation
 $\mathcal W_{n+1,1}\subset\mathcal H^1(\mathcal L)$.
Set
 $\mathcal W_{n+1,2}:=
 \mathcal H^1(\mathcal L)/\mathcal W_{n+1,1}$. Thus, we obtain a short exact sequence in $\QCoh(B(G_{n+1}\rtimes\mathbb G_m^{2n}))
 =\mathsf{Rep}(G_{n+1}\rtimes\mathbb G_m^{2n})$:
 \[
 0
 \longrightarrow
 \mathcal W_{n+1,1}
 \longrightarrow
 \mathcal H^1(\mathcal L)
 \longrightarrow
 \mathcal W_{n+1,2}
 \longrightarrow
 0.
 \]
 Moreover, the formulas~\eqref{eq:coaction} show that the restrictions of both
 $\mathcal W_{n+1,1}$ and $\mathcal W_{n+1,2}$
 to $BG_{n+1}$ are trivial $G_{n+1}$-representations.
\end{construction}

\begin{notation}
	For $i=1,2$, put $W_{n+1,i}:=s^*\mathcal W_{n+1,i}$,
	viewed as a $\mathbb G_m^{2n}$-representation, and set
	$W_{n+1}:=W_{n+1,1}\oplus W_{n+1,2}$. Its underlying $k$-vector space is
	\[
	W_{n+1}
	=
	\bigoplus_{i=1}^{n-1}k\omega_{x_i}
	\oplus
	\bigoplus_{i=1}^{n}k\omega_{y_i}
	\oplus
	\bigoplus_{i=1}^{n}k\omega_{z_i}
	\oplus
	\bigoplus_{i=1}^{n}k\omega_{w_i},
	\]
	where we also write $\omega_{w_i}$ for its image in
	$W_{n+1,2}$.
\end{notation}

\begin{lemma}\label{sq}
	For integers $a,b\geq 0$, the \(\mathbb G_m^{2n}\)-representation
	$\mathcal H^{a}\!\left(
	\pi_*\mathrm{Sym}^{b}\bigl(\mathcal H^1(\mathcal L)\bigr)
	\right)$ is a subquotient of $\mathrm{Sym}^b(W_{n+1})\otimes \mathcal H^{a}(\pi_*\mathcal{O}_{B(G_{n+1}\rtimes \mathbb{G}_m^{2n})})$ in $\QCoh(B\mathbb{G}_m^{2n})=\mathsf{Rep}(\mathbb{G}_m^{2n})$. Consequently, the \(\mathbb G_m^{2n}\)-representation $\mathcal H^{n-l}\left(\pi_*\Lambda^l\mathcal L\right)$ is a subquotient of $\displaystyle \bigoplus_{i=0}^l
	\Lambda^i(s^*\mathcal H^0(\mathcal L))\otimes
	\mathrm{Sym}^{l-i}(W_{n+1})\otimes \mathcal H^{n+i-2l}(\pi_*\mathcal{O}_{B(G_{n+1}\rtimes \mathbb{G}_m^{2n})})$ for every $0\leq l\leq n$.
\end{lemma}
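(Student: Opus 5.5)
The plan is to filter $\mathcal H^1(\mathcal L)$, and then its symmetric powers, by $G_{n+1}\rtimes\mathbb G_m^{2n}$-subrepresentations whose graded pieces are pulled back from $B\mathbb G_m^{2n}$. After that, compute $\pi_*$ of each graded piece by the projection formula. The basic input is the short exact sequence $0\to\mathcal W_{n+1,1}\to\mathcal H^1(\mathcal L)\to\mathcal W_{n+1,2}\to 0$. Both $\mathcal W_{n+1,1}$ and $\mathcal W_{n+1,2}$ restrict to trivial $G_{n+1}$-representations. By Lemma~\ref{r}, we therefore have canonical isomorphisms $\mathcal W_{n+1,i}\simeq \pi^*W_{n+1,i}$, where we use $q=\pi$ for the semidirect product $G_{n+1}\rtimes\mathbb G_m^{2n}$.

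Step 1. The symmetric power $\mathrm{Sym}^b(\mathcal H^1(\mathcal L))$ carries a natural finite increasing filtration. Its pieces $F_j$ are the images of $\mathrm{Sym}^{b-j}(\mathcal W_{n+1,1})\otimes \mathrm{Sym}^{j}(\mathcal H^1(\mathcal L))$, and the graded pieces are
\[
\gr_j\simeq \mathrm{Sym}^{b-j}(\mathcal W_{n+1,1})\otimes\mathrm{Sym}^j(\mathcal W_{n+1,2}).
\]
This is the standard filtration of a symmetric power of an extension of vector bundles. Here everything is an honest representation (a vector bundle in the heart), so the non-derived symmetric power is used. Using $\mathcal W_{n+1,i}\simeq\pi^*W_{n+1,i}$, we get $\gr_j\simeq \pi^*\bigl(\mathrm{Sym}^{b-j}W_{n+1,1}\otimes\mathrm{Sym}^jW_{n+1,2}\bigr)$. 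The projection formula then gives
\[
\pi_*\gr_j\simeq \mathrm{Sym}^{b-j}W_{n+1,1}\otimes\mathrm{Sym}^jW_{n+1,2}\otimes\pi_*\mathcal O .
\]
Taking $\mathcal H^a$, which is exact on the left tensor factor because it is a vector bundle on $B\mathbb G_m^{2n}$, we obtain $\mathcal H^a(\pi_*\gr_j)\simeq \mathrm{Sym}^{b-j}W_{n+1,1}\otimes\mathrm{Sym}^jW_{n+1,2}\otimes\mathcal H^a(\pi_*\mathcal O)$. Summing over $j$ gives $\mathrm{Sym}^b(W_{n+1})\otimes\mathcal H^a(\pi_*\mathcal O)$.

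Step 2. Apply $\pi_*$ to the short exact sequences $0\to F_{j-1}\to F_j\to\gr_j\to0$ and use the long exact sequences in $\mathcal H^a$, an exact functor sequence in $\QCoh(B\mathbb G_m^{2n})$. These show inductively that $\mathcal H^a(\pi_*F_j)$ is an extension of a subobject of $\mathcal H^a(\pi_*\gr_j)$ by a quotient of $\mathcal H^a(\pi_*F_{j-1})$. Because $\mathsf{Rep}(\mathbb G_m^{2n})\simeq\Fun(\mathbb Z^{2n},\Mod_k)$ is semisimple, every extension splits weightwise. Subquotients of direct sums therefore behave well, and we conclude that $\mathcal H^a(\pi_*\mathrm{Sym}^b\mathcal H^1(\mathcal L))$ is a subquotient of $\bigoplus_j\mathcal H^a(\pi_*\gr_j)$, which is the first claim.

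Step 3. For the consequence, combine this with Corollary~\ref{subq}, taking $a=n+i-2l$ and $b=l-i$. We then tensor with the vector space $\Lambda^i(s^*\mathcal H^0(\mathcal L))$; tensoring over $k$ is exact and preserves subquotients. Finally, take the direct sum over $0\le i\le l$.

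The only step needing care is Step 1. There one must check that the filtration on the symmetric power is compatible with the $G_{n+1}\rtimes\mathbb G_m^{2n}$-action, which is clear by functoriality of $\mathrm{Sym}$. One must also check that it has the stated graded pieces. I would verify this locally after pulling back along the fppf cover $\operatorname{Spec}k\to B(G_{n+1}\rtimes\mathbb G_m^{2n})$, where it reduces to the linear-algebra fact for a two-step filtered vector space.
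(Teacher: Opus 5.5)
Your proposal is correct and follows essentially the same route as the paper: filter $\mathrm{Sym}^b(\mathcal H^1(\mathcal L))$ using the extension $0\to\mathcal W_{n+1,1}\to\mathcal H^1(\mathcal L)\to\mathcal W_{n+1,2}\to 0$, identify the graded pieces as pullbacks via Lemma~\ref{r} and the projection formula, use semisimplicity of $\mathsf{Rep}(\mathbb G_m^{2n})$, and finally combine with Corollary~\ref{subq}. Your increasing filtration $F_j$ is the paper's decreasing filtration reindexed, and your long-exact-sequence bookkeeping in Step~2 just spells out what the paper leaves implicit when it invokes semisimplicity.
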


\begin{proof}
	The \(G_{n+1}\rtimes \mathbb G_m^{2n}\)-representation $\mathrm{Sym}^{b}\bigl(\mathcal H^1(\mathcal L)\bigr)$ admits a $(b+1)$-step decreasing filtration
	\[0=F^{b+1}\subset F^{b}=\mathrm{Sym}^{b}(\mathcal W_{n+1,1})\subset F^{b-1}\subset \cdots \subset F^{0}=\mathrm{Sym}^{b}\bigl(\mathcal H^1(\mathcal L)\bigr),\]
	with graded pieces $\gr^i\bigl(\mathrm{Sym}^b(\mathcal H^1(\mathcal L))\bigr) \simeq \mathrm{Sym}^i(\mathcal W_{n+1,1}) \otimes \mathrm{Sym}^{b-i}(\mathcal{W}_{n+1,2})$ for $0\leq i\leq b$. Pushing forward this filtration along $\pi$, we obtain a $(b+1)$-step filtration $\pi_*F^\bullet$
	on $\pi_*\mathrm{Sym}^b(\mathcal H^1(\mathcal L))$
	with graded pieces
	$\gr^i\bigl(\pi_*\mathrm{Sym}^b(\mathcal H^1(\mathcal L))\bigr)\simeq
	\pi_*\bigl(
	\mathrm{Sym}^i(\mathcal W_{n+1,1})
	\otimes
	\mathrm{Sym}^{b-i}(\mathcal W_{n+1,2})
	\bigr)$
	for $0\leq i\leq b$. Since the restriction of each $\mathcal W_{n+1,j}$
	to $BG_{n+1}$ is a trivial $G_{n+1}$-representation, Lemma~\ref{r} gives
	$\mathcal W_{n+1,j}\simeq\pi^*s^*\mathcal W_{n+1,j}\simeq \pi^*W_{n+1,j}$
	for $j=1,2$. Hence, we have $\mathrm{Sym}^i(\mathcal{W}_{n+1,1}) \otimes \mathrm{Sym}^{b-i}(\mathcal{W}_{n+1,2})\simeq \pi^* \bigl(\mathrm{Sym}^i(W_{n+1,1}) \otimes \mathrm{Sym}^{b-i}(W_{n+1,2}) \bigr)$. The projection formula therefore yields $\pi_*\bigl(\mathrm{Sym}^i(\mathcal{W}_{n+1,1}) \otimes \mathrm{Sym}^{b-i}(\mathcal{W}_{n+1,2})\bigr)\simeq\mathrm{Sym}^i(W_{n+1,1}) \otimes \mathrm{Sym}^{b-i}(W_{n+1,2})\otimes \pi_*\mathcal{O}$.
	
	Since $\mathsf{Rep}(\mathbb G_m^{2n})$ is a semisimple abelian category,
	it follows from the above filtration that
	$\mathcal H^a\!\left(
	\pi_*\mathrm{Sym}^b(\mathcal H^1(\mathcal L))
	\right)$ is a subquotient of
	$\displaystyle \bigoplus_{i=0}^b
	\mathrm{Sym}^i(W_{n+1,1})\otimes
	\mathrm{Sym}^{b-i}(W_{n+1,2})\otimes
	\mathcal H^a(\pi_*\mathcal O)
	\simeq
	\mathrm{Sym}^b(W_{n+1})\otimes
	\mathcal H^a(\pi_*\mathcal O)$. Combining this with Lemma~\ref{subq}, we obtain the last assertion.
\end{proof}

 By Convention~\ref{GM}, we can reformulate the preceding result as follows: the \(\mathbb G_m^{2n}\)-representation $E_1^{l,n-l}= H^{n-l}(BG_{n+1},\Lambda^l L_{BG_{n+1}/k})$ is a subquotient of $ \displaystyle \bigoplus_{i=0}^l
\Lambda^i(n_{G_{n+1}})\otimes
\mathrm{Sym}^{l-i}(W_{n+1})\otimes H^{n+i-2l}(BG_{n+1},\mathcal{O})$. Here, by abuse of notation, we write $n_{G_{n+1}}$ for
the $\mathbb G_m^{2n}$-representation
$s^*\mathcal H^0(\mathcal L)$, whose underlying $k$-vector
space is canonically identified with that of $n_{G_{n+1}}$. We next analyze $H^{r}(BG_{n+1},\mathcal{O})$ as a $\mathbb G_m^{2n}$-representation.

\begin{construction}\label{equi} We can consider a closed subscheme $H_{n+1}=V(x_1,\ldots,x_{n-1},y_1,\ldots,y_n,z_1,\ldots, z_n)$ of $G_{n+1}$. One can check that the closed subscheme $H_{n+1}$ is in fact a closed subgroup scheme of $G_{n+1}$ and $H_{n+1}\simeq \alpha_{p^2}^{n}$ with coordinates $w_1,\cdots,w_n $ corresponding to the respective factors.
	Let $K_{n+1}=\alpha_p^{n-1}
	\times \alpha_{p^2}^{n}
	\times \alpha_{p^2}^{n}$ as a group scheme, with coordinates $x_1,\ldots ,x_{n-1},y_1,\ldots ,y_n,z_1, \ldots, z_n$ corresponding to the respective factors. Consider the natural projection map \(G_{n+1}\to K_{n+1}\) given by
	$(x_1,\ldots,x_{n-1},y_1,\ldots,y_n,z_1,\ldots,z_n,w_1,\ldots,w_n)
	\mapsto
	(x_1,\ldots,x_{n-1},y_1,\ldots,y_n,z_1,\ldots,z_n)$, which is a homomorphism of group schemes. It is easy to see that the kernel of this homomorphism is $H_{n+1}$. Hence, we obtain a short exact sequence of group schemes 
	\[ 1\to H_{n+1}\to G_{n+1}\to K_{n+1}\to 1.\] 
	
	It is easy to see that the $\mathbb{G}_m^{2n}$-action on $G_{n+1}$ induces  $\mathbb{G}_m^{2n}$-actions on $H_{n+1}$ and $K_{n+1}$. 
    Hence,   the above short exact sequence is $\mathbb{G}_m^{2n}$-equivariant and induces a cartesian square in $\mathsf{Stk}_{/B\mathbb{G}_m^{2n}}$:

\[
\begin{tikzcd}
{B(H_{n+1}\rtimes \mathbb{G}_m^{2n})}
\arrow[r,"f"]
\arrow[d,"h"']
\arrow[dr, phantom, "\lrcorner", very near start]
&
B(G_{n+1}\rtimes \mathbb{G}_m^{2n})
\arrow[d,"g"]
\\
B\mathbb{G}_m^{2n}
\arrow[r,"i"]
&
{B(K_{n+1}\rtimes \mathbb{G}_m^{2n})
}.
\end{tikzcd}
\]
This square can be viewed as a $\mathbb{G}_m^{2n}$-equivariant refinement of  the following cartesian square in $\mathsf{Stk}_k$
\[
\begin{tikzcd}
	{BH_{n+1}}
	\arrow[r,"f_1"]
	\arrow[d,"h_1"']
	\arrow[dr, phantom, "\lrcorner", very near start]
	&
	BG_{n+1}
	\arrow[d,"g_1"]
	\\
	*
	\arrow[r,"i_1"]
	&
	{BK_{n+1}
	}.
\end{tikzcd}
\]

\end{construction}
\begin{recall}(A review of the Leray spectral sequence; see \cite[Tag 0783]{Sta26}) Let
$X \xrightarrow{f} Y \xrightarrow{g} Z$ be a pair of composable morphisms of stacks  and $\mathcal{F}\in \mathsf{D}^{+}(X)$. Consider the Postnikov filtration of \(f_*\mathcal F\), which is exhaustive and increasing,
\[\cdots \to \tau^{\leq n-1}f_*\mathcal{F}\to \tau^{\leq n}f_*\mathcal{F}\to \cdots. \] Its graded pieces are $\gr_nf_*\mathcal{F}\simeq \mathcal H^n(f_*\mathcal{F})[-n]$. Applying $g_*$ to the increasing filtration, we obtain an increasing filtration on $(g\circ f)_*\mathcal{F}= g_*f_*\mathcal{F}$,
\[\cdots \to g_*\tau^{\leq n-1}f_*\mathcal{F}\to g_*\tau^{\leq n}f_*\mathcal{F}\to \cdots \]
with graded pieces $\gr_n\bigl((g\circ f)_*\mathcal{F}\bigr)\simeq g_* \mathcal H^n(f_*\mathcal{F})[-n]$. Note this filtration is exhaustive by left $t$-exactness of $g_*$.
By \cite[Proposition~1.2.2.7 and Proposition~1.2.2.14]{Lur17}, we obtain a convergent spectral sequence  in $\mathsf{QCoh}(Z)=\mathsf{D}(Z)^{\heartsuit}$
\[
E_2^{a,b}
=
\mathcal H^a \bigl(g_*\mathcal H^b (f_*\mathcal{F})\bigr)
\Longrightarrow
\mathcal H^{a+b}\bigl((g\circ f)_*\mathcal{F}\bigr).
\]
\end{recall}

\begin{proposition}\label{G}
We have a convergent  spectral sequence in $\QCoh(B\mathbb{G}_m^{2n})=\mathsf{Rep}(\mathbb{G}_m^{2n})$ of the form
\[E_2^{a,b}
\simeq H^a( BK_{n+1},\mathcal{O})\otimes H^b( BH_{n+1},\mathcal{O})\Longrightarrow H^{a+b}(BG_{n+1},\mathcal{O}).\]
Consequently, the $\mathbb{G}_m^{2n}$-representation $H^r(BG_{n+1},\mathcal{O})$ is a subquotient of  $\displaystyle \bigoplus_{a+b=r} H^a( BK_{n+1},\mathcal{O})\otimes H^b( BH_{n+1},\mathcal{O})$.
\end{proposition}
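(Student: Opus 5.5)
We will obtain the spectral sequence as the Leray spectral sequence of the recalled form, applied to the composite $B(G_{n+1}\rtimes\mathbb G_m^{2n})\xrightarrow{g}B(K_{n+1}\rtimes\mathbb G_m^{2n})\xrightarrow{\pi_K}B\mathbb G_m^{2n}$ and to $\mathcal F=\mathcal O$. Here $\pi_K\circ g=\pi$, so the abutment is $\mathcal H^{a+b}(\pi_*\mathcal O)$, which is $H^{a+b}(BG_{n+1},\mathcal O)$ with its $\mathbb G_m^{2n}$-action in the sense of Convention~\ref{GM}. The $E_2$-term is $\mathcal H^a\bigl(\pi_{K*}\mathcal H^b(g_*\mathcal O)\bigr)$. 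Convergence and exhaustiveness are already part of the Recall, since everything is bounded below.

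The main step is to identify $\mathcal H^b(g_*\mathcal O)$. First use flat base change along the cartesian square of Construction~\ref{equi}: $i^*g_*\mathcal O\simeq h_*\mathcal O$. Pulling further back to a point gives $i_1^*g_{1*}\mathcal O\simeq R\Gamma(BH_{n+1},\mathcal O)$. Thus $\mathcal H^b(g_*\mathcal O)$ is a $K_{n+1}\rtimes\mathbb G_m^{2n}$-representation whose underlying $\mathbb G_m^{2n}$-representation is $\mathcal H^b(h_*\mathcal O)=H^b(BH_{n+1},\mathcal O)$. Its $K_{n+1}$-action is the natural action of $K_{n+1}=G_{n+1}/H_{n+1}$ on $H^b(H_{n+1},k)$, induced by conjugation in $G_{n+1}$.

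The crucial claim, and the main obstacle, is that this $K_{n+1}$-action is trivial. My plan is to check it on generators. By Proposition~\ref{compute}, $H^*(B\alpha_{p^2}^n,\mathcal O)$ is generated, via a Künneth argument, by the classes $\alpha_{w_i}$ and $\alpha_{w_i^p}$ (and the $\bar\beta$'s in odd characteristic). It therefore suffices to show that conjugation by $G_{n+1}$ fixes the primitive functions $w_i$ and $w_i^p$ on $H_{n+1}$; the $\bar\beta$-classes are built functorially from these. From the group law, conjugating $(0,\dots,0,w)$ by $P$ changes $w_k$ by terms of the form $z_{k-1}^p\cdot 0-0\cdot z_{k-1}^p$ and $x_{k-1}\cdot0$. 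These vanish because the $x,y,z$-coordinates of the element of $H_{n+1}$ are zero. Hence $H_{n+1}$ is central in $G_{n+1}$, and the action on $\mathcal O(H_{n+1})$, hence on its cohomology, is trivial.

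Given this, apply Lemma~\ref{r} with $K=\mathbb G_m^{2n}$, using $B(K_{n+1}\rtimes\mathbb G_m^{2n})\to B\mathbb G_m^{2n}$. This gives $\mathcal H^b(g_*\mathcal O)\simeq \pi_K^*\,H^b(BH_{n+1},\mathcal O)$. The projection formula then yields
\[
\mathcal H^a\bigl(\pi_{K*}\pi_K^*H^b(BH_{n+1},\mathcal O)\bigr)\simeq H^a(BK_{n+1},\mathcal O)\otimes H^b(BH_{n+1},\mathcal O),
\]
which is $\mathbb G_m^{2n}$-equivariant; tensoring over $k$ is exact. The consequence then follows formally: $H^r$ has a finite filtration whose graded pieces are $E_\infty^{a,b}$, $a+b=r$. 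Each $E_\infty^{a,b}$ is a subquotient of $E_2^{a,b}$, and $\mathsf{Rep}(\mathbb G_m^{2n})$ is semisimple.
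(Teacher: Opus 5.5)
Your proposal is correct and follows essentially the same route as the paper. Both use the Leray spectral sequence for $\pi_K\circ g$, base change along the cartesian square to identify $\mathcal H^b(g_*\mathcal O)$ with $H^b(BH_{n+1},\mathcal O)$, centrality of $H_{n+1}$ to make the $K_{n+1}$-action trivial, then Lemma~\ref{r} with the projection formula, and finally semisimplicity of $\mathsf{Rep}(\mathbb G_m^{2n})$ for the consequence. Your explicit check from the group law that $H_{n+1}$ is central, which the paper only asserts, already trivializes the conjugation action on the whole cobar complex, so the detour through the generators $\alpha_{w_i}$, $\alpha_{w_i^p}$ and the $\bar\beta$-classes is unnecessary.
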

\begin{proof} Since $\mathsf{Rep}(\mathbb G_m^{2n})$ is a semisimple abelian category, the second assertion follows from the first one. It therefore remains to prove the first assertion.
	Consider the composable morphisms
	$
	B(G_{n+1}\rtimes \mathbb{G}_m^{2n})\xrightarrow{g}
	B(K_{n+1}\rtimes \mathbb{G}_m^{2n})\xrightarrow{\pi_K}B\mathbb{G}_m^{2n}
	$, where $\pi_K$  is the structure morphism from $B(K_{n+1}\rtimes\mathbb{G}_m^{2n})$ to $B\mathbb{G}_m^{2n}$.
	Applying the Leray spectral sequence to $g$ and $\pi_K$ with $\mathcal{F}=\mathcal{O}_{B(G_{n+1}\rtimes \mathbb{G}_m^{2n})}$, we obtain a convergent  spectral sequence in $\QCoh(B\mathbb{G}_m^{2n})=\mathsf{Rep}(\mathbb{G}_m^{2n})$
	\[ 
	E_2^{a,b} 
	=
	\mathcal H^a ((\pi_K)_*\mathcal H^b (g_*\mathcal{O})) 
	\Longrightarrow 
	\mathcal H^{a+b}(\pi_*\mathcal{O}). 
	\] 
    We next identify the \(E_2\)-page. Since \(H_{n+1}\) is contained in the center of \(G_{n+1}\), the $K_{n+1}$-action on $H^b(BH_{n+1},\mathcal O)$ induced by conjugation is trivial. In other words, the $K_{n+1}$-action on $\mathcal H^b(g_{1*}\mathcal{O})$ is trivial.
	By base change and $t$-exactness of $j^*$, we have $j^*\mathcal H^b (g_*\mathcal{O})\simeq \mathcal H^b(g_{1*}\mathcal{O})$.
	Thus $j^*\mathcal H^b (g_*\mathcal{O})$ is a trivial $K_{n+1}$-representation. It follows from Lemma~\ref{r} that there is an equivalence $\mathcal H^b (g_*\mathcal{O})\simeq \pi_K^*i^*\mathcal H^b(g_*\mathcal{O})$.
	We thus obtain equivalences
	\[ 
	(\pi_K)_*\mathcal H^b (g_*\mathcal{O})\simeq  (\pi_K)_*\pi_K^*i^*\mathcal H^b(g_*\mathcal{O})\simeq (\pi_K)_*\mathcal{O}\otimes i^*\mathcal H^b(g_*\mathcal{O}),\] 
	where the second equivalence follows from the projection formula. By base change and $t$-exactness of $i^*$, we have equivalences $i^*\mathcal H^b (g_*\mathcal{O})\simeq \mathcal H^b( i^* g_*\mathcal{O})\simeq \mathcal H^b( h_*f^*\mathcal{O})\simeq \mathcal H^b(h_*\mathcal{O})$.  Therefore, we see that
	$(\pi_K)_*\mathcal H^b (g_*\mathcal{O}) \simeq (\pi_K)_*\mathcal{O}\otimes \mathcal H^b(h_*\mathcal{O})$. Taking \(\mathcal H^a\), we obtain an identification	$E_2^{a,b}\simeq  \mathcal H^a ((\pi_K)_*\mathcal H^b (g_*\mathcal{O})) \simeq \mathcal H^a((\pi_K)_*\mathcal{O})\otimes \mathcal H^b(h_*\mathcal{O})$. 
	
\end{proof}

Suppose now $\operatorname{char}(k)=2$.
By the K\"unneth formula and Proposition~\ref{compute},
we have
$H^*(BK_{n+1},\mathcal O)\simeq
\mathrm{Sym}^*(H^1(BK_{n+1},\mathcal O))$
and
$H^*(BH_{n+1},\mathcal O)\simeq
\mathrm{Sym}^*(H^1(BH_{n+1},\mathcal O))$,
where
\begin{align*}
	H^1(BH_{n+1},\mathcal O)
	&=
	\bigoplus_{i=1}^n
	\left(k\alpha_{w_i}\oplus k\alpha_{w_i^p}\right),\\[0.5em]
	H^1(BK_{n+1},\mathcal O)
	&=
	\bigoplus_{i=1}^{n-1}k\alpha_{x_i}
	\oplus
	\bigoplus_{i=1}^n
	\left(k\alpha_{y_i}\oplus k\alpha_{y_i^p}\right)
	\oplus
	\bigoplus_{i=1}^n
	\left(k\alpha_{z_i}\oplus k\alpha_{z_i^p}\right).
\end{align*}
It follows that $H^r(BG_{n+1},\mathcal{O})$ is a subquotient of $\displaystyle \bigoplus_{i+j=r} \mathrm{Sym}^i\bigl(H^1( BK_{n+1},\mathcal{O})\bigr)\otimes \mathrm{Sym}^j\bigl(H^1( BH_{n+1},\mathcal{O})\bigr) \simeq\mathrm{Sym}^r\bigl((H^1(BH_{n+1},\mathcal{O}) \oplus H^1(BK_{n+1},\mathcal{O})\bigr)$ by Proposition~\ref{G}.
We denote
$H^1(BH_{n+1},\mathcal O)\oplus H^1(BK_{n+1},\mathcal O)$
by $V_{n+1}$, and $n_{G_{n+1}}$ by $U_{n+1}$.
By Lemma~\ref{sq}, the
$\mathbb G_m^{2n}$-representation $E_1^{l,n-l}=H^{n-l}(BG_{n+1},\Lambda^l L_{BG_{n+1}/k})$
is a subquotient of
\[
\bigoplus_{i=0}^l
\mathrm{Sym}^{n+i-2l}(V_{n+1})
\otimes
\mathrm{Sym}^{l-i}(W_{n+1})
\otimes
\Lambda^i(U_{n+1})
\]
for $0\leq l\leq n$. The following lemma describes the weight-$\chi_{n+1}$ part
of of the above direct sum for $0\leq l\leq n$.
\begin{lemma}\label{c1}
	Under the above assumption, for $0\leq l\leq n$, we have
\[
\begin{aligned}
& \bigl(\bigoplus_{0\leq i\leq l} \mathrm{Sym}^{n+i-2l}(V_{n+1})\otimes\mathrm{Sym}^{l-i}(W_{n+1})\otimes\Lambda^i(U_{n+1})\bigr)_{\wt=\chi_{n+1}}=\left(\operatorname{Sym}^{n-l}(V_{n+1}) \otimes {\Lambda}^l(U_{n+1})\right)_{\wt=\chi_{n+1}} \\
&= \begin{cases}
k\alpha_{w_1^p}\alpha_{y_2^p}\alpha_{y_3^p} \cdots \alpha_{y_n^p} , & l=0 \\
k\alpha_{w_{l+1}^p}\alpha_{y_{l+2}^p} \cdots \alpha_{y_n^p} \otimes \eta_{x_1^p} \wedge \cdots \wedge \eta_{x_{l-1}^p}\wedge \eta_{y_1^{p^2}} , & 1 \leq l \leq n-1 \\
0, & l=n
\end{cases}
\end{aligned}
\]
Here we use the convention that $\mathrm{Sym}^m(V_{n+1})=0$
for $m<0$, and that an empty product and an empty exterior
product are both equal to $1$.
\end{lemma}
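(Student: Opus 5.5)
The proof will be a direct weight computation. Every summand is spanned by monomials in explicit basis vectors of pure $\mathbb G_m^{2n}$-weight, so I will determine exactly which monomials have weight $\chi_{n+1}$.

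First I record the weights of the basis vectors from Lemma~\ref{weight}, using that $\alpha_f$, $\eta_f$ and $\omega_f$ all have weight $\wt(f)$ (as in the $B\alpha_{p^n}$ example).
\begin{itemize}
\item In $V_{n+1}$, the vectors $\alpha_f$ for $f\in\{x_i,y_i,y_i^p,z_i,z_i^p,w_i,w_i^p\}$ have weight $\wt(f)$; in particular $\wt(f^p)=p\wt(f)$.
\item In $W_{n+1}$, the vectors $\omega_{x_i},\omega_{y_i},\omega_{z_i},\omega_{w_i}$ have weights $\wt(x_i),\wt(y_i),\wt(z_i),\wt(w_i)$.
\item In $U_{n+1}$, the vectors $\eta_{x_i^p},\eta_{y_i^{p^2}},\eta_{z_i^{p^2}},\eta_{w_i^{p^2}}$ have weights $p\wt(x_i)$, $p^2e_i$, $p^2e_{n+i}$, $p^2\wt(w_i)$.
\end{itemize}
A basis of the $i$-th summand is given by products of $n+i-2l$ vectors from $V_{n+1}$, $l-i$ from $W_{n+1}$ and $i$ distinct ones from $U_{n+1}$. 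Such a product lies in the weight-$\chi_{n+1}$ part exactly when its weights sum to $\chi_{n+1}=p^2e_1+p(e_2+\cdots+e_n)+p^2e_{n+1}$. The only negative coordinates come from the $x_i$, via the term $-pe_{n+i+1}$ (or $-p^2e_{n+i+1}$ for $\eta_{x_i^p}$). Every other factor has nonnegative weight.

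Next I will compare coordinates, starting with the $z$-coordinates $e_{2n},e_{2n-1},\dots,e_{n+2}$, which must all vanish.

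At $e_{2n}$, the possible positive contributions come from $z_n$-type factors, from $x_{n-1}$ (coefficient $p$), and from $w_n$ (coefficient $p$). The only negative contribution is from $x_{n-2}$. At $e_{n}$ in the $y$-block, the target coefficient is just $p$. Together these two equations force one of the following: the factor with $e_n$-content is $\alpha_{y_n^p}$, or $\eta_{x_{n-1}^p}$ together with compensating terms, or $\alpha_{w_n^p}$ paired with a negative $x_{n-1}$-contribution, and so on.

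Proceeding by downward induction on the index $j$, I will show that the coordinates $e_j,e_{n+j}$ pin down the unique choice at each stage. The resulting chain is
\[
\alpha_{y_n^p},\ \alpha_{y_{n-1}^p},\ \dots,\ \alpha_{y_{l+2}^p},\ \alpha_{w_{l+1}^p},\ \eta_{x_{l-1}^p},\ \dots,\ \eta_{x_1^p},\ \eta_{y_1^{p^2}}.
\]
Here $\alpha_{w_{l+1}^p}$, of weight $pe_{l+1}+p^2e_{n+l}$, contributes a $p^2e_{n+l}$ that must be cancelled by $\eta_{x_{l-1}^p}$. That factor in turn leaves $p^2e_{n+l-1}$, so the cancellation propagates down to $e_{n+1}$. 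The case $l=0$ has no $\eta$ factors and uses $\alpha_{w_1^p}$ to produce $p^2e_1+p^2e_{n+1}$.

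To control the sizes, I will also use the auxiliary linear functional ``sum of coordinates''. The degree-one generators ($\alpha_{x_i}$, $\alpha_{y_i}$, $\alpha_{z_i}$, $\alpha_{w_i}$ and all of $W_{n+1}$) have small total weight, at most $1+p$ or $2p$. Weights are sums of $p$-adic digits, so these generators cannot combine to produce the digit $p$ or $p^2$ required at each coordinate of $\chi_{n+1}$ within the allowed number of factors. This rules out $W_{n+1}$ entirely, which forces $i=l$ and gives the first equality of the lemma. It also rules out the non-$p$-power elements of $V_{n+1}$. The hypothesis $\operatorname{char}(k)=2$ enters only in that $V$-monomials are just symmetric products of $H^1$-classes, with no $\bar\beta$-classes.

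For $l=n$ there are no $V$ factors. The $n$ exterior factors from $U_{n+1}$ would then have to produce $p$ in $e_2,\dots,e_n$ using only $p^2$-multiples or $p\wt(x_i)$. Since $p\wt(x_i)$ always carries the compensating $p^2e_{n+i}$ terms, a coordinate count shows no solution, so the weight space is $0$.

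The main obstacle is making this elimination rigorous without an enormous case split. The negative entries from the $x_i$ allow cancellations, and these must be tracked carefully. I would organize the argument as an induction on the highest index $j$ appearing in a nonzero $z$-coordinate of some factor. At each step I would compare $p$-adic valuations of the coefficients in $e_j$ and $e_{n+j}$: every coefficient is $0$, $\pm1$, $\pm p$ or $\pm p^2$, and the target digits are small. The congruences modulo $p$ and $p^2$ then force the factor choices at each step.
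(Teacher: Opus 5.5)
Your final monomials and the telescoping chain in $e_{n+1},\dots,e_{2n}$ agree with the paper. The gap is the decisive reduction step: you must show that a basis monomial of weight $\chi_{n+1}$ contains no factor from $W_{n+1}$ and no non-$p$-power class. That is what gives $i=l$ and the first equality, and the two tools you propose for it do not work.

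\emph{The sum-of-all-coordinates functional $\sigma$ is not sharp.} The classes $\alpha_{w_1^p}$, $\alpha_{w_j^p}$ and $\eta_{z_j^{p^2}}$ carry large mass in the last $n$ coordinates. For example, take $n\ge 3$ and the monomial $\alpha_{w_1^p}\alpha_{y_4^p}\cdots\alpha_{y_n^p}\otimes\omega_{w_1}$ in the summand $l=1$, $i=0$. It has the allowed number of factors and $\sigma$-value $2p^2+(n-1)p=\sigma(\chi_{n+1})$, yet it contains a $W$-factor.

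\emph{The $p$-adic digit heuristic fails exactly in the setting of the lemma.} For $p=2$, the monomial $\alpha_{y_j}\cdot\alpha_{y_j}\in\mathrm{Sym}^2(V_{n+1})$ has weight $2e_j=pe_j$. So degree-one classes do combine to give the digit $p$, and no congruence modulo $p$ or $p^2$ rules this out. Only a sharp count of factors does.

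Your bookkeeping at $e_{2n}$ is also off by one. There, the $x_{n-1}$-type factors contribute negatively, $w_n$ contributes nothing (its $p$ sits at $e_{2n-1}$), and $x_{n-2}$ does not appear. More generally, starting from the $z$-block is the hard direction, because that is where all the cancellations occur.

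The missing idea is to look first at $e_1,\dots,e_n$. On these coordinates:
\begin{itemize}
\item every generator of $V_{n+1}$, $W_{n+1}$ and $U_{n+1}$ has nonnegative entries;
\item every such generator has at most one nonzero entry;
\item all $n$ entries of $\chi_{n+1}$ are nonzero.
\end{itemize}
A basis monomial of the $i$-th summand has $n+i-l\le n$ factors. Each of the $n$ coordinates needs at least one factor, and each factor covers at most one coordinate. So the monomial has exactly $n$ factors, hence $i=l$, and each factor alone supplies the full entry of $\chi_{n+1}$ at its coordinate.
\begin{itemize}
\item At $e_1$ the entry is $p^2$, which allows only $\alpha_{w_1^p}$ or $\eta_{y_1^{p^2}}$.
\item At $e_j$ for $j\ge 2$ the entry is $p$, which allows only $\alpha_{y_j^p}$, $\alpha_{w_j^p}$ or $\eta_{x_{j-1}^p}$.
\end{itemize}
The paper packages this as the functional $\delta_1(\lambda)=v_1(\lambda)/p+\sum_{j=2}^n v_j(\lambda)$, which is at most $p$ on every admissible generator and equals $np$ on $\chi_{n+1}$.

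After this reduction, use $\delta_2(\lambda)=\sum_{j>n}v_j(\lambda)$.
\begin{itemize}
\item For $l=n$ it gives $0\neq p^2$, so the weight space is zero.
\item For $l<n$ it forces exactly one $\alpha_{w_r^p}$.
\end{itemize}
When $l\ge1$, exclude $\alpha_{w_1^p}$: otherwise $\eta_{y_1^{p^2}}$ cannot occur, and the $\eta_{x_i^p}$ with the largest index $i$ leaves an uncancelled $-p^2e_{n+i+1}$. Your telescoping chain then finishes the proof.
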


\begin{proof} 
	Throughout the proof, \(p=2\), although we retain the notation \(p\) in 
	the weight formulas.

	Choose the monomial basis of 
	$ \displaystyle 
	\bigl(\bigoplus_{0\leq i\leq l} \mathrm{Sym}^{n+i-2l}(V_{n+1})\otimes\mathrm{Sym}^{l-i}(W_{n+1})\otimes\Lambda^i(U_{n+1})\bigr) 
	$ 
	induced by the chosen weight bases of
	$V_{n+1}$, $W_{n+1}$, and $U_{n+1}$. 
	Thus, a basis element is of the form  
	$ 
	u=a_1\cdots a_{n+i-2l}\otimes b_1\cdots b_{l-i} \otimes c_1\wedge\cdots\wedge c_i, 
	$ 
	where the \(a_j\)'s are basis elements of \(V_{n+1}\), the \(b_k\)'s are basis elements of \(W_{n+1}\), while the 
	\(c_m\)'s are distinct basis elements of \(U_{n+1}\). 
	
	For a weight 
	$ 
	\lambda=\sum_{i=1}^{2n}v_i(\lambda)e_i \in \mathbb{Z}^{2n}$, 
	we define 
	$ 
	\delta_1(\lambda) 
	= 
	\frac{v_1(\lambda)}{p} 
	+\sum_{i=2}^n v_i(\lambda), 
	\delta_2(\lambda) 
	= 
	\sum_{i=n+1}^{2n}v_i(\lambda). 
	$ 
	Recall that 
	$\chi_{n+1}=p^2e_1+p(e_2+\cdots+e_n)+p^2e_{n+1}. 
	$ 
	Hence, we have 
	$\delta_1(\chi_{n+1})=np$, $\delta_2(\chi_{n+1})=p^2$. 
	
	We first determine which generators can not occur in a monomial of weight 
	\(\chi_{n+1}\). Since all of the first \(n\) coordinates of the weights of the 
	basis generators are non-negative, no cancellation can occur in these 
	coordinates. Consequently, 
	$\eta_{w_1^{p^2}}$ 
	cannot occur, since its \(e_1\)-coordinate is \(p^3>p^2\). Similarly, 
	for \(2\leq i\leq n\), neither 
	$\eta_{w_i^{p^2}}$ nor $\eta_{y_i^{p^2}}$ 
	can occur, since their \(e_i\)-coordinates are \(p^2>p\). 
	
	After excluding these terms, every remaining basis generator \(c\) 
	satisfies $\delta_1(\wt(c))\leq p$. 
	For a monomial
	$u=a_1\cdots a_{n+i-2l}\otimes b_1\cdots b_{l-i}
	\otimes c_1\wedge\cdots\wedge c_i$
	of weight $\chi_{n+1}$, the total number of factors is $n+i-l$.
	Hence we have
	$np=\delta_1(\wt(u))\leq p(n+i-l)\leq np$.
	Thus $i=l$, and every factor of \(u\) must satisfy  
	$\delta_1(\wt(c))=p$. Hence, we have\[\bigl(\bigoplus_{0\leq i\leq l}\mathrm{Sym}^{n+i-2l}(V_{n+1})\otimes\mathrm{Sym}^{l-i}(W_{n+1})\otimes\Lambda^i(U_{n+1})\bigr)_{\wt=\chi_{n+1}}=\left(\operatorname{Sym}^{n-l}(V_{n+1}) \otimes {\Lambda}^l(U_{n+1})\right)_{\wt=\chi_{n+1}}.\] 
	It follows that the only possible generators in the symmetric part are 
	$ 
	\alpha_{w_1^p},\ldots,\alpha_{w_n^p}, 
	\alpha_{y_2^p},\ldots,\alpha_{y_n^p}, 
	$. 
	And the only possible generators in the exterior part are $\eta_{x_1^p},\ldots,\eta_{x_{n-1}^p}, \eta_{y_1^{p^2}}$.
	Moreover, each of the remaining generators can occur with multiplicity 
	at most one in the symmetric factor. Indeed, if \(\alpha_{w_1^p}\) occurred 
	twice, then $v_1(\wt(u))\geq 2p^2>p^2=v_1(\chi_{n+1})$.
	Similarly, if \(\alpha_{w_i^p}\) occurred twice for some \(2\leq i\leq n\), 
	then $v_i(\wt(u))\geq 2p>p=v_i(\chi_{n+1})$.
	The same argument shows that none of the elements 
	$
	\alpha_{y_2^p},\ldots,\alpha_{y_n^p} 
	$
	can occur more than once. Thus, every remaining generator appears with 
	multiplicity either \(0\) or \(1\). 
	
	We now distinguish the case $l=n$ from the case $0\leq l \leq n-1$.  If \(l=n\), there is no symmetric factor, and hence only choice of $u$ is 
	\[ \eta_{y_1^{p^2}}\wedge\eta_{x_1^p}\wedge\cdots\wedge\eta_{x_{n-1}^p} .\] 
	However, we have $
	\delta_2(\wt(\eta_{y_1^{p^2}}\wedge\eta_{x_1^p}\wedge\cdots\wedge\eta_{x_{n-1}^p}))=0\neq p^2. 
	$
	This proves that the weight-\(\chi_{n+1}\) subspace is zero when \(l=n\). 
	
	Assume from now on that \(0\leq l\leq n-1\). Among the remaining
	generators, only the elements
	\(\alpha_{w_1^p},\ldots,\alpha_{w_n^p}\)
	have non-zero \(\delta_2\)-weight, and each of them contributes exactly
	\(p^2\). Since \(\delta_2(\chi_{n+1})=p^2\), exactly one of the elements
	\(\alpha_{w_i^p}\) must occur in the symmetric factor.
	
	Assume first that \(l=0\). Since there is no exterior factor, the
	\(e_1\)-coordinate forces \(\alpha_{w_1^p}\) to occur. Comparing the
	\(e_{n+1},\ldots,e_{2n}\)-coordinates then shows that none of the factors
	\(\alpha_{w_i^p}\), \(2\leq i\leq n\), can occur: the factor
	\(\alpha_{w_1^p}\) already contributes the full \(e_{n+1}\)-coordinate of
	\(\chi_{n+1}\), whereas the \(e_{n+2},\ldots,e_{2n}\)-coordinates of \(\chi_{n+1}\)
	are zero. So this forces all the factors
	\(\alpha_{y_2^p},\ldots,\alpha_{y_n^p}\) to occur. Hence the unique
	monomial of weight \(\chi_{n+1}\) is
	\[
	\alpha_{w_1^p}\alpha_{y_2^p}\cdots\alpha_{y_n^p}.
	\]
	
	Now suppose that \(1\leq l\leq n-1\). We first claim that
	\(\alpha_{w_1^p}\) cannot occur. Indeed, if \(\alpha_{w_1^p}\) occurred,
	then \(\eta_{y_1^{p^2}}\) could not occur, since either one already
	contributes the full \(e_1\)-coordinate of \(\chi_{n+1}\). Hence, the exterior
	part would consist only of factors of the form \(\eta_{x_i^p}\). If \(i\)
	were the largest index for which \(\eta_{x_i^p}\) occurred, then the
	negative \(e_{n+i+1}\)-component contributed by \(\eta_{x_i^p}\) could not
	be cancelled. This contradicts the fact that the
	\(e_{n+2},\ldots,e_{2n}\)-coordinates of \(\chi_{n+1}\) are zero. Therefore
	\(\alpha_{w_1^p}\) does not occur, and \(\eta_{y_1^{p^2}}\) must occur.
	The remaining \(l-1\) exterior factors are of the form \(\eta_{x_i^p}\).
	
	Recall that
	$
	\wt(\eta_{x_i^p})
	=
	p e_{i+1}+p^2e_{n+i}-p^2e_{n+i+1},
	$
	while, for \(2\leq r\leq n\),
	 we have  $
	\wt(\alpha_{w_r^p})
	=
	p e_r+p^2e_{n+r-1}.
	$
	Since the target weight has \(p^2e_{n+1}\) as its only non-zero component
	among \(e_{n+1},\ldots,e_{2n}\), the last \(n\) coordinates force a
	telescoping chain
	$
	\eta_{x_1^p},\ \eta_{x_2^p},\ \ldots,\ \eta_{x_{r-2}^p},\
	\alpha_{w_r^p}.
$
	As there are exactly \(l-1\) exterior factors of the form
	\(\eta_{x_i^p}\), we must have \(r=l+1\). Thus, the exterior part is
	\[
	\eta_{x_1^p}\wedge\cdots\wedge\eta_{x_{l-1}^p}\wedge
	\eta_{y_1^{p^2}},
	\]
	and the distinguished symmetric factor is \(\alpha_{w_{l+1}^p}\).
	Finally, comparing the first \(n\) coordinates shows that the remaining
	symmetric factors must be
	\(\alpha_{y_{l+2}^p},\ldots,\alpha_{y_n^p}\). Consequently, the unique
	monomial of weight \(\chi_{n+1}\) is
	\[
	\alpha_{w_{l+1}^p}\alpha_{y_{l+2}^p}\cdots\alpha_{y_n^p}
	\otimes
	\eta_{x_1^p}\wedge\cdots\wedge\eta_{x_{l-1}^p}
	\wedge\eta_{y_1^{p^2}}.
	\]
\end{proof}

	As an immediate consequence, we obtain the following corollary.
\begin{corollary}
	When $\operatorname{char}(k)=2$, we have
	$\dim (E_1^{l,n-l})_{\wt=\chi_{n+1}}\leq 1$
	for $0\leq l\leq n-1$, while
	\mbox{$(E_1^{n,0})_{\wt=\chi_{n+1}}=0$}.
\end{corollary}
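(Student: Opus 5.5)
The plan is to read the corollary off directly from Lemma~\ref{sq}, Proposition~\ref{G} and Lemma~\ref{c1}. By Lemma~\ref{sq}, reformulated via Convention~\ref{GM}, the $\mathbb G_m^{2n}$-representation $E_1^{l,n-l}=H^{n-l}(BG_{n+1},\Lambda^lL_{BG_{n+1}/k})$ is a subquotient of
\[
\bigoplus_{i=0}^l\Lambda^i(U_{n+1})\otimes\mathrm{Sym}^{l-i}(W_{n+1})\otimes H^{n+i-2l}(BG_{n+1},\mathcal O).
\]
By Proposition~\ref{G}, combined with the characteristic-$2$ description of $H^*(BK_{n+1},\mathcal O)$ and $H^*(BH_{n+1},\mathcal O)$ recorded just before Lemma~\ref{c1}, each $H^{n+i-2l}(BG_{n+1},\mathcal O)$ is in turn a subquotient of $\mathrm{Sym}^{n+i-2l}(V_{n+1})$. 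Since tensoring with a fixed representation and taking finite direct sums preserve the relation ``is a subquotient of'', it follows that $E_1^{l,n-l}$ is a subquotient of the direct sum appearing in Lemma~\ref{c1}. For negative $n+i-2l$ the corresponding summand vanishes, matching the convention $\mathrm{Sym}^m=0$ for $m<0$ used there.

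Next I pass to weight spaces. The category $\mathsf{Rep}(\mathbb G_m^{2n})\simeq\Fun(\mathbb Z^{2n},\Mod_k)$ is semisimple, and the functor $V\mapsto V_{\wt=\chi_{n+1}}$ is exact. Hence if $E$ is a subquotient of $M$, then $E_{\wt=\chi_{n+1}}$ is a subquotient of $M_{\wt=\chi_{n+1}}$, so
\[
\dim E_{\wt=\chi_{n+1}}\leq\dim M_{\wt=\chi_{n+1}}.
\]
Applying this with $E=E_1^{l,n-l}$ and $M$ the direct sum of Lemma~\ref{c1} yields both claims. For $0\leq l\leq n-1$, the lemma shows the weight-$\chi_{n+1}$ part of $M$ is spanned by a single monomial, so $\dim\leq1$. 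For $l=n$ it is zero, so $(E_1^{n,0})_{\wt=\chi_{n+1}}=0$.

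The only point requiring care is bookkeeping of the equivariant structures: every identification used must be $\mathbb G_m^{2n}$-equivariant. This holds for the following reasons. The Künneth and Proposition~\ref{compute} identifications are compatible with the induced torus actions, since the classes $\alpha_{w_i},\alpha_{w_i^p},\alpha_{x_i},\dots$ are weight vectors of the weights prescribed by Lemma~\ref{weight}. The subquotient statements in Lemma~\ref{sq} and Proposition~\ref{G} are formulated in $\QCoh(B\mathbb G_m^{2n})$. With these checks in place, no further computation beyond Lemma~\ref{c1} is needed.
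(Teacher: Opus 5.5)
Your proposal is correct and follows the paper's argument. The paper obtains the corollary as an immediate consequence of the subquotient description of $E_1^{l,n-l}$ (Lemma~\ref{sq} combined with Proposition~\ref{G} and the characteristic-$2$ description of $H^*(BK_{n+1},\mathcal O)$ and $H^*(BH_{n+1},\mathcal O)$), together with the weight computation of Lemma~\ref{c1}; your remark that taking the weight-$\chi_{n+1}$ part is exact on $\mathsf{Rep}(\mathbb G_m^{2n})$ makes explicit the step the paper leaves implicit.
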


Suppose now $\operatorname{char}(k)\neq 2$.
By the K\"unneth formula and Proposition~\ref{compute},
we have
$H^*(BH_{n+1},\mathcal O)\simeq
\Lambda^*(H^1(BH_{n+1},\mathcal O))
\otimes \mathrm{Sym}^*(V'_{n+1,1})$
and
$H^*(BK_{n+1},\mathcal O)\simeq
\Lambda^*(H^1(BK_{n+1},\mathcal O))
\otimes \mathrm{Sym}^*(V'_{n+1,2})$,
where 
\[
V'_{n+1,1}
=
\bigoplus_{i=1}^n
\left(
k\bar{\beta}(\alpha_{w_i})
\oplus
k\bar{\beta}(\alpha_{w_i^p})
\right),
\]
and
\[
V'_{n+1,2}
=
\bigoplus_{i=1}^{n-1}
k\bar{\beta}(\alpha_{x_i})
\oplus
\bigoplus_{i=1}^n
\left(
k\bar{\beta}(\alpha_{y_i})
\oplus
k\bar{\beta}(\alpha_{y_i^p})
\right)
\oplus
\bigoplus_{i=1}^n
\left(
k\bar{\beta}(\alpha_{z_i})
\oplus
k\bar{\beta}(\alpha_{z_i^p})
\right).
\]
We denote
$H^1(BH_{n+1},\mathcal O)\oplus H^1(BK_{n+1},\mathcal O)$
by $V_{n+1}$,
$V'_{n+1,1}\oplus V'_{n+1,2}$ by $V'_{n+1}$,
and $n_{G_{n+1}}$ by $U_{n+1}$.
Hence, for every $r\geq0$, the
$\mathbb G_m^{2n}$-representation $H^r(BG_{n+1},\mathcal O)$
is a subquotient of the degree $r$ part of
$\Lambda^*(V_{n+1})\otimes\mathrm{Sym}^*(V'_{n+1})$,
where the grading is defined by placing
$V_{n+1}$ in degree $1$ and $V'_{n+1}$ in degree $2$. By Lemma~\ref{sq}, the
$\mathbb G_m^{2n}$-representation $E_1^{l,n-l}$
is therefore a subquotient of
\[
\bigoplus_{i=0}^l
\left(
\Lambda^*(V_{n+1})
\otimes \mathrm{Sym}^*(V'_{n+1})
\right)_{\mathrm{deg}=n+i-2l}
\otimes \mathrm{Sym}^{l-i}(W_{n+1})
\otimes \Lambda^i(U_{n+1})
\]
for $0\leq l\leq n$.
We now compute the weight-$\chi_{n+1}$ part
of the above direct sum for $0\leq l\leq n$.

\begin{lemma}\label{c2} 
	Under the above assumption, for $0\leq l\leq n$, we have
\[
\begin{aligned}
	&\left(
	\bigoplus_{i=0}^l
	\left(
	\Lambda^*(V_{n+1})
	\otimes \mathrm{Sym}^*(V_{n+1}')
	\right)_{\mathrm{deg}=n+i-2l}
	\otimes \mathrm{Sym}^{l-i}(W_{n+1})
	\otimes \Lambda^i(U_{n+1})
	\right)_{\wt=\chi_{n+1}}
	\\[0.5em]
	&\quad=
	\left(
	\Lambda^{n-l}(V_{n+1})
	\otimes \Lambda^l(U_{n+1})
	\right)_{\wt=\chi_{n+1}}
	\\[0.5em]
	&\quad=
	\begin{cases}
		k\alpha_{w_1^p}\alpha_{y_2^p}\cdots\alpha_{y_n^p},
		& l=0,\\[0.4em]
		k\alpha_{w_{l+1}^p}\alpha_{y_{l+2}^p}\cdots\alpha_{y_n^p}
		\otimes
		\eta_{x_1^p}\wedge\cdots\wedge\eta_{x_{l-1}^p}
		\wedge\eta_{y_1^{p^2}},
		& 1\leq l\leq n-1,\\[0.4em]
		0,
		& l=n.
	\end{cases}
\end{aligned}
\]
\end{lemma}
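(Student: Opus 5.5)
The plan is to reduce to the combinatorial weight analysis already carried out in Lemma~\ref{c1}. The new feature for odd $p$ is the additional generators $\bar{\beta}(\cdot)\in V'_{n+1}$ sitting in degree $2$. We work with the monomial basis built from the weight bases of $V_{n+1}$, $V'_{n+1}$, $W_{n+1}$, $U_{n+1}$. A basis element is $u=a\otimes a'\otimes b\otimes c$, where $a$ is an exterior monomial in $V_{n+1}$ of length $m$, $a'$ is a symmetric monomial in $V'_{n+1}$ of length $m'$ with $m+2m'=n+i-2l$, $b$ is a monomial of length $l-i$ in $W_{n+1}$, and $c$ is a wedge of $i$ distinct elements of $U_{n+1}$. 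The weights of generators are read off from Lemma~\ref{weight} together with the $\mathbb G_m$-weights of Proposition~\ref{compute}. In particular, we use $\wt(\bar\beta(\alpha_f))=p\,\wt(\alpha_f)$.

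First, we reuse the functionals $\delta_1,\delta_2$ from the proof of Lemma~\ref{c1}. The first $n$ coordinates of all generator weights are again non-negative, and we have $v_1(\chi_{n+1})=p^2$ and $v_i(\chi_{n+1})=p$ for $2\le i\le n$. These two facts exclude, as before, $\eta_{w_1^{p^2}}$, the classes $\eta_{w_i^{p^2}},\eta_{y_i^{p^2}}$ for $i\ge2$, and now also most $\bar\beta$-classes. Every $\bar\beta(\alpha_f)$ with $f\in\{w_i,w_i^p,y_i,y_i^p,x_i\}$ for $i\ge 2$ has some $e_i$-coordinate equal to $p^2$ or $p^3$, which is too large. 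The class $\bar\beta(\alpha_{w_1^p})$ has $e_1$-coordinate $p^3$, which is also too large. The remaining $\bar\beta$-classes satisfy $\delta_1\le p$: these are $\bar\beta(\alpha_{y_1})$, $\bar\beta(\alpha_{y_1^p})$, $\bar\beta(\alpha_{w_1})$ (with $\delta_1=p$), $\bar\beta(\alpha_{x_1})$ (with $\delta_1=p$), and the $z$-classes (with $\delta_1=0$). Every surviving generator of degree $1$ or exterior type then also has $\delta_1\le p$.

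The key counting step is as follows. Each factor of $u$ has $\delta_1\le p$, and the number of factors is $m+m'+(l-i)+i$. Since $m+2m'=n+i-2l$, this number equals $n-l+i-m'$. We then get $np=\delta_1(\wt u)\le p(n-l+i-m')\le p(n-m')$. This forces $m'=0$ and $i=l$, and it forces every factor to have $\delta_1$ exactly $p$. So no $\bar\beta$-class and no $W_{n+1}$-class occurs. This yields the first equality, with $\Lambda^{n-l}(V_{n+1})$ in place of $\mathrm{Sym}^{n-l}(V_{n+1})$.

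For the second equality, we redo the case analysis of Lemma~\ref{c1} verbatim. The generators with $\delta_1=p$ are the same as there, namely $\alpha_{w_i^p}$, $\alpha_{y_i^p}$ for $i\ge2$, $\eta_{x_i^p}$ and $\eta_{y_1^{p^2}}$. That argument already showed each of these appears with multiplicity at most one, so passing from $\mathrm{Sym}$ to $\Lambda$ changes nothing. The $\delta_2$-count, the $e_1$-coordinate, and the telescoping chain through the coordinates $e_{n+1},\dots,e_{2n}$ then give exactly the listed monomials. The sign ordering of the exterior factors in $V_{n+1}$ is irrelevant for spanning. The main obstacle is the bookkeeping in the exclusion step: one has to confirm that every $\bar\beta$-class is either excluded by a coordinate bound or has $\delta_1\le p$. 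Once that holds, the inequality with the extra $-m'$ term does the rest.
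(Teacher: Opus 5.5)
This is essentially the paper's proof. Both arguments exclude the generators whose first $n$ weight coordinates overshoot $\chi_{n+1}$, bound $\delta_1$ of every remaining factor by $p$, use $m+2m'=n+i-2l$ to force $m'=0$ and $i=l$, and then rerun the case analysis of Lemma~\ref{c1}. One slip in your exclusion list, which you rightly flagged as the delicate point: $\bar\beta(\alpha_{w_i})$, $\bar\beta(\alpha_{y_i})$ and $\bar\beta(\alpha_{x_i})$ for $i\ge 2$ are \emph{not} ruled out by a coordinate bound, since for instance $\wt(\bar\beta(\alpha_{w_i}))=pe_i+p^2e_{n+i-1}$ and $\wt(\bar\beta(\alpha_{y_i}))=pe_i$. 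These classes do have $\delta_1=p$, so the dichotomy you need still holds and your counting step eliminates them; the paper excludes only $\bar\beta(\alpha_{w_i^p})$ for $1\le i\le n$ and $\bar\beta(\alpha_{y_i^p})$ for $2\le i\le n$.
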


\begin{proof}
We use the same notation as in Lemma~\ref{c1}. By the same argument, the generators
$\eta_{w_i^{p^2}},\bar{\beta}(\alpha_{w_i^p})$ for  $1\leq i\leq n$ 
and $\eta_{y_i^{p^2}} ,\bar{\beta}(\alpha_{y_i^p})$ for  $2\leq i\leq n$
cannot occur in a monomial of weight $\chi_{n+1}$. After excluding these terms, every remaining basis generator \(c\)
satisfies
$
\delta_1(\wt(c))\leq p
$. Consider a basis monomial $u$ of weight $\chi_{n+1}$
in the $i$-th summand, with $a$ factors from $V_{n+1}$
and $b$ factors from $V_{n+1}'$.
Since these generators have degrees $1$ and $2$, respectively,
we have $a+2b=n+i-2l$.
Since $u$ has $l-i$ factors from $W_{n+1}$ and $i$ factors
from $U_{n+1}$, the total number of factors is
$a+b+l=n+i-l-b$.
Hence, we have
$np=\delta_1(\wt(u))\leq p(n+i-l-b)\leq np$.
Thus $i=l$ and $b=0$, so $u$ contains exactly $n$ factors,
and each factor $c$ satisfies $\delta_1(\wt(c))=p$. It follows that no generator from $V_{n+1}'$ or $W_{n+1}$ occurs. Consequently, we have
\[
\left(
\bigoplus_{i=0}^l
\left(
\Lambda^*(V_{n+1})\otimes \mathrm{Sym}^*(V_{n+1}')
\right)_{\mathrm{deg}=n+i-2l}
\otimes \mathrm{Sym}^{l-i}(W_{n+1})
\otimes \Lambda^i(U_{n+1})
\right)_{\wt=\chi_{n+1}}
=
\left(
\Lambda^{n-l}(V_{n+1})\otimes \Lambda^l(U_{n+1})
\right)_{\wt=\chi_{n+1}}.
\] The remaining argument is the same as in Lemma~\ref{c1}.
 
\end{proof}

As an immediate consequence, we obtain the following corollary.
\begin{corollary}
	When $\operatorname{char}(k)\neq 2$, we have
	$\dim (E_1^{l,n-l})_{\wt=\chi_{n+1}}\leq 1$
	for $0\leq l\leq n-1$, while
	\mbox{$(E_1^{n,0})_{\wt=\chi_{n+1}}=0$}.
\end{corollary}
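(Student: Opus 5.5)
The statement is the corollary for $\operatorname{char}(k)\neq 2$: $\dim (E_1^{l,n-l})_{\wt=\chi_{n+1}}\leq 1$ for $0\leq l\leq n-1$, and $(E_1^{n,0})_{\wt=\chi_{n+1}}=0$. I would deduce it directly from Lemma~\ref{c2}, together with the subquotient statement established just before it.

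First, I would recall the chain of subquotient relations. Proposition~\ref{G} shows that $H^r(BG_{n+1},\mathcal O)$ is a subquotient of $\bigoplus_{a+b=r}H^a(BK_{n+1},\mathcal O)\otimes H^b(BH_{n+1},\mathcal O)$. By Proposition~\ref{compute} and the Künneth formula, this is the degree $r$ part of $\Lambda^*(V_{n+1})\otimes\mathrm{Sym}^*(V'_{n+1})$. Lemma~\ref{sq} then shows that $E_1^{l,n-l}$, viewed as a $\mathbb G_m^{2n}$-representation via the equivariant refinement of Construction~\ref{a1}, is a subquotient of
\[
\bigoplus_{i=0}^l\bigl(\Lambda^*(V_{n+1})\otimes\mathrm{Sym}^*(V'_{n+1})\bigr)_{\deg=n+i-2l}\otimes\mathrm{Sym}^{l-i}(W_{n+1})\otimes\Lambda^i(U_{n+1}).
\]

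Second, I would use that $\mathsf{Rep}(\mathbb G_m^{2n})$ is semisimple and that taking the weight-$\chi_{n+1}$ part is exact. Hence the weight-$\chi_{n+1}$ part of a subquotient is a subquotient of the weight-$\chi_{n+1}$ part of the ambient representation, and its dimension is bounded by that of the ambient one. Lemma~\ref{c2} computes the ambient weight-$\chi_{n+1}$ part explicitly. It is spanned by a single monomial for $0\leq l\leq n-1$, giving dimension at most $1$, and it is zero for $l=n$. Both assertions follow at once.

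There is no real obstacle, since the substantive content lies in Lemma~\ref{c2}. The one point to check is that the identifications in Lemma~\ref{sq} and Proposition~\ref{G} are $\mathbb G_m^{2n}$-equivariant, so that weights are preserved; this holds by construction. One should also confirm that the weights of the basis generators of $V_{n+1}$, $V'_{n+1}$, $W_{n+1}$ and $U_{n+1}$ used in Lemma~\ref{c2} agree with those computed from Lemma~\ref{weight}.
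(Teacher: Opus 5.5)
Your proposal is correct and follows the paper's own route, which also derives the corollary immediately from Lemma~\ref{c2}. As you do, it combines the subquotient description obtained from Lemma~\ref{sq}, Proposition~\ref{G}, the K\"unneth formula and Proposition~\ref{compute}, and uses semisimplicity of $\mathsf{Rep}(\mathbb G_m^{2n})$ to pass to weight-$\chi_{n+1}$ parts, where the ambient space is one-dimensional for $0\leq l\leq n-1$ and zero for $l=n$.
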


The following lemma shows that the above dimension bounds are attained
in every positive characteristic and gives an explicit description of
$(E_1^{l,n-l})_{\wt=\chi_{n+1}}$
for $0\leq l\leq n$.

\begin{lemma}\label{bound}
 For  $0\leq l\leq n-1 $, we have
     $\dim (E_1^{l,n-l})_{\wt=\chi_{n+1}}= 1$ , while    $(E_1^{n,0})_{\wt=\chi_{n+1}}=0$. More precisely, we have \[(E_1^{l,n-l})_{\wt=\chi_{n+1}}=
     \begin{cases}
     	k\alpha_{w_1^p}\alpha_{y_2^p}\alpha_{y_3^p} \cdots \alpha_{y_n^p} , & l=0 \\
     	k\alpha_{w_{l+1}^p}\alpha_{y_{l+2}^p} \cdots \alpha_{y_n^p} \otimes \eta_{x_1^p} \wedge \cdots \wedge \eta_{x_{l-1}^p}\wedge \eta_{y_1^{p^2}} , & 1 \leq l \leq n-1 \\
     	0, & l=n
     \end{cases}
     \]
\end{lemma}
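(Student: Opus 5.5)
The upper bounds are already in hand. By the two corollaries following Lemma~\ref{c1} and Lemma~\ref{c2}, we have $\dim (E_1^{l,n-l})_{\wt=\chi_{n+1}}\leq 1$ for $0\leq l\leq n-1$ and $(E_1^{n,0})_{\wt=\chi_{n+1}}=0$. Moreover, the weight-$\chi_{n+1}$ part of the ambient representation is spanned by the indicated monomial. So it remains to show that, for each $0\leq l\leq n-1$, the indicated element is a well-defined \emph{non-zero} class in $E_1^{l,n-l}$ of weight $\chi_{n+1}$. Since $E_1^{l,n-l}$ is only a subquotient of the ambient representation, we cannot just read this off from Lemma~\ref{c1} and Lemma~\ref{c2}. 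Instead I would construct each class directly and detect it by a map to a simpler group.

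For the construction, the classes $\alpha_{w_{l+1}^p},\alpha_{y_m^p}\in H^1(BG_{n+1},\mathcal O)$ exist because these functions are primitive. The classes $\eta_{x_i^p},\eta_{y_1^{p^2}}\in n_{G_{n+1}}=H^0(BG_{n+1},L_{BG_{n+1}/k})$ exist because $G_{n+1}$ acts trivially on $n_{G_{n+1}}$. The cup product in Hodge cohomology, namely $H^{n-l}(\mathcal O)\otimes H^0(\Lambda^l L)\to H^{n-l}(\Lambda^l L)$, then gives the element. Its weight is the sum of weights, which equals $\chi_{n+1}$ by Lemma~\ref{weight}.

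To prove non-vanishing, I would pull back along a $\mathbb G_m^{2n}$-equivariant homomorphism $\phi\colon G_{n+1}\to Q$ to a commutative group $Q$ whose Hodge cohomology is a Künneth product of pieces of the form computed for $B\alpha_{p^m}$. The aim is to choose $Q$ so that the element is the pullback of a visibly non-zero class, and then use a retraction or a direct evaluation to show the pullback is injective on it. The natural candidate is to restrict instead to a commutative subgroup $S\subset G_{n+1}$. We take $S$ cut out by setting to zero all coordinates except $w_{l+1}$, $y_{l+2},\dots,y_n$, $x_1,\dots,x_{l-1}$, $y_1$. On $S$ the group law becomes additive, because each twisting term in the definition of $w_k''$ involves a $z$-coordinate, and those are all zero on $S$. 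Hence $S\simeq\alpha_{p^2}^{\,?}\times\alpha_p^{l-1}$, and $S\hookrightarrow\widetilde S=\mathbb G_a^{N}$ is compatible with $G_{n+1}\hookrightarrow\widetilde G_{n+1}$. By functoriality of the classes $\alpha_f$ and $\eta_g$ (Remarks~\ref{alpha} and~\ref{eta}), the restriction of the element to $BS$ is $\alpha_{w_{l+1}^p}\prod\alpha_{y_m^p}\otimes\eta_{x_1^p}\wedge\cdots\wedge\eta_{y_1^{p^2}}$. Here the factors $\alpha_{t^p}$ and $\eta_{t^{p^m}}$ sit on distinct Künneth factors.

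By the Künneth formula together with the explicit ring structure computed for $B\alpha_{p^m}$ (Proposition~\ref{compute} and the lemma after it), this is a non-zero product of polynomial or exterior generators from different tensor factors, so it is non-zero. Therefore the original class is non-zero, and the dimension equals $1$. The case $l=n$ is just the corollary.

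The main obstacle is checking that this restriction step is legitimate. First, the subscheme $S$ must really be a subgroup, which means verifying that no cross term $z_{k-1}^p y_k'$ or $x_{k-1}(z_k')^p$ survives. Second, and more subtle, $\eta_{y_1^{p^2}}$ and $\eta_{x_i^p}$ must restrict to the standard generators $\eta_{t^{p^2}}$ and $\eta_{t^p}$ of $n_S$. For this I would use the naturality of the identification $n_G\simeq\ker(i^*(I/I^2)\to j^*\Omega^1)$ in Proposition~\ref{com} under the map of embedded pairs $(S\subset\widetilde S)\to(G_{n+1}\subset\widetilde G_{n+1})$, noting that the relevant generators of the ideal map to generators of the ideal of $S$.
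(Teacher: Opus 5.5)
Your proof is correct. Its outer structure matches the paper's: you take the upper bound from the two corollaries, then exhibit a non-zero class of weight $\chi_{n+1}$ by restricting to a commutative subgroup on which all $z_i$ vanish and applying the K\"unneth formula. Where you differ is in how you handle the $\eta$-factor.

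The paper keeps the $\eta$'s on $BG_{n+1}$. It uses the splitting $L_{BG_{n+1}/k}\simeq n_{G_{n+1}}\oplus\omega_{G_{n+1}}[-1]$ (Lemma~\ref{split}) and the triviality of the action on $n_{G_{n+1}}$ (Lemma~\ref{normal}). With these, it places the class in the summand $H^{n-l}(BG_{n+1},\mathcal O)\otimes\Lambda^l(n_{G_{n+1}})$ of $E_1^{l,n-l}$. There $\eta_{x_1^p}\wedge\cdots\wedge\eta_{x_{l-1}^p}\wedge\eta_{y_1^{p^2}}$ is a wedge of distinct basis vectors. So the paper only has to show $\alpha_{w_{l+1}^p}\alpha_{y_{l+2}^p}\cdots\alpha_{y_n^p}\neq 0$. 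It does this by restricting this structure-sheaf class alone to $H'_{n+1}=V(z_1,\ldots,z_n)\simeq\alpha_p^{n-1}\times\alpha_{p^2}^{2n}$.

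You instead restrict the whole mixed cup product to the smaller subgroup $S\subset H'_{n+1}$. You then use K\"unneth for the full Hodge cohomology of $BS$, together with the lemma after Proposition~\ref{compute} describing $H^*(B\alpha_{p^m},\Lambda^*L)$. This needs one extra input: that the classes $\eta_g$ are natural under maps of embedded pairs. The paper itself uses this kind of functoriality in the proof of Proposition~\ref{Heisenberg2}, so it is available. In exchange, you never use the direct-summand embedding $H^{n-l}(BG_{n+1},\Lambda^l n_{G_{n+1}})\hookrightarrow E_1^{l,n-l}$. You only use that pullback is multiplicative.

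The paper's route is a bit more economical, since it needs only structure-sheaf K\"unneth. Yours works entirely with cup products and pullbacks. Shrinking $H'_{n+1}$ to $S$ is harmless but unnecessary, since $H'_{n+1}$ is already commutative. Your explicit check that the class has weight $\chi_{n+1}$ is a point the paper leaves implicit.
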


\begin{proof}Observe that $\alpha_{w_1^p}\alpha_{y_2^p}\alpha_{y_3^p} \cdots \alpha_{y_n^p}\in H^n(BG_{n+1},\mathcal O)=E_1^{0,n}$ and $\alpha_{w_{l+1}^p}\alpha_{y_{l+2}^p} \cdots \alpha_{y_n^p} \otimes \eta_{x_1^p} \wedge \cdots \wedge \eta_{x_{l-1}^p}\wedge \eta_{y_1^{p^2}}\in H^{n-l}(BG_{n+1},\mathcal O)\otimes \Lambda^l(n_{G_{n+1}})\simeq H^{n-l}(BG_{n+1},\Lambda^ln_{G_{n+1}})\subset H^{n-l}(BG_{n+1},\Lambda^lL_{BG_{n+1}/k})=E_1^{l,n-l}$ for $1 \leq l \leq n-1$. Hence, it suffices to show that the class $\alpha_{w_{l+1}^p}\alpha_{y_{l+2}^p} \cdots \alpha_{y_n^p}$ is nonzero in  $H^{n-l}(BG_{n+1},\mathcal O)$  for  $0\leq l\leq n-1 $.
    Consider the closed scheme $H'_{n+1}=V(z_1,\cdots,z_n)$ of $G_{n+1}$. One can check that $H'_{n+1}$ is a closed subgroup scheme of $G_{n+1}$ and $H'_{n+1}\simeq \alpha_p^{n-1}\times\alpha_{p^2}^{2n}$ with coordinates $x_1,\cdots, x_{n-1},y_1,\cdots, y_n,w_1,\cdots,w_n $ corresponding to the respective factors. By the K\"unneth formula and Proposition~\ref{compute}, we see that $\bigl(\alpha_{w_{l+1}^p}\alpha_{y_{l+2}^p}\cdots\alpha_{y_n^p}\bigr)|_{BH'_{n+1}}=\alpha_{w_{l+1}^p}\alpha_{y_{l+2}^p} \cdots \alpha_{y_n^p}\neq 0$ in $H^{n-l}(BH'_{n+1},\mathcal{O})$.

\end{proof}

\begin{proof}[Proof of Theorem~\ref{thm:main1}(2)]
 For $n=1$, we have seen in Lemma~\ref{form} the class $d_2(\alpha_2)\in E^{2,0}_2$ is represented by $\eta_{y_1^{p^2}}\wedge\eta_{z_1^{p^2}}\in \Lambda^2(n_{G_2})\simeq E^{2,0}_1$. Hence, it suffices to show that this nonzero class does not lie in the image of $d_1\colon E^{1,0}_1\to E^{2,0}_1$. Indeed, by Lemma~\ref{bound}, we have $(E^{1,0}_1)_{\wt=\chi_2}=0$. Since $d_1$ preserves weights and
 $\wt(\eta_{y_1^{p^2}}\wedge\eta_{z_1^{p^2}})=\chi_2$, we conclude that
 $d_2(\alpha_2)$ is non-zero in $E^{2,0}_2$.
 
For $n\geq 2$, we have seen in Lemma~\ref{form} that the class $d_{n+1}(\alpha_2)\in E^{n+1,0}_{n+1}$ is represented by $(-1)^{n}\eta_{y_1^{p^2}}\wedge\eta_{x_1^{p}}\wedge\cdots\wedge\eta_{x_{n-1}^{p}}\wedge\eta_{z_n^{p^2}}\in \Lambda^2(n_{G_2})\simeq E^{2,0}_1$. Moreover, we have  $\wt(\eta_{y_1^{p^2}}\wedge\eta_{x_1^{p}}\wedge\cdots\wedge\eta_{x_{n-1}^{p}}\wedge\eta_{z_n^{p^2}})=\chi_{n+1}$. We claim that $(E_2^{l,n-l})_{\mathrm{wt}=\chi_{n+1}}=0$ for $1\leq l \leq n$.
For $1\leq l\leq n-1$, the Leibniz rule gives
\[d_1(\alpha_{w_{l+1}^p}\alpha_{y_{l+2}^p} \cdots \alpha_{y_n^p} \otimes \eta_{x_1^p} \wedge \cdots \wedge \eta_{x_{l-1}^p}\wedge \eta_{y_1^{p^2}})=d_1(\alpha_{w_{l+1}^p})\alpha_{y_{l+2}^p} \cdots \alpha_{y_n^p} \otimes \eta_{x_1^p} \wedge \cdots \wedge \eta_{x_{l-1}^p}\wedge \eta_{y_1^{p^2}},\]
in $H^{n-l}(BG_{n+1},\mathcal{O})\otimes \Lambda^l(n_{G_{n+1}})\subset E_1^{l,n-l}$.  The class $d_1(\alpha_{w_{l+1}^p}) $ is represented by the image of $[w_{l+1}^p]$ under boundary operator
$\partial:H^1(\gr^0C^{\bullet})\to H^{2}(\gr^1C^{\bullet}) $ induced by the short exact sequence of  cochain complexes
\[ 0\to \gr^1C^{\bullet}\to \mathrm{Fil}^0C^{\bullet}/\mathrm{Fil}^2C^{\bullet} \to \gr^0C^{\bullet}\to 0.\] Note that $\delta(w_{l+1}^p)=-z_l^{p^2}\otimes y_{l+1}^p-x_l^p\otimes z_{l+1}^{p^2}$ and  $d(w_{l+1}^p)=0$ in the bicomplex $B^{\bullet,\bullet}$. Then we have
$\partial([w_{l+1}^p])
=[d_{\mathrm{Tot}}(w_{l+1}^p)]=-[z_l^{p^2}\otimes y_{l+1}^p +x_l^p\otimes z_{l+1}^{p^2} ]=-[z_l^{p^2}\otimes y_{l+1}^p]=-[z_l^{p^2}]\cdot[y_{l+1}^p]=[y_{l+1}^p]\cdot[z_l^{p^2}]\in H^1(\gr^1C^{\bullet})$. It follows that $d_1(\alpha_{w_{l+1}^p})= \alpha_{y_{l+1}^p}\otimes \eta_{z_l^{p^2}}$ in $H^1(BG_{n+1},\mathcal{O})\otimes n_G$. Thus, we have
\[d_1(\alpha_{w_{l+1}^p}\alpha_{y_{l+2}^p} \cdots \alpha_{y_n^p} \otimes \eta_{x_1^p} \wedge \cdots \wedge \eta_{x_{l-1}^p}\wedge \eta_{y_1^{p^2}})=(-1)^{n-l-1}\alpha_{y_{l+1}^p} \alpha_{y_{l+2}^p} \cdots \alpha_{y_n^p} \otimes\eta_{z_l^{p^2}}\wedge \eta_{x_1^p} \wedge \cdots \wedge \eta_{x_{l-1}^p}\wedge \eta_{y_1^{p^2}}.\] We claim this class is non-zero in $H^{n-l}(BG_{n+1},\mathcal O)\otimes \Lambda^{l+1}(n_{G_{n+1}})\subset H^{n-l}(BG_{n+1},\Lambda^{l+1}L_{BG_{n+1}/k})=E_1^{l+1,n-l}$. It suffices to show $\alpha_{y_{l+1}^p}\alpha_{y_{l+2}^p} \cdots \alpha_{y_n^p}\neq 0$ in $H^{n-l}(BG_{n+1},\mathcal O)$. Indeed, restricting to the closed subgroup scheme
$H'_{n+1}$ introduced in Lemma~\ref{bound}, we obtain
$\big(\alpha_{y_{l+1}^p}\alpha_{y_{l+2}^p} \cdots \alpha_{y_n^p}\big)|_{BH'_{n+1}}=\alpha_{y_{l+1}^p}\alpha_{y_{l+2}^p} \cdots \alpha_{y_n^p}\neq 0$. It is non-zero by the K\"unneth formula and Proposition~\ref{compute}. Thus, the vector space $(E_2^{l,n-l})_{\mathrm{wt}=\chi_{n+1}}=0$ for $1\leq l \leq n$.  Consequently, we obtain a chain of inclusions:
\[ 
(E_{n+1}^{n+1,0})_{\wt=\chi_{n+1}} \subseteq (E_{n}^{n+1,0})_{\wt=a_{n+1}} \subseteq (E_{n-1}^{n+1,0})_{\wt=\chi_{n+1}} \subseteq \dots \subseteq (E_{1}^{n+1,0})_{\wt=\chi_{n+1}} 
.\]
It follows that $\eta_{y_1^{p^2}}\wedge\eta_{x_1^{p}}\wedge\cdots\wedge\eta_{x_{n-1}^{p}}\wedge\eta_{z_n^{p^2}}\neq 0$  in   $E_{n+1}^{n+1,0}$. This completes the proof.

\end{proof}
\subsection{Designing such group schemes in practice}
In this subsection, we explain how to design, for each $n\geq 2$, a finite type group scheme $G$ over $k$ together with a class $\alpha_n\in H^{n-1}(BG,\mathcal O)$ that survives to the
$E_n$-page of the Hodge-to-de Rham spectral sequence of $BG$. Whenever possible, we also aim to arrange for $d_n(\alpha_n)\in E_n^{n,0}$ to be non-zero.

We first fix a smooth affine group scheme $\widetilde G$ together with a closed subgroup scheme $G$, and consider how to construct a class $\alpha_n\in H^{n-1}(BG,\mathcal O)$
that survives to the $E_n$-page of the Hodge-to-de Rham spectral sequence of $BG$. We apply Theorem~\ref{PD1} and Construction~\ref{PD2} to the closed embedding $G\hookrightarrow \widetilde{G}$ and retain the notation introduced there. We begin with the following observation.
\begin{observation}
	To construct a class in $H^{n-1}(BG,\mathcal O)$ that survives to the $E_n$-page, it suffices to construct an element  $t_{n-1}\in C^{n-1}$ such that  $d_{\mathrm{Tot}}(t_{n-1})\in \mathrm{Fil}^nC^n$. Indeed, such an element defines a class \([t_{n-1}]\in H^{n-1}(\mathrm{Fil}^0C^{\bullet}/\mathrm{Fil}^nC^{\bullet})\), whose image in $H^{n-1}(\gr^0C^{\bullet})\simeq H^{n-1}(BG,\mathcal O)$ gives the desired class $\alpha_n$. One natural approach to constructing such an element \(t_{n-1}\) is to choose elements \(s_1,\ldots,s_{n-1}\in C^{n-1}\) successively such that $s_i\in \mathrm{Fil}^{i-1}C^{n-1}$ for $1\leq i\leq n-1$ and the partial sums  $t_i=s_1+\cdots+s_i$, satisfy $d_{\mathrm{Tot}}(t_i)\in \mathrm{Fil}^{i+1}C^n$ for $1\leq i\leq n-1$.
\end{observation}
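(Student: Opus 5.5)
The plan is to unwind the construction of the spectral sequence of the filtered complex $\mathrm{Fil}^*C^\bullet$ from Construction~\ref{PD2}, in the explicit form of \cite[Tag~012K]{Sta26}, and to check that an element $t_{n-1}$ with $d_{\mathrm{Tot}}(t_{n-1})\in\mathrm{Fil}^nC^n$ gives a class of $E_1^{0,n-1}$ on which $d_1,\dots,d_{n-1}$ all vanish. Recall that $E_r^{0,n-1}$ is a subquotient of
\[
Z_r^{0,n-1}=\{x\in\mathrm{Fil}^0C^{n-1}\;:\;d_{\mathrm{Tot}}x\in\mathrm{Fil}^rC^n\}.
\]
The differential $d_r$ sends the class of $x$ to the class of $d_{\mathrm{Tot}}x$, viewed in $Z_r^{r,n-r}$. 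Since $\mathrm{Fil}^0C^\bullet=C^\bullet$, every $t_{n-1}\in C^{n-1}$ lies in $\mathrm{Fil}^0$.

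First, I check that $t_{n-1}$ defines the desired classes. Because $d_{\mathrm{Tot}}(t_{n-1})\in\mathrm{Fil}^nC^n\subset\mathrm{Fil}^1C^n$, the image of $t_{n-1}$ in $\gr^0C^\bullet$ is a cycle, and it gives a class $\alpha_n\in H^{n-1}(\gr^0C^\bullet)=E_1^{0,n-1}$. Under Remark~\ref{alpha}, this group is $H^{n-1}(BG,\mathcal O)$. Likewise, $t_{n-1}$ is a cycle modulo $\mathrm{Fil}^n$, so it defines $[t_{n-1}]\in H^{n-1}(\mathrm{Fil}^0C^\bullet/\mathrm{Fil}^nC^\bullet)$. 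This class maps to $\alpha_n$ under the natural map to $H^{n-1}(\gr^0C^\bullet)$.

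Second, I argue by induction on $r$ for $1\le r\le n-1$. Suppose $d_1(\alpha_n),\dots,d_{r-1}(\alpha_n)$ vanish, so that $\alpha_n$ survives to $E_r$. Its class in $E_r^{0,n-1}$ is represented by $t_{n-1}\in Z_r^{0,n-1}$; this uses $d_{\mathrm{Tot}}t_{n-1}\in\mathrm{Fil}^n\subset\mathrm{Fil}^r$. Then $d_r(\alpha_n)$ is represented by $d_{\mathrm{Tot}}t_{n-1}$, read in $\mathrm{Fil}^rC^n/\mathrm{Fil}^{r+1}C^n$. Since $r+1\le n$, this element lies in $\mathrm{Fil}^{r+1}$, so its image there is $0$ and $d_r(\alpha_n)=0$. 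Equivalently, this is the boundary-map description used in the proofs of Propositions~\ref{Heisenberg2} and~\ref{conjugate} and Lemma~\ref{form}. There $d_r$ of a lift in $H^{n-1}(\mathrm{Fil}^0/\mathrm{Fil}^r)$ is $\partial$ of that lift into $H^n(\gr^r)$, and the lift $[t_{n-1}]$ restricted to $\mathrm{Fil}^0/\mathrm{Fil}^r$ has $\partial=[d_{\mathrm{Tot}}t_{n-1}]=0$ for all $r<n$. Hence $\alpha_n$ survives to $E_n$. Moreover, $d_n(\alpha_n)$ is represented by $[d_{\mathrm{Tot}}t_{n-1}]\in H^n(\gr^nC^\bullet)=E_1^{n,0}$.

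Third, I verify the successive construction. If $s_i\in\mathrm{Fil}^{i-1}C^{n-1}$ and $d_{\mathrm{Tot}}(t_i)\in\mathrm{Fil}^{i+1}C^n$ for all $i\le n-1$, then $t_{n-1}$ satisfies the hypothesis. Also, for $i\ge2$ each $s_i$ lies in $\mathrm{Fil}^1$, so the image of $t_{n-1}$ in $\gr^0$ is just that of $s_1$. Thus $\alpha_n=[s_1]$ is determined by the first term alone, as in the proof of Theorem~\ref{thm:main1}(1).

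The only real obstacle is bookkeeping. One has to use the correct indexing of $Z_r$ and $B_r$ in \cite[Tag~012K]{Sta26}, and check that "represented by $t_{n-1}$ on $E_r$" is legitimate at each stage. The inclusion $\mathrm{Fil}^n\subset\mathrm{Fil}^r$ handles this uniformly, so no genuine difficulty is expected.
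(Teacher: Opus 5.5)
Your proposal is correct and follows the paper's argument. The paper simply observes that $t_{n-1}$ gives a lift $[t_{n-1}]\in H^{n-1}(\mathrm{Fil}^0C^\bullet/\mathrm{Fil}^nC^\bullet)$ of $\alpha_n$, and your explicit $Z_r$ bookkeeping from \cite[Tag~012K]{Sta26} spells out why such a lift forces $d_1,\dots,d_{n-1}$ to vanish; this uses that nothing maps into the column $i=0$, so $E_r^{0,n-1}\subseteq E_1^{0,n-1}$. Your remarks that $d_n(\alpha_n)$ is represented by $[d_{\mathrm{Tot}}t_{n-1}]\in H^n(\gr^nC^\bullet)$ and that $\alpha_n=[s_1]$ are exactly how the paper later uses this observation.
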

We next simplify the problem as follows.
\begin{simplification} Inspired by our study of $H[F^{4/3}]$ in Proposition~\ref{Heisenberg2}, we impose the additional condition that each $s_i$ lies in $D_{A^{\otimes n-1}}(I_{n-1})$ and is a $p$-th power. Then the de Rham differential satisfies $d(s_i)=0$, and hence $d_{\mathrm{Tot}}(t_i)=\delta(t_i)$. Consequently, the required condition becomes $\delta(t_i)\in \mathrm{Fil}_{\mathrm{PD}}^{i+1}D_{A^{\otimes n}}(I_{n})$. Using the canonical identifications
	$D_{A^{\otimes r}}(I_r)\simeq D_A(I)^{\otimes r}$, we hope, for simplicity, to find elements $s_i$ that have the form of a pure tensor
	$q_1^p\otimes\cdots\otimes q_{n-1}^p$,
	and require each $\delta(t_i)$ to be a pure tensor as well. 
\end{simplification}	
	
We next impose further conditions on $s_i$ to arrange for $d_n(\alpha_n)$ to be non-zero whenever possible.	
\begin{restrict}
	To arrange for \(d_n(\alpha_n)\in E_n^{n,0}\) to be nonzero  in the Hodge-to-de Rham spectral sequence of $BG$ as possible as we can, we need  $\delta(t_{n-1})=d_{\mathrm{Tot}}(t_{n-1})\notin \mathrm{Fil}^{n+1}_{\mathrm{PD}}D_{A^{\otimes n}}(I_{n})$. Indeed, otherwise $t_{n-1}$ determines a class in $H^0(\mathrm{Fil}^{0}C^{\bullet}/\mathrm{Fil}^{n+1}C^{\bullet})$ which lifts $\alpha_n$. Motivated by this observation, we strengthen our requirement by imposing the following condition: $\delta(t_i)\in \mathrm{Fil}^{i+1}_{\mathrm{PD}}D_{A^{\otimes n}}(I_{n})\setminus \mathrm{Fil}^{i+2}_{\mathrm{PD}}D_{A^{\otimes n}}(I_{n})$ for $1\leq i\leq n-1$. A natural way to achieve this is to require
	$\delta(t_i)$ to be a pure tensor
	$r_1\otimes\cdots\otimes r_n$,
	with exactly $i+1$ factors in
	$\mathrm{Fil}^1_{\mathrm{PD}}D_A(I)
	\setminus\mathrm{Fil}^2_{\mathrm{PD}}D_A(I)$
	and the remaining $n-i-1$ factors in
	$D_A(I)\setminus\mathrm{Fil}^1_{\mathrm{PD}}D_A(I)$, by Lemma~\ref{PD-tensor}.
	
On the other hand, to ensure that \(\alpha_n\neq 0\), we need \(s_1\notin \mathrm{Fil}^{1}_{\mathrm{PD}}D_A(I)\). It is natural to extend this requirement to
	$s_i\notin\mathrm{Fil}^i_{\mathrm{PD}}
	D_{A^{\otimes(n-1)}}(I_{n-1})$ for $1\leq i\leq n-1$.
	To achieve this, we seek $s_i$ of the form
	$q_1^p\otimes q_2^p\otimes\cdots\otimes q_{n-1}^p$,
	with exactly $i-1$ factors in
	$\mathrm{Fil}^1_{\mathrm{PD}}D_A(I)
	\setminus\mathrm{Fil}^2_{\mathrm{PD}}D_A(I)$
	and the remaining $n-i$ factors in
	$D_A(I)\setminus\mathrm{Fil}^1_{\mathrm{PD}}D_A(I)$, by Lemma~\ref{PD-tensor}.
\end{restrict}

\begin{lemma}\label{PD-tensor}
	Let $(A,I)$ be a ring-ideal pair, where $A$ is a $k$-algebra and $I$ is generated by a regular sequence. Let
	$(D_A(I),J,\gamma)$ be the divided power envelope of $(A,I)$.
	For $n\geq 1$, let
	$
	(D_{A^{\otimes n}}(I_n),J_n,\gamma)
	$
	be the divided power envelope of
	$(A^{\otimes n},I_n)$, where $I_n:=\ker\bigl(A^{\otimes n}\to (A/I)^{\otimes n}\bigr)$. For $0\leq l\leq n$, suppose that
	$a_1,\ldots,a_l\in J\setminus J^{[2]}$ and
	$a_{l+1},\ldots,a_n\in D_{A}(I)\setminus J$. Under the canonical identification $D_{A^{\otimes n}}(I_{n})\simeq D_{A}(I)^{\otimes n}$, the element
	$
	a_1\otimes\cdots\otimes a_n
	$
	lies in 
	$J_n^{[l]}\setminus J_n^{[l+1]}$.
\end{lemma}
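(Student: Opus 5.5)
The plan is to reduce to the one-variable case via an explicit description of the divided power envelope and its PD filtration. Since $I$ is generated by a regular sequence $f_1,\dots,f_m$ in the $k$-algebra $A$, and we are in characteristic $p$, I would use the standard description of $D_A(I)$ together with its filtration $J^{[r]}$. Concretely, the associated graded $\gr^*_{J}D_A(I)=\bigoplus_r J^{[r]}/J^{[r+1]}$ is the divided power algebra $\Gamma^*_{A/I}(I/I^2)$, which is free over $A/I$ on the divided monomials in the $f_i$ (Stacks Tag~07HQ and the regular-sequence computations). The first step is therefore to record that $\gr^0 D_A(I)=A/I$, and that $\gr^r D_A(I)\simeq\Gamma^r_{A/I}(I/I^2)$ is a flat (indeed free) $A/I$-module.

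The second step is to pass to $n$ factors. The ideal $I_n$ is generated by the regular sequence consisting of the $f_i$ placed in each tensor slot, and $D_{A^{\otimes n}}(I_n)\simeq D_A(I)^{\otimes n}$ as divided power rings. I would then check that under this identification $J_n^{[r]}=\sum_{r_1+\cdots+r_n=r}J^{[r_1]}\otimes\cdots\otimes J^{[r_n]}$. One inclusion holds because $J_n^{[r]}$ is generated by products of divided powers of elements of the various $J\otimes 1\otimes\cdots$. The other holds because each $J^{[r_1]}\otimes\cdots\otimes J^{[r_n]}$ is a product of elements of the $J^{[r_i]}$ in each slot, and these lie in $J_n^{[r_i]}$, so the product lies in $J_n^{[\sum r_i]}$.

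From these two steps, the associated graded of the tensor filtration is $\gr^*(D_A(I)^{\otimes n})\simeq(\gr^*D_A(I))^{\otimes n}$, tensored over $k$. This is the Künneth statement for filtered vector spaces over a field, and it holds since everything is over the field $k$: choose splittings of each $J^{[r]}\supset J^{[r+1]}$ as $k$-vector spaces. Now take $a_1\otimes\cdots\otimes a_n$ as in the statement. By the description of $J_n^{[r]}$ it obviously lies in $J_n^{[l]}$. Its image in $\gr^l$ is $\bar a_1\otimes\cdots\otimes\bar a_n$, where $\bar a_i$ is the nonzero class of $a_i$ in $\gr^1$ for $i\le l$ and in $\gr^0$ for $i>l$. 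A pure tensor of nonzero vectors over a field is nonzero, so the element does not lie in $J_n^{[l+1]}$.

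The main obstacle is the filtration identity $J_n^{[r]}=\sum J^{[r_1]}\otimes\cdots\otimes J^{[r_n]}$. In particular, one has to see that the right-hand side is a sub-PD-ideal filtration, i.e. stable under $\gamma_m$ with the correct degree shift. I would verify this using $\gamma_m(x+y)=\sum\gamma_a(x)\gamma_b(y)$, together with $\gamma_m$ of a product $x\otimes y$ with $x\in J^{[s]}$ equal to $x^m\gamma_m(y)$ (or the symmetric version). This shows $\gamma_m$ sends $J^{[r_1]}\otimes\cdots$ into $J_n^{[mr]}$. Alternatively, I could sidestep this by using the explicit $A^{\otimes n}/I_n$-basis of divided monomials in all the generators, in which both filtrations are visibly given by total divided degree.
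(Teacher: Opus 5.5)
Your proof is correct. It ends at the same point as the paper: the image of $a_1\otimes\cdots\otimes a_n$ in $\gr^l$ is a pure tensor of nonzero vectors over the field $k$. You get there by a different route.

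\textbf{The paper's route.} It works directly with $I_n$. Because $I_n$ is generated by a regular sequence, $J_n^{[l]}/J_n^{[l+1]}\simeq\Gamma^l_B(I_n/I_n^2)$ with $B=(A/I)^{\otimes n}$. Splitting $I_n/I_n^2=\bigoplus_i M_i$ gives $\Gamma^l_B(\bigoplus_i M_i)\simeq\bigoplus\Gamma^{i_1}_B(M_1)\otimes_B\cdots\otimes_B\Gamma^{i_n}_B(M_n)$. The element then lands in the summand $(I/I^2)^{\otimes l}\otimes_k(A/I)^{\otimes(n-l)}$.

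\textbf{Your route.} You first prove that the PD filtration on $D_A(I)^{\otimes n}$ equals the tensor-product filtration $F^r=\sum_{r_1+\cdots+r_n=r}J^{[r_1]}\otimes\cdots\otimes J^{[r_n]}$. You then apply K\"unneth for filtered $k$-vector spaces. As you say, this only needs splittings $J^{[r]}=V_r\oplus J^{[r+1]}$; separatedness of the filtration is not required.

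\textbf{Comparison.} In your route, Step 1 (the identification $\gr D_A(I)\simeq\Gamma_{A/I}(I/I^2)$) is never actually used:
\begin{itemize}
\item $\bar a_i\neq0$ in $\gr^1$ just says $a_i\notin J^{[2]}$, and $\bar a_i\neq 0$ in $\gr^0$ just says $a_i\notin J$;
\item the K\"unneth decomposition is pure linear algebra over $k$.
\end{itemize}
So your argument needs only two inputs:
\begin{itemize}
\item the PD identification $D_{A^{\otimes n}}(I_n)\simeq D_A(I)^{\otimes n}$, under which the slot inclusions are PD morphisms;
\item the description $J_n=\sum_i D\otimes\cdots\otimes J\otimes\cdots\otimes D$.
\end{itemize}
It uses regularity only to the extent these do. The price is a hands-on PD computation. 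The paper's proof is shorter because it delegates that computation to the structure theorem for PD envelopes of regular ideals, applied to $I_n$.

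\textbf{A small slip.} In your last paragraph, the inclusion $J_n^{[r]}\subseteq F^r$ requires $\gamma_m(J_n)\subseteq F^m$, with target the right-hand-side filtration. Saying that $\gamma_m$ lands in $J_n^{[mr]}$ is true but does not help. The correct statement follows from three facts:
\begin{itemize}
\item $\gamma_m(d_1\otimes\cdots\otimes x_i\otimes\cdots\otimes d_n)=d_1^m\otimes\cdots\otimes\gamma_m(x_i)\otimes\cdots\otimes d_n^m$ for $x_i\in J$;
\item the sum formula $\gamma_m(x+y)=\sum_{a+b=m}\gamma_a(x)\gamma_b(y)$;
\item $F^rF^s\subseteq F^{r+s}$.
\end{itemize}
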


\begin{proof} Since the first $l$ factors lie in $J$, we have $a_1\otimes\cdots\otimes a_n\in J_n^{[l]}$. It remains to show that 
	$a_1\otimes\cdots\otimes a_n\notin J_n^{[l+1]}$. Write $B=(A/I)^{\otimes n}$.
	Since $I$ is generated by a regular sequence, we have
	$\displaystyle
	J_n/J_n^{[2]}\simeq I_n/I_n^2 \simeq \bigoplus_{i=1}^n M_i 
	$, where $M_i=
	(A/I)^{\otimes(i-1)}
	\otimes I/I^2
	\otimes (A/I)^{\otimes(n-i)}$
	is the $B$-module obtained by putting $I/I^2$ in the $i$-th factor.
	Note that 
	\[ J_n^{[l]}/J_n^{[l+1]}\simeq 
	\Gamma_B^l(J_n/J_n^{[2]})
	\simeq
	\bigoplus_{i_1+\cdots+i_n=l}
	\Gamma_B^{i_1}(M_1)\otimes_B\cdots\otimes_B
	\Gamma_B^{i_n}(M_n).
	\]
	The image of
	$a_1\otimes\cdots\otimes a_n$ lies in the summand
	$
	M_1\otimes_B\cdots\otimes_B M_l\otimes_B B\otimes_B \cdots \otimes_B B
	$
	corresponding to
	$
	(i_1,\ldots,i_n)=(1,\ldots,1,0,\ldots,0).
	$
	
	There is a natural identification
	\[
	M_1\otimes_B\cdots\otimes_B M_l\otimes_B B\otimes_B \cdots \otimes_B B
	\simeq
	(I/I^2)^{\otimes l}
	\otimes (A/I)^{\otimes(n-l)} .
	\]
	Under this identification, the image of
	$a_1\otimes\cdots\otimes a_n$ corresponds to 
	$\bar a_1\otimes\cdots\otimes\bar a_l
	\otimes \bar a_{l+1}\otimes\cdots\otimes\bar a_n
	$ in $(I/I^2)^{\otimes l} \otimes (A/I)^{\otimes(n-l)}$
Here $\bar a_i$ denotes the image of $a_i$ in $J/J^{[2]}\simeq I/I^2$ for $1 \leq i \leq l$ and $\bar a_j$ denotes the image of $a_j$ in $D_A(I)/J\simeq A/I$ for $l+1 \leq j \leq n$. Since they are all non-zero by the assumption, we have	$\bar a_1\otimes\cdots\otimes\bar a_l
\otimes \bar a_{l+1}\otimes\cdots\otimes\bar a_n \neq 0
$ in $(I/I^2)^{\otimes l} \otimes (A/I)^{\otimes(n-l)}$. Consequently, we have $a_1\otimes\cdots\otimes a_n\notin J_n^{[l+1]}$.
\end{proof}

We now seek to construct a smooth affine group scheme $\widetilde G$ together with a closed subgroup scheme $G$ and the elements $s_i$ so as to arrange for $d_n(\alpha_n)$ to be non-zero.

\begin{construction}
	We now proceed to choose the elements $s_i$ stage by stage, imposing additional conditions at each stage.
	
	\medskip
	\noindent\textit{Stage $1$.} To make $\delta(s_1)$ a pure tensor of the form
	$q_1^p\otimes\cdots\otimes q_n^p$, it is natural to consider
	a pure tensor
	$s_1=e_1^p\otimes b_1^p\otimes\cdots\otimes b_{n-2}^p$
	such that $\delta(b_i)=0$ for all $i$ and
	$\delta(e_1)=-a_1\otimes c_1$.
	Indeed, the Leibniz rule for $\delta$ gives
	$\delta(s_1)
	=\delta(e_1^p)\otimes b_1^p\otimes\cdots\otimes b_{n-2}^p
	=-a_1^p\otimes c_1^p\otimes b_1^p\otimes\cdots\otimes b_{n-2}^p$. To meet the requirement that
	$\delta(s_1)\in
	\mathrm{Fil}^{2}_{\mathrm{PD}}D_{A^{\otimes n}}(I_n)
	\setminus
	\mathrm{Fil}^{3}_{\mathrm{PD}}D_{A^{\otimes n}}(I_n)$,
	we impose the conditions
	$a_1^p,c_1^p\in
	\mathrm{Fil}^{1}_{\mathrm{PD}}D_A(I)
	\setminus\mathrm{Fil}^{2}_{\mathrm{PD}}D_A(I)$
	and
	$b_1^p,\ldots,b_{n-2}^p\in
	D_A(I)\setminus\mathrm{Fil}^{1}_{\mathrm{PD}}D_A(I)$.
	To meet the requirement that
	$s_1\in
	\mathrm{Fil}^{0}_{\mathrm{PD}}D_{A^{\otimes(n-1)}}(I_{n-1})
	\setminus
	\mathrm{Fil}^{1}_{\mathrm{PD}}D_{A^{\otimes(n-1)}}(I_{n-1})$,
	we further require
	$e_1^p\in D_A(I)\setminus\mathrm{Fil}^{1}_{\mathrm{PD}}D_A(I)$. For $n=2$, this already
	completes the construction.
	
	\medskip
	\noindent\textit{Stage $2$.}
	Suppose $n\geq 3$. We need to choose $s_2\in \mathrm{Fil}^{1}D_{A^{\otimes n-1}}(I_{n-1})$ such that $\delta(s_2)=a_1^p\otimes c_1^p\otimes b_1^p \otimes \cdots \otimes b_{n-2}^p+ u$, where $u$ is some element in $\mathrm{Fil}^3_{\mathrm{PD}}D_{A^{\otimes n-1}}(I_{n-1})$. We hope to obtain $u$, up to a sign, from $\delta(s_1)$
	by replacing $c_1^p\otimes b_1^p$ with $r_1^p\otimes r_2^p$,
	where
	$r_1^p,r_2^p\in\mathrm{Fil}_{\mathrm{PD}}^1D_A(I)$.
	Write $a_2=r_1$ and $c_2=r_2$.
	One way to achieve this is to let
	$s_2=a_1^p\otimes e_2^p\otimes b_2^p\otimes\cdots\otimes b_{n-2}^p$ with $\delta(a_1)=0$ and $\delta(e_2)=-c_1\otimes b_1 - a_2\otimes c_2 $. Indeed, the Leibniz rule for $\delta$ gives
	\[
	\begin{aligned}
		\delta(s_1+s_2)
		&=-a_1^p\otimes c_1^p\otimes b_1^p\otimes\cdots\otimes b_{n-2}^p-a_1^p\otimes\delta(e_2^p)\otimes b_2^p
		\otimes\cdots\otimes b_{n-2}^p\\
		&=-a_1^p\otimes c_1^p\otimes b_1^p\otimes\cdots\otimes b_{n-2}^p+a_1^p\otimes c_1^p\otimes b_1^p\otimes\cdots\otimes b_{n-2}^p
		+a_1^p\otimes a_2^p\otimes c_2^p
		\otimes b_2^p\otimes\cdots\otimes b_{n-2}^p\\
		&=a_1^p\otimes a_2^p\otimes c_2^p
		\otimes b_2^p\otimes\cdots\otimes b_{n-2}^p.
	\end{aligned}
	\]
	To meet the requirement that $\delta(t_2)=a_1^p\otimes a_2^p\otimes c_2^p \otimes b_2^p\otimes\cdots\otimes b_{n-2}^p\in
	\mathrm{Fil}^{3}_{\mathrm{PD}}D_{A^{\otimes n}}(I_n)
	\setminus
	\mathrm{Fil}^{4}_{\mathrm{PD}}D_{A^{\otimes n}}(I_n)$, we impose the condition $a_2^p,c_2^p\in \mathrm{Fil}^{1}_{\mathrm{PD}}D_{A}(I)\setminus\mathrm{Fil}^{2}_{\mathrm{PD}}D_{A}(I)$. To meet the requirement that
	$s_2\in
	\mathrm{Fil}^{1}_{\mathrm{PD}}D_{A^{\otimes(n-1)}}(I_{n-1})
	\setminus
	\mathrm{Fil}^{2}_{\mathrm{PD}}D_{A^{\otimes(n-1)}}(I_{n-1})$,
	we further require $e_2^p\in D_{A}(I)\setminus\mathrm{Fil}^{1}_{\mathrm{PD}}D_{A}(I)$.
	 
	Continuing in this way, we arrive at the following construction.
	We seek elements
	$a_i,c_i,e_i\in D_A(I)$ for $1\leq i\leq n-1$
	and $b_j\in D_A(I)$ for $1\leq j\leq n-2$,
	subject to the following conditions:
	
\begin{enumerate}
	\item We have $\delta(a_i)=0$ for $1\leq i\leq n-2$
	and $\delta(b_j)=0$ for $1\leq j\leq n-2$.
	
	\item $\delta(e_1)=-a_1\otimes c_1$.
	
	\item For $2\leq i\leq n-1$, we have
	$\delta(e_i)=-c_{i-1}\otimes b_{i-1}-a_i\otimes c_i$.
	
	\item We have
	$a_i^p,c_i^p\in
	\mathrm{Fil}_{\mathrm{PD}}^1D_A(I)
	\setminus\mathrm{Fil}_{\mathrm{PD}}^2D_A(I)$
	and
	$b_j^p,e_i^p\in D_A(I)\setminus
	\mathrm{Fil}_{\mathrm{PD}}^1D_A(I)$
	for all relevant indices.
\end{enumerate}

Equivalently, the first three conditions can be expressed
in terms of the comultiplication as follows:
\begin{enumerate}
	\item We have
	$m^*(a_i)=a_i\otimes1+1\otimes a_i$
	for $1\leq i\leq n-2$ and
	$m^*(b_j)=b_j\otimes1+1\otimes b_j$
	for $1\leq j\leq n-2$.
	
	\item We have
	$m^*(e_1)=e_1\otimes1+1\otimes e_1+a_1\otimes c_1$.
	
	\item For $2\leq i\leq n-1$, we have
	$m^*(e_i)=e_i\otimes1+1\otimes e_i
	+c_{i-1}\otimes b_{i-1}+a_i\otimes c_i$.
\end{enumerate}
	
Then we set $s_1=e_1^p\otimes b_1^p \otimes \cdots \otimes b_{n-2}^p$, $s_i=a_1^p\otimes\cdots \otimes a_{i-1}^p\otimes e_i^p\otimes b_{i}\cdots \otimes b_{n-2}$ for $2\leq i\leq n-1$. Hence the class  $\alpha_n=[s_1]=[e_1^p\otimes b_1^p \otimes \cdots \otimes b_{n-2}^p]\in H^{n-1}(\gr^0C^{\bullet})\simeq H^{n-1}(BG,\mathcal O)$ survives to the $E_n$-page. Moreover, by the same argument as in Lemma~\ref{form}, we see that
$d_n(\alpha_n)\in E^{n,0}_n$ is represented by  $[d_{\mathrm{Tot}}(s_1)]=(-1)^{n-1}[a_1^p\otimes \otimes \cdots a_{n-1}^p\otimes c_{n-1}^p] \in H^{n}(\gr^nC^{\bullet}) $.

\end{construction}

In practice, we realize these conditions by choosing
$a_i,c_i,e_i\in A$ for $1\leq i\leq n-1$
and $b_j\in A$ for $1\leq j\leq n-2$,
requiring the same comultiplication identities to hold in
$A\otimes A$, and imposing
$a_i^p,c_i^p\in I\setminus I^2$
and $b_j^p,e_i^p\in A\setminus I$
for all relevant indices. Indeed, the canonical identifications $I/I^2\simeq \mathrm{Fil}_{\mathrm{PD}}^1D_A(I)
/\mathrm{Fil}_{\mathrm{PD}}^2D_A(I)$ and $A/I\simeq D_A(I)/\mathrm{Fil}_{\mathrm{PD}}^1D_A(I)$ ensure the required conditions.

\begin{example} A natural way to construct a closed embedding of affine group schemes
	$G\hookrightarrow\widetilde G$ together with the above data is to take
	the underlying scheme of $\widetilde G$ to be an affine space with $a_i,b_j,c_i,e_i$ as coordinates, and define
	the group law by the above comultiplication identities. For simplicity, we further require $a_{n-1}$ and all the $c_i$
	to be primitive. More precisely, let $\widetilde{G}=\mathbb{A}^{4n-5}$ as a scheme with coordinates $x_i,z_i,w_i$ for $1\leq i\leq n-1$
	and $y_j$ for $1\leq j\leq n-2$. Define a comultiplication on $\mathcal{O}(\widetilde G)$ by declaring $x_i,y_j,z_i$ to be primitive and setting
	\[
	\begin{aligned}
		\Delta(w_1)
		&=w_1\otimes1+1\otimes w_1+x_1\otimes z_1,\\
		\Delta(w_i)
		&=w_i\otimes1+1\otimes w_i
		+z_{i-1}\otimes y_{i-1}+x_i\otimes z_i
		\qquad (2\leq i\leq n-1).
	\end{aligned}
	\] One checks that these formulas define a group scheme structure
	on $\widetilde G$. For integers $l,m\geq 2$, we set $I=(x^p_i,y_i^{p^l},z_i^p,w_i^{p^m})$. It is straightforward to check $G:=\mathrm{Spec}(A/I)$ is a closed subgroup scheme of $\widetilde G$. Taking $a_i=x_i$, $c_i=z_i$, $e_i=w_i$, and $b_j=y_j$, we obtain the required data. Then the corresponding class $\alpha_n=[w_1^p\otimes y_1^p\otimes \cdots \otimes y_{n-2}^p]\in H^{n-1}(\gr^0C^{\bullet})\simeq H^{n-1}(BG,\mathcal O)$ survives to the $E_n$-page. Moreover, one can show $d_n(\alpha_n)\neq 0$ in $E_n^{n,0}$ by a torus weight argument. The group schemes in Construction~\ref{example} can be constructed in a similar way.

\end{example}

\section{Smooth projective varieties with highly non-degenerate Hodge-to-de Rham spectral sequences}
  In this section, we will construct smooth projective varieties $X_n$ of dimension $n$ over $k$ whose Hodge-to-de Rham spectral sequence has a non-zero differential on the \(E_n\)-page.

We first recall the notion of Hodge $d$-equivalences from \cite[Definition~5.1]{ABM21}.
\begin{definition}[Hodge $d$-equivalences]

    We will say that a morphism $X\to Y$ of syntomic Artin stacks over $k$ is a 
	 Hodge $d$-equivalence if, for each $s \geq 0$,
	 we have 
    $$\cofib \left(
	 R\Gamma(Y,\Lambda^s L_{Y/k})\rightarrow R\Gamma(X,\Lambda^s L_{X/k})
	 \right) \in 
	 \mathsf{D}(k)^{\geq d-s}. $$
\end{definition}

We first record the following auxiliary lemma on spectral sequences.
\begin{lemma}\label{lemma:ss-comparison}
	Let \(\varphi\colon E\to F\) be a morphism of spectral sequences, and write
	$
	\varphi_r\colon E_r^{i,j}\to F_r^{i,j}
	$
	for the induced map on the \(E_r\)-page. Suppose that, for some $r\geq 1$,
	$\varphi_r$ is an isomorphism for $i+j\leq d-1$ and is injective for
	$i+j=d$. Then the same holds for $\varphi_{r+1}$.
\end{lemma}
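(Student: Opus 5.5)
The plan is to identify $E_{r+1}^{i,j}$ as the cohomology of the $E_r$-page at spot $(i,j)$ and to compare the two pages spot by spot. Concretely, $E_{r+1}^{i,j}=\ker(d_r\colon E_r^{i,j}\to E_r^{i+r,j-r+1})/\operatorname{im}(d_r\colon E_r^{i-r,j+r-1}\to E_r^{i,j})$, and $\varphi_{r+1}$ is the map induced by $\varphi_r$, because $\varphi$ commutes with the differentials. The key bookkeeping is that $d_r$ raises total degree $i+j$ by exactly one. So at total degree $m$ the relevant three-term complex is
\[
E_r^{(m-1)}\longrightarrow E_r^{(m)}\longrightarrow E_r^{(m+1)},
\]
where $E_r^{(m)}$ denotes the part of the page in total degree $m$. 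I would then treat the two cases $m\le d-1$ and $m=d$ separately.

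\emph{Case $m\le d-1$.} Here the incoming term has total degree $m-1\le d-2$ and the middle term has total degree $m\le d-1$, so $\varphi_r$ is an isomorphism on both. The outgoing term has total degree $m+1\le d$, so $\varphi_r$ is injective there. I would check directly that such a map of three-term complexes induces an isomorphism on middle cohomology:
\begin{itemize}
\item Kernels correspond: if $x\in E_r^{(m)}$, then $d_r x=0$ if and only if $\varphi_r(d_r x)=d_r\varphi_r(x)=0$, using injectivity on the target.
\item Images correspond: since $\varphi_r$ is bijective on the incoming and middle terms, it carries $\operatorname{im} d_r$ onto $\operatorname{im} d_r$.
\end{itemize}
Hence $\varphi_{r+1}$ is an isomorphism in total degree $m$.

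\emph{Case $m=d$.} This is the delicate case, since only injectivity is available on the middle term. I would prove injectivity of $\varphi_{r+1}$ directly.
\begin{enumerate}
\item Take a cycle $x\in E_r^{(d)}$ whose image $\varphi_r(x)$ is a boundary, say $\varphi_r(x)=d_r y'$ with $y'\in F_r^{(d-1)}$.
\item Since $\varphi_r$ is surjective in total degree $d-1$, write $y'=\varphi_r(y)$ for some $y\in E_r^{(d-1)}$.
\item Then $\varphi_r(x-d_r y)=0$.
\item By injectivity of $\varphi_r$ in total degree $d$, we get $x=d_r y$, so the class of $x$ in $E_{r+1}$ is zero.
\end{enumerate}
This uses surjectivity only in total degree $d-1$ and injectivity only in degree $d$, both of which the hypothesis provides. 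No injectivity in degree $d+1$ is needed, because we already start from a cycle $x$.

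The main obstacle is just the index bookkeeping: confirming that $d_r$ shifts total degree by exactly $+1$, so that the hypotheses cover exactly the needed spots in each case. The rest is a routine diagram chase.
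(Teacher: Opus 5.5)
Your proposal is correct and follows essentially the same route as the paper: compare $d_r$-cycles and $d_r$-boundaries spot by spot, using that $d_r$ raises total degree by one, which gives an isomorphism in total degree $\leq d-1$ and injectivity in total degree $d$. Your explicit diagram chase in the case $i+j=d$ spells out the step the paper leaves implicit, namely that injectivity on cycles together with bijectivity on boundaries implies injectivity on the quotient.
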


\begin{proof}
	
	Suppose first that $i+j\leq d-1$.  Since 
	$\varphi_r\colon E_r^{i,j}\longrightarrow F_r^{i,j}$ is an isomorphism for $i+j\leq d-1$ and is injective for
	$i+j=d$, the map on $d_r$-cycles
	\[
	\ker\left(
	d_r\colon E_r^{i,j}\to E_r^{i+r,j-r+1}
	\right)
	\longrightarrow
	\ker\left(
	d_r\colon F_r^{i,j}\to F_r^{i+r,j-r+1}
	\right)
	\]
	is an isomorphism.  Similarly, the map on $d_r$-boundaries in bidegree $(i,j)$ is an
	isomorphism, as these boundaries are the images of the differentials from $E_r^{i-r,j+r-1}$ and $F_r^{i-r,j+r-1}$, and $\varphi_r\colon E_r^{i-r,j+r-1}\to F_r^{i-r,j+r-1}$ is an isomorphism. It follows that
	$
	\varphi_{r+1}\colon E_{r+1}^{i,j}\longrightarrow F_{r+1}^{i,j}
$
	is an isomorphism for $i+j<d$.
	
	Now suppose that $i+j=d$. Since 
	$\varphi_r\colon E_r^{i,j}\longrightarrow F_r^{i,j}$  is injective for $i+j=d$, the map on $d_r$-cycles
	\[
	\ker\left(
	d_r\colon E_r^{i,j}\to E_r^{i+r,j-r+1}
	\right)
	\longrightarrow
	\ker\left(
	d_r\colon F_r^{i,j}\to F_r^{i+r,j-r+1}
	\right)
	\]
	 is injective. Similarly, the map on $d_r$-boundaries in bidegree $(i,j)$ is an
	 isomorphism, as these boundaries are the images of the differentials from $E_r^{i-r,j+r-1}$ and $F_r^{i-r,j+r-1}$ and $\varphi_r\colon E_r^{i-r,j+r-1}\to F_r^{i-r,j+r-1}$ is an isomorphism.
	 It follows that
	 $
	 \varphi_{r+1}\colon E_{r+1}^{i,j}\longrightarrow F_{r+1}^{i,j}
	 $
	 is injective for $i+j=d$.
\end{proof}

We now state a refinement of the Antieau--Bhatt--Mathew approximation theorem, which  is essentially contained in the argument of \cite[Theorem 1.2 and Proposition 5.11]{ABM21}.
\begin{theorem}[Antieau--Bhatt--Mathew]\label{ABM}
Suppose \(G\) is a finite group scheme over $k$. For any integer \(d\geq 0\), there exists a smooth
projective \(k\)-scheme \(X\) of dimension \(d\), together with a map
\(X\to BG\) such that, writing \(E_r^{i,j}(Z)\) for the \((i,j)\)-term on the \(E_r\)-page of the Hodge-to-de Rham spectral sequence of \(Z\), the induced map between the Hodge-to-de Rham spectral sequences
\[
E_r^{i,j}(BG)
\longrightarrow
E_r^{i,j}(X)
\]
 is injective for \(i+j\leq d\), for every $r\geq 1$.
\end{theorem}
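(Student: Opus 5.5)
The plan follows the route of \cite[Theorem~1.2 and Proposition~5.11]{ABM21}. First I will produce a smooth projective $X$ of dimension $d$ with a map $X\to BG$ that is a Hodge $(d+1)$-equivalence. Then I will use Lemma~\ref{lemma:ss-comparison} to propagate the resulting $E_1$-page statement to all pages.

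\textbf{Step 1: the approximation.} Choose a faithful representation $G\hookrightarrow \mathrm{GL}_N$. Then $BG$ is approximated by the quotient $U/G$, where $U\subset \mathbb A(V)$ is the open locus in a large representation $V=\mathrm{Hom}(k^N,k^M)^{\oplus r}$ on which $G$ acts freely. The map $U/G\to BG$ is a smooth fibration with fibre $U$. Since the complement of $U$ has high codimension, the map induces isomorphisms on $H^j(-,\Lambda^iL)$ in a large range; this is where one uses that $\mathbb A(V)\to *$ is a Hodge equivalence and that removing a closed set of codimension $c$ does not change coherent cohomology below degree $c-1$.

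Next, compactify: $U/G$ is quasi-projective. Following ABM, take a projective closure and intersect with general hyperplanes; a Bertini argument over an infinite field (after a finite extension and descent, or directly as in ABM) yields a smooth projective complete intersection $X\subset \overline{U/G}$ of dimension $d$ that lies inside $U/G$. This is possible because the boundary has high codimension, so a general $d$-dimensional linear section misses it. A Lefschetz-type theorem for Hodge cohomology, i.e.\ Kodaira-type vanishing for the Koszul pieces via Serre vanishing for sufficiently ample hyperplanes, then shows that $X\to U/G$ is a Hodge $(d+1)$-equivalence. The precise bookkeeping is as follows: the cofiber of $R\Gamma(Y,\Lambda^s L)\to R\Gamma(X,\Lambda^sL)$ is built from $H^*(Y,\Lambda^{s'}L\otimes\mathcal O(-m))$-type terms, and these vanish in degrees $<\dim$. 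This step is exactly \cite[Proposition~5.11]{ABM21}, and I will cite it rather than reprove it. The composite $X\to BG$ is then a Hodge $(d+1)$-equivalence in the sense of the definition above.

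\textbf{Step 2: pages.} A Hodge $(d+1)$-equivalence means that for every $s$ the cofiber of $R\Gamma(BG,\Lambda^sL)\to R\Gamma(X,\Lambda^sL)$ lies in $\mathsf D(k)^{\geq d+1-s}$. Consequently $E_1^{s,j}(BG)\to E_1^{s,j}(X)$ is an isomorphism for $s+j\le d-1$ and injective for $s+j=d$. The map $X\to BG$ induces a morphism of filtered objects $\mathrm{Fil}^*_{\mathrm{Hod}}\dR_{BG}\to \mathrm{Fil}^*_{\mathrm{Hod}}\dR_X$, hence a morphism of spectral sequences. Inducting on $r$ with Lemma~\ref{lemma:ss-comparison} then gives the same statement on every $E_r$-page, in particular injectivity for $i+j\le d$.

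\textbf{Main obstacle.} The main obstacle is Step 1, specifically keeping the smooth projective complete intersection $X$ inside $U/G$ while retaining the degree bound $d+1$ with $\dim X=d$ exactly. This requires choosing $V$ so large that $\operatorname{codim}(\mathbb A(V)\setminus U)$ exceeds $d+2$ (and the projective boundary is equally small), and then choosing hyperplane sections of high degree for the vanishing. I will follow ABM's choices verbatim.
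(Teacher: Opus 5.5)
Step~1 fails as written, and its target cannot be reached.

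\begin{itemize}
\item \emph{No $\mathbb{A}^1$-invariance.} $H^0(\mathbb{A}(V),\mathcal{O})$ is a polynomial ring, so $\mathbb{A}(V)\to *$ is not a Hodge equivalence. Nor is $U/G\to BG$ one in any range: if $\mathbb{A}(V)\setminus U$ has codimension at least $2$, then $H^0(U/G,\mathcal{O})=\mathrm{Sym}(V^\vee)^G$ is infinite-dimensional, while $H^0(BG,\mathcal{O})=k$.
\item \emph{No projective $X$ inside $U/G$.} $U/G$ contains no projective subscheme of positive dimension, since its preimage in the quasi-affine $U$ would be finite over it, hence proper and positive-dimensional. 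Equivalently, the boundary of $U/G$ in a projective closure contains a divisor at infinity, which every linear section of dimension $\geq 1$ meets. So your Bertini step cannot place $X$ inside $U/G$.
\item \emph{Wrong target.} No $d$-dimensional $X\to BG$ is a Hodge $(d+1)$-equivalence in general. For $G=\alpha_p$ it would force the non-zero group $H^{d+1}(B\alpha_p,\mathcal{O})$ to inject into $H^{d+1}(X,\mathcal{O}_X)=0$.
\item \emph{Off by one.} Your translation in Step~2 does not match the definition. A Hodge $(d+1)$-equivalence would give isomorphisms for $s+j\leq d$ and injectivity for $s+j=d+1$. What you wrote is the consequence of a Hodge $d$-equivalence.
\end{itemize}

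The argument of \cite{ABM21}, as the paper uses it, gives a closed embedding $X\hookrightarrow Y:=[\mathbb{P}(V)/G]$ which is a Hodge $d$-equivalence. Working with a stack proper over $BG$, whose Hodge cohomology is controlled by the projective bundle formula, is what replaces the false $\mathbb{A}^1$-invariance. Note that \cite[Proposition~5.11]{ABM21}, which you cite for the Lefschetz step, is that projective bundle formula.

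The composite $X\to Y\to BG$ is then \emph{not} a Hodge $d$-equivalence. The formula gives $E_1(Y)\cong\bigoplus_{l=0}^{n-1}\pi^*E_1(BG)\cdot c_1^l$ with $n=\dim V$. So for $d\geq 3$ the class $c_1\in E_1^{1,1}(Y)\cong E_1^{1,1}(X)$ does not come from $BG$. From $BG$ you therefore only get injectivity on $E_1$. Injectivity alone does not propagate through the pages via Lemma~\ref{lemma:ss-comparison}: a $d_r$-cycle coming from $BG$ could a priori become the $d_r$-boundary of a class involving $c_1$.

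The missing idea is to treat $Y\to BG$ separately.
\begin{itemize}
\item $c_1=c_1(\mathcal{O}(1))$ is a permanent cycle, because it lifts to $H^2(\mathrm{Fil}^1_{\mathrm{Hod}}\dR_{Y/k})$.
\item By the Leibniz rule, the map $\bigoplus_{l}E_r(BG)\to E_r(Y)$, $(a_l)_l\mapsto\sum_l\pi^*(a_l)\,c_1^l$, commutes with $d_r$.
\item Induction on $r$ then shows this map is an isomorphism on every page, so $E_r(BG)\to E_r(Y)$ is injective for all $r$.
\end{itemize}
Composing with Lemma~\ref{lemma:ss-comparison} applied to the Hodge $d$-equivalence $X\to Y$ (isomorphism for $i+j\leq d-1$, injective for $i+j=d$) gives the theorem.
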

\begin{proof}
By the argument in the proof of  \cite[Theorem 1.2]{ABM21},  for an integer $d\geq 0$,  there exists a finite
dimensional representation $V$ of $G$, and a $d$-dimensional smooth projective scheme $X$ with a closed embedding $f:X\hookrightarrow [\mathbb P(V)/G]$ which is a Hodge $d$-equivalence. We take the composite  map $X\hookrightarrow [\mathbb P(V)/G]\to BG$ as the desired map.

We denote $[\mathbb P(V)/G]$ by $Y$ for notational simplicity.  By the definition of a Hodge $d$-equivalence, for each $s \geq 0$,
we have  
\[\cofib(R\Gamma(Y,\Lambda^s L_{Y/k})\to R\Gamma(X,\Lambda^s L_{X/k})) \in \mathsf{D}(k)^{\geq d-s}.\]
It follows that  
$ H^j(Y,\Lambda^i L_{Y/k})\to H^j(X,\Lambda^i L_{X/k})  $ 
is an isomorphism for $i+j\leq d-1$ and  injective for $i+j= d$. Equivalently, we see that
$f^*\colon E_1^{i,j}(Y)\to E_1^{i,j}(X)$ is an isomorphism for $i+j\leq d-1$ and injective for $i+j=d$. By repeatedly applying Lemma~\ref{lemma:ss-comparison}, we see that $f^*\colon E_r^{i,j}(Y)\to E_r^{i,j}(X)$ is an isomorphism for $i+j\leq d-1$ and injective for $i+j=d$, for every $r\geq 1$.

It remains to show $ E_r^{i,j}(BG)\rightarrow E_r^{i,j}(Y)$ is injective for $i+j\leq d$ and $r\geq 1$. Let $\pi\colon Y=[\mathbb P(V)/G]\to BG$
be the structure map, and put $n=\dim_k(V)$. The stack $Y=[\mathbb P(V)/G]$ is the
projective bundle over $BG$ associated with the vector bundle corresponding to the representation \(V\). We first observe the first Chern class $c_1:=c_1(\mathcal{O}(1))\in E_1^{1,1}(Y)=H^1\bigl(Y,L_{Y/k})$ is a permanent cycle in the Hodge-to-de Rham spectral
sequence. Indeed, the first Chern class \(c_1\) admits a lift along the natural map
$
H^2(\mathrm{Fil}^1_{\mathrm{Hod}}\dR_{Y/k})
\longrightarrow
H^2(\gr^1_{\mathrm{Hod}}\dR_{Y/k})
=
H^1(Y,L_{Y/k})
$; see, for example,  \cite[Tag~0FME]{Sta26} or \cite[Lecture~14]{Cla21}. Now we claim that  $E_r^{*,*}(Y)$ is a free module over $E_r^{*,*}(BG)$ on $1,c_1,\cdots,c_1^{n-1}$, where $c_1:=c_1(\mathcal{O}(1))$. We  prove this claim by induction. For $r=1$, the claim comes from the projective bundle formula in \cite[Proposition~5.11]{ABM21}.
Suppose that the claim holds for all \(r\leq m\). We just need to show that
\[
\bigoplus_{l=0}^{n-1}
E_{m+1}^{\,i-l,j-l}(BG)
\xrightarrow{\ \sum_{l=0}^{n-1}\pi^*(-)\cdot c_1^l\ }
E_{m+1}^{\,i,j}\bigl(Y\bigr)
\]is an isomorphism. 
 Consider the following diagram:
 \[
\begin{tikzcd}[column sep=large]
\cdots \arrow[r,"d_m"] &
\displaystyle\bigoplus_{l=0}^{n-1}
E_m^{\,i-l-m,j-l+m-1}(BG)
\arrow[r,"d_m"]
\arrow[d,"\sum_l \pi^*(-)\cdot c_1^l"'] &
\displaystyle\bigoplus_{l=0}^{n-1}
E_m^{\,i-l,j-l}(BG)
\arrow[r,"d_m"]
\arrow[d,"\sum_l \pi^*(-)\cdot c_1^l"'] &
\displaystyle\bigoplus_{l=0}^{n-1}
E_m^{\,i-l+m,j-l-m+1}(BG)
\arrow[r,"d_m"]
\arrow[d,"\sum_l \pi^*(-)\cdot c_1^l"'] &
\cdots
\\
\cdots \arrow[r,"d_m"] &
E_m^{\,i-m,j+m-1}\bigl(Y\bigr)
\arrow[r,"d_m"] &
E_m^{\,i,j}\bigl(Y\bigr)
\arrow[r,"d_m"] &
E_m^{\,i+m,j-m+1}\bigl(Y\bigr)
\arrow[r,"d_m"] &
\cdots .
\end{tikzcd}
\]
Since $c_1$ is a permanent cycle, the Leibniz rule shows
that this diagram defines a map of complexes. By the induction hypothesis,
the vertical maps are isomorphisms. Thus we obtain an
isomorphism of complexes.
Taking cohomology, we obtain an isomorphism
\[
\bigoplus_{l=0}^{n-1}
E_{m+1}^{\,i-l,j-l}(BG)
\xrightarrow{\ \sum_{l=0}^{n-1}\pi^*(-)\cdot c_1^l\ }
E_{m+1}^{\,i,j}\bigl(Y\bigr).
\] Hence the claim holds for \(r=m+1\), and the induction is complete.

\end{proof}

\begin{remark}
    There is a more conceptual explanation of this theorem in \cite[Theorem 4.14]{LY26}.
\end{remark} 
 
\begin{remark}
	When $d\geq 1$, we can show that the smooth projective \(k\)-scheme $X$ appearing in Theorem~\ref{ABM} is geometrically integral. In other words, \(X\) is a smooth projective variety over \(k\). In fact, we have $H^0(X,\mathcal O)\simeq H^0([\mathbb P(V)/G],\mathcal O) \simeq H^0(BG,\mathcal O)\simeq k$. Here the first isomorphism follows from the fact that $X\to [\mathbb P(V)/G]$ is a Hodge $d$-equivalence, while the second isomorphism follows from the projective bundle formula. Hence, by \cite[Tag~0FD3]{Sta26}, we see that $X$ is geometrically integral.
\end{remark}

Applying Theorem~\ref{ABM} to $BG_{n}$, we obtain the following result.
    \begin{theorem}\label{thm:main 2}
 For $n\geq 2$, there exists a smooth projective variety $X_n$ over $k$ of dimension $n$, and there exists a class $\alpha_{n} \in H^{n-1}(X_{n},\mathcal{O}) =E_1^{0,n-1}$ in the Hodge-to-de Rham spectral sequence of $X_n$
    which satisfies $d_1(\alpha_{n}),\cdots,d_{n-1}(\alpha_{n})=0$ and $d_{n}(\alpha_{n})\neq 0$ 
in $E^{n,0}_n$.
\end{theorem}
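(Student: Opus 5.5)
We will apply Theorem~\ref{ABM} to the finite group scheme $G_n$ of Construction~\ref{example} with $d=n$. This produces a smooth projective $k$-scheme $X_n$ of dimension $n$ together with a map $\phi\colon X_n\to BG_n$. Since $n\geq 1$, the remark following Theorem~\ref{ABM} shows that $X_n$ is a smooth projective variety. The map $\phi$ induces a morphism of Hodge-to-de Rham spectral sequences $\phi^*\colon E_r^{i,j}(BG_n)\to E_r^{i,j}(X_n)$, compatible with all differentials. By Theorem~\ref{ABM}, each $\phi^*$ is injective whenever $i+j\leq n$, for every $r\geq 1$. We define $\alpha_n\in H^{n-1}(X_n,\mathcal O)=E_1^{0,n-1}(X_n)$ to be $\phi^*$ of the class $\alpha_n=\alpha_{w_1^p}\alpha_{y_2^p}\cdots\alpha_{y_{n-1}^p}\in E_1^{0,n-1}(BG_n)$ from Theorem~\ref{thm:main1}.

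The vanishing of $d_1,\dots,d_{n-1}$ follows by functoriality. By Theorem~\ref{thm:main1}(1), the class $\alpha_n$ on $BG_n$ survives to $E_n$. A morphism of spectral sequences commutes with $d_r$ and with passing to the next page. Hence $\phi^*\alpha_n$ lies in $\ker d_r$ at each stage $r<n$, and the class it defines on $E_r(X_n)$ is $\phi^*_r$ of the class of $\alpha_n$ on $E_r(BG_n)$. In particular, $\phi^*\alpha_n$ survives to $E_n(X_n)$, and its image there is $\phi_n^*$ of the image of $\alpha_n$ in $E_n^{0,n-1}(BG_n)$.

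For the nonvanishing of $d_n$, naturality gives
\[d_n(\phi_n^*\alpha_n)=\phi_n^*\bigl(d_n(\alpha_n)\bigr)\in E_n^{n,0}(X_n).\]
By Theorem~\ref{thm:main1}(2), we have $d_n(\alpha_n)\neq 0$ in $E_n^{n,0}(BG_n)$. The target bidegree $(n,0)$ satisfies $i+j=n\leq d$, so $\phi_n^*$ is injective on $E_n^{n,0}$, and therefore $d_n(\phi^*\alpha_n)\neq 0$.

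The step needing the most care is the choice $d=n$. This is exactly where the injectivity range of Theorem~\ref{ABM} must include the target bidegree $(n,0)$, and it is also what fixes $\dim X_n=n$. One could worry that the source class becomes zero on $X_n$. This does not matter, because the conclusion only needs injectivity in the target: if $d_n(\phi^*\alpha_n)\neq 0$, then $\phi^*\alpha_n$ is automatically nonzero on $E_n(X_n)$. Injectivity at the source, in bidegree $(0,n-1)$, is also provided by Theorem~\ref{ABM}. All remaining steps are formal consequences of the results already established.
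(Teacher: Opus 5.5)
Your proposal is correct and is essentially the paper's argument, which applies Theorem~\ref{ABM} to $G_n$ with $d=n$ and transports Theorem~\ref{thm:main1} along the induced morphism of Hodge-to-de Rham spectral sequences. You spell out the details the paper leaves implicit: naturality of $d_r$ for $r\le n$ shows that $\phi^*\alpha_n$ survives to $E_n$, and injectivity of $E_n^{n,0}(BG_n)\to E_n^{n,0}(X_n)$, which holds since $n\le d$, shows that $d_n(\phi^*\alpha_n)\neq 0$.
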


Applying Theorem~\ref{ABM} to $B(H[F])$ and $B\alpha_p$, we obtain the following result.
\begin{proposition}\label{surface}
There is a smooth projective surface $S_1$ over $k$ with a global 1-form $\omega\in H^0(S_1,\Omega^1_{S_1/k})=E_1^{1,0}$ in the Hodge-to-de Rham spectral sequence of $S_1$ which satisfies $d_1(\omega)\neq 0$ in $H^0(S_1,\Omega^2_{S_1/k})=E^{2,0}_1$.  Moreover, there is a smooth projective surface $S_2$ over $k$ with a class $\alpha\in H^1(S_2,\mathcal{O})=E_1^{0,1}$ in the Hodge-to-de Rham spectral sequence of $S_2$ which satisfies $d_1(\alpha)\neq 0$ in $H^1(S_2,\Omega^1_{S_2/k})=E^{1,1}_1$.
\end{proposition}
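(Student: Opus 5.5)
The plan is to apply Theorem~\ref{ABM} with $d=2$ to two classifying stacks and pull back the relevant nonvanishing differentials. In both cases the pulled-back differential stays nonzero because the map of Hodge-to-de Rham spectral sequences on $E_1$ commutes with $d_1$ and is injective in total degree $\leq 2$.

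For $S_1$, take $G=H[F]$, a finite group scheme. Theorem~\ref{ABM} gives a smooth projective surface $X$, geometrically integral by the remark following Theorem~\ref{ABM}, with a map $g\colon X\to B(H[F])$. On $E_1$ the induced map $E_1^{i,j}(B(H[F]))\to E_1^{i,j}(X)$ is injective for $i+j\leq 2$. Set $S_1=X$ and $\omega=g^*\eta_{y^p}\in E_1^{1,0}(S_1)=H^0(S_1,\Omega^1_{S_1/k})$; this uses that for a smooth scheme $L_{X/k}=\Omega^1_{X/k}$ and the spectral sequence is the classical one. A morphism of stacks induces a morphism of filtered objects $\mathrm{Fil}^*_{\mathrm{Hod}}\dR$, hence a morphism of spectral sequences commuting with $d_1$. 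Therefore
\[
d_1(\omega)=g^*d_1(\eta_{y^p})=-g^*(\eta_{x^p}\wedge\eta_{z^p}).
\]
This is nonzero by Proposition~\ref{Heisenberg} together with injectivity in bidegree $(2,0)$, where $i+j=2\leq d$.

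For $S_2$, take $G=\alpha_p$ and again $d=2$, giving $g\colon S_2\to B\alpha_p$. Put $\alpha=g^*\alpha_t\in H^1(S_2,\mathcal O)$. Proposition~\ref{alphap} (with $n=1$) gives $d_1(\alpha_t)=\omega_t\neq 0$ in $E_1^{1,1}(B\alpha_p)$. Naturality gives $d_1(\alpha)=g^*\omega_t$, which is nonzero by injectivity in bidegree $(1,1)$, again $i+j=2$.

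The only point needing care is that Theorem~\ref{ABM} gives injectivity exactly up to total degree $d$, and the targets of the differentials have total degree $2$. This is why $d=2$ suffices and the resulting varieties are surfaces. I expect no real obstacle beyond checking this degree bookkeeping and the naturality of the spectral sequence under pullback. The identification of the scheme's $E_1$ terms with $H^j(X,\Omega^i)$ follows from the earlier remark comparing derived and classical de Rham cohomology for smooth schemes.
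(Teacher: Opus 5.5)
Your proposal is correct and matches the paper's approach: apply Theorem~\ref{ABM} with $d=2$ to $B(H[F])$ and to $B\alpha_p$. Then pull back the nonzero differentials $d_1(\eta_{y^p})=-\eta_{x^p}\wedge\eta_{z^p}$ from Proposition~\ref{Heisenberg} and $d_1(\alpha_t)=\omega_t$ from Proposition~\ref{alphap}, using naturality of $d_1$ and injectivity of the comparison map on $E_1$ in total degree $2$.
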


By taking products of $X_n$ $(n\geq 2)$, $S_1$, or $S_2$ with several copies of an arbitrary elliptic curve, we obtain the following corollary.

\begin{corollary}\label{cor}
      Given \(n\geq 1\) and \(i,j\geq 0\) satisfying   $ j\geq n-1$ and \((i,j)\neq(0,0)\), there exists a smooth projective variety $Y$ over $k$ and a class $\beta  \in H^{j}(Y,\Omega^i_{Y/k}) =E_1^{i,j}$ in the Hodge-to-de Rham spectral sequence of $Y$
   which satisfies $d_r(\beta)=0 $ for $1\leq r<n$ and $d_{n}(\beta)\neq 0$ in $E^{n+i,j-n+1}_n$.

\end{corollary}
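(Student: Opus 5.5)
The plan is to take a product $Y = Z\times E^{m}$, where $E$ is an elliptic curve and $Z$ is one of $X_n$, $S_1$, $S_2$ chosen according to $(n,i,j)$. The differential on $Z$ will be transported to $Y$ using the multiplicative Künneth structure of the Hodge-to-de Rham spectral sequence. For an elliptic curve the Hodge-to-de Rham spectral sequence degenerates at $E_1$, since it lifts to $W_2(k)$ and $\dim E=1\le p$ (Deligne--Illusie). So $E_1(E)=E_\infty(E)$ has classes $\omega_E\in E_1^{1,0}(E)$ and $\bar\omega_E\in E_1^{0,1}(E)$, and every class is a permanent cycle.

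\textbf{Choosing $Z$ and the class.}
\begin{itemize}
\item If $n\geq2$: then $j\geq n-1\geq 1$. Take $Z=X_n$ with the class $\alpha_n\in E_1^{0,n-1}$ from Theorem~\ref{thm:main 2}, and put $\beta=\alpha_n\otimes\omega_E^{\otimes i}\otimes\bar\omega_E^{\otimes(j-n+1)}$ on $X_n\times E^{i}\times E^{j-n+1}$, using one copy of $E$ per factor. Its bidegree is $(i,j)$.
\item If $n=1$ and $j\geq1$: use $S_2$ with $\alpha\in E_1^{0,1}$, tensored with $i$ copies of $\omega_E$ and $j-1$ copies of $\bar\omega_E$.
\item If $n=1$ and $j=0$: then $i\geq1$. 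Use $S_1$ with $\omega\in E_1^{1,0}$, tensored with $i-1$ copies of $\omega_E$.
\end{itemize}

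\textbf{Künneth for the spectral sequence.} For smooth projective $Z,W$ over a field, the filtered de Rham complex of $Z\times W$ is the filtered tensor product of those of $Z$ and $W$. This holds by Künneth for Hodge and de Rham cohomology, or at the level of Čech/Godement filtered complexes. Hence there is a map of spectral sequences $E_r(Z)\otimes E_r(W)\to E_r(Z\times W)$ compatible with the differentials via the Leibniz rule. Over a field this map is an isomorphism on $E_1$. By induction on $r$, using the algebraic Künneth theorem for tensor products of complexes of vector spaces, it remains an isomorphism $E_r(Z\times W)\cong E_r(Z)\otimes E_r(W)$ on every page, with $d_r=d_r\otimes1\pm1\otimes d_r$.

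\textbf{Conclusion and main obstacle.} Take $W=E^m$. Its spectral sequence degenerates, since it is a product of degenerate ones (or directly by Deligne--Illusie when $m\le p$). So $d_r$ on $E_r(Y)$ is just $d_r^{Z}\otimes1$. It follows that $\beta=\gamma\otimes\epsilon$ survives to $E_n$, with $d_n(\beta)=d_n(\gamma)\otimes\epsilon$. This is nonzero because $d_n(\gamma)\neq0$, $\epsilon\neq0$ in $E_1(W)=E_n(W)$, and a tensor product of nonzero vectors is nonzero. Finally, $Y$ is a smooth projective variety, since products of geometrically integral varieties are geometrically integral.

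The main point needing care is the page-by-page Künneth identification of spectral sequences. One has to check that the external product of filtered complexes induces the tensor product of spectral sequences on every page, not just on $E_1$. I would argue this by realizing both filtered objects by explicit filtered complexes of $k$-vector spaces, then applying the standard fact that for filtered complexes over a field the spectral sequence of the tensor product is the tensor product of spectral sequences. Alternatively, one can avoid the full isomorphism: it suffices that $\beta$ survives to $E_n$, which follows from the Leibniz rule, and that $d_n(\beta)\neq0$. For the latter, choose a projection $Y\to Z$ and pair against a class on $W$ using the top-degree class of $E^m$ and Poincaré duality, or use that $E_r(Z)\otimes\epsilon\to E_r(Y)$ is split injective via a retraction onto the $\epsilon$-component.
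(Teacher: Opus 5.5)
Your proposal is correct and follows essentially the same route as the paper. The paper also takes $Y=X_n\times E^{i+j-n+1}$ with $\beta=\alpha_n\otimes\gamma^{\otimes(j-n+1)}\otimes\omega^{\otimes i}$, handles $n=1$ via $S_1,S_2$, and uses $E_1$-degeneration for curves (citing Oda rather than Deligne--Illusie) together with a page-by-page Künneth isomorphism, which it asserts without proof; your inductive justification of that isomorphism via the multiplicative structure fills in that step.
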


\begin{proof}
First, suppose $n\geq 2$. 
Let $E$ be an arbitrary elliptic curve over $k$, and $\gamma \in H^1(E,\mathcal{O}  )$ be a nonzero class, and $\omega \in H^0(E,\Omega^1_{E/k}  )$ be a nonzero differential form. It is well known that   the Hodge-to-de Rham spectral sequence of a smooth projective curve degenerates at the $E_1$-page \cite[Corollary~5.2]{Oda69}. Therefore, we have $d_r(\gamma)=0$ and $d_r(\omega)=0$ for $r\geq 1$. Let $Y=X_n\times E^{i+j-n+1}$. Applying the K\"{u}nneth formula to $Y=X_n\times E^{i+j-n+1}$, we obtain
$  E_r^{*,*}(Y)\simeq E_r^{*,*}(X_n)\otimes E_r^{*,*}(E) ^{\otimes i+j-n+1 } $  for $r\geq 1$, and the differentials are compatible with the isomorphism in the evident sense.
Therefore, we define $\beta=\alpha_n\otimes \gamma^{\otimes j-n+1}\otimes\omega^{\otimes i} $ as a class in $E_1^{i,j}(Y)$. Then it is easy to see $d_r(\beta)=d_r(\alpha_n)\otimes \gamma^{\otimes j-n+1}\otimes\omega^{\otimes i}=0$ for $1 \leq r\leq n-1$, and  $d_n(\beta)=d_n(\alpha_n)\otimes \gamma^{\otimes j-n+1}\otimes\omega^{\otimes i}\neq 0$ in $E^{n,0}_n(X_n) \otimes E^{0,1}_n(E)^{\otimes j-n+1}\otimes E^{1,0}_n(E)^{\otimes i} \subset E^{n+i,j-n+1}_n(Y).$

For the case $n=1$, applying the same argument to $S_1$ and $S_2$ , we obtain the desired result.

\end{proof}

\begin{remark}\label{CY}
	To the best of our knowledge, prior to the present work, the only example we have found in the literature among smooth projective varieties in positive characteristic for which the Hodge-to-de Rham spectral sequence is known to have a non-zero differential beyond the first page is Hirokado's Calabi-Yau threefold $\mathcal H$ in characteristic $3$; see \cite{Hir99}. In \cite[Theorem~3.6(iii)]{Eke04}, Ekedahl computed its Hodge numbers as follows.
	\[
	\begin{array}{c|cccc}
		h^{i,j}(\mathcal H) & i=0 & i=1 & i=2 & i=3\\
		\hline
		j=0 & 1 & 0 & 0 & 1\\
		j=1 & 0 & 42 & 0 & 0\\
		j=2 & 0 & 0 & 42 & 0\\
		j=3 & 1 & 0 & 0 & 1
	\end{array}
	\]
	Therefore, the only possible non-zero differentials in the Hodge-to-de Rham spectral sequence of $\mathcal H$	are $
	d_2^{0,3}\colon H^3(\mathcal H,\mathcal O)\to H^2(\mathcal H,\Omega_{\mathcal H/k}^2)
	$ and 
	$d_2^{1,1}\colon H^1(\mathcal H,\Omega_{\mathcal H/k}^1)\to H^0(\mathcal H,\Omega_{\mathcal H/k}^3)
	$. Lam  later showed \cite[Proof of Theorem~5.4]{Lam20} that the conjugate spectral sequence of $\mathcal H$ has a non-zero differential on the $E_2$-page, and hence  the Hodge-to-de Rham spectral sequence does not degenerate at the $E_1$-page. Consequently, at least one of $d_2^{0,3}$ and $d_2^{1,1}$ is non-zero. By Serre duality, these two differentials  are dual to each other up to sign. Thus, both  $d_2^{0,3}$ and $d_2^{1,1}$ are  non-zero. 
\end{remark}

\appendix

\section{Derived de Rham cohomology and derived divided power de Rham complex}
In this section, we review derived de Rham cohomology and the derived divided power de Rham complex to make the exposition self-contained.

\subsection{Derived de Rham cohomology}
\label{app:relative-kan-extension}
We first recall the construction of derived de Rham cohomology; see \cite[Chapitre~VIII, \S2]{Ill72a} and \cite[Definition~2.1]{Bha12}. Following \cite[Definition~2.1.1]{FKM26}, we organize the definitions below as follows. For an $\infty$-category $\mathsf{C}$, we write $\Ar(\mathsf{C})$ for $\Fun([1],\mathsf{C})$. When $\mathsf{C}=\CAlg^\an$, the $\infty$-category of animated rings, the arrow category $\Ar(\CAlg^\an)$ admits a family of compact projective generators
 \[\mathbb{Z}[x_1,\ldots,x_l] \to \mathbb{Z}[x_1,\ldots,x_m], \qquad 0\leq l\leq m,\]
 where the map sends \(x_i\) to \(x_i\) for \(1\leq i\leq l\); see \cite[Corollary~2.13]{Mao24}. We denote the full subcategory spanned by these objects by $\Ar(\CAlg^\an)_{\mathrm{poly}}$. Let $\CAlg^{\an}\CAlg^{\mathsf{fil}}$ be the unstraightening of the functor \[
 \CAlg^\an
 \longrightarrow
 \Cat,
 \qquad
 R
 \longmapsto
 \CAlg\bigl(\mathsf{Fil}(\mathsf{D}(R))\bigr).
 \]

\begin{construction}[Derived de Rham cohomology]\label{de Rham}
We define the Hodge-filtered derived de Rham cohomology functor to be the left Kan extension along the inclusion  $\Ar(\CAlg^\an)_{\mathrm{poly}}\hookrightarrow\Ar(\CAlg^\an)$ of the classical Hodge-filtered de Rham cohomology  functor $\mathrm{Fil}_{\mathrm{Hod}}^{*} \Omega_{-/-}^\bullet\colon
\Ar(\CAlg^\an)_{\mathrm{poly}}
\longrightarrow
\CAlg^{\an}\CAlg^{\mathsf{fil}}$. 

   \[
\begin{aligned}
\mathrm{Fil}_{\mathrm{Hod}}^{*} \dR\colon
\Ar\bigl(\CAlg^{\an}\bigr)
&\longrightarrow
\CAlg^{\an}\CAlg^{\mathsf{fil}} \\
(A\to B)
&\longmapsto
\mathrm{Fil}_{\mathrm{Hod}}^{*}\mathrm{dR}_{B/A}.
\end{aligned}
\]

\end{construction}

We list some basic properties of derived de Rham cohomology.
\begin{proposition}\label{syn}
	Let \(A \to R\) be a morphism of animated rings.
	
	\begin{enumerate}
		\item[(1)] \textup{(Base change)}
		For any morphism of animated rings \(A \to A'\), the natural map
		\[
		\mathrm{Fil}_{\mathrm{Hod}}^{*}\dR_{R/A}
		\otimes_A A'
		\longrightarrow
		\Fil_{\mathrm{Hod}}^{ *}
		\dR_{R\otimes_A A'/A'}
		\]
		is an equivalence.	
	\item[(2)]\textup{(Graded pieces)} There is a natural equivalence in $\CAlg\bigl(\Gr(\mathsf{D}(A))\bigr)$ \[\gr^*_{\mathrm{Hod}}\dR_{R/A}\simeq \bigoplus_i\Lambda^i(L_{R/A})[-i].\]
	 Here the direct sum is understood in the graded sense. 
	\item[(3)] \textup{(Descent in characteristic $p$)}
	Assume that $A$ is an $\mathbb F_p$-algebra and that
	$A\to R$ is quasisyntomic.
	\begin{enumerate}
		\item[(a)] \textup{(Quasisyntomic descent in \(R\))}
		If \(R \to R^0\) is quasisyntomic and faithfully flat, let $R^i:= (R^0)^{\otimes_R i+1}$. Then
		\[
		\mathrm{Fil}_{\mathrm{Hod}}^{*}\dR_{R/A}
		\longrightarrow
		\lim_{[i]\in \Delta}\mathrm{Fil}_{\mathrm{Hod}}^{ *}\dR_{R^i/A}
		\]
		is an equivalence.
		
		\item[(b)] \textup{(Flat descent in $A$)}
		If \(A \to A^0\) is faithfully flat, let $A^i:= (A^0)^{\otimes_A i+1}$ and $R^i:= R \otimes_A A^i$. Then
		\[
		\mathrm{Fil}_{\mathrm{Hod}}^{ *}\dR_{R/A}
		\longrightarrow
		\lim_{[i]\in \Delta} \mathrm{Fil}_{\mathrm{Hod}}^{ *}\dR_{R^i/A^i}
		\]
		is an equivalence.
	\end{enumerate}
\end{enumerate}
\end{proposition}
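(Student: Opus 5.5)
The plan is to prove all three parts by reducing to the case of polynomial arrows, where the classical de Rham complex is explicit, and then propagating along the left Kan extension of Construction~\ref{de Rham}.

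\textbf{Step 1 (base change).} Both sides of (1), viewed as functors of the pair $(A\to R)$ with $A'$ varying along $A$, preserve sifted colimits in $\Ar(\CAlg^{\an})$:
\begin{itemize}
\item the source does because $\mathrm{Fil}^*_{\mathrm{Hod}}\dR$ is a left Kan extension from $\Ar(\CAlg^{\an})_{\mathrm{poly}}$, and $-\otimes_A A'$ commutes with colimits;
\item the target does because the base change $(A\to R)\mapsto (A'\to R\otimes_A A')$ preserves sifted colimits.
\end{itemize}
It therefore suffices to treat $A=\mathbb Z[x_1,\dots,x_l]\to R=\mathbb Z[x_1,\dots,x_m]$, with $A'$ an arbitrary $A$-algebra. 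Resolving $A'$ by polynomial $A$-algebras again reduces to $A'$ polynomial over $A$. In that case both sides are the classical Hodge-filtered de Rham complex of a polynomial algebra, and
\[
\Omega^\bullet_{R/A}\otimes_A A'\simeq \Omega^\bullet_{R\otimes_AA'/A'}
\]
holds termwise, compatibly with the stupid filtration.

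\textbf{Step 2 (graded pieces).} Taking $\gr^*$ commutes with colimits, so $\gr^*_{\mathrm{Hod}}\dR$ is the left Kan extension of $\gr^*\Omega^\bullet$. For polynomial arrows,
\[
\gr^i\Omega^\bullet_{R/A}=\Omega^i_{R/A}[-i]=\Lambda^iL_{R/A}[-i],
\]
naturally and multiplicatively; the latter is the wedge product of $\Omega^1$. Derived exterior powers of the cotangent complex are, by definition, left Kan extended from polynomial arrows. Hence the two graded commutative algebra objects agree, with the algebra structure transported along the Kan extension.

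\textbf{Step 3 (descent).} For (3a), I would first prove descent for $\gr^i_{\mathrm{Hod}}=\Lambda^iL_{-/A}[-i]$. This is quasisyntomic (fpqc) descent for wedge powers of the cotangent complex, as in \cite{BMS19}: use the transitivity sequence for $A\to R\to R^\bullet$, the fact that $L_{R^\bullet/R}$ has Tor amplitude in $[-1,0]$, and the fact that a cosimplicial object obtained by base change along the Čech nerve of a faithfully flat map is a limit diagram. By induction on $n$ this gives descent for each $\mathrm{Fil}^0/\mathrm{Fil}^n$.

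To get descent for $\mathrm{Fil}^n$ itself, it is enough to have descent for $\dR$ and then take fibers. Since $A$ is an $\mathbb F_p$-algebra, I would use the conjugate filtration on $\dR_{R/A}$: it is increasing and exhaustive, with graded pieces $\Lambda^iL_{R^{(1)}/A}[-i]$, and each of these satisfies descent as above. Because $A\to R$ is quasisyntomic, all these pieces lie in $\mathsf D^{\geq 0}$. Totalization of cosimplicial objects in $\mathsf D^{\geq 0}$ commutes with filtered colimits, since $H^n$ of $\operatorname{Tot}$ only depends on the first $n+1$ cosimplicial terms. So descent passes to the colimit, which is $\dR$.

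\textbf{Step 4 (flat descent in $A$).} For (3b), by (1) the cosimplicial object $\mathrm{Fil}^*_{\mathrm{Hod}}\dR_{R^i/A^i}$ is $\mathrm{Fil}^*_{\mathrm{Hod}}\dR_{R/A}\otimes_AA^i$. Faithfully flat descent for modules, $M\simeq\lim_\Delta M\otimes_AA^\bullet$, then applies filtration-degreewise.

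I expect Step 3 to be the main obstacle: justifying the interchange of totalization with the exhaustive conjugate filtration, and checking that the $\mathbb F_p$-hypothesis is exactly what makes $\dR$ (rather than only its Hodge completion) satisfy descent. Steps 1, 2 and 4 are formal once the reduction to polynomial arrows is in place.
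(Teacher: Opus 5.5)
Your proposal is correct. The paper does not actually prove this proposition. It records it as a list of standard facts, with the inputs cited elsewhere in the text: Bhatt's Kan-extension construction \cite{Bha12}, flat descent for $\Lambda^m L$ from \cite[Theorem~3.1]{BMS19}, and descent for $\mathrm{Fil}^*_{\mathrm{Hod}}\dR$ in characteristic $p$ from \cite[Theorem~2.5]{ABM21} and \cite[Example~5.12]{BMS19}.

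What you write is the argument behind those citations:
\begin{itemize}
\item Steps 1 and 2 are the usual reduction to polynomial arrows via preservation of sifted colimits.
\item Step 3 is the mechanism the paper itself runs for stacks in Lemma~\ref{affe}: the conjugate filtration is exhaustive, its graded pieces $\Lambda^iL_{R^{(1)}/A}[-i]$ are coconnective by quasisyntomicity, and totalizations commute with filtered colimits in $\mathsf D^{\geq 0}$. So the $\mathbb F_p$-hypothesis enters exactly where you say.
\end{itemize}

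The one non-formal input you delegate to \cite{BMS19} is the vanishing of $\lim_{\Delta}\Lambda^jL_{R/A}\otimes_R\Lambda^kL_{R^\bullet/R}$ for $k\geq 1$. This is where your Tor-amplitude remark is used. The terms are uniformly bounded below, so the totalization commutes with the faithfully flat base change $-\otimes_R R^0$. After this base change the augmented \v{C}ech nerve acquires a section, so it splits with augmentation $\Lambda^kL_{R^0/R^0}=0$.

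Finally, your Step 4 uses only (1) and faithfully flat descent for $A$-modules. So the characteristic $p$ and quasisyntomic hypotheses play no essential role in (3b).
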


We now want to extend the construction to stacks. We imitate the definition of cotangent complexes in \cite[Definition~3.27]{Tan24}; see also \cite[Definition~3.27]{ABM21} for the definition over a fixed affine base.  
\begin{construction}\label{relative}

For a syntomic morphism $X\to Y$ between Artin stacks in the sense of  \cite[Construction~A.1.12]{KP24}, we can define the Hodge-filtered derived de Rham cohomology of $X\to Y$ as follows:
When $Y=\operatorname{Spec}(A)$ is an affine scheme, we define 
\[\mathrm{Fil}^*_{\mathrm{Hod}}\dR_{X/\mathrm{Spec}(A)}=\mathrm{Fil}^*_{\mathrm{Hod}} \dR_{X/A}:= \lim_{\substack{\mathrm{Spec} B\to X\\ B/A\ \mathrm{syntomic}}} \mathrm{Fil}^*_{\mathrm{Hod}}\dR_{B/A} \in \CAlg(\Fil(\mathsf{D}(A)))\]

For general $Y$, we define 
\[\mathrm{Fil}^*_{\mathrm{Hod}}\dR_{X/Y}:=\lim_{u:\mathrm{Spec}(A)\to Y} u_*\mathrm{Fil}^*_{\mathrm{Hod}}\dR_{X_A/A}  \in \CAlg(\Fil(\mathsf{D}(Y)))\] 

We denote by $\dR_{X/Y}$ the underlying object of
$\mathrm{Fil}^*_{\mathrm{Hod}}\dR_{X/Y}$ and refer to it as the de Rham cohomology of $X\to Y$.

We now explain the construction more conceptually. We define $\CAlg^{\mathrm{fil}}\to \PStk^\op$  as the unstraightening of the functor
\[
\begin{aligned}
\PStk^{\mathsf{op}}
&\longrightarrow
\mathsf{Pr}^{\mathrm{L}}\hookrightarrow \Cat \\
X
&\longmapsto
\CAlg(\mathsf{Fil}(\mathsf{D}(X))).
\end{aligned}
\]

We know that $\CAlg^{\mathsf{fil}}\times_{\PStk^{\op}} \CRing\to \CRing$ can be identified with $\CAlg^{\an}\CAlg^{\mathsf{fil}}\times_{\CAlg^\an} \CRing \to \CRing$ as cocartesian fibration over $\CRing$. Thus, we can restrict $\mathrm{Fil}_{\mathrm{Hod}}^{*}\dR$ to a functor from $\Ar(\CRing)$ to $\CAlg^{\mathsf{fil}}$. Let $\Ar(\CRing)^\syn$ be the full subcategory of $\Ar(\CRing)$ spanned  by syntomic maps of commutative rings. Similarly, we can define $\Ar(\Aff)^{\syn}$ , $\Ar(\ArtStk)^\syn$ in the evident sense. Consider the following commutative diagram in $\Cat$,

\[
\begin{tikzcd}
\Ar(\CRing)^\syn
=(\Ar(\Aff)^{\syn})^\op
  \arrow[r]
  \arrow[d]
&
\CAlg^{\mathsf{fil}}
  \arrow[d]
\\
(\Ar(\ArtStk)^\syn)^{\op}
  \arrow[r]
&
\PStk^{\op}.
\end{tikzcd}
\]

We define  $
\mathrm{Fil}_{\mathrm{Hod}}^{*}\dR: (\Ar(\ArtStk)^\syn)^{\op}
\longrightarrow
\CAlg^{\mathsf{fil}}
$ as the relative right Kan extension along the inclusion $(\Ar(\Aff)^{\syn})^\op \hookrightarrow (\Ar(\ArtStk)^\syn)^{\op}$ over $\PStk^{\op}$ of  $
\mathrm{Fil}_{\mathrm{Hod}}^{*}\dR: \Ar(\CRing)^\syn
\longrightarrow
\CAlg^{\mathsf{fil}}
$  in the sense of \cite[Tag~02Z2]{Lur26}. For our purposes, we will only use this definition in characteristic \(p\).
\end{construction}

\begin{remark}[Descent properties of $\dR$]\label{lim}
	\leavevmode\par
	
	\begin{enumerate}
		\item[(1)]\textup{(Descent in the source)}
	Let $R$ be an $\mathbb F_p$-algebra. Write $\mathrm{Syn}_R$ for the category of syntomic $R$-algebras. By Proposition~\ref{syn}, the functor
	\[
	\mathrm{Fil}^*_{\mathrm{Hod}}\dR_{-/R}
	\colon
	\mathrm{Syn}_R
	\longrightarrow
	\CAlg\bigl(\mathsf{Fil}(\mathsf{D}(R))\bigr)
	\]
	satisfies syntomic descent. Consequently, for a syntomic cover $X_0\to X$ of syntomic Artin stacks over $R$, we have a natural equivalence $\displaystyle \mathrm{Fil}^*_{\mathrm{Hod}}\dR_{X/R}\simeq \lim_{[i]\in \Delta} \mathrm{Fil}^*_{\mathrm{Hod}}\dR_{X_0^{\times_X (i+1)}/R}$. The same statement holds in the relative setting. More precisely, for a syntomic cover $X_0\to X$ of syntomic Artin stacks over $Y$, we also have $\displaystyle \mathrm{Fil}^*_{\mathrm{Hod}}\dR_{X/Y}\simeq \lim_{[i]\in \Delta} \mathrm{Fil}^*_{\mathrm{Hod}}\dR_{X_0^{\times_X (i+1)}/Y}$ as $\mathrm{Fil}^*_{\mathrm{Hod}}\dR_{-/Y}$ can be written as a limit of the functors $u_*\mathrm{Fil}^*_{\mathrm{Hod}}\dR_{-\times_Y \mathrm{Spec}(A)/A}$.
	\item[(2)]\textup{(Descent in the base)}
	Let  $X\to Y$ be syntomic morphism of Artin stacks over
	$\mathbb F_p$. Write $\CRing_{/Y}:=\CRing\times_{\ArtStk^{\op}}(\ArtStk_{/Y})^{\op}$. The functor \[F\colon \CRing_{/Y}\longrightarrow  \CAlg\bigl(\mathsf{Fil}(\mathsf{D}(Y))\bigr), \qquad (u:\mathrm{Spec}(A)\to Y)\longmapsto u_*\mathrm{Fil}^*_{\mathrm{Hod}}\dR_{X\times_Y \mathrm{Spec}(A)/A}\] satisfies flat descent. Indeed, for every morphism $\mathrm{Spec}(A)\to Y$,
	the $A$-stack $X\times_Y\mathrm{Spec}(A)$ lies in the
	colimit closure of syntomic affine schemes over $A$.
	The assertion therefore follows from Proposition~\ref{syn}(3)(b).
	Note that the functor \[G\colon (\ArtStk_{/Y})^{\op} \longrightarrow  \CAlg\bigl(\mathsf{Fil}(\mathsf{D}(Y))\bigr), \qquad (u:Z\to Y)\longmapsto u_*\mathrm{Fil}^*_{\mathrm{Hod}}\dR_{X\times_Y Z/Z}\] is the right Kan extension of $F$ along the inclusion $\CRing_{/Y}\hookrightarrow (\ArtStk_{/Y})^{\op}$. Thus, for an fppf cover $Y_0\to Y$, we have an equivalence $\displaystyle \mathrm{Fil}^*_{\mathrm{Hod}}\dR_{X/Y}\simeq \lim_{[i]\in \Delta} (f_i)_*\mathrm{Fil}^*_{\mathrm{Hod}}\dR_{X\times_Y Y_i/Y_i}$, where $Y_i:=Y_0^{\times_Y (i+1)}$ and  $f_i:Y_i\to Y$ is the natural map in the \v{C}ech nerve of $Y_0\to Y$.
\end{enumerate}
\end{remark}

\begin{remark}
	The functor defined by the following assignment
	\[
	(\Aff_{/Y})^{\op}\longrightarrow \CAlg^{\mathsf{fil}},
	\qquad
	(\mathrm{Spec}(A)\to Y)\longmapsto
	\mathrm{Fil}^*_{\mathrm{Hod}}\dR_{X_A/A}
	\]
	determines an object of
	$
	\Fun_{(\Aff_{/Y})^{\op}}
	\left(
	(\Aff_{/Y})^{\op},
	\CAlg^{\mathsf{fil}}\times_{\PStk^\op}(\Aff_{/Y})^{\op}
	\right).
	$
	Suppose that this object is a cocartesian section, i.e. it lies in
	$
	\Fun_{(\Aff_{/Y})^{\op}}^{\mathrm{cocart}}
	\left(
	(\Aff_{/Y})^{\op},
	\CAlg^{\mathsf{fil}}\times_{\PStk^\op}(\Aff_{/Y})^{\op}
	\right).
	$
	Then this cocartesian section corresponds to $\mathrm{Fil}^*_{\mathrm{Hod}}\dR_{X/Y}$ under the equivalences
	\[
	\CAlg\bigl(\mathsf{Fil}(\mathsf{D}(Y))\bigr)
	\simeq
	\lim_{\mathrm{Spec}(A)\to Y}
	\CAlg\bigl(\mathsf{Fil}(\mathsf{D}(A))\bigr)
	\simeq
	\Fun_{(\Aff_{/Y})^{\op}}^{\mathrm{cocart}}
	\left(
	(\Aff_{/Y})^{\op},
	\CAlg^{\mathsf{fil}}\times_{\PStk^\op}(\Aff_{/Y})^{\op}
	\right);
	\]
	see the following lemma. In this case, for every morphism $f\colon Y'\to Y$, there is a natural equivalence $f^*\mathrm{Fil}^*_{\mathrm{Hod}}\dR_{X/Y}\simeq \mathrm{Fil}^*_{\mathrm{Hod}}\dR_{X\times_Y Y'/Y'}$. For example, when $X\to Y$ is an affine syntomic morphism of Artin stacks, the above cocartesian condition is satisfied.
\end{remark}
The proof of the following lemma essentially comes from the argument in \cite[Lemma~3.16.]{BZFN10}. Let $F_0:\CRing \to \mathsf{Pr}^{\mathrm{L}}$ be an accessible functor satisfying fppf descent. Thus, the right Kan extension $F: \Stk^\op\to \mathsf{Pr}^{\mathrm{L}}$  of $F_0$ along the inclusion  $\CRing\hookrightarrow \Stk^\op$ commutes with limits. We denote by $\mathsf C\to\Stk^\op$ the unstraightening of the functor $F: \Stk^\op\to \mathsf{Pr}^{\mathrm{L}}$.
\begin{lemma}\label{BZFN}
	Let $(X_{i})_{i\in I}$ be a functor from $I$ to $\Stk$ with colimit $X$, where $I$ is a small $\infty$-category, or more generally an $\infty$-category admitting a small cofinal full subcategory. Let $(M_i)_{i\in I^{\op}}$ be a cocartesian section of $\mathsf{C}\times_{\Stk^\op}I^{\op} \to I^{\op}$, i.e. an object in $\Fun^{\mathrm{cocart}}_{I^{\op}}(I^{\op}, \mathsf{C}\times_{\Stk^\op}I^{\op})$. Under the equivalence $\displaystyle F(X)\simeq \lim_{i\in I^{\op}}F(X_i)\simeq \Fun^{\mathrm{cocart}}_{I^{\op}}(I^{\op}, \mathsf{C}\times_{\Stk^\op}I^{\op})$ , the cocartesian section $(M_i)_{i\in I^{\op}}$ defines an object  $M\in F(X)$. Then there is a canonical equivalence
	
	\[\lim_{i\in I^{\op}} (f_i)_*M_i\simeq M,\]
	where $f_i:X_i\to X$ is the canonical morphism induced by the colimit diagram.
	
\end{lemma}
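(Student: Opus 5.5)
The proof will be a general statement about limits of quasi-coherent objects in the unstraightened $\mathsf C\to\Stk^\op$, following \cite[Lemma~3.16]{BZFN10}. First I make the pushforwards precise. Each $f_i\colon X_i\to X$ induces $f_i^*\colon F(X)\to F(X_i)$, a morphism in $\mathsf{Pr}^{\mathrm{L}}$, so it has a right adjoint $(f_i)_*$. These assemble into a functor $I^\op\to F(X)$, $i\mapsto (f_i)_*M_i$. For $\phi\colon i\to j$ in $I$ with $u_\phi\colon X_i\to X_j$, the transition map $(f_j)_*M_j\to (f_i)_*M_i$ is obtained by applying $(f_j)_*$ to the unit $M_j\to (u_\phi)_*u_\phi^*M_j$ and then composing with $(u_\phi)_*u_\phi^*M_j\simeq (u_\phi)_*M_i$, which uses the cocartesian structure. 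Similarly, the units $M\to (f_i)_*f_i^*M\simeq (f_i)_*M_i$ give a cone, and hence a canonical comparison map $c\colon M\to \lim_i (f_i)_*M_i$.

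To show $c$ is an equivalence, I test it against maps out of an arbitrary $N\in F(X)$, computing in the limit category via the equivalence $F(X)\simeq \lim_{i\in I^\op}F(X_i)$. Mapping spaces in a limit of $\infty$-categories are limits of mapping spaces, so
\[
\Hom_{F(X)}(N,M)\simeq \lim_{i\in I^\op}\Hom_{F(X_i)}(f_i^*N,M_i).
\]
Adjunction identifies the right-hand side with $\lim_i \Hom_{F(X)}(N,(f_i)_*M_i)$, which is $\Hom_{F(X)}(N,\lim_i (f_i)_*M_i)$. By Yoneda, $M\simeq\lim_i (f_i)_*M_i$. The smallness hypothesis on $I$ is needed so that the limit exists in the presentable category $F(X)$; if $I$ only has a small cofinal subcategory, I first restrict to that subcategory, which changes neither $\lim_i F(X_i)$ nor $\lim_i(f_i)_*M_i$.

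The main obstacle is checking that the composite of these natural identifications is the map induced by $c$, and not merely some abstract equivalence. In particular, the adjunction isomorphisms must be compatible with the transition maps. My first approach is to work in the unstraightened picture: view $(f_i)_*$ as relative right adjoints and use \cite[Tag~02Z2]{Lur26}-style relative Kan extension. There, $M$ is the cocartesian section and $\lim(f_i)_*M_i$ is the value at $X$ of the relative right Kan extension of the section along $I^\op\to (I^{\triangleleft})^{\op}$ over $\Stk^\op$. The identification then reduces to the statement that a relative limit of a cocartesian section, when the fibers form a limit diagram, is computed by the cocartesian extension. This holds by the universal property of the limit in $\mathsf{Pr}^{\mathrm L}$ ($F$ commutes with limits since it is a right Kan extension of a descent functor).
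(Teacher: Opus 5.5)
Your proposal is correct and takes essentially the same route as the paper, whose entire proof is the chain $M\simeq\lim_{i\in I^{\op}}(f_i)_*f_i^*M\simeq\lim_{i\in I^{\op}}(f_i)_*M_i$, read off from $F(X)\simeq\lim_{i\in I^{\op}}F(X_i)$ together with $f_i^*M\simeq M_i$. Your argument via mapping spaces, adjunction and Yoneda just proves the first of these equivalences, which the paper treats as standard; the paper also leaves the coherence issue in your last paragraph implicit, so the unit/adjunction argument already suffices for the lemma as stated.
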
 

\begin{proof}Since $\displaystyle F(X)\simeq \lim_{i\in I^{\op}}F(X_i)$, there is an equivalence
$\displaystyle M\simeq \lim_{i\in I^{\op}}(f_i)_*(f_i)^*M\simeq \lim_{i\in I^{\op}}(f_i)_*M_i$.
\end{proof}

We next study the graded pieces of Hodge-filtered derived de Rham cohomology. For a commutative ring $R$, we denote by $\ArtStk_{R}^{\mathrm{qcqs,syn}}$ the full subcategory of $\ArtStk_{R}$ spanned by quasi-compact quasi-separated syntomic Artin stacks over $R$.
\begin{lemma}\label{Kan}
	 For every $m\geq 0$, the Hodge cohomology functor
	\[ R\Gamma(-,\Lambda^mL_{-/R})\colon \ArtStk_{R}^{\mathrm{qcqs,syn},\op}\longrightarrow \mathsf{D}(R) \qquad X\longmapsto R\Gamma(X,\Lambda^mL_{X/R}) \] is right Kan extended from $\mathrm{Syn}_R$.
\end{lemma}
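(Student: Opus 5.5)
The plan is to show that for every qcqs syntomic Artin stack $X$ over $R$ the natural map
\[
R\Gamma(X,\Lambda^mL_{X/R})\longrightarrow \lim_{\substack{\operatorname{Spec}(B)\to X\\ B/R\ \mathrm{syntomic}}} R\Gamma(\operatorname{Spec}B,\Lambda^mL_{B/R})
\]
is an equivalence. The point is that both sides satisfy syntomic (indeed fppf) descent in $X$. We then compare them on a Čech nerve of an affine syntomic atlas.

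\textbf{Step 1: descent for the left side.} For any syntomic map $B'\to B''$ of syntomic $R$-algebras we have $\Lambda^mL_{B''/R}\simeq B''\otimes_{B'}\Lambda^mL_{B'/R}$ only up to the transitivity filtration. So we do not use naive base change. Instead we invoke fpqc/flat descent for $\Lambda^m$ of the cotangent complex on the quasisyntomic site. This is the statement that $\gr^m_{\mathrm{Hod}}\dR_{-/R}$, equivalently $R\Gamma(-,\Lambda^mL_{-/R})[-m]$, satisfies quasisyntomic descent (\cite[Theorem~3.1]{BMS19}, cf.\ flat descent for cotangent complexes \cite[Proposition~1.1.5]{KP22}). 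With this, for a smooth affine atlas $U=\operatorname{Spec}B_0\to X$ with Čech nerve $U_\bullet$, we get $R\Gamma(X,\Lambda^mL_{X/R})\simeq \lim_{\Delta}R\Gamma(U_\bullet,\Lambda^mL_{U_\bullet/R})$.

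\textbf{Step 2: reduce to algebraic spaces and then to affines.} Since $X$ is qcqs, we may take $U$ affine and syntomic over $R$, as a smooth cover of a syntomic stack is syntomic over $R$. The terms $U_n$ are qcqs syntomic algebraic spaces, or higher Artin stacks, of lower ``geometric level''. By induction on the level, using an étale/smooth affine cover again, we reduce to proving the claim for syntomic affines. For affines it is tautological, since $\operatorname{Spec}B$ is a final object of the indexing category.

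\textbf{Step 3: identify the right Kan extension.} Let $F(X)$ denote the right Kan extension. Since $F$ is formally a limit over affines mapping to $X$, it converts colimits of stacks into limits, provided the affine-level functor is a sheaf for the syntomic topology. That sheaf property is Step 1 restricted to $\mathrm{Syn}_R$. Hence $F(X)\simeq\lim_\Delta F(U_\bullet)$, and by induction $F(U_n)\simeq R\Gamma(U_n,\Lambda^mL_{U_n/R})$. Comparing with Step 1 gives the equivalence. The argument is parallel to Lemma~\ref{BZFN} and Remark~\ref{lim}.

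The main obstacle is Step 1. We need descent for $\Lambda^mL$ along syntomic rather than smooth covers, where $\Lambda^m$ of the cotangent complex does not simply pull back. I would deduce it from the Hodge-filtered syntomic descent of Proposition~\ref{syn}(3)(a) together with the graded-pieces identification Proposition~\ref{syn}(2). Descent of a filtered object passes to its graded pieces since limits commute with cofibers. This yields descent for $\Lambda^mL_{-/R}$ in characteristic $p$. For general $R$ I would cite \cite[Theorem~3.1]{BMS19} directly.
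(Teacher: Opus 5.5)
Your proposal is correct and is essentially the paper's proof. Both argue by induction on the Artin level via a smooth affine atlas, using \cite[Theorem~3.1]{BMS19} to get descent of the right Kan extension $F$ along the \v{C}ech nerve, and \cite[Proposition~1.1.5]{KP22} to get $R\Gamma(X,\Lambda^mL_{X/R})\simeq\lim_{\Delta}R\Gamma(U_\bullet,\Lambda^mL_{U_\bullet/R})$. The detour through filtered de Rham descent in characteristic $p$ is unnecessary, and somewhat backwards since that filtered descent is itself proved from descent of the graded pieces $\Lambda^iL$; citing \cite[Theorem~3.1]{BMS19} directly, as the paper does, works for any $R$.
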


\begin{proof}
	Let $F$ be the right Kan extension of the functor $\Lambda^mL_{-/R}\colon\mathrm{Syn}_R\longrightarrow \mathsf{D}(R) ,~A\longmapsto \Lambda^mL_{A/R} $ along the inclusion $\mathrm{Syn}_R \hookrightarrow \ArtStk_{R}^{\mathrm{qcqs,syn},\op}$. By the universal property of right Kan extension, we obtain a natural transformation $R\Gamma(-,\Lambda^mL_{-/R})\to F$, whose restriction to $\mathrm{Syn}_R$ is the identity. Hence, we only need to show this natural transformation is a natural equivalence. 
	
	We know that $\Lambda^mL_{-/R}$ satisfies flat descent; see \cite[Theorem~3.1.]{BMS19}. It follows that, for a fppf cover $Y_0\to Y$ in $\ArtStk_{R}^{\mathrm{qcqs,syn}}$, we have an equivalence $\displaystyle F(Y)\simeq  \lim_{[i]\in \Delta}F(Y_i)$, where $\displaystyle Y_i=Y_0^{\times_Y (i+1)}$.  Let $X$ be an $n$-Artin stack over $R$. We will prove by induction on \(n\) that the natural map $R\Gamma(X,\Lambda^mL_{X/R})\to F(X)$ is an isomorphism. When $X$ is an affine scheme i.e. a $(-1)$-Artin stack, the claim follows from definition of $F$. Suppose now that the claim holds for all $(k-1)$-Artin stacks, and let $X$ be a $k$-Artin stack, where $k\geq 0$. Then choose a smooth atlas $X_0=\mathrm{Spec}(A) \to X$. We have $\displaystyle F(X)\simeq\lim_{[i]\in \Delta}F(X_i)$, where $X_i:=X_0^{\times_X (i+1)}$ is a $(k-1)$-Artin stack for $i\geq 0$. By the induction hypothesis, we see that $R\Gamma(X_i,\Lambda^mL_{X_i/R})\simeq F(X_i)$ for $i\geq 0$.  Thus, it remains to show that $\displaystyle R\Gamma(X,\Lambda^mL_{X/R})\simeq \lim_{[i]\in \Delta}R\Gamma(X_i,\Lambda^mL_{X_i/R})$. Let $p_i:X_i\to X$ denote the augmentation map
	in the \v{C}ech nerve of $X_0\to X$. By flat descent for cotangent complexes of stacks in \cite[Proposition~1.1.5]{KP22}, we obtain an equivalence
	\[
	\Lambda^m L_{X/R}
	\simeq
	\lim_{[i]\in \Delta}
	p_{i*}\bigl(\Lambda^m L_{X_i/R}\bigr).
	\]
	Applying the functor $R\Gamma(X,-)$, we obtain an equivalence
	$R\Gamma(X,\Lambda^mL_{X/R})\simeq \lim_{[i]\in \Delta}R\Gamma(X_i,\Lambda^mL_{X_i/R})$.
\end{proof}	

\begin{proposition}\label{cotangent}
For a syntomic quasi-compact quasi-separated morphism $f:X\to Y $ of Artin stacks and every \(n\geq0\), there is a natural equivalence
    \[\gr^n(\dR_{X/Y})\simeq f_*( \Lambda^nL_{X/Y})[-n].\] Moreover, these equivalences assemble to a natural equivalence in $\CAlg(\Gr(\mathsf{D}(Y)))$
    \[\gr^*(\dR_{X/Y})\simeq \bigoplus_{i\in \mathbb N}f_*( \Lambda^iL_{X/Y})[-i].\]
 Here the direct sum is understood in the graded sense.   
    
\end{proposition}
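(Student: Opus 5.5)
The plan is to reduce to the affine case, where the statement follows from Proposition~\ref{syn}(2), and then to propagate it to stacks using descent on both sides. First treat $Y=\operatorname{Spec}(A)$ affine. By Construction~\ref{relative}, $\mathrm{Fil}^*_{\mathrm{Hod}}\dR_{X/A}$ is the limit of $\mathrm{Fil}^*_{\mathrm{Hod}}\dR_{B/A}$ over syntomic $\operatorname{Spec}(B)\to X$. Taking graded pieces commutes with limits, since $\gr^n$ is a cofiber of a map between limits in a stable category. Hence
\[
\gr^n\dR_{X/A}\simeq\lim_{\operatorname{Spec} B\to X}\gr^n\dR_{B/A}\simeq \lim_{\operatorname{Spec} B\to X}\Lambda^nL_{B/A}[-n],
\]
using Proposition~\ref{syn}(2), which is natural in $B$. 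Lemma~\ref{Kan}, applied with $R=A$, identifies the right-hand side with $R\Gamma(X,\Lambda^nL_{X/A})[-n]=f_*\Lambda^nL_{X/A}[-n]$. The multiplicative version follows in the same way, because Proposition~\ref{syn}(2) is an equivalence in $\CAlg(\Gr(\mathsf D(A)))$ and the forgetful functor from graded algebras creates limits. The main point to check here is naturality: the limit produced by Lemma~\ref{Kan} must be compatible with the maps produced by Proposition~\ref{syn}(2). For this, I would take $\Lambda^nL$ as a functor on $\mathrm{Syn}_A$ and argue purely by the universal property of the right Kan extension.

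For general $Y$, I would use the definition $\mathrm{Fil}^*_{\mathrm{Hod}}\dR_{X/Y}=\lim_{u\colon \operatorname{Spec} A\to Y}u_*\mathrm{Fil}^*_{\mathrm{Hod}}\dR_{X_A/A}$ together with the fact that $u_*$ is exact. This gives
\[
\gr^n\dR_{X/Y}\simeq \lim_u u_*\bigl(f_{A*}\Lambda^nL_{X_A/A}\bigr)[-n].
\]
By base change for the cotangent complex, $\Lambda^nL_{X_A/A}\simeq g_u^*\Lambda^nL_{X/Y}$, where $g_u\colon X_A\to X$ is the projection. Because $f$ is qcqs, the stack version of flat base change gives $f_{A*}g_u^*\simeq u^*f_*$. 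Consequently the system $(f_{A*}\Lambda^nL_{X_A/A})_u$ is a cocartesian section, namely the one corresponding to $f_*\Lambda^nL_{X/Y}$. Lemma~\ref{BZFN} then yields $\lim_u u_*u^*f_*\Lambda^nL_{X/Y}\simeq f_*\Lambda^nL_{X/Y}$.

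The main obstacle is this last step: upgrading the objectwise base-change equivalences to a coherent cocartesian section over $(\Aff_{/Y})^{\op}$, and checking that it matches the graded pieces of the de Rham system. I would handle this by working inside the relative right Kan extension of Construction~\ref{relative}. The map $\gr^*\to\bigoplus f_*\Lambda^*L[-*]$ would be constructed globally, via the graded-piece transformation, and only then checked to be an equivalence after pullback along each $u$. Since each such pullback reduces to the affine case already proved, no separate coherence argument would be needed.
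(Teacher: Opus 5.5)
Your affine case is the same as the paper's argument, but the passage to general $Y$ has a genuine gap. The crux of your argument is the claim that $(f_{A*}\Lambda^nL_{X_A/A})_u$ is a cocartesian section. This rests on $u^*f_*\simeq f_{A*}g_u^*$ for \emph{every} $u\colon\operatorname{Spec}A\to Y$, and that is not flat base change, because $u$ is arbitrary and is typically neither flat nor of finite Tor-amplitude.

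Flatness of $f$ only makes the square Tor-independent. General Tor-independent base change needs $f$ to be concentrated. It does not cover morphisms like $BG_Y\to Y$ with $G$ finite in characteristic $p$, which are exactly the cases this paper needs. There $f_*$ is an infinite cosimplicial totalization, and $u^*$ of infinite Tor-amplitude need not commute with it, since a bounded-below complex of flat modules need not be K-flat.

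The paper itself only proves base change along finite Tor-amplitude maps (Proposition~\ref{basechange}). It treats cocartesianness of such systems over $(\Aff_{/Y})^{\op}$ as an extra condition, verified for instance when $f$ is affine. Your fallback (build the map globally, then test after pullback along each $u$) needs the same unavailable base change, both for $f_*\Lambda^nL_{X/Y}$ and for $\gr^n\dR_{X/Y}$, so it does not close the gap. Testing only along a smooth atlas would make base change legitimate, but you would still have to construct the global comparison map.

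The fix is to avoid pulling back along $u$ on $Y$ at all. Since $u\circ f_A=f\circ g_A$, you have $u_*f_{A*}\simeq f_*g_{A*}$. Combined with $g_A^*\Lambda^nL_{X/Y}\simeq\Lambda^nL_{X_A/A}$, which uses only flatness of $f$, this gives
\[
\gr^n\dR_{X/Y}\simeq\lim_u f_*g_{A*}g_A^*\Lambda^nL_{X/Y}[-n]\simeq f_*\Bigl(\lim_u g_{A*}g_A^*\Lambda^nL_{X/Y}\Bigr)[-n],
\]
because $f_*$ is a right adjoint and so commutes with the limit. Now descend on the source rather than the target. By universality of colimits $X\simeq\colim_u X_A$, and the section $(g_A^*\Lambda^nL_{X/Y})_u$ is tautologically cocartesian. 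Lemma~\ref{BZFN} applied on $X$ therefore gives $\lim_u g_{A*}g_A^*\Lambda^nL_{X/Y}\simeq\Lambda^nL_{X/Y}$. This is the paper's argument, and it requires no base change for pushforwards.
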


\begin{proof} Suppose first that $Y=\mathrm{Spec}(A)$. We have 
	\[
	\begin{aligned}
		\gr^n(\dR_{X/A})
		\simeq
		\lim_{\substack{\mathrm{Spec} B\to X\\ B/A\ \mathrm{syntomic}}}
		\gr^n(\dR_{B/A}) 
		&\simeq
		\lim_{\substack{\mathrm{Spec} B\to X\\ B/A\ \mathrm{syntomic}}}
		 \Lambda^n L_{B/A}[-n] \\
		&\simeq
	 R\Gamma\bigl(X,\Lambda^nL_{X/A}\bigr)[-n].
	\end{aligned}
	\]
The last equivalence follows from Lemma~\ref{Kan}.
For a general Artin stack \(Y\), let \(u:\operatorname{Spec}(A)\to Y\) be a morphism, set \(X_A:=X\times_Y\operatorname{Spec}(A)\). Let \(g_A:X_A\to X\) and $f_A:X_A\to\mathrm{Spec}(A)$ be the natural projections. We identify $R\Gamma(X_A,-)$ with $f_{A,*}$. Then we have 
\[
\begin{aligned}
	\gr^n(\dR_{X/Y})
	&\simeq
	\lim_{u:\mathrm{Spec}(A)\to Y}
	u_*\gr^n(\dR_{X_A/A})\\
	&\simeq
	\lim_{u:\mathrm{Spec}(A)\to Y}
	u_*f_{A,*}\bigl(\Lambda^nL_{X_A/A}\bigr)[-n]\\
	&\simeq
	\lim_{u:\mathrm{Spec}(A)\to Y}
	f_*g_{A,*}g_A^*\bigl(\Lambda^nL_{X/Y}\bigr)[-n]\\
	&\simeq
	f_*\left(
	\lim_{u:\mathrm{Spec}(A)\to Y}
	g_{A,*}g_A^*\bigl(\Lambda^nL_{X/Y}\bigr)
	\right)[-n]\\
	&\simeq
	f_*\bigl(\Lambda^nL_{X/Y}\bigr)[-n].
\end{aligned}
\]
Here the third equivalence follows from $u_*f_{A,*}\simeq f_* g_{A,*}$ and the base change equivalence for cotangent complexes $g_A^*\bigl(\Lambda^nL_{X/Y}\bigr)\simeq \Lambda^nL_{X_A/A}$. The second assertion follows from a similar argument.
\end{proof}

We next study the base change property of derived de Rham cohomology. We will use  the conjugate filtration in these proofs.

	\begin{construction}[Conjugate filtration on derived de Rham cohomology]
		Let $A$ be an $\mathbb F_p$-algebra and let $B$ be an animated $A$-algebra. The derived de Rham cohomology $\dR_{B/A}$ carries a natural increasing exhaustive $\mathbb{N}$-indexed filtration $\mathrm{Fil}^{\mathrm{conj}}_*\dR_{B/A}$, called the conjugate filtration; see \cite[Proposition~2.3]{Bha12} and \cite[Construction~E.7]{BL22}. Moreover, the derived Cartier isomorphism
		gives $\gr_n^{\mathrm{conj}}\dR_{B/A}\simeq \Lambda^nL_{B^{(1)}/A}[-n]$, where $B^{(1)}:=B\otimes_{A,\mathrm{Frob}_A} A$ 	; see \cite[Proposition~3.5]{Bha12} and \cite[Remark~E.8]{BL22}. We next extend the definition to stacks by an approach similar to that of Construction~\ref{relative}.
		For a syntomic morphism $X\to\mathrm{Spec} A$, we define
		\[
		\mathrm{Fil}_n^{\mathrm{conj}}\dR_{X/A}
		:=
		\lim_{\substack{\mathrm{Spec} B\to X\\ B/A\ \mathrm{syntomic}}}
		\mathrm{Fil}_n^{\mathrm{conj}}\dR_{B/A} \in \mathsf{D}(A),
		\]
		and for a general syntomic morphism $X\to Y$, we define
		\[
		\mathrm{Fil}_n^{\mathrm{conj}}\dR_{X/Y}
		:=
		\lim_{u:\mathrm{Spec} A\to Y}
		u_*\mathrm{Fil}_n^{\mathrm{conj}}\dR_{X_A/A}\in \mathsf{D}(Y).
		\]
Therefore, for a syntomic morphism $X\to Y$ of $\mathbb{F}_p$-Artin stacks, we obtain a natural increasing filtration $\mathrm{Fil}_*^{\mathrm{conj}}\dR_{X/Y}$ on	$\dR_{X/Y}$. Moreover, if $f$ is quasi-compact and quasi-separated, the derived Cartier isomorphism and the same argument as in the proof of
Proposition~\ref{cotangent} give a natural equivalence
$\gr_n^{\mathrm{conj}}\dR_{X/Y}
\simeq
f^{(1)}_*
\bigl(\Lambda^nL_{X^{(1)}/Y}\bigr)[-n]
$, where
$X^{(1)}:=X\times_{Y,\mathrm{Frob}_Y}Y$
and $f^{(1)}:X^{(1)}\to Y$ is the natural projection.	
	\end{construction}
We first establish the exhaustiveness of the conjugate filtration on derived de Rham cohomology over an affine base.
\begin{lemma}\label{affe}
	Let  $X$ be a quasi-compact quasi-separated syntomic Artin stack over an $\mathbb F_p$-algebra $R$. Then the conjugate filtration on $\dR_{X/R}$ is exhaustive, and both $\dR_{X/R}$ and
	$\mathrm{Fil}_n^{\mathrm{conj}}\dR_{X/R}$
	belong to $\mathsf D(R)^{\geq0}$ for every $n\geq0$.

\end{lemma}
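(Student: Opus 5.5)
The plan is to reduce to the affine case by descent along a smooth atlas, and then control the coconnectivity through the conjugate filtration. First I treat a syntomic $R$-algebra $B$. There the conjugate filtration on $\dR_{B/R}$ is exhaustive by construction (\cite[Proposition~2.3]{Bha12}). Its graded pieces are $\gr_n^{\mathrm{conj}}\dR_{B/R}\simeq\Lambda^nL_{B^{(1)}/R}[-n]$. Since $B^{(1)}$ is syntomic over $R$, the complex $L_{B^{(1)}/R}$ has Tor-amplitude in $[-1,0]$. Hence $\Lambda^nL_{B^{(1)}/R}$ lies in degrees $[-n,0]$, and after the shift $[-n]$ the $n$-th graded piece lies in $\mathsf D(R)^{[0,n]}\subset\mathsf D(R)^{\geq 0}$. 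Inducting on $n$ from $\mathrm{Fil}_0^{\mathrm{conj}}=\gr_0^{\mathrm{conj}}$ gives $\mathrm{Fil}_n^{\mathrm{conj}}\dR_{B/R}\in\mathsf D(R)^{\geq0}$ for all $n$, and passing to the filtered colimit gives $\dR_{B/R}\in\mathsf D(R)^{\geq0}$.

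Next I pass to a qcqs syntomic Artin stack $X$. Since a limit of objects of $\mathsf D(R)^{\geq0}$ stays in $\mathsf D(R)^{\geq0}$, the limit formula defining $\mathrm{Fil}_n^{\mathrm{conj}}\dR_{X/R}$ immediately places it in $\mathsf D(R)^{\geq0}$. Similarly, $\dR_{X/R}$, defined as a limit of the $\dR_{B/R}$, lies in $\mathsf D(R)^{\geq0}$.

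The real content is exhaustiveness, namely that
\[
\colim_n\lim_{B}\mathrm{Fil}_n^{\mathrm{conj}}\dR_{B/R}\longrightarrow\lim_B\colim_n\mathrm{Fil}_n^{\mathrm{conj}}\dR_{B/R}
\]
is an equivalence, since limits do not commute with filtered colimits in general. I would argue on cohomology in a fixed degree $m$. Set
\[
Q_n:=\cofib\bigl(\mathrm{Fil}_n^{\mathrm{conj}}\to\dR\bigr)
\]
in the affine case. This $Q_n$ has a complete exhaustive filtration with graded pieces $\gr_{n'}^{\mathrm{conj}}$ for $n'>n$, each in degrees $\geq n'-\,$? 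The issue is that these pieces lie in $[0,n']$, not in degrees $\geq n'$, so coconnectivity must instead come from the relative Hodge cohomology. I would use that $\Lambda^{n'}L_{B^{(1)}/R}[-n']$ has $\mathcal H^{m}$ computed by $\Gamma$/Sym-type pieces, and refine the estimate via the fiber sequence $\mathcal H^{-1}(L)[1]\to L\to\mathcal H^0(L)$ and décalage: $\Lambda^{n'}$ of a complex in degrees $[-1,0]$ has a filtration with pieces $\Lambda^{i}\mathcal H^0\otimes\Gamma^{n'-i}\mathcal H^{-1}[n'-i]$. The degree-$m$ contribution is then nonzero only when $i=m\leq\dim$, and $\Lambda^i$ of a locally free module of finite rank vanishes for large $i$. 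This last vanishing fails on $\Gamma$, which is the gap.

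Given this, the key step, and the main obstacle, is the following. Since $X$ is qcqs, I would choose a finite-type atlas $U=\mathrm{Spec}(A)\to X$ and replace the limit over all syntomic $B$ by the Čech limit over $U_\bullet$, which is justified by syntomic descent (Remark~\ref{lim}). I would then show that, for each fixed cohomological degree $m$, the cofibers $Q_n(U_i)$ lie uniformly in $\mathsf D^{\geq m+2}$ for $n\gg0$, with a bound independent of $i$. Such a uniform bound follows from relative dimension bounds on the $U_i$ together with Tor-amplitude; if needed, I would instead work via the Frobenius-twisted Hodge cohomology of $X^{(1)}$ directly, which is bounded because $X$ is qcqs. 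Once the bound holds, $H^m$ of the limit commutes with the colimit by a Milnor-type argument on the totalization, which only involves finitely many cosimplicial degrees in each degree range. This gives exhaustiveness.
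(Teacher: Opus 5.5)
\textbf{There is a genuine gap in the exhaustiveness step.} Your first two paragraphs are fine. The affine computation, together with the closure of $\mathsf D(R)^{\geq0}$ under limits, gives all the coconnectivity assertions. The problem is the uniform bound you propose: that $Q_n(U_i)=\cofib\bigl(\mathrm{Fil}_n^{\mathrm{conj}}\dR_{U_i/R}\to\dR_{U_i/R}\bigr)$ lies in $\mathsf D^{\geq m+2}$ for $n\gg0$. This is false, as your own d\'ecalage computation already suggests.

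Take $R=\mathbb F_p$ and $B=\mathbb F_p[t]/(t^p)$; the \v{C}ech terms of $\ast\to B\alpha_p$ are the tensor powers $B^{\otimes i}$. Then $L_{B^{(1)}/R}\simeq B^{(1)}[1]\oplus B^{(1)}$, so $\gr_n^{\mathrm{conj}}\dR_{B/R}\simeq B^{(1)}\oplus B^{(1)}[-1]$ for every $n\geq1$. Since all graded pieces lie in $\mathsf D^{\geq0}$, $H^0(\gr_{n+1}^{\mathrm{conj}}\dR_{B/R})$ injects into $H^0(Q_n)$. Hence $Q_n\notin\mathsf D^{\geq1}$ for any $n$. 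Neither relative dimension bounds nor boundedness of Frobenius-twisted Hodge cohomology can repair this. The obstruction is $\Gamma^n\bigl(\mathcal H^{-1}(L)\bigr)$, which no dimension or quasi-compactness hypothesis kills.

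\textbf{The missing idea} is that no control of the cofibers is needed. The uniform coconnectivity of the terms, which you have already established, suffices. For a cosimplicial object $M^\bullet$ of $\mathsf D(R)^{\geq0}$, the fiber of $\lim_{\Delta_{\leq s}}M^\bullet\to\lim_{\Delta_{\leq s-1}}M^\bullet$ is $N^sM[-s]\in\mathsf D^{\geq s}$, with $N^sM$ a retract of $M^s$. So $H^m(\lim_{\Delta}M^\bullet)\simeq H^m(\lim_{\Delta_{\leq m+1}}M^\bullet)$ is computed by a finite limit. Hence $\lim_\Delta$ commutes with filtered colimits of such diagrams. This is the fact the paper quotes from \cite[Lemma~3.20]{Tan24}, and your closing remark about finitely many cosimplicial degrees is exactly it. On its own it finishes the argument:
$$\colim_j\mathrm{Fil}_j^{\mathrm{conj}}\dR_{X/R}\simeq\lim_\Delta\colim_j\mathrm{Fil}_j^{\mathrm{conj}}\dR_{X_i/R}\simeq\lim_\Delta\dR_{X_i/R}\simeq\dR_{X/R}.$$

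One further point: for a general qcqs Artin stack, the \v{C}ech terms $X_i$ of an affine atlas need not be affine. So exhaustiveness on each $X_i$ (the middle equivalence) must come from an induction on the Artin level, as in the paper. You also need descent of $\dR$ and of each $\mathrm{Fil}_j^{\mathrm{conj}}\dR$ along the atlas; the latter follows from the derived Cartier isomorphism and flat descent for $\Lambda^jL$.
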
	
\begin{proof}	
	 Suppose that $X$ is an $n$-Artin stack. We prove both assertions simultaneously by induction on $n$. When $X=\mathrm{Spec}(B)$ is an affine scheme, i.e. a $(-1)$-Artin stack, the conjugate filtration is always exhaustive. Since $X=\mathrm{Spec}(B)$ is syntomic over $R$, we have
	 $\gr_n^{\mathrm{conj}}\dR_{B/R}\simeq\Lambda^nL_{B^{(1)}/R}[-n]\in\mathsf D(R)^{\geq0}$
	 for every $n\geq0$. It follows that $\mathrm{Fil}_n^{\mathrm{conj}}\dR_{B/R}\in \mathsf{D}(R)^{\geq 0}$. Since the conjugate filtration is exhaustive, we have 
	 $\displaystyle \dR_{B/R}\simeq\colim_{n\in\mathbb N}\mathrm{Fil}_n^{\mathrm{conj}}\dR_{B/R}\in\mathsf D(R)^{\geq0}$. Suppose now that the claim holds for all $(k-1)$-Artin stacks, and let $X$ be a $k$-Artin stack, where $k\geq 0$. Choose a smooth atlas $X_0=\mathrm{Spec}(A) \to X$, and set $X_i:=X_0^{\times_X (i+1)}$. Then each $X_i$ is a quasi-compact quasi-separated syntomic $(k-1)$-Artin stack over $R$. By the induction hypothesis, the conjugate filtration on $\dR_{X_i/R}$ is exhaustive and $\dR_{X_i/R}\in\mathsf D(R)^{\geq0}$ for every $i\geq0$. Also note that the derived Cartier isomorphism and flat descent for the cotangent complex imply that the assignment
	 $X\longmapsto \mathrm{Fil}_n^{\mathrm{conj}}\dR_{X/R}$ satisfy sytomic descent for every $n\geq0$. Hence, we have
\[    \colim_{j\in \mathbb{N}} \mathrm{Fil}^{\mathrm{conj}}_j\dR_{X/R}  \simeq \colim_{j\in \mathbb{N}} \lim_{[i]\in \Delta}     \mathrm{Fil}^{\mathrm{conj}}_j\dR_{X_i/R} \simeq   \lim_{[i]\in \Delta} \colim_{j\in \mathbb{N}} \mathrm{Fil}^{\mathrm{conj}}_j\dR_{X_i/R} \simeq \lim_{[i]\in \Delta}\dR_{X_i/R}\simeq \dR_{X/R} ~,\]
where the second equivalence follows from the fact that cosimplicial limits commute with filtered colimits for coconnective objects in $\mathsf{D}(R)$; see, for example, \cite[Lemma~3.20]{Tan24}. Thus, the conjugate filtration $\mathrm{Fil}^{\mathrm{conj}}_*\dR_{X/R}$ is exhaustive. Since $\mathsf D(R)^{\geq0}$ is closed under limits,
the equivalences
$\dR_{X/R}\simeq\lim_{[i]\in\Delta}\dR_{X_i/R}$
and
$\mathrm{Fil}_n^{\mathrm{conj}}\dR_{X/R}
\simeq\lim_{[i]\in\Delta}
\mathrm{Fil}_n^{\mathrm{conj}}\dR_{X_i/R}$
show that both $\dR_{X/R}$ and
$\mathrm{Fil}_n^{\mathrm{conj}}\dR_{X/R}$
belong to $\mathsf D(R)^{\geq0}$ for every $n\geq0$. This completes the induction.
\end{proof}
We next prove the base change property for derived de Rham cohomology over an affine base.
\begin{proposition}\label{aff}
Let  $X$ be a quasi-compact quasi-separated syntomic Artin stack over an $\mathbb F_p$-algebra $R$, and let $R\to R'$ be a ring homomorphism of finite $\mathrm{Tor}$-amplitude. Writing $X_{R'}:=X\times_RR'$, the base change map
\[  \mathrm{Fil}^*_{\mathrm{Hod}}\dR_{X/R}\otimes_R R'\to  \mathrm{Fil}^*_{\mathrm{Hod}}\dR_{X_{R'}/R'} \]
is an equivalence.
\end{proposition}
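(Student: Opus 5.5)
The plan is to reduce to the affine case by syntomic descent, and then handle the commutation of $-\otimes_R R'$ with the cosimplicial limit using finite Tor-amplitude together with the conjugate filtration.

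First, the affine case. For a syntomic $R$-algebra $B$, Proposition~\ref{syn}(1) gives $\mathrm{Fil}^*_{\mathrm{Hod}}\dR_{B/R}\otimes_R R'\simeq \mathrm{Fil}^*_{\mathrm{Hod}}\dR_{B\otimes_R R'/R'}$. Here $B\otimes_R R'$ is syntomic over $R'$, and since $B$ is flat over $R$ the tensor product is underived. So the statement holds for affine $X$.

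Next, the general case, by induction on $n$ where $X$ is an $n$-Artin stack, exactly as in Lemma~\ref{Kan} and Lemma~\ref{affe}. Choose a smooth atlas $X_0=\mathrm{Spec}(A)\to X$ with Čech nerve $X_\bullet$. Each $X_i$ is a qcqs syntomic $(n-1)$-Artin stack, and $(X_i)_{R'}$ is the Čech nerve of $(X_0)_{R'}\to X_{R'}$. Remark~\ref{lim}(1) gives
\[
\mathrm{Fil}^*_{\mathrm{Hod}}\dR_{X/R}\simeq\lim_{\Delta}\mathrm{Fil}^*_{\mathrm{Hod}}\dR_{X_i/R},
\]
and the same holds over $R'$. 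By induction, the problem therefore reduces to showing that the natural map
\[
\Bigl(\lim_{\Delta}\mathrm{Fil}^m_{\mathrm{Hod}}\dR_{X_i/R}\Bigr)\otimes_R R'\longrightarrow \lim_{\Delta}\Bigl(\mathrm{Fil}^m_{\mathrm{Hod}}\dR_{X_i/R}\otimes_R R'\Bigr)
\]
is an equivalence for each $m$. Filtered objects are detected termwise, so it suffices to treat each $m$ separately.

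The main obstacle is this exchange of tensor product and totalization. The key input is that $-\otimes_R R'$ commutes with limits of uniformly bounded-below diagrams: if $R'$ has Tor-amplitude in $[-a,0]$, then $-\otimes_R R'$ sends $\mathsf D(R)^{\geq 0}$ into $\mathsf D(R')^{\geq -a}$. It also commutes with totalizations of objects in $\mathsf D(R)^{\geq 0}$, since in each cohomological degree such a totalization is computed by a finite stage; compare \cite[Lemma~3.20]{Tan24} and \cite[Corollary~3.1.13]{KP24}.

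So I need the terms $\mathrm{Fil}^m_{\mathrm{Hod}}\dR_{X_i/R}$ to lie uniformly in $\mathsf D(R)^{\geq 0}$. I would obtain this in three steps.

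1. For $\dR_{X_i/R}$ itself, coconnectivity is exactly Lemma~\ref{affe}.
2. For the graded pieces, $\gr^j_{\mathrm{Hod}}\simeq R\Gamma(X_i,\Lambda^jL)[-j]$ by Proposition~\ref{cotangent}. Since $\Lambda^jL$ has Tor-amplitude in $[-j,0]$ for syntomic maps, these lie in $\mathsf D(R)^{\geq 0}$.
3. For $\mathrm{Fil}^m$, use the fiber sequence $\mathrm{Fil}^m\to\dR\to\dR/\mathrm{Fil}^m$. The cofiber $\dR/\mathrm{Fil}^m$ is a finite extension of graded pieces, hence coconnective. This bounds $\mathrm{Fil}^m$ in $\mathsf D(R)^{\geq -1}$, which is still a uniform bound and therefore suffices.

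Alternatively, one can first prove the statement for the underlying $\dR$ and for each $\gr^j$, and then deduce it for $\mathrm{Fil}^m$ via the finite filtration on $\dR/\mathrm{Fil}^m$ and the fiber sequence. In this approach, the base change of $\gr^j$ comes from base change of cotangent complexes along Tor-independent pullback, combined with the same bounded-below limit exchange. The base change of $\dR$ itself can be checked on the conjugate filtration: the base change of $\gr^{\mathrm{conj}}_n$ follows by the same argument via the derived Cartier isomorphism, and exhaustiveness (Lemma~\ref{affe}) lets one pass to colimits, since filtered colimits commute with $\otimes_R R'$.
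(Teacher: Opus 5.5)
Your argument is correct, but your main route differs from the paper's. Your ``alternatively'' paragraph is essentially the paper's proof.

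The paper does no induction on the Artin level. It handles the Hodge graded pieces $R\Gamma(X,\Lambda^nL_{X/R})[-n]$ by citing base change for quasi-coherent cohomology on qcqs stacks along finite Tor-amplitude maps \cite[Lemma~A.1.3]{HLP23}. It then handles the underlying $\dR_{X/R}$ via the conjugate filtration: its graded pieces are again cohomology of $\Lambda^nL_{X^{(1)}/R}$, so the same lemma applies, and exhaustiveness comes from Lemma~\ref{affe}. The filtered statement follows as you describe.

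Your main route instead exchanges $-\otimes_RR'$ with the \v{C}ech totalization directly on $\mathrm{Fil}^m_{\mathrm{Hod}}\dR$. It needs only the affine case (base change for derived de Rham cohomology of rings) and a uniform coconnectivity bound from Lemma~\ref{affe} and Proposition~\ref{cotangent}.
\begin{itemize}
\item \textbf{What yours buys:} it proves the filtered statement in one stroke. It avoids the external lemma and any separate conjugate-filtration base-change argument, although the conjugate filtration still enters implicitly through Lemma~\ref{affe}.
\item \textbf{What it costs:} you need the induction scaffolding and must check that the base-change equivalences are natural along the \v{C}ech nerve.
\item \textbf{Why the paper's is shorter:} the bounded-below limit exchange is outsourced to \cite{HLP23}, where it is applied to the bounded complexes $\Lambda^nL$ rather than to de Rham complexes.
\end{itemize}

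Two small inaccuracies, neither affecting the argument:
\begin{itemize}
\item For an Artin stack, $\Lambda^jL_{X_i/R}$ need not have Tor-amplitude in $[-j,0]$; for example, $L_{BG/k}$ has a nonzero $\mathcal H^1$. Only the lower bound $-j$ is needed, and it holds because $R\Gamma$ is left $t$-exact.
\item The fiber of a map between objects of $\mathsf D(R)^{\geq 0}$ already lies in $\mathsf D(R)^{\geq 0}$, not merely in $\mathsf D(R)^{\geq -1}$.
\end{itemize}
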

\begin{proof}
	
		The induced map on the $n$-th graded pieces is the base change map 
		\[
		R\Gamma(X,\Lambda^nL_{X/R}[-n])\otimes_R R'
		\longrightarrow
		R\Gamma(X_{R'},\Lambda^nL_{X_{R'}/R'}[-n]).
		\] By \cite[Lemma~A.1.3]{HLP23}, we see that this map is an equivalence. It remains to show that $\dR_{X/R}\otimes_R R'\simeq \dR_{X_{R'}/R'}$.  For this, we consider the conjugate filtration. Indeed, the induced map on the $n$-th graded pieces is the base change map  $ R\Gamma(X^{(1)},\Lambda^nL_{X^{(1)}/R}[-n])\otimes_R R'\longrightarrow R\Gamma((X_{R'})^{(1)},\Lambda^n L_{(X_{R'})^{(1)}/R'}[-n])$. By \cite[Lemma~A.1.3]{HLP23}, this map is also an equivalence. Therefore, we have  $\mathrm{Fil}_n^{\mathrm{conj}}\dR_{X/R}\otimes_R R'\simeq \mathrm{Fil}_n^{\mathrm{conj}}\dR_{X_{R'}/R'}$.Since the conjugate filtrations on
		$\dR_{X/R}\otimes_R R'$ and $\dR_{X_{R'}/R'}$
		are exhaustive by Lemma~\ref{affe}, we conclude that
		$
		\dR_{X/R}\otimes_R R'
		\simeq
		\dR_{X_{R'}/R'}
		$.
	
\end{proof}

We finally extend the above results to the general case.
\begin{proposition}\label{basechange}
Let $f:X\to Y $ be a syntomic morphism between quasi-compact quasi-separated Artin stacks over $\mathbb{F}_p$. Then the conjugate filtration on $\dR_{X/Y}$ is exhaustive and $\dR_{X/Y}\in \mathsf{D}(Y)^{\geq 0}$. For a morphism $h: Z\to Y$ with finite $\mathrm{Tor}$-amplitude between Artin stacks over  $\mathbb{F}_p$, the base change map
\[h^*(\mathrm{Fil}^*_{\mathrm{Hod}}\dR_{X/Y})\to \mathrm{Fil}^*_{\mathrm{Hod}}\dR_{X\times_Y Z/Z}\]
is an equivalence. 
\end{proposition}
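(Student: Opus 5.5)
The argument will have two parts: the exhaustiveness and coconnectivity assertion, and then the base change equivalence. Both reduce to the affine-base case, which is already done in Lemma~\ref{affe} and Proposition~\ref{aff}. The reduction uses the definition $\mathrm{Fil}^*_{\mathrm{Hod}}\dR_{X/Y}=\lim_{u}u_*\mathrm{Fil}^*_{\mathrm{Hod}}\dR_{X_A/A}$ together with descent in the base, Remark~\ref{lim}(2).

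For the first assertion, choose a smooth atlas $Y_0=\mathrm{Spec}(A)\to Y$ with Čech nerve $Y_i$. By Remark~\ref{lim}(2), applied both to $\dR$ and to each $\mathrm{Fil}^{\mathrm{conj}}_n\dR$, we get
\[\dR_{X/Y}\simeq\lim_{[i]\in\Delta}(f_i)_*\dR_{X_{Y_i}/Y_i},\]
and similarly for every conjugate step. The same descent works for the conjugate pieces because the derived Cartier isomorphism identifies their graded pieces with pushforwards of $\Lambda^nL$, which satisfy flat descent.

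Pulling back along the conservative, $t$-exact functor $u^*$ for the atlas, it suffices to check the affine-base statement on $X_A$. Here one has to handle $Y_i$ not being affine. I would do this by induction on the Artin level of $Y$, exactly as in Lemma~\ref{affe}. The key points are:
\begin{itemize}
\item $(f_i)_*$ is left $t$-exact, so coconnectivity is preserved under the limit;
\item cosimplicial limits commute with filtered colimits of coconnective objects \cite[Lemma~3.20]{Tan24}, which gives exhaustiveness after totalization;
\item $(f_i)_*$ commutes with filtered colimits of uniformly bounded below objects, because $f_i$ is qcqs.
\end{itemize}

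For base change, the map $h^*\mathrm{Fil}^*_{\mathrm{Hod}}\dR_{X/Y}\to\mathrm{Fil}^*_{\mathrm{Hod}}\dR_{X_Z/Z}$ is checked fppf locally on $Z$. So we can take $Z=\mathrm{Spec}(B)$ affine, factoring through some $\mathrm{Spec}(A)\to Y$ from a smooth atlas, with $\mathrm{Spec}(A)\to Y$ flat and $A\to B$ of finite Tor-amplitude. It then suffices to treat the following two cases.
\begin{enumerate}
\item \textbf{Flat $u\colon\mathrm{Spec}(A)\to Y$.} We need $u^*\mathrm{Fil}^*_{\mathrm{Hod}}\dR_{X/Y}\simeq\mathrm{Fil}^*_{\mathrm{Hod}}\dR_{X_A/A}$. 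On graded pieces this is flat base change for $f_*\Lambda^nL_{X/Y}$ (Proposition~\ref{cotangent}) together with base change for cotangent complexes. The underlying objects are then handled via the conjugate filtration, using the same graded argument and the exhaustiveness from the first part.
\item \textbf{Affine-to-affine base change $A\to B$.} This is Proposition~\ref{aff}.
\end{enumerate}
For a general finite Tor-amplitude $h$, I would compare filtered pieces in the same way. Graded pieces follow from base change for qcqs pushforward along finite Tor-amplitude maps \cite[Lemma~A.1.3]{HLP23}. The underlying objects then follow from the conjugate filtration, which is exhaustive on both sides by the first assertion. Since $h^*$ commutes with colimits, it preserves exhaustiveness.

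The main obstacle is the underlying (non-graded) comparison. The Hodge filtration need not be complete in a way compatible with $h^*$, which is exactly why the conjugate filtration has to be routed through. The delicate input is that $h^*$ of finite Tor-amplitude preserves the relevant coconnectivity bounds, so that the limit–colimit interchange in the first part remains valid after base change.
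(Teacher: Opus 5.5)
Your argument is essentially sound, but you prove the first assertion by a different route than the paper.

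\textbf{First assertion.} The paper runs one induction on the Artin level of $Y$ that proves coconnectivity/exhaustiveness and base change \emph{simultaneously}. At level $k$, base change at level $k-1$ shows that $\{\dR_{X\times_Y Y_i/Y_i}\}$ and $\{\mathrm{Fil}^{\mathrm{conj}}_n\dR_{X\times_Y Y_i/Y_i}\}$ are cocartesian sections over the \v{C}ech nerve. Lemma~\ref{BZFN} then gives $g_i^*\dR_{X/Y}\simeq\dR_{X\times_Y Y_i/Y_i}$, and exhaustiveness is checked after the jointly conservative, colimit-preserving functors $g_i^*$.

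You instead transplant the proof of Lemma~\ref{affe} to the base $Y$, commuting the filtered colimit past $\lim_\Delta$ and past $(f_i)_*$. This makes the first assertion independent of base change. The price is two facts the paper's proof does not use:
\begin{itemize}
\item Pushforward along the qcqs maps $Y_i\to Y$ commutes with filtered colimits of uniformly coconnective objects. The uniform bound is essential, since stacks in characteristic $p$ can have infinite cohomological dimension.
\item Totalizations commute with such colimits in $\mathsf{D}(Y)$, not just in $\mathsf{D}(R)$ as in \cite[Lemma~3.20]{Tan24}. This can be checked on cohomology sheaves after the $t$-exact conservative pullback to an atlas.
\end{itemize}
Both are standard, but each needs an argument for higher Artin stacks.

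\textbf{Base change.} Your final argument is exactly the paper's: conjugate graded pieces via \cite[Lemma~A.1.3]{HLP23}, exhaustiveness on both sides, then the Hodge graded pieces. The fppf-local reduction to affine $Z$ and the two-case split are unnecessary.

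\textbf{Two things to clean up.}
\begin{itemize}
\item The sentence reducing the first assertion to $X_A$ by ``pulling back along the conservative, $t$-exact functor $u^*$'' is circular as written. Identifying $u^*\dR_{X/Y}$ with $\dR_{X_A/A}$ is your case~1 of base change. You prove its underlying part using the very exhaustiveness on $Y$ you are trying to establish: the individual $\mathrm{Fil}^{\mathrm{conj}}_n$ base-change by the graded argument, but the colimit does not. Lemma~\ref{BZFN} is precisely how the paper avoids this. In your route the interchange argument from your bullet points carries the proof, so drop the $u^*$ reduction.
\item The ``main obstacle'' you name does not arise. Suppose each $h^*\mathrm{Fil}^{\mathrm{conj}}_m\dR_{X/Y}\to\mathrm{Fil}^{\mathrm{conj}}_m\dR_{X\times_Y Z/Z}$ is an equivalence and both filtrations are exhaustive. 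Then the underlying comparison follows simply because $h^*$ preserves colimits; no limit--colimit interchange after base change is needed.
\end{itemize}
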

\begin{proof}We prove the following assertions simultaneously for every $n$-Artin stack $Y$ by induction on $n$:
	\begin{enumerate}
		\item The conjugate filtration on $\dR_{X/Y}$ is exhaustive,
		and both $\dR_{X/Y}$ and
		$\mathrm{Fil}_n^{\mathrm{conj}}\dR_{X/Y}$
		belong to $\mathsf D(Y)^{\geq0}$ for every $n\geq0$.
		\item For every finite Tor-amplitude morphism $h:Z\to Y$ with $Z$ an $n$-Artin stack, the base change map $h^*(\dR_{X/Y})\to \dR_{X\times_Y Z/Z}$ is an equivalence.
	\end{enumerate}
	
	When $Y$ is an affine scheme, i.e. a $(-1)$-Artin stack, these assertions follow from Lemma~\ref{affe} and Proposition~\ref{aff}. Suppose now that the claim holds for all $(k-1)$-Artin stacks, and let $Y$ be a $k$-Artin stack, where $k\geq 0$. Choose a smooth atlas $Y_0=\mathrm{Spec}(A) \to Y$, and set $Y_i:=Y_0^{\times_Y (i+1)}$ with the augmented map $g_i:Y_i\to Y$. Then each $Y_i$ is a quasi-compact quasi-separated $(k-1)$-Artin stack over $\mathbb F_p$.
	By the induction hypothesis, we have
	$\dR_{X\times_Y Y_i/Y_i}\in\mathsf D(Y_i)^{\geq0}$. By descent of derived de Rham cohomology in base in Remark~\ref{lim}(2), we have $\displaystyle \dR_{X/Y}\simeq \lim_{[i]\in \Delta}(g_i)_*\dR_{X\times_Y Y_i/Y_i}$, and $\displaystyle \mathrm{Fil}_*^{\mathrm{conj}}\dR_{X/Y}\simeq \lim_{[i]\in \Delta}(g_i)_*\mathrm{Fil}_*^{\mathrm{conj}}\dR_{X\times_Y Y_i/Y_i}$.
	Since each $(g_i)_*$ preserves $\mathsf D^{\geq0}$ and
	$\mathsf D(Y)^{\geq0}$ is closed under limits, these two equivalences above imply that
	$\dR_{X/Y}\in\mathsf D(Y)^{\geq0}$ and $\mathrm{Fil}_n^{\mathrm{conj}}\dR_{X/Y}\in\mathsf D(Y)^{\geq0}$ for every $n\geq 0$.
	
	 We next show exhaustiveness of the conjugate filtration on $\dR_{X/Y}$.
	Since each $Y_i$ is a quasi-compact quasi-separated $(k-1)$-Artin stack over $\mathbb F_p$ and the morphisms occurring in the \v{C}ech nerve $Y_0\to Y$ are of finite Tor-amplitude, it follows from the base change assertion in the induction hypothesis that the systems
	$\{\dR_{X\times_Y Y_i/Y_i}\}_{[i]\in\Delta}$ and
	$\{\mathrm{Fil}_n^{\mathrm{conj}}\dR_{X\times_Y Y_i/Y_i}\}_{[i]\in\Delta}$, for every $n\geq0$, are cocartesian sections of $\mathsf{D}\times_{\PStk^\op}\Delta\to \Delta$. Here $\mathsf{D}\to \PStk^\op$ is the unstraightening of the functor $X\mapsto \mathsf{D}(X)$ and $\Delta\to \PStk^\op$ corresponds to the \v{C}ech nerve of $Y_0\to Y$. Hence, by Lemma~\ref{BZFN}, we have $g_i^*(\dR_{X/Y}) \simeq \dR_{X\times_Y Y_i/Y_i}$ and
	$g_i^*(\mathrm{Fil}_n^{\mathrm{conj}}\dR_{X/Y}) \simeq \mathrm{Fil}_n^{\mathrm{conj}}\dR_{X\times_Y Y_i/Y_i}$ for every $n\geq 0$. We note that the functors
	$g_i^*:\mathsf{D}(Y)\to \mathsf{D}(Y_i)$ are jointly conservative and commute with colimits. Thus, it suffices to check exhaustiveness after pulling back to each $Y_i$, which follows from the induction hypothesis. 
	
	Let $h:Z\to Y$ be a finite Tor-amplitude morphism with $Z$ a $k$-Artin stack. By an argument using base change for cotangent complexes,
	as in the proof of Proposition~\ref{aff}, we see that the base change maps $h^*(\mathrm{Fil}_m^{\mathrm{conj}}\dR_{X/Y})\to \mathrm{Fil}_m^{\mathrm{conj}}\dR_{X\times_Y Z/Z}$ are equivalences for every $m\geq0$. Hence, exhaustiveness of the conjugate filtrations on $\dR_{X/Y}$ and $\dR_{X\times_Y Z/Z}$ implies the base change map $h^*(\dR_{X/Y})\to \dR_{X\times_Y Z/Z}$ is an equivalence. This completes the induction. Finally, by an argument similar to that in the proof of Proposition~\ref{aff}, together with the equivalence $h^*(\dR_{X/Y})\simeq \dR_{X\times_Y Z/Z}$, we see that the base change map $h^*(\mathrm{Fil}^*_{\mathrm{Hod}}\dR_{X/Y})\to \mathrm{Fil}^*_{\mathrm{Hod}}\dR_{X\times_Y Z/Z}$ is an equivalence.
\end{proof}

\begin{example}\label{action}
Let $R$ be a $\mathbb{F}_p$-algebra, and let $G$ be an affine flat finitely presented group scheme over $R$. For a quasi-compact quasi-separated syntomic Artin stack $X$ over $R$ equipped with a left $G$-action, we can consider the de Rham cohomology of $[X/G]$ over $BG$, i.e. $\dR_{[X/G]/ BG}$. Let $e:\mathrm{Spec}(R)\to BG$ denote the base point. Proposition~\ref{basechange} gives an equivalence $e^*\mathrm{Fil}^*_{\mathrm{Hod}}\dR_{[X/G]/ BG}\simeq \mathrm{Fil}^*_{\mathrm{Hod}}\dR_{X/R}$. Thus \(\mathrm{Fil}^*_{\mathrm{Hod}}\dR_{X/R}\) is naturally equipped with a $G$-action.

\end{example}	
\subsection{Derived divided power de Rham complex}

\begin{definition}(Animated pairs) An animated pair, or simply a pair when no confusion can arise, is a map of animated
 rings $A\to B$ with $\pi_0(A)\to \pi_0(B)$ surjective. We define $\mathsf{Pair}$ to be the full subcategory of $\Ar(\CAlg^{\an})$ spanned by animated pairs. Similarly, we define $\mathsf{Pair}_R$ in the evident way for a commutative ring $R$.

\end{definition}

\begin{remark}(Compact projective generators of \(\mathsf{Pair}_R\))
The category $\mathsf{Pair}_R$ admits a family of compact projective generators of the form 
 \[
 R[x_1,\ldots,x_m]\twoheadrightarrow R[x_1,\ldots,x_l],
 \qquad 0\leq l\leq m,
 \]
 where the map sends \(x_i\) to \(x_i\) for \(1\leq i\leq l\), and sends \(x_i\) to \(0\) for \(l+1\leq i\leq m\). We denote the full subcategory spanned by these objects by $\mathsf{Pair}_{R,\mathsf{poly}}$.
\end{remark}

\begin{remark}\label{Pair1}
    Let  $\mathsf{Pair}_R^{\mathrm{cl}}$ be the full subcategory of $\mathsf{Pair}_R$ spanned by pairs \(A\to B\) such that $A,B$ are classical $R$-algebras. It is easy to see $\mathsf{Pair}_R^{\mathrm{cl}}$ is equivalent to the category of ring-ideal pairs $(A, I)$, that is, a classical $R$-algebra $A$ along with an ideal $I$ of $A$. We will not distinguish between these two notions. Accordingly, we will often write a pair as $A\to A/I$,  even in the animated setting.
\end{remark}

\begin{construction}(Derived divided power de Rham complex with Hodge filtration) 
	\label{derived-pd-derham}
	Let $R$ be a commutative ring. For  $A\to A/I\in \mathsf{Pair}_R^{\mathrm{cl}}$, we denote its divided power envelope by $(D_A(I),J,\gamma)$. Consider the divided power de Rham complex of $D_A(I)$ over $R$; compare \cite[Tag 07HQ]{Sta26}
	\begin{equation}
		\Omega^{\mathrm{pd},\bullet}_{D_A(I)/R}: = \left( D_A(I) \xrightarrow{d} D_A(I) \otimes_A \Omega^1_{A/R} \xrightarrow{d} D_A(I) \otimes_A \Omega^2_{A/R} \xrightarrow{d} \dots \right).
	\end{equation}
This complex is  equipped with a decreasing multiplicative filtration defined by:
	\begin{equation}
		\mathrm{Fil}_{\mathrm{PD}}^{ r} \Omega^{\mathrm{pd},\bullet}_{D_A(I)/R}: = \left( \mathrm{Fil}_{\mathrm{PD}}^{ r} D_A(I) \rightarrow \mathrm{Fil}_{\mathrm{PD}}^{ r-1} D_A(I) \otimes_A \Omega^1_{A/R} \rightarrow \mathrm{Fil}_{\mathrm{PD}}^{ r-2} D_A(I) \otimes_A \Omega^2_{A/R} \rightarrow \dots \right).
	\end{equation}
	And the decreasing multiplicative filtration $\mathrm{Fil}_{\mathrm{PD}}^{*} D_A(I)$ on $ D_A(I)$  is defined by
	$\mathrm{Fil}_{\mathrm{PD}}^{ n} D_A(I)= J^{[n]}$,
	where we use the convention $\mathrm{Fil}_{\mathrm{PD}}^{ r}=D_A(I)$ for $r\leq 0$. Hence, we obtain a functor
\begin{equation}
		\mathrm{Fil}_{\mathrm{PD}}^*
		\Omega^{\mathrm{pd},\bullet}_{D_{-}(-)/R}
		:
		\mathsf{Pair}_{R}^{\mathrm{cl}}
		\longrightarrow
		\CAlg\bigl(\Fil(\mathsf D(R))\bigr).
\end{equation}
We define Hodge-filtered derived divided power de Rham complex functor 
\[
\begin{aligned}
\mathsf{Pair}_R&\longrightarrow \CAlg\bigl(\Fil(\mathsf{D}(R))\bigr)\\
( A\to A/I)&\longmapsto \mathrm{Fil}_{\mathrm{PD}}^*\dR^{\mathrm{pd}}_{LD_A(I)/R}
\end{aligned}
\]
as the left Kan extension  of the functor $\mathrm{Fil}_{\mathrm{PD}}^*\Omega^{\mathrm{pd},\bullet}_{D_{-}(-)/R}:\mathsf{Pair}_{R,\mathsf{poly}} \to \CAlg\bigl(\Fil(\mathsf{D}(R))\bigr)$ along the inclusion $\mathsf{Pair}_{R,\mathsf{poly}}\hookrightarrow \mathsf{Pair}_R$.
 
\end{construction}

\begin{lemma}\label{lemma A.7}
    Let $R$ be a commutative ring. For $A\to A/I\in \mathsf{Pair}_{R,\mathsf{poly}}$, there is a natural map of filtered commutative differential graded \(R\)-algebras  \[ \mathrm{Fil}_{\mathrm{PD}}^*\Omega^{\mathrm{pd},\bullet}_{D_A(I)/R} \to \mathrm{Fil}_{\mathrm{Hod}}^* \Omega^{\bullet}_{(A/I) /R},\] which is a filtered quasi-isomorphism. 
\end{lemma}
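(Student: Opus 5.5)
The plan is to reduce to the case of a single polynomial generator pair and then use the Künneth formula. For $A\to A/I$ in $\mathsf{Pair}_{R,\mathsf{poly}}$ we have
\[A=R[x_1,\ldots,x_m], \qquad I=(x_{l+1},\ldots,x_m).\]
This pair is the tensor product over $R$ of $l$ copies of the pair $R[x]\xrightarrow{\mathrm{id}}R[x]$ and $m-l$ copies of the pair $R[x]\to R$, $x\mapsto 0$.

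\textbf{Constructing the map.} The divided power envelope is $D_A(I)=R[x_1,\ldots,x_l]\otimes_R R\langle x_{l+1},\ldots,x_m\rangle$. There is a natural surjection $D_A(I)\to A/I$ that kills $J$. We extend it to the de Rham complexes by sending $D_A(I)\otimes_A\Omega^i_{A/R}\to \Omega^i_{(A/I)/R}$; this is induced by $\Omega^i_{A/R}\otimes_A A/I\to\Omega^i_{(A/I)/R}$, which kills $dx_j$ for $j>l$. This is a map of commutative dgas. It respects the filtrations: $\mathrm{Fil}^{r-i}_{\mathrm{PD}}D_A(I)\otimes\Omega^i$ maps to zero when $r-i\geq 1$, and lands in $\Omega^i_{(A/I)/R}$ when $i\geq r$, which is exactly $\mathrm{Fil}^r_{\mathrm{Hod}}$. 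Naturality holds for maps in $\mathsf{Pair}_{R,\mathsf{poly}}$ because the map is defined functorially from the pair.

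\textbf{Reducing to one variable.} Both filtered complexes are compatible with tensor products of pairs over $R$. On the source, $D_{A\otimes A'}(I\otimes A'+A\otimes I')\simeq D_A(I)\otimes_R D_{A'}(I')$, and the PD filtration is the tensor-product (Day convolution) filtration; this holds because all modules involved are free over $R$. On the target, the Hodge-filtered de Rham complex of a tensor product of polynomial algebras is the filtered tensor product. The comparison map is multiplicative, so it is the tensor product of the one-variable maps. Since every term is $R$-flat, a tensor product of filtered quasi-isomorphisms is again one. It therefore suffices to check two cases.

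\textbf{The identity pair $R[x]\to R[x]$.} Here $I=0$ and $D=R[x]$, so the map is the identity on $\Omega^\bullet_{R[x]/R}$. The PD filtration reduces to the stupid filtration, which is the Hodge filtration, so this case is trivial.

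\textbf{The pair $R[x]\to R$.} This is the main step. The target is $R$ in degree $0$, filtered as $\mathrm{Fil}^0=R$ and $\mathrm{Fil}^{\geq1}=0$. The source is $R\langle x\rangle\xrightarrow{d}R\langle x\rangle\,dx$ with $d(x^{[n]})=x^{[n-1]}dx$, and its $r$-th filtered piece is $\mathrm{Fil}^r=\bigl(J^{[r]}\to J^{[r-1]}dx\bigr)$. We work on graded pieces. For $r\geq1$, $\gr^r$ is the complex $R\,x^{[r]}\to R\,x^{[r-1]}dx$, and the differential is an isomorphism (on the nose, with no denominators, which is why divided powers are needed). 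So $\gr^r$ is acyclic. For $r=0$, $\gr^0$ is $R\cdot 1\to 0$, which maps isomorphically onto $R$. The filtrations on both sides are complete: the source is degreewise $\bigoplus_n Rx^{[n]}$ and $\mathrm{Fil}^r$ involves only $n\geq r-1$, so both sides are finite in each weight. Hence a graded quasi-isomorphism is a filtered quasi-isomorphism, and combining this with the Künneth reduction completes the proof. The only delicate points are the Künneth compatibility of $J^{[n]}$ and the flatness needed for the tensor-product step; both are direct because everything is free over $R$ with explicit bases $x^{[\underline n]}$.
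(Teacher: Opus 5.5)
Your proof is correct, but it organizes the argument differently from the paper. The paper does not carry the K\"unneth reduction through the filtration. It first handles the graded pieces for an arbitrary polynomial pair at once: it identifies $\gr^i_{\mathrm{PD}}\Omega^{\mathrm{pd},\bullet}_{D_A(I)/R}$ with the degree-$i$ Koszul complex $\Gamma^i_{A/I}(I/I^2)\to(\Omega^1_{A/R}\otimes_A A/I)\otimes\Gamma^{i-1}_{A/I}(I/I^2)\to\cdots\to\Omega^i_{A/R}\otimes_A A/I$. It then invokes Illusie's quasi-isomorphism of this complex with $\Omega^i_{(A/I)/R}[-i]$, which needs only the exactness of $0\to I/I^2\to\Omega^1_{A/R}\otimes_A A/I\to\Omega^1_{(A/I)/R}\to0$. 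K\"unneth is used only for the \emph{underlying} complexes, where the one-variable input is your computation $0\to R\to R\langle x\rangle\xrightarrow{d}R\langle x\rangle\,dx\to0$. Since $\mathrm{Fil}^0$ is everything, graded pieces together with the underlying complex give the filtered statement.

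Your route avoids the Koszul/Illusie input and is more elementary. The cost is the bookkeeping you yourself flag: $J^{[r]}$ and the Hodge filtration must be the Day-convolution filtrations. You also need the naive filtered tensor product to compute the derived one, which holds because the filtrations are split with free pieces. Only then do filtered quasi-isomorphisms tensor to filtered quasi-isomorphisms. The paper's graded-piece argument is less tied to polynomial pairs; the same Koszul description of $\gr^i_{\mathrm{PD}}$ is what drives Theorem~\ref{P2} for $\mathsf{Pair}^{\mathsf{sm,reg}}_R$.

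One wording point: the PD filtration on $R\langle x\rangle$ is not complete degreewise, since $\lim_r R\langle x\rangle/J^{[r]}=\prod_n Rx^{[n]}$, so ``complete'' is not the right justification. What you actually use does suffice: the decomposition by $x$-weight into finitely filtered subcomplexes, which the map respects. Even more directly, $\mathrm{Fil}^r=(J^{[r]}\xrightarrow{\sim}J^{[r-1]}dx)$ is visibly acyclic for $r\geq1$.
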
 
\begin{proof}
	For \(A\to A/I\in \mathsf{Pair}_{R,\mathsf{poly}}\) and each $i\geq 0$, the natural maps 
   $D_A(I)\otimes_A\Omega^i_{A/R}  \to A/I\otimes_A \Omega^i_{A/R} \to  \Omega^{i}_{(A/I) /R} $ assemble to a map of cochain complexes
    $ \Omega^{\mathrm{pd}, \bullet}_{D_A(I)/R} \to  \Omega^{\bullet}_{(A/I) /R}$. It is straightforward to check that this map is multiplicative and respects the filtrations. Hence, we obtain a natural map of filtered commutative differential graded \(R\)-algebras  $ \mathrm{Fil}_{\mathrm{PD}}^*\Omega^{\mathrm{pd},\bullet}_{D_A(I)/R} \to \mathrm{Fil}_{\mathrm{Hod}}^* \Omega^{\bullet}_{(A/I) /R}$. Taking the \(i\)-th graded pieces on both sides, we obtain 
    \[
    \begin{aligned}
    	\gr^i_{\mathrm{PD}}
    	\Omega^{\mathrm{pd},\bullet}_{D_A(I)/R}
    	&\simeq
    	\Bigl[
    	\Gamma^i_{A/I}(I/I^2)
    	\longrightarrow
    	\Omega^1_{A/R}\otimes_A \Gamma^{i-1}_{A/I}(I/I^2)
    	\longrightarrow
    	\cdots
    	\longrightarrow
    	\Omega^{i-1}_{A/R}\otimes_A (I/I^2)
    	\longrightarrow
    	\Omega^i_{A/R}\otimes_A A/I
    	\Bigr]
    	\\
    	&\simeq
    	\Bigl[
    	\Gamma^i_{A/I}(I/I^2)
    	\longrightarrow
    	\bigl(\Omega^1_{A/R}\otimes_A A/I\bigr)\otimes_{A/I}
    	\Gamma^{i-1}_{A/I}(I/I^2)
    	\longrightarrow
    	\cdots
    	\longrightarrow
    	\Omega^i_{A/R}\otimes_A A/I
    	\Bigr]
    	\\
    	&\longrightarrow
    	\Omega^i_{(A/I)/R}[-i]
    	\simeq
    	\gr^i_{\mathrm{Hod}}\Omega^\bullet_{(A/I)/R}.
    \end{aligned}
    \]
    The third map is a quasi-isomorphism by \cite[Chapitre~I, \S~4.3.2]{Ill71}, since $0\to I/I^2 \to \Omega^1_{A/R}\otimes_A A/I \to\Omega^1_{(A/I)/R}\to 0$ is a short exact sequence. It therefore remains to prove that the map on underlying cochain complexes $ \Omega^{\mathrm{pd},\bullet}_{D_A(I)/R} \to  \Omega^{\bullet}_{(A/I) /R}$ is a quasi-isomorphism. Both functors carry finite coproducts to tensor products, and the comparison map is compatible with finite coproducts. Since \(\mathsf{Pair}_{R,\mathrm{poly}}\) is generated under finite coproducts by the pairs \(R[x]\to R[x]\) and \(R[x]\to R\), it suffices to verify that the comparison maps are quasi-isomorphisms for these two pairs. For the pair
$R[x]\longrightarrow R[x]$, we have $\Omega^{\mathrm{pd}, \bullet}_{D_A(I)/R}= \Omega^{\bullet}_{R[x] /R}$, and the comparison map is the identity, hence a quasi-isomorphism. For the pair $R[x]\longrightarrow R$,  we have $D_A(I)=R\langle x\rangle$, and hence
$
\Omega^{\mathrm{pd},\bullet}_{D_A(I)/R}
=
\Omega^{\mathrm{pd},\bullet}_{R\langle x\rangle/R}
=
\left[
R\langle x\rangle
\xrightarrow{d}
R\langle x\rangle\,dx
\right]
$. The comparison map is 
$[R\langle x \rangle
\xrightarrow{d}
R\langle x \rangle\, dx]\to R$, which is given in degree \(0\) by the natural augmentation \(R\langle x\rangle\to R\) and in degree \(1\) by the zero map. Hence, it suffices to show that 
\[
0 \longrightarrow R
\longrightarrow R\langle x \rangle
\xrightarrow{d}
R\langle x \rangle\, dx
\longrightarrow 0
\]
is a short exact sequence. Indeed, the \(R\)-module \(R\langle x\rangle\) is free with basis $ \{x^{[q]}\}_{q\geq 0}$,
and $ d(x^{[q]})=x^{[q-1]}dx$ for $q\geq 1$. Thus $d$ is surjective and $\ker(d)=R$. This completes the proof.

\end{proof}
\begin{theorem}\label{P1}
	For \(A\to A/I\in \mathsf{Pair}_R\), there is a natural equivalence
	\[ \mathrm{Fil}_{\mathrm{PD}}^*\dR^{\mathrm{pd}}_{LD_A(I)/R} \simeq \mathrm{Fil}_{\mathrm{Hod}}^* \dR_{(A/I) /R}.\]
\end{theorem}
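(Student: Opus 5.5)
Both sides are functors $\mathsf{Pair}_R\to\CAlg(\Fil(\mathsf D(R)))$ that preserve sifted colimits, so the plan is to reduce to the polynomial generators and apply Lemma~\ref{lemma A.7} there. The left side, $\mathrm{Fil}^*_{\mathrm{PD}}\dR^{\mathrm{pd}}_{LD_A(I)/R}$, is by definition (Construction~\ref{derived-pd-derham}) the left Kan extension from $\mathsf{Pair}_{R,\mathsf{poly}}$. I would show that the right side, $(A\to A/I)\mapsto \mathrm{Fil}^*_{\mathrm{Hod}}\dR_{(A/I)/R}$, is also left Kan extended from $\mathsf{Pair}_{R,\mathsf{poly}}$. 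Lemma~\ref{lemma A.7} then identifies the two functors on generators, naturally and multiplicatively. By the universal property of left Kan extension, the natural filtered map defined on generators extends to a natural transformation $\mathrm{Fil}^*_{\mathrm{PD}}\dR^{\mathrm{pd}}_{LD_-(-)/R}\to \mathrm{Fil}^*_{\mathrm{Hod}}\dR_{(-/-)/R}$, and this transformation is an equivalence on generators.

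To see that the right side is left Kan extended, note that the functor factors as
\[
\mathsf{Pair}_R\xrightarrow{\ (A\to B)\mapsto (R\to B)\ }\Ar(\CAlg^{\an})\xrightarrow{\ \mathrm{Fil}^*_{\mathrm{Hod}}\dR\ }\CAlg\bigl(\Fil(\mathsf D(R))\bigr).
\]
The second functor is left Kan extended from $\Ar(\CAlg^{\an})_{\mathrm{poly}}$ by Construction~\ref{de Rham}, so it preserves sifted colimits. The first functor preserves sifted colimits because sifted colimits in $\mathsf{Pair}_R$ are computed componentwise. This is the standard description of $\mathsf{Pair}_R$ as $\mathcal P_\Sigma(\mathsf{Pair}_{R,\mathsf{poly}})$; I would cite \cite[Corollary~2.13]{Mao24}. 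It sends generators to polynomial $R$-algebras $R[x_1,\dots,x_l]$, which lie in the image of the polynomial arrows. Hence the composite preserves sifted colimits. Since every object of $\mathsf{Pair}_R$ is a sifted colimit of generators, the composite is the left Kan extension of its restriction to $\mathsf{Pair}_{R,\mathsf{poly}}$. On that subcategory, the restriction agrees with the classical $\mathrm{Fil}^*_{\mathrm{Hod}}\Omega^\bullet_{(A/I)/R}$, since $A/I$ is a polynomial ring.

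It then remains to check that the two left Kan extensions agree. Both sides commute with sifted colimits, and the comparison map is an equivalence on the generators by Lemma~\ref{lemma A.7}. Therefore it is an equivalence everywhere.

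I expect the main obstacle to be a subtle point about the target category $\CAlg(\Fil(\mathsf D(R)))$. Lemma~\ref{lemma A.7} gives a map of strictly commutative filtered dg algebras, which must be promoted to a natural map of $E_\infty$-algebras in filtered $\mathsf D(R)$ that is functorial on $\mathsf{Pair}_{R,\mathsf{poly}}$. I would handle this by observing that both sides on generators are strict functors to filtered cdga's. The comparison is a strict natural transformation, so composing with the localization $\mathsf{CDGA}^{\mathsf{fil}}_R\to\CAlg(\Fil(\mathsf D(R)))$ yields the required natural transformation. A second point is that sifted colimits in $\CAlg(\Fil(\mathsf D(R)))$ are computed underlying, which is standard. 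Exhaustiveness or completeness of the filtrations plays no role, because everything is a statement about filtered objects and is checked termwise.
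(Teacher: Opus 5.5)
Your proposal is correct and is essentially the paper's argument: the paper's proof simply left Kan extends the equivalence of Lemma~\ref{lemma A.7} along $\mathsf{Pair}_{R,\mathsf{poly}}\hookrightarrow\mathsf{Pair}_R$, and you usefully make explicit the step it leaves implicit, namely that $(A\to A/I)\mapsto\mathrm{Fil}^*_{\mathrm{Hod}}\dR_{(A/I)/R}$ preserves sifted colimits and is therefore itself left Kan extended from polynomial pairs. One small correction: $R\to R[x_1,\dots,x_l]$ is in general not one of the arrows $\mathbb{Z}[x_1,\dots,x_a]\to\mathbb{Z}[x_1,\dots,x_b]$ spanning $\Ar(\CAlg^{\an})_{\mathrm{poly}}$, so its agreement with the classical Hodge-filtered de Rham complex should be deduced by base change from $\mathbb{Z}\to\mathbb{Z}[x_1,\dots,x_l]$ (using that the classical complex consists of free modules), not from membership in that subcategory.
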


\begin{proof}
	By   Lemma~\ref{lemma A.7}, for every pair $ A\to A/I\in \mathsf{Pair}_{R,\mathsf{poly}}$, there is a natural equivalence $ \mathrm{Fil}_{\mathrm{PD}}^*\Omega^{\mathrm{pd},\bullet}_{D_A(I)/R} \simeq \mathrm{Fil}_{\mathrm{Hod}}^* \Omega^{\bullet}_{(A/I) /R}$ in $\CAlg\bigl(\Fil(\mathsf{D}(R))\bigr)$.  Taking the left Kan extension of both sides along the inclusion $\mathsf{Pair}_{R,\mathsf{poly}} \hookrightarrow
	\mathsf{Pair}_R$, we obtain the desired natural equivalence. 
\end{proof}

We next come to the case of characteristic $p$, and let $R$ be a (classical) \(\mathbb F_p\)-algebra.
\begin{definition}\label{Pair2}
    We define $\mathsf{Pair}_R^{\mathsf{sm,reg}}$ as the full subcategory of $\mathsf{Pair}^\mathrm{cl}_R$ 
spanned by $  A\to A/I$ such that $A$ is smooth over $R$ and $I$ is generated by a regular sequence in $A$.
\end{definition}
We record the following result, which essentially follows from the comparison between crystalline and derived de Rham cohomology  in \cite[Theorem~3.27]{Bha12}. See also \cite[Theorem~7.16]{Mat22} and \cite[Lemma~4.32 and Proposition~4.35]{Mao24}.
\begin{theorem}\label{P2}
	For $A\to A/I\in \mathsf{Pair}_R^{\mathsf{sm,reg}}$, there is a natural equivalence $\mathrm{Fil}_{\mathrm{PD}}^{*}\dR^{\mathrm{pd}}_{LD_A(I)/R}\simeq \mathrm{Fil}_{\mathrm{PD}}^{*}\Omega^{\mathrm{pd},\bullet}_{D_A(I)/R}$ in $\CAlg\bigl(\Fil(\mathsf{D}(R))\bigr)$.
    Consequently, there is a natural equivalence \[ \mathrm{Fil}^*_{\mathrm{Hod}}\dR_{(A/I)/R}\simeq \mathrm{Fil}_{\mathrm{PD}}^{*}\Omega^{\mathrm{pd},\bullet}_{D_A(I)/R}\] for every $A\to A/I\in\mathsf{Pair}_R^{\mathsf{sm,reg}}$.
\end{theorem}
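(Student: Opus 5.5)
The strategy is to build a natural comparison map, check it is an equivalence on the underlying objects and on every graded piece, and then deduce the statement for every filtration level. Write $\mathcal F^*(A,I):=\mathrm{Fil}_{\mathrm{PD}}^{*}\Omega^{\mathrm{pd},\bullet}_{D_A(I)/R}$ for the classical functor on $\mathsf{Pair}_R^{\mathrm{cl}}$ (Construction~\ref{derived-pd-derham}). By definition $\mathrm{Fil}_{\mathrm{PD}}^{*}\dR^{\mathrm{pd}}_{LD_A(I)/R}$ is the left Kan extension of $\mathcal F^*|_{\mathsf{Pair}_{R,\mathsf{poly}}}$. The universal property of left Kan extension then gives a natural map
\[
c\colon \mathrm{Fil}_{\mathrm{PD}}^{*}\dR^{\mathrm{pd}}_{LD_A(I)/R}\longrightarrow \mathrm{Fil}_{\mathrm{PD}}^{*}\Omega^{\mathrm{pd},\bullet}_{D_A(I)/R}
\]
in $\CAlg(\Fil(\mathsf D(R)))$, natural on classical pairs. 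I would show that $c$ is an equivalence for $(A,I)\in\mathsf{Pair}_R^{\mathsf{sm,reg}}$.

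\emph{Step 1: graded pieces.} Left Kan extension commutes with $\gr^i$, since $\gr^i$ is a cofiber and hence preserves colimits. The computation in the proof of Lemma~\ref{lemma A.7} shows that on polynomial pairs $\gr^i\mathcal F$ is the complex
\[
\bigl[\Gamma^i_{A/I}(I/I^2)\to \Omega^1_{A/R}\otimes_A\Gamma^{i-1}_{A/I}(I/I^2)\to\cdots\to\Omega^i_{A/R}\otimes_A A/I\bigr].
\]
This complex is built functorially from $L_{A/R}$ and from $L_{(A/I)/A}[-1]\simeq I/I^2$, through $\Lambda^j$, $\Gamma^{i-j}$ and tensor products. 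Its left Kan extension is therefore the same complex formed with the derived functors $\Lambda^j L_{A/R}\otimes\Gamma^{i-j}(L_{(A/I)/A}[-1])$, with the same differentials; I would make this precise with the evident finite filtration by $j$.

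For $A$ smooth, $L_{A/R}=\Omega^1_{A/R}$ is projective, so the derived $\Lambda^j$ agrees with the classical $\Omega^j_{A/R}$. For $I$ generated by a regular sequence, $L_{(A/I)/A}[-1]\simeq I/I^2$ is a projective $A/I$-module, so derived $\Gamma^{m}$ is the classical $\Gamma^m_{A/I}(I/I^2)$. Berthelot's theorem for regular ideals identifies the latter with $J^{[m]}/J^{[m+1]}$ in $D_A(I)$. Hence $\gr^i(c)$ is an equivalence for all $i$.

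\emph{Step 2: underlying objects.} The PD filtration on $D_A(I)$ is neither finite nor complete in characteristic $p$, so the graded statement does not suffice by itself. For the underlying objects I would use Theorem~\ref{P1}, which identifies the source with $\dR_{(A/I)/R}$. For the target, the classical divided power de Rham complex computes $R\Gamma_{\mathrm{crys}}((A/I)/R)$ because $A$ is a smooth lift of $A/I$. Bhatt's comparison \cite[Theorem~3.27]{Bha12} then gives $\dR_{(A/I)/R}\simeq R\Gamma_{\mathrm{crys}}$ for the lci algebra $A/I$.

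The point to check is that the composite equivalence is $c$ itself. I would verify this by naturality: both constructions are left Kan extended, or compatibly defined, from polynomial pairs. On those pairs every map reduces to the explicit quasi-isomorphism of Lemma~\ref{lemma A.7}, and this is how Bhatt's map is also constructed. Granting this, $\mathrm{Fil}^0(c)$ is an equivalence.

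\emph{Step 3: all filtration levels.} Induct on $i$ using the fiber sequences $\mathrm{Fil}^{i+1}\to\mathrm{Fil}^i\to\gr^i$: Step 2 starts the induction and Step 1 supplies the graded pieces. Negative indices are constant, so every $\mathrm{Fil}^i(c)$ is an equivalence. Combining the resulting equivalence with Theorem~\ref{P1} yields the second assertion, $\mathrm{Fil}^*_{\mathrm{Hod}}\dR_{(A/I)/R}\simeq\mathrm{Fil}^*_{\mathrm{PD}}\Omega^{\mathrm{pd},\bullet}_{D_A(I)/R}$.

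The main obstacle is Step 2, namely matching $c$ with Bhatt's crystalline comparison rather than just producing some abstract equivalence. If the naturality argument becomes too delicate, I would fall back on an alternative route. Use the conjugate filtration on both sides: the Cartier isomorphism holds for the classical PD complex of a smooth lift with regular $I$, and the derived Cartier isomorphism holds for $\dR$. Both conjugate filtrations are exhaustive, and one would check, via the same derived-functor argument as in Step 1, that $c$ induces isomorphisms on the conjugate graded pieces $\Lambda^nL_{(A/I)^{(1)}/R}[-n]$.
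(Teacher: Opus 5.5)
Your proof is correct in outline, but it handles the underlying object by a different route from the paper. The architecture is the same: both arguments show that the classical functor $(A,I)\mapsto\mathrm{Fil}^*_{\mathrm{PD}}\Omega^{\mathrm{pd},\bullet}_{D_A(I)/R}$ is left Kan extended from $\mathsf{Pair}_{R,\mathsf{poly}}$ on $\mathsf{Pair}_R^{\mathsf{sm,reg}}$, i.e.\ that your counit $c$ is an equivalence. Both check this separately on each $\gr^i$ and on $\mathrm{Fil}^0$, and both assemble the result by your Step~3 induction, which is needed because the PD filtration is not complete.

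\emph{Graded pieces.} The paper identifies $\gr^i$ with $\Lambda^iL_{(A/I)/R}[-i]$, viewing it as a Koszul complex of $I/I^2\to\Omega^1_{A/R}\otimes_AA/I$. Your filtration by form degree, with graded pieces $\Lambda^jL_{A/R}\otimes\Gamma^{i-j}(L_{(A/I)/A}[-1])$, works equally well.

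\emph{Underlying object.} The paper stays inside PD complexes. It uses the conjugate filtration of $D_A(I)$, whose graded pieces on $\Omega^{\mathrm{pd},\bullet}_{D_A(I)/R}$ are de Rham complexes of $\mathrm{Frob}_A^*\Gamma^i_{A/I}(I/I^2)$ with the Frobenius-descent connection. By Cartier, their cohomology is $\iota_A^*\Gamma^i_{A/I}(I/I^2)\otimes_{A^{(1)}}\Omega^j_{A^{(1)}/R}$, which is visibly left Kan extended, and exhaustiveness finishes the argument. You instead pass through $R\Gamma_{\mathrm{crys}}$ via Berthelot's comparison and Bhatt's Theorem~3.27, which is itself proved with conjugate filtrations and the Cartier isomorphism. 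Your route is shorter if those theorems are black boxes; the paper itself remarks that the statement essentially follows from Bhatt. However, it needs both comparisons to be natural as transformations of $\infty$-functors on $\mathsf{Pair}_R^{\mathsf{sm,reg}}$, for instance via strictly functorial \v{C}ech--Alexander models. The paper's argument avoids this bookkeeping.

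Two remarks on your Step~2.

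First, you do not need to identify the composite equivalence with $c$. Since $c$ is the counit of the Kan extension, and being left Kan extended is invariant under natural equivalence, it suffices to have an $\infty$-natural equivalence on $\mathsf{Pair}_R^{\mathsf{sm,reg}}$ between $(A,I)\mapsto\Omega^{\mathrm{pd},\bullet}_{D_A(I)/R}$ and $(A,I)\mapsto\dR_{(A/I)/R}$. The latter is left Kan extended from polynomial pairs because it preserves sifted colimits, so this already forces $\mathrm{Fil}^0(c)$ to be an equivalence. What must be checked is naturality, not agreement with $c$.

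Second, your fallback as stated conflates two filtrations. The conjugate filtration on $\Omega^{\mathrm{pd},\bullet}_{D_A(I)/R}$ for which the Cartier isomorphism is available has $i$-th graded piece with cohomology $\iota_A^*\Gamma^i_{A/I}(I/I^2)\otimes\Omega^j_{A^{(1)}/R}$ in every degree $j$. It is not $\Lambda^iL_{(A/I)^{(1)}/R}[-i]$; for instance, for $i=0$ it is typically not concentrated in degree $0$. So there is no evident filtered map to compare graded pieces along. The workable version is the paper's: show that each of these graded pieces is left Kan extended.
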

For an $R$-algebra $A$, we write $\mathrm{Frob}_A\colon A\to A$ for the absolute Frobenius of $A$. Let
$
A^{(1)}:=A\otimes_{R,\mathrm{Frob}_R}R
$. We denote by
$
\iota_A\colon A\longrightarrow A^{(1)}
$
the natural map given by \(a\mapsto a\otimes1\), and by
$
\mathrm{Frob}_{A/R}\colon A^{(1)}\longrightarrow A
$
the relative Frobenius of \(A\) over \(R\).
\begin{proof}By Theorem~\ref{P1}, the second assertion follows from the first. It therefore remains to prove that, for $(A,I)\in \mathsf{Pair}_R^{\mathsf{sm,reg}}$, there is a natural equivalence $\mathrm{Fil}_{\mathrm{PD}}^{*}\dR^{\mathrm{pd}}_{LD_A(I)/R}\simeq \mathrm{Fil}_{\mathrm{PD}}^{*}\Omega^{\mathrm{pd},\bullet}_{D_A(I)/R}$ in $\CAlg\bigl(\Fil(\mathsf{D}(R))\bigr)$. Hence, we need to show   $(A,I)\mapsto \mathrm{Fil}_{\mathrm{PD}}^{*}\Omega^{\mathrm{pd},\bullet}_{D_A(I)/R}$ as a functor $\mathsf{Pair}_R^{\mathsf{sm,reg}}\to \Fil\bigl(\mathsf{D}(R)\bigr)$ is a left Kan extension from its restriction on $\mathsf{Pair}_{R,\mathsf{poly}}$. Note that there is a canonical equivalence  
	\[
	\begin{aligned}
		\gr^i_{\mathrm{PD}}
		\Omega^{\mathrm{pd},\bullet}_{D_A(I)/R}
		&\simeq
		\Bigl[
		\Gamma^i_{A/I}(I/I^2)
		\longrightarrow
		\Omega^1_{A/R}\otimes_A \Gamma^{i-1}_{A/I}(I/I^2)
		\longrightarrow
		\cdots
		\longrightarrow
		\Omega^{i-1}_{A/R}\otimes_A (I/I^2)
		\longrightarrow
		\Omega^i_{A/R}\otimes_A A/I
		\Bigr]
		\\
		&\simeq
		\Bigl[
		\Gamma^i_{A/I}(I/I^2)
		\longrightarrow
		\bigl(\Omega^1_{A/R}\otimes_A A/I\bigr)\otimes_{A/I}
		\Gamma^{i-1}_{A/I}(I/I^2)
		\longrightarrow
		\cdots
		\longrightarrow
		\Omega^i_{A/R}\otimes_A A/I
		\Bigr].
	\end{aligned}
	\]
	The complex on the right is a shift of the total degree $i$ part of the Koszul complex
	associated with the map
	$
	d\colon I/I^2\longrightarrow \Omega^1_{A/R}\otimes_A A/I
	$
	in the sense of \cite[\S I.4.3.1]{Ill71}.
By \cite[Corollary~1.2.7]{KS04}, we therefore obtain an equivalence	$\gr^i_{\mathrm{PD}}
\Omega^{\mathrm{pd},\bullet}_{D_A(I)/R}
\simeq \Lambda^i(L_{(A/I)/R})[-i]$. It is a left Kan extension from its restriction on $\mathsf{Pair}_{R,\mathsf{poly}}$. Thus, it suffices to show that $(A,I)\mapsto \mathrm{Fil}_{\mathrm{PD}}^{*}\Omega^{\mathrm{pd},\bullet}_{D_A(I)/R}$ as a functor $\mathsf{Pair}_R^{\mathsf{sm,reg}}\to \mathsf{D}(R)$ is a left Kan extension from its restriction on $\mathsf{Pair}_{R,\mathsf{poly}}$. In fact, there is a natural increasing, exhaustive, and multiplicative filtration on $D_A(I)$, denoted by \(\mathrm{Fil}^{\mathrm{conj}}_* D_A(I)\) and called the conjugate filtration; see for example, \cite[Lemma~3.42]{Bha12} and \cite[Definition~8.10]{BMS19}. Its graded pieces are given by $\gr^{\mathrm{conj}}_i D_A(I)=\mathrm{Frob}_A^*\bigl(
\Gamma^i_{A/I}(I/I^2)\bigr)$. Moreover, we can see  this filtration induces a filtration on $\Omega^{\mathrm{pd},\bullet}_{D_A(I)/R}$, see for example\cite[Lemma 3.44]{Bha12}, the complex
$\Omega^{\mathrm{pd},\bullet}_{D_A(I)/R}$ admits increasing, exhaustive, multiplicative filtration, namely conjugate filtration,
   $\mathrm{Fil}_*^{\mathrm{conj}}\Omega^{\mathrm{pd},\bullet}_{D_A(I)/R}$, which is defined by
   \[\mathrm{Fil}_n^{\mathrm{conj}}\Omega^{\mathrm{pd},\bullet}_{D_A(I)/R}:=\mathrm{Fil}_n^{\mathrm{conj}}D_A(I)\to \mathrm{Fil}_{n}^{\mathrm{conj}}D_A(I)\otimes_A \Omega^1_{A/R}\to \mathrm{Fil}_{n}^{\mathrm{conj}}D_A(I)\otimes_A \Omega^2_{A/R} \to \cdots .\]
   Then we can see that   \begin{equation}
   	\gr_i^{\mathrm{conj}}\Omega^{\mathrm{pd},\bullet}_{D_A(I)/R}\simeq\mathrm{Frob}_A^*\Gamma^i_{A/I}\bigl(I/I^2\bigr)
   	\longrightarrow
   	\mathrm{Frob}_A^*\Gamma^i_{A/I}\bigl(I/I^2\bigr)
   	\otimes_{A}\Omega^1_{A/R}
   	\longrightarrow
   	\mathrm{Frob}_A^*\Gamma^i_{A/I}\bigl(I/I^2\bigr)
   	\otimes_{A}\Omega^2_{A/R}
   	\longrightarrow \cdots	
   \end{equation} 
   is the de Rham complex over \(A\) of the
  \(A\)-module with $R$-linear connection
  $
  \mathrm{Frob}_A^*\Gamma^i_{A/I}\bigl(I/I^2\bigr).
  $
  It thus suffices to show that this de Rham complex as a functor from such pairs \((A,I)\) to \(\mathsf{D}(R)\), is left Kan
  extended from $\mathsf{Pair}_{R,\mathsf{poly}}$. By \cite[Lemma~3.44]{Bha12}, the connection on
  $\mathrm{Frob}_A^*\Gamma^i_{A/I}(I/I^2)$
  is the Frobenius descent connection on the pullback of the
  $A^{(1)}$-module $\iota_A^*\Gamma^i_{A/I}(I/I^2)$
  along the relative Frobenius $\mathrm{Frob}_{A/R}$. The Cartier isomorphism therefore shows that the $j$-th cohomology of
  $\gr_i^{\mathrm{conj}}\Omega^{\mathrm{pd},\bullet}_{D_A(I)/R}$
  is given by
  \[
  \iota_A^*\Gamma^i_{A/I}\bigl(I/I^2\bigr)
  \otimes_{A^{(1)}}
  \Omega^j_{A^{(1)}/R}.
  \]
It follows that $(A\to A/I)\longmapsto\gr_i^{\mathrm{conj}}\Omega^{\mathrm{pd},\bullet}_{D_A(I)/R}$ is left Kan extended from $\mathsf{Pair}_{R,\mathsf{poly}}$. This completes the proof.

\end{proof}

We now clarify a subtle sign issue in this comparison, beginning
with our convention for two-term complexes.
\begin{convention}
	For a map $d:M\to N$, we write $[M\xrightarrow{d}N]_{[a,a+1]}$
	for the two-term complex with $M$ in cohomological degree $a$
	and $N$ in cohomological degree $a+1$.
\end{convention}

\begin{remark}\label{sign1}
 Note that the canonical equivalence $L_{(A/I)/R}\simeq [I/I^2\xrightarrow{d} \Omega^1_{A/R}\otimes_A A/I]_{[-1,0]}$ gives $L_{(A/I)/R}[-1]\simeq [I/I^2\xrightarrow{d} \Omega^1_{A/R}\otimes_A A/I]_{[-1,0]}[-1]= \left[I/I^2\xrightarrow{-d} \Omega^1_{A/R}\otimes_A A/I\right]_{[0,1]}$; see \cite[Tag~011G]{Sta26} for our convention on shifts of cochain complexes. On the other hand, taking $\gr^1$ of the equivalence $ \mathrm{Fil}^*_{\mathrm{Hod}}\dR_{(A/I)/R}\simeq \mathrm{Fil}_{\mathrm{PD}}^{*}\Omega^{\mathrm{pd},\bullet}_{D_A(I)/R}$ for  $(A,I)\in\mathsf{Pair}_R^{\mathsf{sm,reg}}$, we obtain an equivalence $ L_{(A/I)/R}[-1]\simeq [I/I^2\xrightarrow{d} \Omega^1_{A/R}\otimes_A A/I]_{[0,1]}$. These two equivalences are compatible in the sense that the following diagram commutes
\[
\begin{tikzcd}[column sep=large,row sep=large]
	\left[
	I/I^2\xrightarrow{-d}
	\Omega^1_{A/R}\otimes_A A/I
	\right]_{[0,1]}
	\ar[rr,"{(-\mathrm{id},\mathrm{id})}"]
	\ar[dr,"\simeq"{above}]
	&&
	\left[
	I/I^2\xrightarrow{d}
	\Omega^1_{A/R}\otimes_A A/I
	\right]_{[0,1]}
	\ar[dl,"\simeq"{above}]
	\\
	&
	L_{(A/I)/R}[-1].
	&
\end{tikzcd}
\]
Here the $(-\mathrm{id},\mathrm{id})$ is the map of cochain complexes defined by $-\mathrm{id}$ on $I/I^2$ and $\mathrm{id}$ on 	$\Omega^1_{A/R}\otimes_A A/I$. Indeed, by the construction via left Kan extension, it suffices to establish the commutativity of the diagram for $A\to A/I\in \mathsf{Pair}_{R,\mathsf{poly}}$. In this case, the commutativity can be checked directly.
\end{remark}
\section{A Sign issue in totalization}
Let $R$ be a commutative ring and let $M^{\bullet}\colon \Delta\to \mathsf{Ch}(R)$ be a cosimplicial cochain complex. We forget this functor as $ \colon \Delta \to \mathsf{D}(R)$. Since $\displaystyle \lim_{[i]\in \Delta}\colon \Fun(\Delta,\mathsf{D}(R))\to \mathsf{D}(R))$ is an exact functor, we obtain a natural equivalence $\displaystyle (\lim_{[i]\in \Delta} M^i)[-1]\simeq \lim_{[i]\in \Delta} (M^i[-1])$ in $\mathsf{D}(R)$. By Proposition \ref{Tot}, the two objects \((\lim_{[i]\in \Delta} M^i)[-1]\) and \(\lim_{[i]\in \Delta} (M^i[-1])\)  can be represented by $\operatorname{Tot} (M^{\bullet})[-1], \operatorname{Tot} (M^{\bullet}[-1])$, respectively.

\begin{lemma}[{cf.~\cite[Tag~0FLG]{Sta26}}]\label{sign2} The equivalence $\displaystyle (\lim_{[i]\in \Delta} M^i)[-1]\simeq \lim_{[i]\in \Delta} (M^i[-1])$ is represented by an isomorphism of cochain complexes $\operatorname{Tot} (M^\bullet)[-1]\to \operatorname{Tot} (M^\bullet[-1])$, which acts on $M^{i,j}$ by multiplication by $(-1)^i$. Here, we write  $M^{i,j}$ for the degree-$i$ term of the cochain complex $M^i$.
\end{lemma}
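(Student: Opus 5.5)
The plan is to compare the two product totalizations degree by degree and determine the sign from the way shifts of cochain complexes are defined. With the convention of \cite[Tag~011G]{Sta26}, the complex $K[-1]$ has $(K[-1])^n=K^{n-1}$ and differential $-d_K$. Write $\operatorname{Tot}(M^\bullet)^n=\prod_{i+j=n}M^{i,j}$, with differential $\delta+(-1)^i d$ on $M^{i,j}$. Then
\[
\bigl(\operatorname{Tot}(M^\bullet)[-1]\bigr)^n=\prod_{i+j=n-1}M^{i,j},
\]
with differential $-\delta-(-1)^i d$. On the other side, $M^i[-1]$ has $M^{i,j}$ in degree $j+1$ and differential $-d$. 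Hence
\[
\operatorname{Tot}(M^\bullet[-1])^n=\prod_{i+j'=n}(M^i[-1])^{j'}=\prod_{i+j=n-1}M^{i,j},
\]
and its differential is $\delta+(-1)^i(-d)=\delta-(-1)^i d$. So the two complexes have the same terms in every degree, and their differentials differ only in the sign of the $\delta$-component.

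Next I would define $\phi$ to be multiplication by $(-1)^i$ on $M^{i,j}$ and check that it is a chain map. Take $x\in M^{i,j}$. On one side,
\[
\phi\bigl(-\delta x-(-1)^i dx\bigr)=(-1)^{i}\delta x-dx,
\]
because $\delta x$ lies in cosimplicial degree $i+1$ and so picks up the sign $(-1)^{i+1}$, while $dx$ stays in degree $i$. On the other side,
\[
\bigl(\delta-(-1)^i d\bigr)\bigl((-1)^i x\bigr)=(-1)^i\delta x-dx.
\]
The two agree, so $\phi$ is an isomorphism of cochain complexes.

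Finally I must show that $\phi$ represents the canonical equivalence $(\lim M^i)[-1]\simeq\lim(M^i[-1])$, and this is the step I expect to need the most care. That equivalence is the exactness isomorphism of $\lim$, meaning the map compatible with the identifications of $[-1]$ as a cofiber or loop shift. My approach is to pin it down through naturality. The map $\operatorname{Tot}(-)[-1]\to\operatorname{Tot}(-[-1])$ is natural in $M^\bullet$ and compatible with the equivalence of Proposition~\ref{Tot}. A natural transformation between these exact functors is determined by its value on generating cases, such as $M^\bullet$ concentrated in a single cochain degree or induced from a single cosimplicial level. In those cases I would compare $\phi$ directly with the standard identification and track the sign coming from the cone construction. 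This is exactly what \cite[Tag~0FLG]{Sta26} records for the totalization of double complexes, so in practice I would reduce to that reference: its sign convention $(-1)^i$ for commuting a shift past the first index of a double complex matches our $\phi$ under the identification of Proposition~\ref{Tot}.
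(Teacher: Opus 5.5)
Your computation of the two complexes and your check that $\phi$ (multiplication by $(-1)^i$ on $M^{i,j}$) is a chain isomorphism are correct. They carry none of the content of the lemma, however. The map $-\phi$ is also a chain isomorphism and is equally natural in $M^\bullet$. So neither the chain-map check nor naturality can decide which of $\pm\phi$ represents the equivalence $(\lim_{[i]\in\Delta}M^i)[-1]\simeq\lim_{[i]\in\Delta}(M^i[-1])$ coming from exactness of $\lim$ in $\mathsf{D}(R)$. That sign is the whole point of the lemma (it produces the signs $(-1)^{n+1}$ and $(-1)^n$ in Lemma~\ref{cp}), and your third step does not determine it.

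The principle you invoke there is not valid as stated:
\begin{itemize}
\item A natural transformation between exact functors of stable $\infty$-categories is not in general determined by its components on a generating set of objects.
\item You have not said how a general $M^\bullet$ is built from your ``generating cases'' in a way that both functors respect.
\item Before any of this, you would need to know that the canonical equivalence is represented by a natural chain map at all.
\end{itemize}
Even granting the reduction, you still propose to ``track the sign coming from the cone construction'' in those cases, which is exactly the computation that is missing. Deferring to \cite[Tag~0FLG]{Sta26} does not close the gap either. That is a chain-level statement about sign isomorphisms for total complexes of double complexes. At best it supplies the chain isomorphism you already wrote down, not its identification with the $\infty$-categorical equivalence.

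The missing ingredient is a chain-level model of the shift as a bicartesian square, so that exactness of $\lim$ can be applied to it. The paper's argument runs as follows:
\begin{itemize}
\item For a complex $C$, take the strictly commuting square with maps $C\to 0$, $C\to\operatorname{Cone}(\mathrm{id}_C)$ and $\operatorname{Cone}(\mathrm{id}_C)\to C[1]$. It is bicartesian in $\mathsf{D}(R)$ and exhibits $C[1]$ as the suspension of $C$.
\item Applying this square levelwise to $M^\bullet$ and totalizing gives a bicartesian square whose final corner is $\operatorname{Tot}(M^\bullet[1])$.
\item The same square for $C=\operatorname{Tot}(M^\bullet)$ has final corner $\operatorname{Tot}(M^\bullet)[1]$.
\item The explicit isomorphism $\alpha\colon\operatorname{Tot}(\operatorname{Cone}(\mathrm{id}_{M^\bullet}))\to\operatorname{Cone}(\mathrm{id}_{\operatorname{Tot}(M^\bullet)})$, given by $(x,y)\mapsto(x,(-1)^iy)$ on $M^{i,j}\oplus M^{i,j+1}$, matches the remaining parts of the two squares.
\item The comparison of the final corners is then forced to be the chain map acting by $(-1)^i$ on $M^{i,j}$. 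This is where the sign genuinely comes from.
\item Applying this to $M^\bullet[-1]$, shifting by $[-1]$ and inverting gives the statement.
\end{itemize}
Your third step should be replaced by an argument of this kind.
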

\begin{proof}For cochain complex $C$, we can consider a commutative diagram in $\mathsf{Ch}(R)$
	
		\begin{equation}
		\begin{tikzcd}
			C \arrow[r] \arrow[d] & 0 \arrow[d] \\
			\operatorname{Cone}(\mathrm{id}_C) \arrow[r] & C[1],
		\end{tikzcd}
		\end{equation}
	which is a fiber sequence in $\mathsf{D}(R)$.
Applying this construction degreewise to cosimplicial cochain complex $M^{\bullet}\colon \Delta\to \mathsf{Ch}(R)$, we obtain a commutative diagram in $\Fun(\Delta,\mathsf{Ch}(R))$	
	\[
	\begin{tikzcd}
		M^{\bullet} \arrow[r] \arrow[d] & 0 \arrow[d] \\
		\operatorname{Cone}(\mathrm{id}_M) \arrow[r] & M^{\bullet}[1],
	\end{tikzcd}
	\]
	which is a fiber sequence in $\Fun(\Delta,\mathsf{D}(R))$.
We apply totaliztion to this diagram, we obtain a commutative diagram in $\mathsf{Ch}(R)$.
	\[
	\begin{tikzcd}
	\operatorname{Tot}	(M^{\bullet}) \arrow[r] \arrow[d] & 0 \arrow[d] \\
		\operatorname{Tot}(\operatorname{Cone}(\mathrm{id}_{M^{\bullet}})) \arrow[r] & \operatorname{Tot}(M^{\bullet}[1]),
	\end{tikzcd}
	\]
	which is a fiber sequence in $\mathsf{D}(R)$.
	
	On the other hand, we have a commutative diagram in $\mathsf{D}(R)$ by taking $C=\operatorname{Tot}	(M^{\bullet})$ in diagram 12.
	\[
	\begin{tikzcd}
		\operatorname{Tot}	(M^{\bullet}) \arrow[r] \arrow[d] & 0 \arrow[d] \\
		\operatorname{Cone}(\mathrm{id}_{\operatorname{Tot}	(M^{\bullet})}) \arrow[r] & \operatorname{Tot}(M^{\bullet})[1],
	\end{tikzcd}
	\]
	which is a fiber sequence in $\mathsf{D}(R)$.
	
	Note that there is a natural isomorphism
	$
	\alpha\colon
	\operatorname{Tot}\bigl(\operatorname{Cone}(\mathrm{id}_{M^\bullet})\bigr)
	\xrightarrow{\sim}
	\operatorname{Cone}\bigl(\mathrm{id}_{\operatorname{Tot}(M^\bullet)}\bigr).
	$
	Indeed, we have
	$
	\operatorname{Tot}\bigl(\operatorname{Cone}(\mathrm{id}_{M^\bullet})\bigr)^n
	=
	\prod_{i+j=n}
	\left(M^{i,j}\oplus M^{i,j+1}\right),
	$
	and
\[
\begin{aligned}
	\operatorname{Cone}\bigl(
	\mathrm{id}_{\operatorname{Tot}(M^\bullet)}
	\bigr)^n
	&=
	\operatorname{Tot}(M^\bullet)^n
	\oplus
	\operatorname{Tot}(M^\bullet)^{n+1}
	\\
	&=
	\left(\prod_{i+j=n} M^{i,j}\right)
	\oplus
	\left(\prod_{i+j=n+1} M^{i,j}\right)
	=
	\prod_{i+j=n}
	\left(M^{i,j}\oplus M^{i,j+1}\right).
\end{aligned}
\]
	Under the above identification of the underlying graded \(R\)-modules, we define \(\alpha\) on 
	$
	M^{i,j}\oplus M^{i,j+1}
	$
	by
	$
	\alpha(x,y)=(x,(-1)^i y)$.
	Hence, we obtain a commutative diagram 
	\[
	\begin{tikzcd}[column sep=large,row sep=large]
		\operatorname{Tot}	(M^{\bullet}) \ar[r] \ar[d]
		& 0 \ar[d] \ar[ddr,bend left=15] \\
		\operatorname{Cone}(\mathrm{id}_{\operatorname{Tot}(M^{\bullet})}) \ar[r] \ar[drr,bend right=15,"\beta"]
		& \operatorname{Tot}(M^{\bullet})[1] \ar[dr,"f"] \\
		& & \operatorname{Tot}(M^{\bullet}[1]),
	\end{tikzcd}
	\]
	where $f$ acts on on $M^{i,j}$ as multiplication by $(-1)^i$ and $\beta$ defined by the composition $\operatorname{Cone}(\mathrm{id}_{\operatorname{Tot}(M^{\bullet})})\xrightarrow{\alpha^{-1}}\operatorname{Tot}(\operatorname{Cone}(\mathrm{id}_{M^{\bullet}}))\to  \operatorname{Tot}(M^{\bullet}[-1])$. Thus, the map $f\colon \operatorname{Tot}(M^{\bullet})[1]\to \operatorname{Tot}(M^{\bullet}[1])$ represents $\displaystyle (\lim_{[i]\in \Delta} M^i)[1]\simeq \lim_{[i]\in \Delta} (M^i[1])$. Finally, applying this result to the cosimplicial cochain complex $M^{\bullet}[-1]$, then shifting by \([-1]\) and taking the inverse, we obtain the desired isomorphism
	$
	\operatorname{Tot}(M^\bullet)[-1]
	\xrightarrow{\sim}
	\operatorname{Tot}(M^\bullet[-1]),
	$
	which acts on \(M^{i,j}\) as multiplication by \((-1)^i\).
\end{proof}

\printbibliography

\end{document}